\DeclarePairedDelimiterX\set[1]\lbrace\rbrace{#1}
\newtheorem{thm}{Theorem}[section]
\newtheorem{cor}[thm]{Corollary}
\newtheorem{prop}[thm]{Proposition}
\newtheorem{lem}[thm]{Lemma}
\theoremstyle{definition}
\newtheorem{defn}[thm]{Definition}
\theoremstyle{remark}
\newtheorem{rem}[thm]{Remark}
\newtheorem{rems}[thm]{Remarks}
\definecolor{energy}{RGB}{114,0,172}
\definecolor{freq}{RGB}{45,177,93}
\definecolor{spin}{RGB}{251,0,29}
\definecolor{signal}{RGB}{203,23,206}
\definecolor{circle}{RGB}{217,86,16}
\definecolor{average}{RGB}{203,23,206}
\definecolor{kb}{rgb}   {.6, 0, 0} 
\definecolor{pa}{rgb}   {.4, .4, 0}
\newcommand{\llangle}{\langle\!\langle}
\newcommand{\rrangle}{\rangle\!\rangle}
\colorlet{shadecolor}{gray!20}
\pgfplotsset{compat=1.9}
\let\c@equation\c@thm
\numberwithin{equation}{section}
\author{Pascal Auscher}
\address{Universit{\'e} Paris-Saclay, CNRS, Laboratoire de Math\'{e}matiques d'Orsay, 91405 Orsay, France}
\email{pascal.auscher@universite-paris-saclay.fr}
\author{Khalid Baadi}
\address{Universit{\'e} Paris-Saclay, CNRS, Laboratoire de Math\'{e}matiques d'Orsay, 91405 Orsay, France}
\email{khalid.baadi@universite-paris-saclay.fr}
\date{June 23, 2025}
\title[Fundamental solutions for parabolic systems]{Fundamental solutions for parabolic equations and systems:
universal existence, uniqueness, representation}
\keywords{Abstract parabolic equations, Cauchy problems, Integral identities, Variational methods, Fundamental solution, Green operators.} 
\subjclass[2020]{Primary: 35K90, 35A08  Secondary: 35K45, 35K46, 35K65, 47G20, 47B15}
\begin{document}

\begin{abstract}
In this paper, we develop a universal, conceptually simple and systematic method to prove well-posedness to Cauchy problems for weak solutions of parabolic equations with non-smooth, time-dependent, elliptic part having a variational definition. Our classes of weak solutions are taken with minimal assumptions.  We prove the existence and uniqueness of a fundamental solution which seems new in this generality: it is shown to always coincide with the associated evolution family for the initial value problem with zero source and it yields representation of all weak solutions. Our strategy is a variational approach avoiding density arguments, a priori regularity of weak solutions or regularization by smooth operators. One of our main tools are  embedding results which yield time continuity of our weak solutions going beyond the celebrated Lions regularity theorem and that is addressing a variety of source terms.   We illustrate our results with  three concrete applications : second order uniformly elliptic part with Dirichlet boundary condition on domains, integro-differential elliptic part, and second order degenerate elliptic part. 
\end{abstract}
\maketitle

\tableofcontents

\section{Introduction}\label{Section 1}

Linear parabolic problems have a long history. The standard method usually begins by solving the Cauchy problem, with or without source terms, and representing the solutions through what is called fundamental solution. Abstractly, starting from an initial data $u_{0}$, the evolution takes the form $ \partial_t u + \mathcal{B}u =f$
where time describes an interval $(0,T)$, the sought solution $u$ and the source $f$ are valued in some different  vector spaces and $\mathcal{B}$ is an operator  that could also depend on time. Modeled after the theory of  ordinary scalar differential equations, it is assumed some kind of dissipativity for $-\mathcal{B}$. One issue toward extension to non-linear equations is to not use regularity, but measurability, of the coefficients. We stick here to the linear ones. 
The amount of literature is too vast to be mentioned and we shall isolate only a few representative works for the sake of motivation. 

In the abstract case, when $\mathcal{B}$ does not depend on time, that is the autonomous situation, the approach from semi-group of operators and interpolation has been fruitful, starting from the earlier works (see Kato's book \cite{kato2013perturbation}) up to the criterion for maximal regularity in UMD Banach spaces  (see L. Weis \cite{weis2001operator} or the review by Kunstmann-Weis \cite{kunstmann2004maximal}).  In the non-autonomous case, results can be obtained by perturbation of this theory, assuming some time-regularity; see, for example, Kato's paper \cite{kato1961abstract}.

A specific class of such problems is when $\mathcal{B}$  originates from a sesquilinear form with coercivity assumptions. Lions developed an  approach in \cite{lions1957problemes} which he  systematized in \cite{lions2013equations}. The nice thing about this approach is that there is no need for regularity with respect to time; its drawback is that it is restricted to  Hilbert initial spaces. 

In parallel, there has been a systematic study of    
concrete parabolic Cauchy problems of differential type starting  when the coefficients are regular. In this case, several methods exist for constructing the fundamental solution. The most effective technique involves a parametrix in combination with the freezing point method \cite{friedman2008partial}. This approach simplifies the problem to one where the coefficients become independent of space, leading to explicit solutions represented by kernels $ \Gamma(t, x, s, y) $ with Gaussian decay. When the coefficients are measurable (and possibly unbounded for the lower order terms), the theory of weak solutions, developed in the 1950s and 1960s, applies. This theory culminated in the book of Ladyzenskaja, Solonnikov and Ural'ceva \cite{ladyzhenskaia1968linear}. Although we shall not consider it here, the specific situation of second order parabolic equations with real, measurable, time-dependent coefficients was systematically treated by Aronson \cite{ aronson1967bounds, aronson1968non}: his construction of fundamental solutions and the proof of  lower and upper bounds relied on regularity properties of local weak solutions by Nash \cite{nash1958continuity} and its extensions, and by taking limits from operators with regular coefficients.  A recent result \cite{ataei2024fundamental} follows this approach for non autonomous degenerate parabolic problems in the sense of $A_{2}$-weights. In contrast, not using regularity theory, the article \cite{auscher2023universal} developed a framework for a Laplacian (and its integral powers), extending the Lions embedding theorem (see below) as a first step to obtain new results on fundamental solutions for equations with unbounded coefficients.

A natural question is whether one can develop a framework, going beyond the one of Lions, that provides us with  \begin{enumerate}
\item An  optimal embedding theorem with integral identities,
\item The largest classes of weak solutions for which one obtains existence and uniqueness,
\item Definition, existence and uniqueness of a fundamental solution.
\end{enumerate}
The answer is yes. The arguments can be developed in a very abstract manner, bearing on functional calculus of positive self-adjoint operators. We next present a summary of our results as a road map. Clearly, when it comes to concrete applications,  our results do not distinguish equations from systems, the nature of the elliptic part (local or non-local) and its order, boundary conditions, etc.  
We give three examples at the end as an illustration. 

The first example deals with second-order uniformly elliptic parts under Dirichlet boundary conditions on domains. This situation may not seem original but we still give the main consequences for readers to be able to compare with literature. Our second example is parabolic equations with integro-differential elliptic part. The typical example is the fractional Laplacian and such equations arise in many fields from PDE to probability \cite{lions1969quelques, caffarelli2007extension, bogdan2012estimates}. The usual theory for  the  fractional Laplacians  yields the fundamental solution  as a density of a probability measure. The fundamental solutions for general integro-differential  parabolic operators with kernels are considered  in the literature, assuming positivity condition and pointwise bounds. We refer to the introduction of \cite{kassmann2023upper} and the references there in. In that article, a proof of the poinswise upper bound is presented. Here, we do not assume any kind of positivity and we show the existence of a fundamental solution as an evolution family of operators. At our level of generality, these operators may not have kernels with pointwise bounds. Still, this family can be used to represent weak solutions without further assumptions. In any case,  it gives a universal existence result. For example, the fundamental solution  used in \cite{kassmann2023upper} must be the kernel of our fundamental solution operator. The third example is for degenerate operators as in \cite{ataei2024fundamental}, without assuming the coefficients to be real. It directly gives  existence of a fundamental solution not using local properties of solutions. In a forthcoming article, the second author will use this to give a new proof of the pointwise estimates.

\section{Summary of our results}\label{sec:summary}

\subsection{Embeddings and integral identities}
 
Lions' embedding theorem \cite{lions1957problemes} asserts that  if $V$ and $H$ are two Hilbert spaces such that $V$ is  densely embedded in $H$, itself densely embedded in  $V^\star$, the dual of $V$ in the inner product of  $H$, we have the continuous embedding
$$ L^2((0, \mathfrak{T}); V) \cap H^1((0, \mathfrak{T}); V^\star) \hookrightarrow C([0, \mathfrak{T}]; H),$$
and absolute continuity of the map $t\mapsto\|u(t)\|_{H}^2$, which yields integral identities. The triple $(V, H, V^\star)$ is said to be a Gelfand triple.
One of our main results is the following improvement.

\begin{thm}\label{thm:energyboundedinterval} Consider a positive, self-adjoint operator $S$ on a separable Hilbert space $H$.
    Let $I=(0,\mathfrak{T})$ be a  bounded, open interval of $\mathbb{R}$. Let $u \in L^1(I;H)$ such that $Su\in L^2(I;H)$. Assume that $\partial_t u =  Sf+S^\beta g$ with $f \in L^2(I;H)$ and 
     $g \in L^{\rho'}(I;H)$, where ${\beta}={2}/{\rho} \in [0,1)$ and $\rho'$ is the conjugate H\"older exponent to $\rho$. Then $ u \in C(\Bar{I},H)$
and $ t \mapsto \left \| u(t) \right \|^2_H$ is absolutely continuous on $\Bar{I}$ with, for all $ \sigma, \tau \in \Bar{I}$ such that $  \sigma < \tau$, the integral identity
    \begin{align}\label{eq:integralidentityintro}
        \left \| u(\tau) \right \|^2_H-\left \| u(\sigma) \right \|^2_H = 2\mathrm{Re}\int_{\sigma}^{\tau} \langle f(t),Su(t)\rangle_{H} + \langle g(t),S^\beta u(t)\rangle_{H} \  \mathrm d t.
    \end{align}
As a consequence,     $u\in L^r((0,\mathfrak{T});D(S^\alpha))$ for all $r\in (2,\infty]$ such that $\alpha=2/r$ with for a constant depending only on $\beta$, 
\begin{align}\label{eq:Lr}
 \| S^\alpha u \|_{L^r((0,\mathfrak{T});H)} \lesssim  \|Su \|_{L^2((0,\mathfrak{T});H)}+ \|f \|_{L^2((0,\mathfrak{T});H)}+\|g \|_{L^{\rho'}((0,\mathfrak{T});H)} + \inf_{\tau \in [0,\mathfrak{T}]}\|u(\mathfrak{\tau})\|_{H}.
\end{align}
      \end{thm}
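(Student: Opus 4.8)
The plan is to eliminate at once the unboundedness of $S$ and the fact that only $Su$ (not $u$) is square-integrable by truncating in the spectrum. For $R>0$ let $E_R$ be the spectral projection of $S$ associated with $[0,R]$ (so $E_R=\mathbb{1}_{[0,R]}(S)$); it commutes with $S$ and every $S^{\gamma}$, and $SE_R,\ S^{\beta}E_R$ are bounded self-adjoint operators. Then $u_R:=E_R u\in L^1(I;H)$ satisfies $\partial_t u_R=(SE_R)f+(S^{\beta}E_R)g\in L^2(I;H)+L^{\rho'}(I;H)\subset L^1(I;H)$, hence $u_R\in W^{1,1}(I;H)\hookrightarrow C(\bar I;H)$. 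For such $u_R$ the scalar map $t\mapsto\|u_R(t)\|_H^2$ is absolutely continuous with a.e.\ derivative $2\,\mathrm{Re}\,\langle\partial_t u_R,u_R\rangle_H$, and using $E_R^2=E_R$ with self-adjointness one rewrites the pairing as $\langle E_R f,Su_R\rangle_H+\langle E_R g,S^{\beta}u_R\rangle_H$; integrating gives \eqref{eq:integralidentityintro} for $u_R$. Everything then rests on uniform-in-$R$ bounds allowing the passage $R\to\infty$.

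The key estimate combines this identity with two facts from the spectral theorem: the interpolation inequality $\|S^{\gamma}w\|_H\le\|Sw\|_H^{\gamma}\|w\|_H^{1-\gamma}$ for $\gamma\in[0,1]$, which after integration in time and using $\rho\beta=2$ yields $\|S^{\beta}w\|_{L^{\rho}(I;H)}\le\|Sw\|_{L^2(I;H)}^{\beta}\,\|w\|_{L^{\infty}(I;H)}^{1-\beta}$; and the contractivity $\|E_R w\|_{L^p(I;H)}\le\|w\|_{L^p(I;H)}$. Writing $M_R:=\|u_R\|_{L^{\infty}(I;H)}$, Cauchy--Schwarz and H\"older in the identity give, for all $\sigma,\tau\in\bar I$,
\[
\|u_R(\tau)\|_H^2\le\|u_R(\sigma)\|_H^2+2\|f\|_{L^2(I;H)}\|Su_R\|_{L^2(I;H)}+2\|g\|_{L^{\rho'}(I;H)}\|Su_R\|_{L^2(I;H)}^{\beta}M_R^{1-\beta}.
\]
Choosing $\sigma$ at a minimum of the continuous map $t\mapsto\|u_R(t)\|_H$ on $\bar I$ and taking the supremum over $\tau$, I would absorb $M_R^{1-\beta}$ into $M_R^2$ by Young's inequality (conjugate exponents $\tfrac{2}{1-\beta},\tfrac{2}{1+\beta}$) to obtain
\[
M_R\ \lesssim\ \|Su_R\|_{L^2(I;H)}+\|f\|_{L^2(I;H)}+\|g\|_{L^{\rho'}(I;H)}+\inf_{t\in\bar I}\|u_R(t)\|_H,
\]
with a constant depending on $\beta$ only. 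The same computation applied verbatim to the frequency annuli $(E_{R'}-E_R)u$, $R<R'$, bounds $\|(E_{R'}-E_R)u\|_{L^{\infty}(I;H)}$ by the corresponding truncated norms of $Su$, $f$, $g$, plus $\inf_{t}\|(E_{R'}-E_R)u(t)\|_H$.

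To conclude, each of these error terms tends to $0$ as $R,R'\to\infty$: the truncated norms of $Su\in L^2$, $f\in L^2$, $g\in L^{\rho'}$ vanish by dominated convergence since $E_{R'}-E_R\to0$ strongly, and $\inf_{t}\|(E_{R'}-E_R)u(t)\|_H\to0$ because $\|(E_{R'}-E_R)u(t)\|_H\to0$ for any $t$ with $u(t)\in H$, i.e.\ a.e.\ (this is where $u\in L^1(I;H)$ is used). Hence $(u_R)_R$ is Cauchy in $C(\bar I;H)$ and its limit, which agrees with $u$ in $L^1(I;H)$, is the sought continuous representative. Passing to the limit in the identity for $u_R$ is then routine: $\|u_R(\tau)\|_H^2\to\|u(\tau)\|_H^2$ pointwise, $E_Rf\to f$ and $Su_R\to Su$ in $L^2(I;H)$, while $S^{\beta}u_R$ is Cauchy (hence convergent to $S^{\beta}u$) in $L^{\rho}(I;H)$ by the interpolation inequality above and $E_Rg\to g$ in $L^{\rho'}(I;H)$. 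This yields \eqref{eq:integralidentityintro}, and absolute continuity of $t\mapsto\|u(t)\|_H^2$ is then immediate since its increments form the indefinite integral of the $L^1(I)$ function $t\mapsto2\,\mathrm{Re}(\langle f,Su\rangle_H+\langle g,S^{\beta}u\rangle_H)$. Finally \eqref{eq:Lr} follows by one more interpolation: $\|S^{\alpha}u(t)\|_H\le\|Su(t)\|_H^{\alpha}\|u(t)\|_H^{1-\alpha}$ with $\alpha=2/r$ gives $\|S^{\alpha}u\|_{L^r(I;H)}\le\|Su\|_{L^2(I;H)}^{\alpha}\|u\|_{L^{\infty}(I;H)}^{1-\alpha}$, into which one inserts the $L^{\infty}$ bound obtained in the limit.

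The main obstacle will be the energy estimate itself: the identity controls $\|u_R\|_{L^{\infty}}$ only in terms of itself, through the interpolation of $\|S^{\beta}u_R\|_{L^{\rho}}$, so one must run a self-improving absorption with the precise Young exponents and, crucially, check that the constant produced is universal — depending on $\beta$ alone, not on $S$, $R$ or $\mathfrak{T}$ (and necessarily degenerating as $\beta\to1$, consistent with the hypothesis $\beta\in[0,1)$). The second, more structural point — forced by $Su\in L^2$ but $u\in L^1$ only — is that the low-frequency part of $u$ is controlled solely via the term $\inf_t\|u(t)\|_H$, so the limit $R\to\infty$ must be organized through the annulus estimate rather than a plain truncation, and one must verify $\inf_t\|(E_{R'}-E_R)u(t)\|_H\to0$ using only integrability of $u$.
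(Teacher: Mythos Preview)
Your proof is correct and takes a genuinely different route from the paper's. The paper builds a substantial auxiliary theory: it studies the abstract heat operator $\partial_t+S^2$ on $\mathbb{R}$ via Fourier and Duhamel methods, proves the integral identity first on $\mathbb{R}$ by writing $\partial_t u+S^2u=S^2u+Sf+S^\beta g$ and approximating through solutions in a Schwartz-type class, then restricts to half-lines by even/odd extension and to bounded intervals by cutoff, and finally handles non-injective $S$ by splitting $H=\overline{\mathrm{ran}(S)}\oplus\mathrm{nul}(S)$. Your spectral-truncation argument bypasses all of this: the projections $E_R$ reduce the problem to a $W^{1,1}(I;H)$ setting where the identity is elementary, the self-improving $L^\infty$ bound via interpolation and Young's inequality is exactly the right substitute for the paper's PDE machinery, and the annulus Cauchy estimate is the precise device that lets the low-frequency part be controlled by $\inf_t\|u(t)\|_H$ using only $u\in L^1(I;H)$. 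Your approach is shorter, works directly on the bounded interval, and treats injective and non-injective $S$ uniformly. What the paper's longer route buys is a toolkit (the intermediate spaces $V_\alpha$, $W_\alpha$, the heat regularity theorems, the $C_0$ decay on unbounded intervals) that is reused throughout the rest of the article for the parabolic theory; for the embedding theorem in isolation, your argument is the more economical one.
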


Let us comment this result. Its core is the continuity and the integral identity proved in  Corollary \ref{corenergybounded} when $S$ is injective and  Proposition \ref{prop:energyboundedintervalinhomo} in the general case. If $u$ had belonged to $L^2(I;H)$ then $u\in L^2(I;V)$ with $V$ the domain of $S$. Here we only assume $u\in L^1(I;V)$, which is used qualitatively,  and $\partial_{t}u$ is  taken in the sense of distributions on $I$ valued in $V^*$.  
We allow an extra second term $S^\beta g$ in  the time derivative expression of $u$. In fact, $S^\beta g$ belongs to $SL^2(I;H)+ L^1(I;H)\subset L^1(I; V^*)$ (see Proposition \ref{prop:embed r>0} when $S$ is injective and the proof is the same otherwise).  Note that it could be a finite combination of such terms and the integral identity would  be modified accordingly. 
The $L^r$ estimate \eqref{eq:Lr} follows  \textit{a posteriori}: one first proves \eqref{eq:Lr} with  
$r=\infty$ using
   \eqref{eq:integralidentityintro},  together with  the  interpolation inequality 
$$
 \| S^\alpha u \|_{L^r((0,\mathfrak{T});H)} \le \|Su \|_{L^2((0,\mathfrak{T});H)}^\alpha \|u \|_{L^\infty((0,\mathfrak{T});H)}^{1-\alpha}  
 $$ 
 for $\alpha=\beta$ when $0<\beta<1$. Then we reuse this inequality for different $\alpha\in[0,1)$. See  Proposition \ref{prop:embed r>0} when $S$ is injective together with the remark that follows it and the proof applies \textit{verbatim} when $S$ is not injective.

Theorem \ref{thm:energyboundedinterval} admits versions on unbounded intervals.  On the half-line $(0, \infty)$ (resp.~on $\mathbb{R}$), it holds  assuming  that $u \in L^1((0, \mathfrak{T}); H)$ for all $\mathfrak{T} < \infty$ (resp.~$u \in L^1_{\mathrm{loc}}(\mathbb{R}; H)$) when $S$ is not injective. In this case, $u$ is bounded in $H$ on $(0, \infty)$ (resp. $\mathbb{R}$), {see Proposition \ref{prop:energyboundedintervalinhomo}}. 
When $S$ is injective, the local integrability condition of $u$ in $H$ can be dropped using completions of $V$ and $V^*$ for the homogeneous norms $\|Su\|_{H}$ and $\|S^{-1}u\|_{H}$.  In this case, not only $u$ is bounded, but it also tends to zero at infinity (resp.~at $\pm \infty$) in $H$, as shown in Proposition \ref{energy lemma} and Corollary \ref{corenergy}. As a consequence, we can eliminate the last term  in \eqref{eq:Lr}. 

Our strategy is to prove the embedding first  when $I = \mathbb{R}$, proceed by restriction to $(0,\infty)$.  
When it comes to bounded intervals, although the condition $u\in L^1(I;H)$ does not appear in \eqref{eq:Lr},  the condition  $Su\in L^2(I;H)$ alone does not suffice as an example shows. We added strong integrability  in $H$ but in fact, as the proof shows, it suffices that $u$ exists as a distribution on $I$ valued in $V^\star$ and there exists one $t$ for which $u(t)\in H$. But for applications to Cauchy problems, it is more natural to have the integrability condition and we stick to that. 

 Our proof of the embedding already has a  PDE flavor using 
$\partial_t u+S^2u=S^2u+Sf+S^\beta g$. This leads us to a thorough study of the abstract heat operator $\partial_t +S^2$ (hence the notation $\partial_tu$  rather than $u'$) which has some interest on its own right. This is done in Sections \ref{Section 2}, \ref{Section 3}, \ref{Section 4}.

\subsection{Weak solutions and Cauchy problems}
The embedding and its variants allow us to consider the largest possible class of weak solutions to abstract parabolic operators $\partial_{t}+\mathcal{B}$ with 
a time-dependent elliptic part $\mathcal{B}$ associated to a family of bounded and sesquilinear forms on the domain of  $S$. We do not assume any time-regularity on $\mathcal{B}$ apart its weak measurability. This can be done either with estimates being homogeneous  in $S$ if we decide to work on infinite intervals,   
(see Section \ref{sec:homogenoussetup}) or  inhomogeneous  (See Section \ref{sec:inhomogenousCP};  we called the elliptic operator $\Tilde{\mathcal{B}}$ there).

Let us state the final result in the latter case, that is with inhomogeneous $\Tilde{\mathcal{B}}$ as in Section \ref{sec:inhomogenousCP}, combining Theorems \ref{ThmCauchy inhomog} and \ref{thm: passage au concret} on a finite interval $(0,\mathfrak{T})$.   We fix $\rho \in (2,\infty)$ and set $\beta={2}/{\rho}$. Given an initial condition $a\in H$ and  source terms $f  \in L^2((0,\mathfrak{T});H)$ and $g \in L^{\rho'}((0,\mathfrak{T});H)$, $h\in L^{1}((0,\mathfrak{T});H)$ we wish to solve  the Cauchy problem 
\begin{align}\label{eq:Cauchy inhomogeneintro}
\left\{
    \begin{array}{ll}
        \partial_t u +\Tilde{\mathcal{B}}u =  S{f}+  S^\beta {g} + h  \ \ \mathrm{in} \ \mathcal{D}'((0,\mathfrak{T}); \mathrm{D}), \\
        u(0)=a \ \ \mathrm{weakly \ in} \ \mathrm{D},
    \end{array}
\right.
\end{align}
where $ \mathrm{D}$ is a Hausdorff topological dense subspace of the domain of $S$, equipped with the graph norm, that is, a core of $D(S)$. The first equation is thus understood in the weak sense against test functions in $\mathcal{D}((0,\mathfrak{T}); \mathrm{D})$. The meaning of the second equation is by taking the limit  $\langle u(t), \Tilde{a}\rangle_{H}\rightarrow \langle a, \Tilde{a}\rangle_{H}$ for all $\Tilde{a}\in \mathrm{D}$ along a sequence converging to 0.

\begin{thm}\label{Thm:Cauchyinhomogintro}  There exists a unique weak solution $u\in L^1((0,\mathfrak{T}); H)$ with $Su\in L^2((0,\mathfrak{T}); H)$ to the problem \eqref{eq:Cauchy inhomogeneintro}. Moreover,  
      $u \in C([0,\mathfrak{T}];H) \cap L^r((0,\mathfrak{T});D(S^\alpha))$ for all $r\in [2,\infty)$ with $\alpha=2/r$,  and we have the estimate
        \begin{align*}
            \sup_{t\in [0,\mathfrak{T}]} \| u(t) \|_{H}+\|  S^\alpha u  \|_{L^r((0,\mathfrak{T});H)}
            \leq C  ( \left \| f \right \|_{L^2((0,\mathfrak{T});H)} + \left \| g \right \|_{L^{\rho'}((0,\mathfrak{T});H)}+ \left \| h \right \|_{L^{1}((0,\mathfrak{T});H)}+ \left \| a \right \|_H  ),
        \end{align*} 
        where $C$ is a constant independent of $f,g,h$ and $a$. In addition, we can write the energy equality corresponding to the absolute continuity of $t\mapsto \|u(t)\|^2_{H}$.
\end{thm}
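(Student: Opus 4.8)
The plan is to first reduce the concrete, core-tested problem to the abstract one tested against all of $D(S)$ (the passage-to-the-concrete step, Theorem~\ref{thm: passage au concret}), then solve the abstract Cauchy problem (Theorem~\ref{ThmCauchy inhomog}) by a Lions-type variational argument, and finally read off the time continuity, the $L^r(D(S^\alpha))$ estimates and the energy equality from Theorem~\ref{thm:energyboundedinterval}, deducing uniqueness from the latter. For the first reduction, since $\mathrm{D}$ is a core of $D(S)$ it is dense in $D(S)$ for the graph norm, hence dense in $H$; as the forms $\mathfrak a_t$ attached to $\widetilde{\mathcal B}$ and all the source pairings are continuous in the test function for the $D(S)$-norm, a solution in the class $\{u\in L^1((0,\mathfrak T);H):Su\in L^2((0,\mathfrak T);H)\}$ tested against $\mathcal D((0,\mathfrak T);\mathrm D)$ is automatically tested against $\mathcal D((0,\mathfrak T);D(S))$, and the weak initial condition upgrades from $\mathrm D$ to $H$ in the same way. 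Then, replacing $u$ by $e^{-\lambda t}u$ with $\lambda$ larger than the G\r{a}rding constant of $(\mathfrak a_t)$ turns $\widetilde{\mathcal B}$ into a coercive family, $\mathrm{Re}\,\mathfrak a_t(v,v)\gtrsim\|v\|_{D(S)}^2$, changing $h$ only into $e^{\lambda t}h$, harmless on a bounded interval; when $S$ is not injective I would run everything with the invertible operator $\Lambda=(I+S^2)^{1/2}$, which has the same domain as $S$ with equivalent graph norm and identifies $D(S)^\star$ with $\Lambda H=SH+H$.

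For existence I would invoke the generalized Lax--Milgram (Banach--Ne\v{c}as--Babu\v{s}ka) theorem with trial space $\mathcal T=L^2((0,\mathfrak T);D(S))$, test space $\Phi=\{v\in\mathcal T:\partial_t v\in L^2((0,\mathfrak T);D(S)^\star),\ v(\mathfrak T)=0\}$ normed by $\|v\|_\Phi^2=\|v\|_{\mathcal T}^2+\|\partial_t v\|_{L^2((0,\mathfrak T);D(S)^\star)}^2$, the sesquilinear form
\[
\mathcal E(u,v)=\int_0^{\mathfrak T}\bigl(-\langle u(t),\partial_t v(t)\rangle+\mathfrak a_t(u(t),v(t))\bigr)\,\mathrm dt ,
\]
and the functional $\ell(v)=\langle a,v(0)\rangle_H+\int_0^{\mathfrak T}\bigl(\langle f,Sv\rangle_H+\langle g,S^\beta v\rangle_H+\langle h,v\rangle_H\bigr)\,\mathrm dt$. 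Boundedness of $\mathcal E$ is routine. The crucial point, and the place where Theorem~\ref{thm:energyboundedinterval} enters essentially, is that $\ell\in\Phi^\star$: each $v\in\Phi$ has $\partial_t v=Sf_v+h_v$ with $f_v\in L^2$ and $h_v\in L^2\subset L^1$ controlled by $\|\partial_t v\|_{L^2(D(S)^\star)}$, and $v(\mathfrak T)=0$, so Theorem~\ref{thm:energyboundedinterval} (in its finite-combination form, $h_v$ entering as a $\beta=0$ term) gives $v\in C([0,\mathfrak T];H)\cap L^\rho((0,\mathfrak T);D(S^\beta))$ with $\|v\|_{L^\infty(H)}+\|S^\beta v\|_{L^\rho(H)}\lesssim\|v\|_\Phi$, precisely the control needed to pair $v(0)$ with $a\in H$, $v$ with $h\in L^1(H)$, and $S^\beta v$ with $g\in L^{\rho'}(H)$. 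The inf-sup condition I would verify by testing a given $v\in\Phi$ against $u=v-\theta\,\Lambda^{-2}\partial_t v\in\mathcal T$ for a small fixed $\theta>0$: the diagonal term $\mathcal E(v,v)$ produces, through the identity $\mathrm{Re}\int_0^{\mathfrak T}\langle-v,\partial_t v\rangle=\tfrac12\|v(0)\|_H^2\ge 0$ and coercivity, the full $\mathcal T$-norm of $v$, while $-\theta\,\mathcal E(\Lambda^{-2}\partial_t v,v)$ recovers $\theta\|\partial_t v\|_{L^2(D(S)^\star)}^2$ up to a cross term absorbed by Young's inequality; the non-degeneracy on the trial side follows from the uniqueness established below. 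This yields $u\in\mathcal T=L^2((0,\mathfrak T);D(S))\subset L^1((0,\mathfrak T);H)$ with $\mathcal E(u,v)=\ell(v)$ for all $v\in\Phi$, which is exactly the statement that $u$ is a weak solution of \eqref{eq:Cauchy inhomogeneintro}: testing against $\mathcal D((0,\mathfrak T);D(S))$ gives the equation and testing against arbitrary $v\in\Phi$, after integration by parts, gives $u(0)=a$.

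With $u\in\mathcal T$ in hand, I would rewrite the equation as $\partial_t u=Sf+S^\beta g+(h-\widetilde{\mathcal B}u)$; since $\widetilde{\mathcal B}u$ lies in $L^2((0,\mathfrak T);D(S)^\star)=SL^2+L^2$ with norm $\lesssim\|Su\|_{L^2(H)}$, the right-hand side has the admissible form $S\widetilde f+S^\beta g+\widetilde h$ with $\widetilde h\in L^1((0,\mathfrak T);H)$ as a $\beta=0$ term, so Theorem~\ref{thm:energyboundedinterval} applies to $u$ itself and gives $u\in C([0,\mathfrak T];H)\cap L^r((0,\mathfrak T);D(S^\alpha))$ for every $r\in[2,\infty)$ with $\alpha=2/r$, the absolute continuity of $t\mapsto\|u(t)\|_H^2$ with the corresponding energy equality, and the bound \eqref{eq:Lr}; undoing the exponential shift and estimating $\inf_\tau\|u(\tau)\|_H\le\|a\|_H$ produces the displayed estimate in terms of $\|f\|_{L^2(H)}$, $\|g\|_{L^{\rho'}(H)}$, $\|h\|_{L^1(H)}$ and $\|a\|_H$. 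For uniqueness, the difference $w$ of two solutions in the stated class lies in $\mathcal T$ (after the concrete-to-abstract upgrade), solves the homogeneous equation with $w(0)=0$, and $\partial_t w=-\widetilde{\mathcal B}w$ is again of admissible form, so the energy equality gives $\|w(\tau)\|_H^2=-2\mathrm{Re}\int_0^\tau\mathfrak a_t(w,w)\,\mathrm dt\le 2\lambda\int_0^\tau\|w(t)\|_H^2\,\mathrm dt$ and Gr\"{o}nwall's inequality forces $w\equiv 0$.

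I expect the main obstacle to be the existence step, and inside it the simultaneous checking that $\ell$ is continuous on $\Phi$ and that the inf-sup estimate holds: this is exactly where the sharp embedding of Theorem~\ref{thm:energyboundedinterval}, applied to the test functions, is indispensable, since without the $L^\infty(H)$ and $L^\rho(D(S^\beta))$ bounds on test functions the pairings with $a$, $h$ and $g$ are not even defined, and it is what allows the whole scheme to run without Galerkin truncation, without mollifying $\widetilde{\mathcal B}$, and without any density argument on the solutions themselves, the only density invoked being that of the core $\mathrm D$ in $D(S)$. A secondary but recurring technical point is the uniform treatment of a non-injective $S$ via $(I+S^2)^{1/2}$.
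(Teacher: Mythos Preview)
Your argument is correct, but the route you take for existence is genuinely different from the paper's. The paper never runs a Banach--Ne\v{c}as--Babu\v{s}ka argument on $(0,\mathfrak T)$; instead it \emph{extends}: first the coefficients and data are continued to $(0,\infty)$, the exponential shift is performed, and then Theorem~\ref{ThmCauchy homog} is invoked, whose own proof extends once more to all of $\mathbb{R}$ and appeals to Kaplan's hidden-coercivity lemma (Lemma~\ref{lemme Hidden Coerc}) for the parabolic operator $\mathcal H$ on the variational space $V_0$ involving the half-time derivative $D_t^{1/2}$ and the Hilbert transform. Existence on $(0,\mathfrak T)$ is then obtained by restriction, and continuity, the $L^r(D(S^\alpha))$ estimates and the energy equality are read off from Theorem~\ref{thm:energyboundedinterval} exactly as you do.

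Your inf--sup test $u=v-\theta\Lambda^{-2}\partial_t v$ (with $\Lambda=(I+S^2)^{1/2}$) is a clean substitute for Kaplan's Hilbert-transform trick and works entirely on the finite interval, so you avoid both extensions and the Fourier machinery in $t$. A nice feature of your scheme is that Theorem~\ref{thm:energyboundedinterval} is already used in the existence step, to show $\ell\in\Phi^\star$ (the pairings with $a$, $g$ and $h$); in the paper that embedding is used only \emph{a posteriori}. Conversely, the paper's route through $V_0$ and the $V_\alpha$ hierarchy yields more structure along the way (half-time-derivative regularity of solutions, the isomorphism theorems on $\mathbb R$, decay at infinity in case (b)), which your direct BNB argument does not see. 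The reductions (core $\mathrm D\leadsto D(S)$, exponential shift, passage to $(I+S^2)^{1/2}$ for non-injective $S$) and the uniqueness proof via the energy equality are the same in both.
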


Theorem \ref{ThmCauchy inhomog} also contains a variant  on the interval $(0,\infty)$ (case (b) there) where we replace $S$ by the operator $\Tilde{S}=(S^2+1)^{1/2}$ and we also obtain decay of the solution at $\infty$ while the class of uniqueness is $u\in L^1((0,\mathfrak{T}); H)$ for all $\mathfrak{T}<\infty$ with $Su\in L^2((0,\infty); H)$.

We note that we consider classes of solutions in $L^1((0,\mathfrak{T}); H)$ rather than $L^\infty((0,\mathfrak{T}); H)$  as is customary. This condition suffices to obtain a priori continuity in time by Theorem \ref{thm:energyboundedinterval}. There is a similar theorem for the backward parabolic adjoint operator $ -\partial_s  +\Tilde{\mathcal{B}}^*$ with final condition $\Tilde{a}$ at $\mathfrak{T}$. 

The estimates and the energy equality are a consequence of Theorem \ref{thm:energyboundedinterval}. Uniqueness relies on the energy equality. Existence is obtained by restriction from constructions first on $\mathbb{R}$ and then on $(0,\infty)$.

The role of the core $\mathrm{D}$ of $D(S)$ is in fact irrelevant here and is only for the purpose of having a weak formulation with a small space of test functions. It can be equivalently replaced by $D(S)$ itself (see Theorem \ref{thm: passage au concret} and its proof). However, for concrete partial differential equations where $\mathrm{D}$ can be taken as  a space of smooth and compactly supported functions of the variable $x$,  we can work with smooth and compactly supported functions of the variables $(t,x)$.

Homogeneous variants on $(0,\infty)$ and $\mathbb{R}$ will be found in the text (see Section \ref{Section 5}),  where we had to develop an appropriate theoretical functional framework in Sections \ref{Section 2}, \ref{Section 3}, \ref{Section 4}. Actually, we start the proof by implementing a result of Kaplan (see Lemma \ref{lemme Hidden Coerc})  proving the invertibility of a parabolic operator on a sort of variational space involving the half-time derivative. This result has been central to other developments in the field  of parabolic problems recently (see \textit{e.g.}, \cite{nystrom2017l2, auscher20202, auscher2023universal}), while it was more like a consequence of the construction of weak solutions in earlier works of the literature, including Kaplan's work \cite{kaplan1966abstract}. As this result can only be formulated when time describes $\mathbb{R}$,  this explains why we proceed by restriction from this case.

\subsection{Fundamental solution} 

We come to the notion of fundamental solution and evolution family (or propagators, or Green operators, as suggested by Lions) to represent weak solutions. Although it seems well known that they are the same, we feel it is  essential to clarify the two different definitions. This distinction eventually leads to easy arguments, even in this very general context.  We assume that $\partial_t + \mathcal{B}$ is a parabolic operator as above for which one can prove existence and uniqueness of weak solutions on $I$ with test functions valued in the core $\mathrm{D}$
 of the Cauchy problems with the absolute continuity of $t\mapsto \|u(t)\|_{H}^2$ as in Theorem \ref{Thm:Cauchyinhomogintro}, and similarly for its backward adjoint.

\begin{defn}[Fundamental solution for $\partial_t + \mathcal{B}$ on $I$] \label{FSintro}
    A fundamental solution for $\partial_t + \mathcal{B}$ is a family $\Gamma=(\Gamma(t,s))_{t,s \in I}$ of bounded operators on $H$ such that : 
    \begin{enumerate}
        \item (Uniform boundedness on $H$) $\sup_{t,s \in I} \left\|\Gamma(t,s) \right\|_{\mathcal{L}(H)} < +\infty.$
        \item (Causality) $\Gamma(t,s)=0$ if $s>t$.
        \item (Measurability) For all $a,\Tilde{a}\in  \mathrm{D}$, the function $(t,s) \mapsto \langle \Gamma(t,s)a, \Tilde{a} \rangle_H$ is Borel measurable on $I^2$.
        \item (Representation) For all $\phi \in \mathcal{D}(I)$ and $a \in  \mathrm{D}$, the weak solution of the equation $\partial_t u + \mathcal{B}u = \phi \otimes a $ in $\mathcal{D}'(I; \mathrm{D})$ satisfies for  all $\Tilde{a} \in  \mathrm{D}$,  $\langle u(t), \Tilde{a} \rangle_H = \int_{-\infty}^{t} \phi(s) \langle \Gamma (t,s)a, \Tilde{a} \rangle_H \ \mathrm{d}s,$ for almost every $t\in I$.
    \end{enumerate}
    One defines a fundamental solution $\Tilde{\Gamma}=(\Tilde{\Gamma}(s,t))_{s,t \in I}$ to the backward operator $-\partial_s + \mathcal{B}^\star$ analogously and (2) is replaced by $\Tilde{\Gamma}(s,t)=0$ if $s>t$. \end{defn}

Such an object must be unique (see Lemma \ref{Unicité sol fonda}  in the case where $I=\mathbb{R}$, whose proof applies \textit{verbatim}).

\begin{defn}[Green operators]\label{def:Greenintro}
Let $t,s \in \overline{I}$ and $a,\Tilde{a} \in H$.
\begin{enumerate}
    \item For $t \ge s$, $G(t,s)a$ is defined as the value at time $t$ of the weak solution to the equation $\partial_t u + \mathcal{B}u = 0$ with initial data  $a$  at time $s$.
    \item For $s \le  t$, $\Tilde{G}(s,t)\Tilde{a}$ is defined as the value at time $s$ of the weak solution $\Tilde{u} $ of the equation $-\partial_s \Tilde{u} + \mathcal{B^\star}\Tilde{u} = 0$ with final data $ \Tilde{a}$ at time $t$. 
\end{enumerate}
We set $G(t,s)=0=\Tilde{G}(s,t)$ if $s>t$. 
The operators $G(t,s)$ and $\Tilde{G}(s,t)$ are called the Green operators for the parabolic operator $\partial_t +\mathcal{B}$ and the backward parabolic operator $ -\partial_s +\mathcal{B^\star}$, respectively.
\end{defn}

Uniqueness and the integral identities allow to obtain the identification of the two objects as follows, with proof being \textit{verbatim} the ones of Proposition \ref{Prop Green}  and Theorem \ref{thm:identification}.

\begin{thm}\label{thm:Green-FSintro} The following statements hold.
\begin{enumerate}
    \item (Adjoint relation) For all $s <t $, $G(t,s)$ and $\Tilde{G}(s,t)$ are  adjoint operators.
    \item  (Chapman-Kolmogorov identity) For any $s < r <t $, we have $G(t,s)=G(t,r)G(r,s)$.
 \item (Existence) The family of Green operators $(G(t,s))_{s,t\in \overline{I}}$ is a fundamental solution.
    \end{enumerate}
 \end{thm}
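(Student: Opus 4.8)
The plan is to establish Theorem~\ref{thm:Green-FSintro} by first showing that the Green operators are well-defined bounded operators on $H$, then verifying the three listed properties in order, using the uniqueness of weak solutions and the energy identity of Theorem~\ref{Thm:Cauchyinhomogintro} as the main tools throughout.

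First I would note that $G(t,s)$ and $\tilde G(s,t)$ are well-defined by the assumed existence/uniqueness of weak solutions for the forward Cauchy problem for $\partial_t+\mathcal{B}$ and the backward Cauchy problem for $-\partial_s+\mathcal{B}^\star$; these solutions lie in $C([s,\mathfrak{T}];H)$ (resp.\ $C([0,t];H)$) by Theorem~\ref{Thm:Cauchyinhomogintro}, so evaluation at a time is legitimate, and the estimate there with $f=g=h=0$ gives $\|G(t,s)\|_{\mathcal{L}(H)}\le C$ uniformly in $s\le t$, which is property~(1) of Definition~\ref{FSintro}. Linearity in the data is immediate from uniqueness. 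For the \textbf{adjoint relation} (item~(1)), I would take $a,\tilde a\in H$, let $u$ solve the forward problem with data $a$ at time $s$ and $\tilde u$ solve the backward problem with data $\tilde a$ at time $t$, and compute $\frac{d}{dr}\langle u(r),\tilde u(r)\rangle_H$ on $(s,t)$. The energy identity (the polarized version of the absolute continuity of $r\mapsto\|u(r)\|_H^2$, applied to $u+\tilde u$ etc., or directly the bilinear identity coming from Theorem~\ref{thm:energyboundedinterval}) gives that this derivative is $\langle\partial_r u,\tilde u\rangle+\langle u,\partial_r\tilde u\rangle = \langle -\mathcal{B}u,\tilde u\rangle + \langle u,-(-\mathcal{B}^\star)\tilde u\rangle = -\langle\mathcal{B}u,\tilde u\rangle+\langle u,\mathcal{B}^\star\tilde u\rangle = 0$, so $\langle u(t),\tilde a\rangle_H = \langle u(t),\tilde u(t)\rangle_H = \langle u(s),\tilde u(s)\rangle_H = \langle a,\tilde G(s,t)\tilde a\rangle_H$; that is, $G(t,s)^\star=\tilde G(s,t)$.

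Next, for the \textbf{Chapman--Kolmogorov identity} (item~(2)), I would fix $s<r<t$ and $a\in H$, let $u$ be the weak solution on $(s,t)$ of $\partial_t u+\mathcal{B}u=0$ with $u(s)=a$, so $u(t)=G(t,s)a$ by definition. The restriction of $u$ to $(r,t)$ is a weak solution of the same equation on $(r,t)$ with initial value $u(r)=G(r,s)a$ at time $r$; by uniqueness of weak solutions on $(r,t)$ this restriction coincides with the solution defining $G(\cdot,r)$ applied to $G(r,s)a$, so evaluating at time $t$ gives $G(t,s)a = G(t,r)\big(G(r,s)a\big)$. Here the only subtlety is checking that restriction of a weak solution to a subinterval is again a weak solution with the correct (strong, by time-continuity) initial trace, which is immediate from the definitions and the $C([s,t];H)$ regularity.

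Finally, for \textbf{existence} (item~(3)), I would verify that $\Gamma(t,s):=G(t,s)$ satisfies Definition~\ref{FSintro}: property~(1) was shown above; property~(2) (causality) holds by the convention $G(t,s)=0$ for $s>t$; property~(3) (measurability) follows because for $a,\tilde a\in\mathrm{D}$ the map $(t,s)\mapsto\langle G(t,s)a,\tilde a\rangle_H$ is, for each fixed $s$, continuous in $t$ on $[s,\mathfrak{T}]$ by time-continuity of the solution, and separate measurability in $s$ can be obtained from the construction of solutions (or from the adjoint relation combined with continuity of $\tilde u$), yielding joint Borel measurability by a Carath\'eodory-type argument; property~(4) (representation) is the main point: given $\phi\in\mathcal{D}(I)$ and $a\in\mathrm{D}$, I would show the function $w(t):=\int_{-\infty}^t\phi(s)\,G(t,s)a\,\mathrm ds$ (a Bochner integral in $H$, legitimate by the uniform bound and measurability) is the weak solution of $\partial_t w+\mathcal{B}w=\phi\otimes a$, by testing against $\psi\otimes\tilde a$ with $\psi\in\mathcal{D}(I)$, $\tilde a\in\mathrm{D}$, using Fubini and the Chapman--Kolmogorov/adjoint relations to transfer the time derivative, then invoking uniqueness of weak solutions to conclude $u=w$. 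The main obstacle I anticipate is exactly this last verification — making the interchange of integration and the weak-derivative pairing rigorous, and confirming that $w$ has the correct zero initial trace — together with the measurability bookkeeping in property~(3); the adjoint relation and Chapman--Kolmogorov are comparatively routine once the energy identity is in hand.
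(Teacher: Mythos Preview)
Your arguments for items (1) and (2) are correct. For the adjoint relation you use exactly the paper's idea (the polarized energy identity makes $r\mapsto\langle u(r),\tilde u(r)\rangle_H$ constant). For Chapman--Kolmogorov you argue via restriction and uniqueness, whereas the paper reuses the polarized identity on $[r,t]$ together with the already-proved adjoint relation; both are fine, and your route is arguably more intuitive.

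The substantive difference is in property~(4) of Definition~\ref{FSintro}. You propose to \emph{define} $w(t)=\int_{-\infty}^t\phi(s)G(t,s)a\,\mathrm ds$ and then verify that $w$ is a weak solution of $\partial_t w+\mathcal{B}w=\phi\otimes a$, invoking uniqueness to conclude $w=u$. As you anticipate, this verification is the hard part: you would need to show $Sw\in L^2$ and to pass the time derivative through an integral whose integrand depends on $t$ in both slots of $G(t,s)$, which is delicate. The paper sidesteps this entirely. It starts from the \emph{given} weak solution $u$ (whose existence is part of the hypotheses), fixes $t$, and pairs $u$ against the backward solution $\tilde u=\tilde G(\cdot,t)\tilde a$ on $(-\infty,t]$. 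The polarized energy identity (Corollary~\ref{EnergyPol}) gives
\[
\langle u(t),\tilde a\rangle_H=\langle u(t),\tilde u(t)\rangle_H=\int_{-\infty}^t\langle\phi(s)a,\tilde u(s)\rangle_H\,\mathrm ds=\int_{-\infty}^t\phi(s)\langle a,\tilde G(s,t)\tilde a\rangle_H\,\mathrm ds,
\]
and the adjoint relation converts the last integrand to $\phi(s)\langle G(t,s)a,\tilde a\rangle_H$. No regularity of $w$ needs to be checked and no differentiation under the integral occurs. For measurability, the paper simply observes that $(t,s)\mapsto\langle G(t,s)a,\tilde a\rangle_H$ is separately continuous off the diagonal (by the adjoint relation, continuity in $s$ comes for free from continuity of $\tilde G(\cdot,t)\tilde a$), hence Borel; your Carath\'eodory remark is in the same spirit but slightly less direct.
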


 With this in hand, we may first combine the estimates obtained for both families. See Corollary \ref{Cor Green} for the ones for the Green operators (again, transposed \textit{verbatim}). Next, we obtain full representation for the weak solutions in Theorem \ref{Thm:Cauchyinhomogintro} (again,  combining Theorems \ref{ThmCauchy inhomog} and \ref{thm: passage au concret}).

\begin{thm}\label{Thm:FSCauchyintro} Consider the fundamental solution $(\Gamma_{\Tilde{\mathcal{B}}}(t,s))_{0\le s\le t\le \mathfrak{T}}$ of $\partial_t  +\Tilde{\mathcal{B}}$ as in Theorem \ref{Thm:Cauchyinhomogintro}.
For all $t \in [0,\mathfrak{T}]$, we have the following representation of the weak solution $u$ of  \eqref{eq:Cauchy inhomogeneintro} : 
        \begin{align*}
            u(t)=\Gamma_{\Tilde{\mathcal{B}}}(t,0)a&+\int_{0}^{t}\Gamma_{\Tilde{\mathcal{B}}}(t,s)Sf(s)\mathrm ds 
+\int_{0}^{t}\Gamma_{\Tilde{\mathcal{B}}}(t,s)S^\beta g(s) \ \mathrm ds+\int_{0}^{t}\Gamma_{\Tilde{\mathcal{B}}}(t,s)h(s) \ \mathrm ds, 
        \end{align*}
         where the two integrals containing ${f}$ and ${g}$ are weakly defined in $H$, while the one involving $h$ converges strongly (i.e., in the Bochner sense). More precisely, for all $\Tilde{a} \in H$ and $t \in [0,\mathfrak{T}]$, we have the equality with absolutely converging integrals
        \begin{align*}
            \langle u(t) , \Tilde{a}\rangle_H &
            = \langle \Gamma_{\Tilde{\mathcal{B}}}(t,0)a , \Tilde{a}\rangle_H +  \int_{0}^{t} \langle {f}(s) ,  S \Tilde{\Gamma}_{\Tilde{\mathcal{B}}}(s,t)\Tilde{a}\rangle_H \ \mathrm ds  
       \\
       &
           \qquad +\int_{0}^{t} \langle {g}(s) ,  S^\beta \Tilde{\Gamma}_{\Tilde{\mathcal{B}}}(s,t)\Tilde{a}\rangle_H \ \mathrm ds+\int_{0}^{t} \langle \Gamma_{\Tilde{\mathcal{B}}}(t,s) h(s) , \Tilde{a}\rangle_H \  \mathrm ds.
        \end{align*}
\end{thm}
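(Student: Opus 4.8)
The plan is to combine the representation property of the fundamental solution (Definition \ref{FSintro}(4)) with a density/approximation argument in the source terms, and to use the adjoint relation of Theorem \ref{thm:Green-FSintro}(1) together with the estimates on the Green operators to pass from the weak (tested against $\mathrm D$) identity to an identity valid for all $\tilde a\in H$ with absolutely convergent integrals. First I would treat the three contributions separately by linearity of the Cauchy problem \eqref{eq:Cauchy inhomogeneintro}: the initial-data part $u_0$ solving $\partial_tu_0+\tilde{\mathcal B}u_0=0$, $u_0(0)=a$, which by Definition \ref{def:Greenintro}(1) and Theorem \ref{thm:Green-FSintro}(3) is exactly $\Gamma_{\tilde{\mathcal B}}(t,0)a=G(t,0)a$; and the source parts with $a=0$.

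For the source part, the natural route is: for $\phi\in\mathcal D((0,\mathfrak T))$ and $b\in\mathrm D$, the solution of $\partial_tu+\tilde{\mathcal B}u=\phi\otimes b$ satisfies, by Definition \ref{FSintro}(4), $\langle u(t),\tilde a\rangle_H=\int_0^t\phi(s)\langle\Gamma_{\tilde{\mathcal B}}(t,s)b,\tilde a\rangle_H\,\mathrm ds$ for all $\tilde a\in\mathrm D$ and a.e. $t$; since $t\mapsto u(t)$ is continuous into $H$ (Theorem \ref{Thm:Cauchyinhomogintro}) and, by the boundedness and measurability in Definition \ref{FSintro}(1),(3), the right-hand side is continuous in $t$, this holds for every $t$ and, by density of $\mathrm D$ in $H$ and uniform boundedness of $\Gamma_{\tilde{\mathcal B}}(t,s)$, for every $\tilde a\in H$. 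Then I would write $Sf(s)$ and $S^\beta g(s)$ and $h(s)$ as limits of finite sums $\sum\phi_j(s)b_j$ with $\phi_j\in\mathcal D$, $b_j\in\mathrm D$, in the topologies $L^2((0,\mathfrak T);H)$, $L^{\rho'}((0,\mathfrak T);H)$ and $L^1((0,\mathfrak T);H)$ respectively; by the continuous dependence of the weak solution on the source (the estimate in Theorem \ref{Thm:Cauchyinhomogintro}, applied to the corresponding data: $f$ in $L^2$ for the first term, $g$ in $L^{\rho'}$ for the second, $h$ in $L^1$ for the third) the left-hand sides converge uniformly in $t$ in $H$, and on the right-hand side I pass to the limit using the $\mathcal L(H)$-bound on $\Gamma_{\tilde{\mathcal B}}$ to get the stated integrals — weakly defined for the $f$ and $g$ terms (since $Sf,S^\beta g$ need only be understood after pairing against $S\tilde\Gamma$, resp. $S^\beta\tilde\Gamma$, which requires the Green-operator mapping estimates), and strongly (Bochner) for the $h$ term since $h\in L^1((0,\mathfrak T);H)$ and $\|\Gamma_{\tilde{\mathcal B}}(t,s)h(s)\|_H\le C\|h(s)\|_H$.

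For the second, more precise, displayed identity I would use the adjoint relation $\langle\Gamma_{\tilde{\mathcal B}}(t,s)c,\tilde a\rangle_H=\langle c,\tilde\Gamma_{\tilde{\mathcal B}}(s,t)\tilde a\rangle_H$ valid for $s<t$ (Theorem \ref{thm:Green-FSintro}(1), transported from Proposition \ref{Prop Green}), together with the fact that $\tilde a\mapsto\tilde\Gamma_{\tilde{\mathcal B}}(s,t)\tilde a$ is, as the value at time $s$ of the backward weak solution with final data $\tilde a$, in $D(S)$ for a.e. $s$ with the quantitative bounds coming from Theorem \ref{Thm:Cauchyinhomogintro} applied to the backward problem (equivalently Corollary \ref{Cor Green} for the backward Green operators). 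This lets me move $S$ and $S^\beta$ off $f,g$ and onto $\tilde\Gamma_{\tilde{\mathcal B}}(s,t)\tilde a$, turning the formally weak integrals into absolutely convergent ones: for the $f$-term, $\int_0^t\|f(s)\|_H\,\|S\tilde\Gamma_{\tilde{\mathcal B}}(s,t)\tilde a\|_H\,\mathrm ds\le\|f\|_{L^2((0,t);H)}\,\|S\tilde\Gamma_{\tilde{\mathcal B}}(\cdot,t)\tilde a\|_{L^2((0,t);H)}<\infty$ by Cauchy--Schwarz, and for the $g$-term, $\int_0^t\|g(s)\|_H\,\|S^\beta\tilde\Gamma_{\tilde{\mathcal B}}(s,t)\tilde a\|_H\,\mathrm ds\le\|g\|_{L^{\rho'}((0,t);H)}\,\|S^\beta\tilde\Gamma_{\tilde{\mathcal B}}(\cdot,t)\tilde a\|_{L^\rho((0,t);H)}<\infty$ by H\"older, the $L^\rho$ bound on $S^\beta\tilde\Gamma_{\tilde{\mathcal B}}(\cdot,t)\tilde a$ (with $\beta=2/\rho=\alpha$ for $r=\rho$) being precisely \eqref{eq:Lr} of Theorem \ref{thm:energyboundedinterval} applied to the backward solution. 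The $h$-term is unchanged.

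The main obstacle I anticipate is the bookkeeping around the backward Green operators: one must know that $\tilde G(s,t)\tilde a=\tilde\Gamma_{\tilde{\mathcal B}}(s,t)\tilde a$ genuinely lies in $D(S)$ (and $D(S^\beta)$) for a.e. $s<t$ with the $L^2$- resp. $L^\rho$-in-$s$ quantitative control, uniformly enough in $\tilde a$ to justify the Fubini-type exchanges when passing from the pairing against $\mathrm D$ to the strong formula — this is exactly where Theorem \ref{thm:energyboundedinterval}, Theorem \ref{Thm:Cauchyinhomogintro} for the backward operator, and Corollary \ref{Cor Green} do the heavy lifting, and the argument is otherwise a matter of assembling these ingredients. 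A secondary point is ensuring that the a.e.-in-$t$ representation in Definition \ref{FSintro}(4) upgrades to every $t\in[0,\mathfrak T]$: this is immediate from the $C([0,\mathfrak T];H)$ regularity of $u$ and the continuity in $t$ of the right-hand side, the latter again following from the uniform $\mathcal L(H)$-bound and dominated convergence.
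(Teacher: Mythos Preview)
Your proposal is correct and follows essentially the same route as the paper (Theorem \ref{ThmGreenReprese} combined with Theorem \ref{ThmCauchy inhomog}): split by linearity, use the representation property (4) on tensor-product sources, extend by density using the continuous-dependence estimate of Theorem \ref{Thm:Cauchyinhomogintro}, and invoke the adjoint relation together with the $L^r(D(S^\alpha))$ bounds on the backward Green operators (Corollary \ref{Cor Green}) to obtain absolute convergence via H\"older. One small imprecision: $Sf$ lies only in $L^2((0,\mathfrak T);D_{S,-1})$, not in $L^2((0,\mathfrak T);H)$, so the approximation must be carried out either on $f$ in $L^2((0,\mathfrak T);H)$ (after first extending property (4) from $a\in\mathrm D$ to $a\in H$ by density and the uniform $\mathcal L(H)$-bound on $\Gamma$, as the paper notes before Proposition 5.22) or directly on the source in the dual topology $L^{\rho'}((0,\mathfrak T);D_{S,-\beta})$ --- the paper does the latter.
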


As before, this is obtained from the variants on $(0,\infty)$ and $\mathbb{R}$ which in fact come first.  We refer the reader to the text for details.  

\subsubsection*{\textbf{Notation}}

By convention, and throughout this paper, the notation $C(a,b,\ldots)$ denotes a constant depending only on the parameters $(a,b,\ldots)$. 

\subsubsection*{\textbf{Acknowledgements}}
The authors want to thank Moritz Egert for taking the time to look at  a first version of this article and making valuable suggestions.

\section{The abstract homogeneous framework}\label{Section 2}
Throughout this article, we are working in a separable complex Hilbert space $H$ whose norm is denoted by $\left \| \cdot \right \|_H$ and its inner product by $\langle \cdot, \cdot \rangle_H$, and $S$ is a positive and self-adjoint operator on  $H$. \textbf{From Sections \ref{Section 2} to \ref{Section 4}, we assume that $S$ is  injective and shall not repeat this in statements.} The general case when $S$ might not be injective will be considered in Section~\ref{sec:Inhomogenous}. We do not assume that $0 \in \rho(S)$,  that is,  $S$ is not necessarily invertible. The spectrum of $S$ is contained in $\mathbb{R}_+=[0,\infty)$. To make our approach accessible, it is useful to present facts from functional calculus and give the construction of spaces of test functions and distributions in an abstract context given that 0 might be in the spectrum of $S$.
\subsection{A review of the Borel functional calculus}\label{Section: calcul bor}
For general background on self-adjoint operators and the spectral theorem, we refer to \cite{reed1980methods} and \cite{davies1995spectral}.

By the spectral theorem for self-adjoint operators, there is a unique application $f\mapsto f(S)$ from the space of all locally bounded Borel functions on $(0,\infty)$ that we denote $\mathcal{L}^\infty_{\mathrm{loc}}((0,\infty))$ into the space of closed linear maps on $H$, which sends $1$ to the identity, $(t\mapsto(z-t)^{-1})$ to $(z-S)^{-1}$ for all $z \in \mathbb{C}\setminus \mathbb{R}_+$ and its restriction to the space of all bounded Borel functions on $(0,\infty)$ denoted $\mathcal{L}^\infty((0,\infty))$ is a $\star$-algebra homomorphism  into $\mathcal{L}(H)$, the space of bounded linear maps on $H$. More precisely, we have
$$\forall f \in \mathcal{L}^\infty((0,\infty)): \ \left\|f(S) \right\|\leq \left\|f \right\|_{\infty}.$$
Moreover, for all $f,g \in \mathcal{L}^\infty_{\mathrm{loc}}((0,\infty))$, we have $f(S)g(S)\subset (fg)(S)$ with equality if $g(S) \in \mathcal{L}(H)$. We also recall that $f(S)^*=f^\star (S)$ with $f^\star= \bar f$.

We shall use that if $\varphi :  (0,\infty) \rightarrow \mathbb{C}$ is a Borel function such that 
\begin{equation}\label{Psi+}
    \left| \varphi(t) \right| \leq C \min(\left|t \right|^s,\left|t \right|^{-s}),
\end{equation}
for some constants $C,s >0$ and for all $t>0$,
then the operators $\int_{\varepsilon}^{{1}/{\varepsilon}} \varphi(aS) \ \frac{\mathrm{d}a}{a}$ are uniformly bounded for $0<\varepsilon<1$ and converge strongly in $\mathcal{L}(H)$, namely for all $v \in H$,
\begin{equation}\label{eq:Calderon}
    \lim_{\varepsilon \to 0} \int_{\varepsilon}^{{1}/{\varepsilon}} \varphi(aS)v \ \frac{\mathrm{d}a}{a} = \left ( \int_{0}^{+\infty} \varphi(t) \frac{\mathrm d t}{t} \right ) v,
\end{equation}
where the limit is in $H$. This is the so-called Calder\'on reproducing formula.

In this entire section, we fix a function $\Phi \in \mathcal{D}((0,\infty))$ such that $\int_{0}^{+\infty} \Phi (t) \frac{\mathrm{d} t}{t}=1$. Remark that for all $\alpha \in \mathbb{R}$, $t\mapsto t^\alpha  \Phi(t) \in \mathcal{D} ((0,\infty) )$ and in particular verifies \eqref{Psi+} for some constants $\Tilde{C}, \Tilde{s}>0$ and for all $t>0$.

For $\alpha \in \mathbb{R}$, let $S^{\alpha}$ denote the closed operator $\mathbf{t^{\alpha}}(S)$, which is also injective, positive and self-adjoint. We recall that for all $\alpha, \beta \in \mathbb{R}$, we have 
$$S^{\alpha+\beta}=S^\alpha S^\beta.$$
Denote by $D(S^{\alpha})$ the domain of $S^{\alpha}$. For any element $u\in D(S^\alpha)$, we set
\begin{equation*}
    \left\| u \right\|_{S,\alpha}:=\left\|S^{\alpha}u \right\|_H.
\end{equation*}
We insist on the fact that $\left\| \cdot \right\|_{S,\alpha}$ denotes the homogeneous norm on the domain of $S^\alpha$ and the (Hilbertian) graph norm is 
$(\| \cdot \|_{S,\alpha}^2+ \|\cdot \|_H^2)^{1/2}$. The operator 
\begin{equation}\label{Salpha}
    S^{\alpha}:  (D(S^{\alpha}),  \left\| \cdot \right\|_{S,\alpha}  ) \rightarrow \left ( H, \left\| \cdot \right\|_H \right )
\end{equation}
is isometric with dense range. 
\subsection{An ambient space 
}
We construct an ambient space  in which we can perform all calculations.
Consider the vector space
\begin{align*}
    E_{-\infty}:= \bigcap_{\alpha \in \mathbb{R}} D(S^{\alpha }),
\end{align*}
 endowed with the topology defined using the norms family $(\left\| \cdot \right\|_{S,\alpha} )_{\alpha \in \mathbb{R}}$.
 We recall the following moments inequality 
 \begin{equation*}
    \left \| S^\gamma u \right \|_H\leq   \| S^\alpha u  \|_H^\theta  \| S^\beta u  \|_H^{1-\theta } \ (u \in E_{-\infty}), 
 \end{equation*}
 for all $\gamma=\theta \alpha +(1-\theta)\beta$ and $\theta \in [0,1]$ and $\alpha$ and $\beta$ with same sign \cite[Proposition 6.6.4]{haase2006functional}. Using the moment inequality with the closedness of the powers $S^\alpha$, one can see that $E_{-\infty}$ endowed with the countable norms family $(\left\| \cdot \right\|_{S,\alpha} )_{\alpha \in \mathbb{Z}}$ is in fact a Fr\'echet space. Notice that for all $\alpha \in \mathbb{R}$, $S^{\alpha} : E_{-\infty} \rightarrow E_{-\infty}$ is an isomorphism.
 
The space $E_{-\infty}$ is to be the test space as evidenced in the following lemma.
 \begin{lem}\label{density E_infty}
     $E_{-\infty}$ is dense in $ (D(S^{\alpha}),  \left\| \cdot \right\|_{S,\alpha} )$ for all $\alpha \in \mathbb{R}.$
\end{lem}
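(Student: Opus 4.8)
The plan is to show that any $u \in D(S^\alpha)$ can be approximated in the homogeneous norm $\|\cdot\|_{S,\alpha}$ by elements of $E_{-\infty}$, using the spectral calculus to cut off both the high and low parts of the spectrum. The natural candidates are the truncations $u_\varepsilon := \mathbf{1}_{[\varepsilon, 1/\varepsilon]}(S)\, u$ for $\varepsilon \in (0,1)$. Since $\mathbf{1}_{[\varepsilon,1/\varepsilon]}(t)\, t^\gamma$ is a bounded Borel function for every $\gamma \in \mathbb{R}$, the operator $S^\gamma \mathbf{1}_{[\varepsilon,1/\varepsilon]}(S) = (\mathbf{t^\gamma}\mathbf{1}_{[\varepsilon,1/\varepsilon]})(S)$ is bounded on $H$, hence $u_\varepsilon \in D(S^\gamma)$ for all $\gamma$, i.e. $u_\varepsilon \in E_{-\infty}$.

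The convergence $u_\varepsilon \to u$ in $\|\cdot\|_{S,\alpha}$ is the point where the spectral theorem does the work. Writing $S^\alpha u_\varepsilon = \mathbf{1}_{[\varepsilon,1/\varepsilon]}(S)\, S^\alpha u$ (valid because $\mathbf{1}_{[\varepsilon,1/\varepsilon]}(S)$ is bounded and commutes with $S^\alpha$ on $D(S^\alpha)$, using $f(S)g(S)\subset(fg)(S)$ with equality when one factor is bounded), we get
\begin{equation*}
\|S^\alpha u - S^\alpha u_\varepsilon\|_H^2 = \bigl\| \bigl(1 - \mathbf{1}_{[\varepsilon,1/\varepsilon]}(S)\bigr) S^\alpha u \bigr\|_H^2 = \int_{(0,\infty)\setminus[\varepsilon,1/\varepsilon]} \mathrm{d}\langle E_\lambda S^\alpha u, S^\alpha u\rangle_H,
\end{equation*}
where $E_\lambda$ is the spectral resolution of $S$. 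Since $S^\alpha u \in H$, the measure $\mathrm{d}\langle E_\lambda S^\alpha u, S^\alpha u\rangle_H$ is finite, and as $\varepsilon \to 0$ the sets $(0,\infty)\setminus[\varepsilon,1/\varepsilon]$ decrease to $\emptyset$ (here injectivity of $S$ is used so that there is no mass at $0$); by dominated convergence the integral tends to $0$. Hence $u_\varepsilon \to u$ in the homogeneous norm.

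I expect no serious obstacle here; the only points requiring a little care are the bookkeeping of domains — justifying that $S^\alpha u_\varepsilon = \mathbf{1}_{[\varepsilon,1/\varepsilon]}(S) S^\alpha u$, which follows from the composition rules for the Borel functional calculus recalled just above the lemma — and the observation that injectivity of $S$ (assumed throughout Sections \ref{Section 2}--\ref{Section 4}) removes any atom of the spectral measure at $0$, so that the low-frequency tail genuinely vanishes in the limit. An alternative, perhaps even cleaner, approach would be to use the regularizing operators $\int_\varepsilon^{1/\varepsilon}\Phi(aS)\,\tfrac{\mathrm{d}a}{a}$ built from the fixed $\Phi \in \mathcal{D}((0,\infty))$ together with the Calderón reproducing formula \eqref{eq:Calderon}: these map into $E_{-\infty}$ and converge strongly to the identity, and applying them to $S^\alpha u$ and commuting $S^\alpha$ through gives the same conclusion. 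Either way, density follows.
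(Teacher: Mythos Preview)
Your proof is correct. The spectral cutoff $u_\varepsilon = \mathbf{1}_{[\varepsilon,1/\varepsilon]}(S)u$ works exactly as you describe: the domain bookkeeping is justified by the composition rule $f(S)g(S)=(fg)(S)$ when $g(S)$ is bounded, and strong convergence of $\mathbf{1}_{[\varepsilon,1/\varepsilon]}(S)$ to the identity on $H$ follows from injectivity (no spectral mass at $0$) and dominated convergence.

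The paper takes precisely the alternative you sketch at the end: it sets $v_\varepsilon = \int_\varepsilon^{1/\varepsilon}\Phi(aS)v\,\tfrac{\mathrm{d}a}{a}$ and invokes the Calder\'on reproducing formula \eqref{eq:Calderon} for the convergence. Your indicator cutoff is arguably more elementary, since it needs only the spectral theorem and not the Calder\'on identity. Both approximations share the ``universal'' feature noted in the remark following the lemma (simultaneous convergence in several semi-norms when $v$ lies in the corresponding intersection of domains), so there is no real advantage either way for the purposes of this paper. The paper's choice of the $\Phi$-regularization is consistent with its later repeated use of the same device (e.g.\ in Lemma~\ref{density} and in the density arguments of Section~\ref{S2S1}), which may explain the preference.
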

\begin{proof}
Let $v \in D(S^\alpha)$. We regularise $v$ by setting for all $\varepsilon \in (0,1)$,
\begin{align*}
    v_\varepsilon := \int_{\varepsilon }^{1/\varepsilon } \Phi (aS)v \ \frac{\mathrm{d} a}{a}.
\end{align*}
Indeed, we show that $v_\varepsilon \in E_{-\infty}$ and $S^\alpha v_\varepsilon \to S^\alpha v$ in $H$ as $\varepsilon\to 0$. First, for all $\beta \in \mathbb{R}$, since {$t \mapsto t^\beta \Phi(at) \in \mathcal{L}^\infty((0,\infty))$}, we have $v_\varepsilon \in D(S^\beta)$ with $S^\beta v_\varepsilon = \int_{\varepsilon }^{1/\varepsilon } S^\beta \Phi (aS)v \ \frac{\mathrm{d} a}{a} $. Hence, $v_\varepsilon \in E_{-\infty}$. Furthermore, as $v \in D(S^\alpha)$, we have  $S^\alpha \Phi (aS)v= \Phi (aS)S^\alpha v$, so that $S^\alpha v_\varepsilon$ converges to $S^\alpha v$ by  the Calder\'on reproducing formula.
\end{proof} 

\begin{rem} The approximation is universal, in the sense that if $v\in D(S^\alpha) \cap D(S^\beta) $, then the approximation occurs simultaneously in both semi-norms. {In particular, $E_{-\infty}$ is dense in $D(S^\alpha)$ for the graph norm. }
    
\end{rem}

Let $E_{\infty}$ denote the topological anti-dual space of $E_{-\infty}$. The reason for which we are interested in such a space is that it provides an ambient space containing a copy of a completion of all spaces $ ( D(S^\alpha), \left\| \cdot \right\|_{S,\alpha})  $. To clarify this claim, we define  \begin{align*}
    \left \| \varphi  \right \|_{E_\alpha} := \sup_{v \in E_{-\infty}\setminus \left \{ 0 \right \}}\frac {| \varphi (v)|}  {   \| v  \|_{S,-\alpha}}
\end{align*} and the vector space
\begin{align*}
    E_{\alpha}:= \{\varphi \in E_\infty :  \left \| \varphi  \right \|_{E_\alpha}<\infty\}. 
\end{align*}
The space $\left ( E_{\alpha},\left \| \cdot \right \|_{E_\alpha} \right )$ is a Banach space. We set
\begin{equation*}
j : H   \rightarrow E_\infty, \ \ v \mapsto j(v):=\langle v ,\cdot \rangle_{ H }.
\end{equation*}
The application $j$ is injective by the density of $E_{-\infty}$ in $H$ in Lemma \ref{density E_infty}. 
\begin{lem}\label{j identification}
  For all $\alpha \in \mathbb{R}$,  $ j_{\scriptscriptstyle{\vert D(S^\alpha)}}:  (D(S^{\alpha}),  \left\| \cdot \right\|_{S,\alpha} )   \rightarrow \left ( E_{\alpha},\left \| \cdot \right \|_{E_\alpha} \right )$ is isometric with dense range.
\end{lem}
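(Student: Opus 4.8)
The plan is to establish the isometry first, then the density of the range, using throughout only two facts: $E_{-\infty}$ is dense in $H$ (Lemma~\ref{density E_infty} with $\alpha=0$) and $S^{\gamma}\colon E_{-\infty}\to E_{-\infty}$ is an isomorphism for every $\gamma\in\mathbb{R}$, so that all computations can be carried out against test elements in $E_{-\infty}$ where every power of $S$ acts.

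\emph{Step 1 (well-definedness and upper bound).} Fix $\alpha\in\mathbb{R}$ and $u\in D(S^{\alpha})$. For $v\in E_{-\infty}$ we have $S^{-\alpha}v\in E_{-\infty}\subset D(S^{\alpha})$, so by self-adjointness of $S^{\alpha}$,
\[
\langle u,v\rangle_H=\langle u,S^{\alpha}(S^{-\alpha}v)\rangle_H=\langle S^{\alpha}u,S^{-\alpha}v\rangle_H,
\]
whence $|j(u)(v)|\le\|S^{\alpha}u\|_H\,\|v\|_{S,-\alpha}$ by Cauchy--Schwarz. Thus $j(u)\in E_{\infty}$, in fact $j(u)\in E_{\alpha}$ with $\|j(u)\|_{E_{\alpha}}\le\|S^{\alpha}u\|_H=\|u\|_{S,\alpha}$. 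For the reverse inequality, assume $u\neq0$ and set $w:=S^{\alpha}u\in H\setminus\{0\}$. By Lemma~\ref{density E_infty} there are $w_n\in E_{-\infty}$ with $w_n\to w$ in $H$; put $v_n:=S^{\alpha}w_n\in E_{-\infty}$, so that $S^{-\alpha}v_n=w_n$. Using the identity above, $j(u)(v_n)=\langle w,w_n\rangle_H$ while $\|v_n\|_{S,-\alpha}=\|w_n\|_H$, so
\[
\|j(u)\|_{E_{\alpha}}\ge\frac{|\langle w,w_n\rangle_H|}{\|w_n\|_H}\longrightarrow\frac{\|w\|_H^2}{\|w\|_H}=\|S^{\alpha}u\|_H
\]
as $n\to\infty$. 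Combined with the upper bound this gives $\|j(u)\|_{E_{\alpha}}=\|u\|_{S,\alpha}$, and in particular $j_{\vert D(S^{\alpha})}$ is injective since $\|\cdot\|_{S,\alpha}$ is a norm ($S^{\alpha}$ being injective).

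\emph{Step 2 (dense range).} Let $\varphi\in E_{\alpha}$, so $|\varphi(v)|\le\|\varphi\|_{E_{\alpha}}\|v\|_{S,-\alpha}$ for all $v\in E_{-\infty}$. Since $S^{\alpha}$ is a linear automorphism of $E_{-\infty}$, the anti-linear functional $\psi:=\varphi\circ S^{\alpha}$ on $E_{-\infty}$ satisfies $|\psi(v')|=|\varphi(S^{\alpha}v')|\le\|\varphi\|_{E_{\alpha}}\|S^{\alpha}v'\|_{S,-\alpha}=\|\varphi\|_{E_{\alpha}}\|v'\|_H$. As $E_{-\infty}$ is dense in $H$, $\psi$ extends to a bounded anti-linear functional on $H$, so by the Riesz representation theorem there is $w\in H$ with $\varphi(S^{\alpha}v')=\langle w,v'\rangle_H$ for all $v'\in E_{-\infty}$; equivalently $\varphi(v)=\langle w,S^{-\alpha}v\rangle_H$ for all $v\in E_{-\infty}$ (and $\|w\|_H=\|\varphi\|_{E_{\alpha}}$, again by density of $E_{-\infty}$ in $H$). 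Now choose $w_n\in E_{-\infty}$ with $w_n\to w$ in $H$ and set $u_n:=S^{-\alpha}w_n\in E_{-\infty}\subset D(S^{\alpha})$. For $v\in E_{-\infty}$, self-adjointness of $S^{-\alpha}$ gives $j(u_n)(v)=\langle S^{-\alpha}w_n,v\rangle_H=\langle w_n,S^{-\alpha}v\rangle_H$, hence $(\varphi-j(u_n))(v)=\langle w-w_n,S^{-\alpha}v\rangle_H$ and $\|\varphi-j(u_n)\|_{E_{\alpha}}\le\|w-w_n\|_H\to0$. Therefore $j(D(S^{\alpha}))$ (indeed already $j(E_{-\infty})$) is dense in $(E_{\alpha},\|\cdot\|_{E_{\alpha}})$.

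\emph{On the difficulty.} There is no real obstacle here; the only point requiring attention is the bookkeeping of the unbounded powers of $S$, which stays harmless precisely because everything is tested against elements of $E_{-\infty}$, on which $S^{\alpha}$ is an isomorphism and self-adjointness identities may be used freely. The one conceptual remark is that $j(D(S^{\alpha}))$ is in general a \emph{proper} subspace of $E_{\alpha}$ (the latter being a completion), so Step~2 genuinely needs the production of the Riesz representative $w\in H$ followed by regularization, rather than a surjectivity statement. As a byproduct, $E_{\alpha}$ is isometrically the completion of $(D(S^{\alpha}),\|\cdot\|_{S,\alpha})$.
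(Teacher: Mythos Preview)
Your proof is correct and follows essentially the same route as the paper: the isometry comes from the identity $\langle u,v\rangle_H=\langle S^\alpha u,S^{-\alpha}v\rangle_H$ for $v\in E_{-\infty}$ (the paper asserts equality of norms directly, while you spell out the lower bound via approximation, but the underlying mechanism is the same), and the density of the range is obtained by composing $\varphi$ with $S^\alpha$, invoking Riesz to produce a representative in $H$, and approximating it by elements whose $S^{-\alpha}$ lies in $D(S^\alpha)$. The only cosmetic difference is that the paper approximates using the density of $\mathrm{ran}(S^\alpha)$ in $H$ whereas you approximate within $E_{-\infty}$, which is a slightly stronger statement but yields the same conclusion.
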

\begin{proof}
     If $v \in D(S^\alpha)$ then we can write for all $\Tilde{v} \in E_{-\infty}$,  $j(v)(\Tilde{v})=\langle v, \Tilde{v} \rangle_H = \langle S^\alpha v, S^{-\alpha} \Tilde{v} \rangle_H$. This implies that $j(v) \in E_{\alpha}$ and $\left \| j(v) \right \|_{E_{\alpha }}=\left \| S^\alpha v \right \|_H=\left \| v \right \|_{S,\alpha}$. Now, if $\varphi \in E_\alpha$, then $\varphi \circ S^\alpha$ has a bounded extension on $H$. Using the Riesz representation theorem, there exists $v \in H$ such that $\varphi \circ S^\alpha = \langle v, \cdot \rangle_H$ or equivalently $\varphi = \langle v, S^{-\alpha}\cdot \rangle_H$. Moreover, we have $\left \| \varphi  \right \|_{E_\alpha}= \left \| v  \right \|_H$. Since the range of $S^\alpha$ is dense in $H$,  there exists a sequence $(v_n)_{n \in \mathbb{N}} \in D(S^\alpha)^{\mathbb{N}}$ such that $S^\alpha v_n \to v$ in $H$. Now, we have for all $\Tilde{v}\in E_{-\infty}$,
\begin{align*}
    j(v_n)(\Tilde{v})-\varphi (\Tilde{v})=    \langle v_n, \Tilde{v} \rangle_H -\langle v, S^{-\alpha}\Tilde{v}  \rangle_H  = \langle S^\alpha v_n-v, S^{-\alpha}\Tilde{v} \rangle_H.
\end{align*}
Therefore, $\left \| j(v_n)-\varphi  \right \|_{E_\alpha}=\left \| S^\alpha v_n -v  \right \|_H \to 0$.
\end{proof}

To make clear the identification we will adopt in the next paragraph, we temporarily define the operator $T$ on the Hilbert space $j(H)=H^\star$ by setting 
\begin{equation*}
    D(T):=j(D(S)) \ , \ \ T:=j \circ S \circ j^{-1}.
\end{equation*}
Since $j: H \rightarrow j(H)$ is a unitary operator by Lemma \ref{j identification} when $\alpha=0$, $T$ is unitarily equivalent to $S$. It follows that $T$ has the same properties as $S$. More precisely, $T$ is injective, positive and selfadjoint and we have for all $\alpha \in \mathbb{R}$
$$D(T^\alpha):= j(D(S^\alpha))\ , \ \ T^\alpha= j \circ S^\alpha \circ j^{-1}, \ \ \left \| T^\alpha (j(v))  \right \|_{j(H)}=\left \| S^\alpha v  \right \|_{H}.$$
Using Lemma \ref{j identification}, we have for all $\alpha \in \mathbb{R}$, $D(T^\alpha) \subset E_\alpha$ with dense and isometric inclusion for the homogeneous norm of $T^\alpha$ which means that $E_\alpha$ is a completion of $D(T^\alpha)$ for the homogeneous norm. Moreover, it follows from \eqref{Salpha} that $T^\alpha: D(T^\alpha) \rightarrow j(H)$ is isometric with dense range. Notice that by combining Lemma \ref{density E_infty} and Lemma \ref{j identification}, $j(E_{-\infty})=\bigcap_{\alpha \in \mathbb{R}} D(T^{\alpha })$ is dense in all the spaces $E_\alpha$. {Furthermore, we have for all $\alpha \in \mathbb{R}$, $E_\alpha \cap j(H)= D(T^\alpha)$}. The advantage is that all the spaces mentioned here are contained in $E_\infty$, an anti-dual space of a Fréchet space, which is in particular a Hausdorff topological vector space.

From now on, by making the identification of $H$ with $j(H)$ and $S$ with $j\circ S \circ j^{-1}$ as above, we assume that $H$ is contained in a Hausdorff topological vector space $E_\infty$ that contains a completion of all the domains of $S^\alpha$ for the homogeneous norms and we denote by $D_{S,\alpha}$ this completion of $D(S^\alpha)$ in $E_\infty$. Moreover, we  have that for all $\alpha \in \mathbb{R}$, $D_{S,\alpha}\cap H = D(S^\alpha)$.
By Lemma \ref{density E_infty}, the space $E_{-\infty} = \bigcap_{\alpha \in \mathbb{R}} D(S^\alpha )$ is dense in all these completions. Moreover, there is a sesquilinear continuous duality form $\langle w, v\rangle$ on $E_\infty\times E_{-\infty}$ which extends the inner product on $H$. The functional calculus of $S$ extends to $E_\infty$ by $\langle f(S)w, v\rangle= \langle w, f^*(S)v\rangle$ whenever {$f\in \mathcal{L}^\infty_{\mathrm{loc}}((0,\infty))$ and $f(t)=O(t^a)$ at  $0$ and $f(t)=O(t^b)$ at $\infty$} as $f^*(S):E_{-\infty}\to E_{-\infty}$ is bounded. In particular, $S^\alpha: E_\infty \to E_\infty$ is an automorphism and we have $D_{S,\alpha}=\left\{u \in E_\infty  :  S^\alpha u \in H \right\}$. The restriction of $S^\alpha$ to $D_{S,\alpha}$  agrees with  the unique extension of $S^{\alpha}:  (D(S^{\alpha}),  \left\| \cdot \right\|_{S,\alpha}  ) \rightarrow \left ( H, \left\| \cdot \right\|_H \right )$ (see \eqref{Salpha}). The norm on $D_{S,\alpha}$ is $\left \| S^\alpha \cdot \right \|_H$ and we keep denoting it by $\left \|  \cdot \right \|_{S,\alpha}$ (and it makes it a Hilbert space). We record the following lemma.
 
\begin{lem}\label{density}
    Let $\alpha \in \mathbb{R}$. Then, there are dense inclusions 
    \begin{equation*}
   E_{-\infty} \hookrightarrow    D(S^\alpha) \hookrightarrow   D_{S,\alpha} \hookrightarrow E_\infty.
    \end{equation*}
Moreover, the family $\left ( D_{S,\alpha} \right )_{\alpha \in \mathbb{R}}$ is a complex (and real) interpolation family.
\end{lem}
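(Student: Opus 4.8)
The plan is to establish the four assertions of Lemma \ref{density} by exploiting the unitary equivalence between $S$ and the operator $T$ on $j(H)$ already set up before the statement, so that everything reduces to elementary Hilbert-space facts transported through $j$ and the Borel functional calculus.

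First I would prove the chain of dense inclusions. The inclusion $E_{-\infty} \hookrightarrow D(S^\alpha)$ is continuous because $\|v\|_{S,\alpha}$ is one of the defining seminorms of the Fréchet topology on $E_{-\infty}$, and it is dense by Lemma \ref{density E_infty}. The inclusion $D(S^\alpha) \hookrightarrow D_{S,\alpha}$ is, by the very construction of $D_{S,\alpha}$ as the completion of $(D(S^\alpha), \|\cdot\|_{S,\alpha})$ inside $E_\infty$ (via Lemma \ref{j identification}, which identifies this completion with $E_\alpha$), an isometric embedding with dense range. For $D_{S,\alpha} \hookrightarrow E_\infty$: since $S^\alpha : E_\infty \to E_\infty$ is an automorphism and $D_{S,\alpha} = \{u \in E_\infty : S^\alpha u \in H\}$, continuity follows from the continuity of $S^{-\alpha} : H \hookrightarrow E_\infty \to E_\infty$ (the embedding $H \hookrightarrow E_\infty$ being continuous as $j(H) = E_0 \hookrightarrow E_\infty$, where each $E_\alpha$ carries the subspace-type topology making the duality pairing continuous). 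Density of $D_{S,\alpha}$ in $E_\infty$ reduces, by composing with the automorphism $S^\alpha$, to density of $H$ in $E_\infty$; and $H \supset E_{-\infty}$, which is dense in $E_\infty = (E_{-\infty})'$ by the usual fact that a Fréchet space embeds densely into its anti-dual when the pairing is non-degenerate (here non-degeneracy on $E_{-\infty}$ is immediate, and on $E_\infty$ it is the Hahn--Banach separation argument). I should be a little careful to phrase this last density correctly: it is density for the weak-$*$ (or the natural) topology on $E_\infty$, which is the topology we equipped $E_\infty$ with.

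For the interpolation statement, the clean route is to transport through the unitary $j$ to the operator $T$ on the genuine Hilbert space $j(H)$, and then invoke the standard theory: the scale of completions $D_{T,\alpha}$ of $D(T^\alpha)$ for the homogeneous norms $\|T^\alpha \cdot\|$ associated to an injective positive self-adjoint operator is both a complex and a real interpolation scale. Concretely, $D_{T,\alpha}$ is isometric (via $T^{\alpha}$) to $j(H)$ for every $\alpha$, so one can realize the family as the image under the unitary group $\{T^{i\tau}\}$ of a fixed Hilbert space, which is exactly the setting of the Hilbert-space interpolation results; alternatively, for two exponents $\alpha_0 < \alpha_1$, the pair $(D_{T,\alpha_0}, D_{T,\alpha_1})$ with the operator $T^{\alpha_1 - \alpha_0}$ is a model case of Lions--Magenes / Triebel interpolation of domains of positive operators, giving $[D_{T,\alpha_0}, D_{T,\alpha_1}]_\theta = D_{T,\alpha_\theta}$ with $\alpha_\theta = (1-\theta)\alpha_0 + \theta\alpha_1$ for both the complex method and the real method (the latter because in the Hilbert setting real and complex interpolation of such a scale coincide, up to equivalence of norms, with the expected fractional domain). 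Pulling back by $j$ gives the claim for $D_{S,\alpha}$.

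I expect the main obstacle to be purely expository rather than mathematical: making sure the topology on $E_\infty$ is pinned down precisely enough that ``dense inclusion'' and ``interpolation family'' are unambiguous statements, and that the identifications $D_{S,\alpha} \cong E_\alpha$ and $D_{S,\alpha}\cap H = D(S^\alpha)$ — already recorded in the text — are invoked rather than re-proved. Once one commits to transporting everything to $T$ on $j(H)$, the density chain is a sequence of one-line verifications and the interpolation assertion is a citation to the standard theory of interpolation of fractional domains of a positive self-adjoint operator (e.g., the moment inequality already quoted from Haase together with Triebel's monograph), so no genuinely hard step remains; the care needed is just in not conflating the several norms ($\|\cdot\|_{S,\alpha}$ homogeneous versus graph) in play.
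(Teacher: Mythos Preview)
Your proposal is correct, and the overall structure matches the paper's: the first two dense inclusions are already established earlier, and the interpolation claim is delegated to the standard literature (the paper simply cites \cite{auscher1997holomorphic}, while you sketch the reduction to interpolation of fractional domains of a positive self-adjoint operator; these amount to the same thing).

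The one genuine difference is in the density of $D_{S,\alpha}$ in $E_\infty$. You argue abstractly: reduce via the automorphism $S^\alpha$ to density of $H$ in $E_\infty$, and then invoke the weak-$*$ density of $E_{-\infty}$ in its anti-dual by a Hahn--Banach/bipolar argument. The paper instead gives a direct, constructive proof: for $w\in E_\infty$ it forms the Calder\'on-type approximants $w_\varepsilon = \int_\varepsilon^{1/\varepsilon}\Phi(aS)w\,\frac{\mathrm da}{a}$ (defined through the duality pairing), checks that $w_\varepsilon \to w$ in $E_\infty$, and shows explicitly that $w_\varepsilon \in D_{S,\alpha}$ for \emph{every} $\alpha$, hence $w_\varepsilon \in E_{-\infty}$. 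The paper's route gives more: it exhibits a universal regularization landing in the smallest space $E_{-\infty}$, consistent with the approximation scheme used throughout Section~\ref{Section 2}. Your route is quicker and requires no calculation, but it leans on fixing the weak-$*$ topology on $E_\infty$ (which you rightly flag as the point needing care) and does not supply a concrete approximating sequence.
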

\begin{proof}
For the first statement, the first two inclusions are already known to be dense. We show it for the last one. Indeed, if $w\in E_\infty$, then let $\mathcal{F}_w$ be a finite subset of $\mathcal{F}$ such that we have $|\langle w, v\rangle| \le C_w \sup_{\gamma \in \mathcal{F}_w} \|S^\gamma v\|_H$ for all $v\in E_{-\infty}$. The approximation procedure using the duality form and  using that $\Phi^\star=\Phi$ shows that $w_\varepsilon$ converges to $w$ in $E_\infty$.  We claim that $ w_\varepsilon \in D_{S,\alpha}$ for all $\alpha$ (hence, $w_\varepsilon\in E_{-\infty}$). Indeed,  if we pick $v\in E_{-\infty}$, then $S^{\alpha} v_\varepsilon \in E_{-\infty}$ and  $S^{\alpha+\gamma} v_\varepsilon \in H$ with norm controlled by $C_{\varepsilon,\alpha+\gamma} \|v\|_H$. Thus, we have 
$$    |\langle S^\alpha w_\varepsilon, v\rangle| = |\langle w, S^{\alpha} v_\varepsilon \rangle| \le C_w \sup_{\gamma \in \mathcal{F}_w} \|S^{\alpha+\gamma} v_\varepsilon\|_H \le C_w \sup_{\gamma \in \mathcal{F}_w} C_{\varepsilon,\alpha+\gamma}\, \|v\|_H,$$
from which  the claim follows. Finally, the fact that family $\left ( D_{S,\alpha} \right )_{\alpha \in \mathbb{R}}$ is a complex (and real) interpolation family is proved in \cite{auscher1997holomorphic}.
\end{proof}
For any real $\alpha$, the sesquilinear form $(u,v) \mapsto \langle S^\alpha u, S^{-\alpha} v \rangle_H$ defines a canonical duality pairing between $D_{S,\alpha}$ and $D_{S,-\alpha}$ which is simply the inner product $\langle \cdot, \cdot \rangle_H$ extended from $E_{-\infty}\times E_{-\infty}$ to $D_{S,\alpha} \times D_{S,-\alpha}$. In fact, we have for all $(u,v) \in E_{-\infty} \times E_{-\infty}$,
\begin{equation*}
    \sup_{w \in E_{-\infty}\setminus\left\{0 \right\}} \frac{\left|\langle u, w \rangle_H \right|}{\ \ \ \ \left\|w \right\|_{S,-\alpha}}=\left\|u \right\|_{S,\alpha} \ \  , \ \ \ \sup_{w \in E_{-\infty}\setminus\left\{0 \right\}} \frac{\left|\langle w, v \rangle_H \right|}{\ \ \ \ \left\|w \right\|_{S,\alpha}}=\left\|u \right\|_{S,-\alpha}.
\end{equation*}
For $(u,v) \in D_{S,\alpha} \times D_{S,-\alpha}$, we denote $\langle u, v \rangle_{H,\alpha}:= \langle S^\alpha u, S^{-\alpha} v \rangle_H$. 
It also coincides with the sesquilinear duality  $\langle u, v \rangle$  on $E_\infty \times E_{-\infty}$ when $u\in D_{S,\alpha}$ and $ v\in E_{-\infty}$. We have the following lemma.

\begin{lem}\label{Interpolation Jalpha bien définie}
    Let $\alpha, \beta \in \mathbb{R}$. If $u \in D_{S,\alpha}\cap D_{S,\beta}$ and $v \in D_{S,-\alpha} \cap D_{S,-\beta}$, then
   \begin{equation*}
        \langle u, v \rangle_{H,\alpha}= \langle u, v \rangle_{H,\beta}.
   \end{equation*}
\end{lem}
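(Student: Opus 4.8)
The statement asserts that the two sesquilinear pairings $\langle u,v\rangle_{H,\alpha} = \langle S^\alpha u, S^{-\alpha} v\rangle_H$ and $\langle u,v\rangle_{H,\beta} = \langle S^\beta u, S^{-\beta} v\rangle_H$ agree when $u$ lies in the intersection $D_{S,\alpha}\cap D_{S,\beta}$ and $v$ in $D_{S,-\alpha}\cap D_{S,-\beta}$. The natural strategy is a density-and-continuity argument anchored to the test space $E_{-\infty}$. First I would verify the identity on the dense subspace: for $u,v\in E_{-\infty}$ one has $S^\alpha u, S^{-\alpha} v, S^\beta u, S^{-\beta} v$ all in $E_{-\infty}\subset H$, and since $S^\alpha$ is self-adjoint on $H$ (with $S^\alpha$ and $S^{-\beta}$ commuting as Borel functional calculus of the same operator), we get $\langle S^\alpha u, S^{-\alpha} v\rangle_H = \langle S^{\alpha+\beta} u, S^{-\alpha-\beta}v\rangle_H = \langle S^\beta u, S^{-\beta} v\rangle_H$ by moving powers of $S$ across the inner product. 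Equivalently, both equal the canonical duality pairing $\langle u,v\rangle$ on $E_\infty\times E_{-\infty}$ restricted to $E_{-\infty}\times E_{-\infty}$, which is exactly the remark made just before the lemma statement.

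The second step is to upgrade from $E_{-\infty}$ to the full intersection spaces by approximation. Fix $u\in D_{S,\alpha}\cap D_{S,\beta}$ and $v\in D_{S,-\alpha}\cap D_{S,-\beta}$. Using the regularization $u_\varepsilon = \int_\varepsilon^{1/\varepsilon}\Phi(aS)u\,\frac{\mathrm da}{a}$ (and similarly $v_\varepsilon$), Lemma \ref{density E_infty} together with the universality remark that follows it guarantees $u_\varepsilon\in E_{-\infty}$ with $u_\varepsilon\to u$ simultaneously in $\|\cdot\|_{S,\alpha}$ and $\|\cdot\|_{S,\beta}$, and likewise $v_\varepsilon\to v$ in both $\|\cdot\|_{S,-\alpha}$ and $\|\cdot\|_{S,-\beta}$ (this extends to the completions $D_{S,\alpha}$ etc. by the identification set up via $j$). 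Then $\langle u_\varepsilon, v_\varepsilon\rangle_{H,\alpha}\to\langle u,v\rangle_{H,\alpha}$ because $\langle u_\varepsilon,v_\varepsilon\rangle_{H,\alpha} = \langle S^\alpha u_\varepsilon, S^{-\alpha}v_\varepsilon\rangle_H$ and $S^\alpha u_\varepsilon\to S^\alpha u$, $S^{-\alpha}v_\varepsilon\to S^{-\alpha}v$ in $H$; the same holds with $\beta$. Since $\langle u_\varepsilon,v_\varepsilon\rangle_{H,\alpha} = \langle u_\varepsilon,v_\varepsilon\rangle_{H,\beta}$ for every $\varepsilon$ by Step 1, passing to the limit yields the claim.

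The only mild subtlety — and the step I would be most careful about — is making sure the simultaneous convergence of the regularization holds in the \emph{completed} spaces $D_{S,\alpha}$, $D_{S,\beta}$ rather than just on the dense domains $D(S^\alpha)$, $D(S^\beta)$; this is precisely why the paper set up $E_\infty$ and the identification $D_{S,\alpha} = \{w\in E_\infty : S^\alpha w\in H\}$ with $S^\alpha:E_\infty\to E_\infty$ an automorphism, so that $\Phi(aS)$ acts on $E_\infty$ and the Calderón reproducing formula applies after hitting $u$ with $S^\alpha$ (respectively $S^\beta$). Once one observes that $S^\alpha u_\varepsilon = \int_\varepsilon^{1/\varepsilon}\Phi(aS) S^\alpha u\,\frac{\mathrm da}{a}$ with $S^\alpha u\in H$, convergence in $H$ is immediate from \eqref{eq:Calderon}, and everything else is routine. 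Alternatively, one could bypass approximation entirely by noting that for $u\in D_{S,\alpha}\cap D_{S,\beta}$ the element $S^{\alpha+\beta}u$ makes sense in $E_\infty$ and both pairings equal $\langle S^{\alpha+\beta}u, S^{-\alpha-\beta}v\rangle$ via the extended duality form, but I would present the density argument as it is the most transparent.
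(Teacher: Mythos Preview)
Your proposal is correct and matches the paper's proof essentially line for line: the paper also verifies $\langle u_\varepsilon, v_\varepsilon\rangle_{H,\alpha}=\langle u_\varepsilon, v_\varepsilon\rangle_H=\langle u_\varepsilon, v_\varepsilon\rangle_{H,\beta}$ on the approximations $u_\varepsilon, v_\varepsilon\in E_{-\infty}$ and then lets $\varepsilon\to 0$ using the simultaneous convergence in both norms. Your additional remarks on why the Calder\'on regularization converges in the completed spaces $D_{S,\alpha}$ (via $S^\alpha u_\varepsilon = \int_\varepsilon^{1/\varepsilon}\Phi(aS)S^\alpha u\,\frac{\mathrm da}{a}$ with $S^\alpha u\in H$) make explicit a step the paper leaves implicit.
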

\begin{proof}
    The approximations $u_\varepsilon$ and $v_\varepsilon$ belong to $E_{-\infty}$ so $\langle u_\varepsilon, v_\varepsilon \rangle_{H,\alpha}=\langle u_\varepsilon, v_\varepsilon \rangle_H= \langle u_\varepsilon, v_\varepsilon \rangle_{H,\beta}$ for all $\varepsilon>0$. The result follows when $\varepsilon$ tends to $0$ as $u_\varepsilon$ converges to $u$ in both spaces $D_{S,\alpha}$ and $D_{S,\beta}$ and $v_\varepsilon$ converges to $v$ in both spaces $D_{S,-\alpha}$ and $D_{S,-\beta}$.
\end{proof}
\subsection{Spaces of test functions and distributions}\label{S2S1}
For $I$ an open interval of $\mathbb{R}$, we denote by $\mathcal{D}(I;E_{-\infty})$ the space of $E_{-\infty}$-valued $C^\infty$ functions on $I$ with compact support. The space $\mathcal{D}(I;E_{-\infty})$ is endowed with the usual inductive limit topology and contains $\mathrm{span}(\mathcal{D}(I)\otimes E_{-\infty})$ as a dense subspace.

We refer to \cite{hytonen2016analysis} for Banach-valued $L^p(I;B)$ spaces. The density lemma below explains why it is relevant to take $\mathcal{D}(I;E_{-\infty})$ as the space of test functions.
\begin{lem}\label{Lemma density}
    $\mathcal{D}(I;E_{-\infty})$ is dense in $L^p(I; D_{S,\alpha})$ for all $\alpha \in \mathbb{R}$ and $p \in [1,\infty)$.
\end{lem}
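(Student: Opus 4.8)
The plan is to prove Lemma \ref{Lemma density} by a two-step reduction: first from $L^p(I;D_{S,\alpha})$ to $L^p(I; D(S^\alpha))$-valued functions that are \emph{uniformly} in $D(S^\beta)$ for all $\beta$ (i.e.\ $E_{-\infty}$-valued up to an $L^p$ issue), and then from general measurable $E_{-\infty}$-valued functions to the smooth compactly supported ones via a standard truncation-and-mollification argument. Recall from Lemma \ref{density} that $D(S^\alpha)$ is dense in $D_{S,\alpha}$, and from Lemma \ref{density E_infty} (together with its Remark) that $E_{-\infty}$ is dense in $D(S^\alpha)$, with the approximation $v_\varepsilon=\int_\varepsilon^{1/\varepsilon}\Phi(aS)v\,\frac{\mathrm da}{a}$ being \emph{universal}: it converges to $v$ simultaneously in every seminorm $\|\cdot\|_{S,\beta}$ for which $v\in D(S^\beta)$, and it is contractive in each such seminorm since $\|\Phi(aS)\|_{\mathcal L(H)}\lesssim 1$ uniformly. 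These are the two "coordinate-wise" density facts that I want to upgrade to the Bochner setting.

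Here is the order of steps. \textbf{Step 1 (reduce to simple functions).} Since $D_{S,\alpha}$ is a separable Banach space (it is a Hilbert space sitting inside the separable $H$ via $S^\alpha$), simple functions $\sum_k \mathbb 1_{A_k}\otimes v_k$ with $v_k\in D_{S,\alpha}$ and $A_k\subset I$ measurable of finite measure are dense in $L^p(I;D_{S,\alpha})$ for $p\in[1,\infty)$; this is the standard structure theorem for Bochner spaces, for which I refer to \cite{hytonen2016analysis}. \textbf{Step 2 (replace the values $v_k$ by elements of $E_{-\infty}$).} Using density of $E_{-\infty}$ in $D_{S,\alpha}$, replace each $v_k$ by some $w_k\in E_{-\infty}$ with $\|v_k-w_k\|_{S,\alpha}$ as small as we like; this shows $\mathrm{span}(\{\mathbb 1_A : A\subset I \text{ measurable, finite measure}\}\otimes E_{-\infty})$ is dense in $L^p(I;D_{S,\alpha})$. \textbf{Step 3 (smooth the time variable).} It therefore suffices to approximate a single $\mathbb 1_A\otimes w$ with $w\in E_{-\infty}$ by elements of $\mathcal D(I;E_{-\infty})$ in the $L^p(I;D_{S,\alpha})$-norm. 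Approximate $\mathbb 1_A$ in $L^p(I;\mathbb C)$ by a scalar $\phi\in\mathcal D(I)$ (e.g.\ first by $\mathbb 1_{K}$ for a compact $K\subset I$, then mollify and cut off); then $\phi\otimes w\in\mathcal D(I;E_{-\infty})$ and $\|\phi\otimes w-\mathbb 1_A\otimes w\|_{L^p(I;D_{S,\alpha})}=\|\phi-\mathbb 1_A\|_{L^p(I)}\,\|w\|_{S,\alpha}$, which is small. Chaining the three approximations with the triangle inequality gives the claim.

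There is essentially no hard obstacle here: the statement is a routine density result, and the only thing one must be slightly careful about is that the target space $D_{S,\alpha}$ is genuinely a Banach (in fact Hilbert) space sitting inside the ambient $E_\infty$, so that the Bochner-space machinery from \cite{hytonen2016analysis} applies verbatim; separability, needed for Step 1, is inherited from $H$ through the isometry $S^\alpha:D_{S,\alpha}\to H$. One could equivalently phrase Steps 2--3 in the opposite order — first mollify in time with an $E_{-\infty}$-valued mollification that is legitimate because $t\mapsto$ (value) is already strongly measurable, then adjust the values — but the order above keeps each step elementary. Note that the universality of the approximation in Lemma \ref{density E_infty} is what makes Step 2 painless: the single sequence $(w_k)$ works simultaneously for the $\|\cdot\|_{S,\alpha}$-norm we care about here, and the same proof gives density of $\mathcal D(I;E_{-\infty})$ in $L^p(I;D_{S,\alpha})$ for \emph{every} $\alpha$ at once.
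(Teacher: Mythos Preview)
Your proof is correct, but it takes a different route from the paper's. The paper first reduces to the case $\alpha=0$ by exploiting that $S^{-\alpha}:E_{-\infty}\to E_{-\infty}$ is an isomorphism (so $S^{-\alpha}:\mathcal D(I;E_{-\infty})\to\mathcal D(I;E_{-\infty})$), and then it suffices to approximate a given $f\in\mathcal D(I;H)$ --- already smooth and compactly supported in time --- by elements of $\mathcal D(I;E_{-\infty})$; this is done by applying the Calder\'on approximation $f_\varepsilon(t)=\big(\int_\varepsilon^{1/\varepsilon}\Phi(aS)\,\tfrac{\mathrm da}{a}\big)f(t)$ pointwise in $t$, which preserves the time support and converges by dominated convergence. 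Your argument, by contrast, stays at level $\alpha$ throughout and runs the standard Bochner-space density chain (simple functions, then adjust values into $E_{-\infty}$, then smooth the time indicators). The paper's approach buys a cleaner one-line reduction and keeps the time support fixed during the spatial regularization; your approach buys independence from the functional-calculus machinery (you only need that $E_{-\infty}$ is dense in $D_{S,\alpha}$ and that $\mathcal D(I)$ is dense in $L^p(I)$), making it transparently a general Bochner-space fact. Both are equally valid here.
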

\begin{proof}
     It is enough to consider the case $\alpha=0$, that is to prove the density of $\mathcal{D}(I;E_{-\infty})$ in $L^p(I, H)$ since $S^{-\alpha} : E_{-\infty} \rightarrow E_{-\infty}$ is an isomorphism and therefore $S^{-\alpha}: \mathcal{D}(I;E_{-\infty}) \rightarrow \mathcal{D}(I;E_{-\infty})$, where we set $(S^\alpha f)(t)=S^\alpha(f(t))$ for all $t\in I$ and $f\in \mathcal{D}(I;E_{-\infty})$. It is enough also to prove that $\mathcal{D}(I;E_{-\infty})$ is dense in $\mathcal{D}(I;H)$ for the $L^p$ norm since the latter is dense in $L^p(I; H)$. To do so, we fix $f \in \mathcal{D}(I;H)$ and regularize it by setting for all $\varepsilon>0$, and $t\in I$
\begin{align*}
    f_\varepsilon (t):= \int_{\varepsilon}^{1/\varepsilon} \Phi (aS)f(t)\ \frac{\mathrm d a}{a} =  \left ( \int_{\varepsilon}^{1/\varepsilon} \Phi (aS)\ \frac{\mathrm d a}{a} \right ) f(t).
\end{align*}
It is obvious that $f_\varepsilon \in \mathcal{D}( I ; E_{-\infty})$ and by the Calder\'on reproducing formula $f_\varepsilon(t) \underset{\varepsilon \to 0}{\rightarrow} f(t) $ pointwisely in $H$. The uniform boundedness in $H$ of the approximate Calder\'on operators yields the domination allowing to apply the dominated convergence theorem.
\end{proof}

 We can now define a space of distributions in which $S$ plays the role of a differential operator. Specifically, we denote by $\mathcal{D}'( I ; E_\infty)$ the space of all bounded anti-linear maps $u :  \mathcal{D}( I ; E_{-\infty})\to  \mathbb{C}$. Bounded means that for any compact set $K \subset  I $, there exist two constants $C_K >0$ and $m_K \in \mathbb{N}$, and a finite set $\mathcal{A}_K \subset \mathbb{Z}$ such that 
\begin{align*}
    \forall \varphi \in \mathcal{D}( I , E_{-\infty}), \ \mathrm{supp}(\varphi) \subset K \Rightarrow \left|\llangle u ,\varphi  \rrangle_{\mathcal{D}',\mathcal{D}} \right|\leq C_K \sup_{\left|j \right|\leq m_K,\,  \alpha \in \mathcal{A}_K} \sup_{t\in K} \|\partial_t^j S^\alpha \varphi(t) \|_H .
\end{align*}
For convenience, we use the notation with partial derivative in $t$ for the derivative in $t$ thinking of future applications to concrete PDE. We also use a double bracket notation for dualities between vector-valued functions and distributions.

 For $\alpha \in \mathbb{R}$, we can embed $L^1_{\mathrm{loc}}(I; D_{S,\alpha})$ in  $\mathcal{D}'\left ( I ; E_\infty \right )$ through the classical identification :
\begin{align*}
J_\alpha \colon L^1_{\mathrm{loc}}(I; D_{S,\alpha})   & \rightarrow  \mathcal{D}'\left ( I ; E_\infty \right )\\
f&\mapsto J_\alpha f :  \varphi \mapsto \int_{I} \langle f(t) ,\varphi(t)  \rangle_{H,\alpha} \ \mathrm{d} t = \int_{I } \langle S^\alpha f(t) ,S^{-\alpha }\varphi(t)  \rangle_H \ \mathrm{d} t.
\end{align*}
{This is well defined by the following lemma.
\begin{lem}\label{lem: Jalpha bien définie}
 For all $\alpha, \beta \in \mathbb{R}$, $J_\alpha = J_\beta $ on $L^1_{\mathrm{loc}}(I; D_{S,\alpha}) \cap L^1_{\mathrm{loc}}(I; D_{S,\beta})$. 
\end{lem}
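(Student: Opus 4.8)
The plan is to reduce the claim to the pointwise identity already established in Lemma \ref{Interpolation Jalpha bien définie}. Suppose $f \in L^1_{\mathrm{loc}}(I; D_{S,\alpha}) \cap L^1_{\mathrm{loc}}(I; D_{S,\beta})$. The first step is to check that for almost every $t \in I$ one has $f(t) \in D_{S,\alpha} \cap D_{S,\beta}$: this is immediate since $f$, viewed as an element of $L^1_{\mathrm{loc}}(I; D_{S,\alpha})$, has a representative taking values in $D_{S,\alpha}$, and likewise for $\beta$, and two representatives of the same element of $L^1_{\mathrm{loc}}(I;E_\infty)$ agree a.e.\ in $E_\infty$; since $D_{S,\alpha}\cap H = D(S^\alpha)$ and more to the point $D_{S,\alpha}$ and $D_{S,\beta}$ are both continuously embedded in the Hausdorff space $E_\infty$, the two $E_\infty$-valued representatives coincide a.e., hence $f(t)\in D_{S,\alpha}\cap D_{S,\beta}$ for a.e.\ $t$.

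The second step is to fix a test function $\varphi \in \mathcal{D}(I;E_{-\infty})$ and compare $J_\alpha f(\varphi)$ and $J_\beta f(\varphi)$. For a.e.\ $t \in I$ we have $f(t) \in D_{S,\alpha}\cap D_{S,\beta}$ and $\varphi(t) \in E_{-\infty} \subset D_{S,-\alpha}\cap D_{S,-\beta}$, so Lemma \ref{Interpolation Jalpha bien définie} gives
\begin{equation*}
\langle f(t), \varphi(t)\rangle_{H,\alpha} = \langle f(t), \varphi(t)\rangle_{H,\beta} \qquad \text{for a.e. } t \in I.
\end{equation*}
Both sides are integrable over $\mathrm{supp}(\varphi)$: indeed $|\langle f(t),\varphi(t)\rangle_{H,\alpha}| = |\langle S^\alpha f(t), S^{-\alpha}\varphi(t)\rangle_H| \le \|f(t)\|_{S,\alpha}\,\|\varphi(t)\|_{S,-\alpha}$, where $t\mapsto\|f(t)\|_{S,\alpha}$ is in $L^1_{\mathrm{loc}}(I)$ by assumption and $t\mapsto\|\varphi(t)\|_{S,-\alpha} = \|S^{-\alpha}\varphi(t)\|_H$ is continuous with compact support, hence bounded on $\mathrm{supp}(\varphi)$; the same bound holds with $\beta$ in place of $\alpha$. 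Integrating the pointwise identity over $I$ therefore yields $J_\alpha f(\varphi) = J_\beta f(\varphi)$, and since $\varphi$ was arbitrary, $J_\alpha f = J_\beta f$ in $\mathcal{D}'(I;E_\infty)$.

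The only mild subtlety — and the one point worth spelling out — is the measurability/a.e.-equality argument in the first step: one must make sure that an element of $L^1_{\mathrm{loc}}(I;D_{S,\alpha}) \cap L^1_{\mathrm{loc}}(I;D_{S,\beta})$ really does admit a single representative landing in $D_{S,\alpha}\cap D_{S,\beta}$ a.e., rather than two a priori unrelated ones. This follows because both spaces embed continuously into the common Hausdorff topological vector space $E_\infty$ from Section \ref{Section 2}, so the induced $L^1_{\mathrm{loc}}(I;E_\infty)$ classes coincide and their a.e.\ values agree in $E_\infty$; pulling back, the values lie in $D_{S,\alpha}\cap D_{S,\beta}$ a.e. Everything else is a routine application of Fubini/dominated integration and the already-proven pointwise Lemma \ref{Interpolation Jalpha bien définie}, so I do not expect any real obstacle.
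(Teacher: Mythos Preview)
Your proof is correct and follows exactly the same approach as the paper, which simply states that the lemma is a ``straightforward corollary of Lemma \ref{Interpolation Jalpha bien définie}.'' You have merely spelled out the details of that corollary---the pointwise application of Lemma \ref{Interpolation Jalpha bien définie} to $f(t)$ and $\varphi(t)$, followed by integration---together with the (reasonable) care about a.e.\ representatives in the common ambient space $E_\infty$.
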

\begin{proof}
    Straightforward corollary of Lemma \ref{Interpolation Jalpha bien définie}.
\end{proof}
Finally, the identification is achieved thanks to the following lemma.}
\begin{lem}
     $J_\alpha$ is injective for all $\alpha \in \mathbb{R}$.
\end{lem}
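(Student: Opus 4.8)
The plan is to reduce the statement to the case $\alpha=0$, where $J_0$ is essentially the canonical embedding of $L^1_{\mathrm{loc}}(I;H)$ into distributions, and then to conclude by the scalar fundamental lemma of the calculus of variations combined with the separability of $H$.

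\emph{Step 1: reduction to $\alpha=0$.} Since $S^{\alpha}\colon E_{-\infty}\to E_{-\infty}$ is an isomorphism, so is the pointwise map $S^{\alpha}\colon \mathcal{D}(I;E_{-\infty})\to\mathcal{D}(I;E_{-\infty})$, and for $f\in L^1_{\mathrm{loc}}(I;D_{S,\alpha})$ one has $S^{\alpha}f\in L^1_{\mathrm{loc}}(I;H)$ with $\llangle J_\alpha f,\varphi\rrangle_{\mathcal{D}',\mathcal{D}}=\llangle J_0(S^{\alpha}f),S^{-\alpha}\varphi\rrangle_{\mathcal{D}',\mathcal{D}}$ for every $\varphi\in\mathcal{D}(I;E_{-\infty})$, directly from the definition of $J_\alpha$ and the identity $\langle u,v\rangle_{H,\alpha}=\langle S^\alpha u,S^{-\alpha}v\rangle_H$. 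As $\varphi\mapsto S^{-\alpha}\varphi$ is a bijection of $\mathcal{D}(I;E_{-\infty})$, we get $J_\alpha f=0$ if and only if $J_0(S^{\alpha}f)=0$; and since $S^{\alpha}\colon D_{S,\alpha}\to H$ is injective, it then suffices to prove that $J_0$ is injective on $L^1_{\mathrm{loc}}(I;H)$.

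\emph{Step 2: the scalar vanishing argument.} Let $g\in L^1_{\mathrm{loc}}(I;H)$ with $J_0 g=0$. Testing against $\psi\otimes v$ with $\psi\in\mathcal{D}(I)$ and $v\in E_{-\infty}$ gives $\int_I \psi(t)\,\langle g(t),v\rangle_H\,\mathrm dt=0$. The function $t\mapsto\langle g(t),v\rangle_H$ belongs to $L^1_{\mathrm{loc}}(I)$ by strong measurability of $g$ and the Cauchy--Schwarz inequality, so the classical du Bois-Reymond lemma provides a Lebesgue-null set $N_v\subset I$ with $\langle g(t),v\rangle_H=0$ for all $t\in I\setminus N_v$.

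\emph{Step 3: removing the dependence of the null set on $v$.} Since $E_{-\infty}$ is dense in $H$ by Lemma \ref{density E_infty} and $H$ is separable, $E_{-\infty}$ equipped with the norm of $H$ is a separable metric space, hence admits a countable subset $\{v_n\}_{n\in\mathbb{N}}$ which is dense in $H$. Set $N:=\bigcup_{n}N_{v_n}$, still Lebesgue-null. For every $t\in I\setminus N$ one has $g(t)\in H$ and $\langle g(t),v_n\rangle_H=0$ for all $n$, whence $g(t)=0$ by density of $\{v_n\}$ in $H$. Therefore $g=0$ almost everywhere, $J_0$ is injective, and by Step 1 so is each $J_\alpha$. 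The only delicate point — and the main (mild) obstacle — is exactly this last passage from ``for each fixed test vector, almost everywhere'' to ``almost everywhere, simultaneously for all test vectors'', which is handled precisely by the separability of $H$.
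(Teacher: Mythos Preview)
Your proof is correct and follows essentially the same approach as the paper: test against tensor products $\psi\otimes v$, apply the scalar fundamental lemma to obtain vanishing for each fixed $v$, and then use separability of $H$ to pass to a common null set. The only organizational difference is that you first reduce to $\alpha=0$ via the isomorphism $S^{\alpha}$, whereas the paper works directly with $\langle S^\alpha f,S^{-\alpha}a\rangle_H$; the paper also leaves the separability step implicit, which you spell out carefully.
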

\begin{proof}
Testing with $\varphi = \theta \otimes a$, with $\theta\in \mathcal{D}(\mathbb{R)}$ real-valued and $a\in E_{-\infty}$, we easily conclude that $J_\alpha(f)=0$ implies $\langle S^\alpha f, S^{-\alpha} a\rangle_H=0$ a.e. on $I$. This implies that $S^\alpha f=0$ in $H$ a.e. on $I$, hence $f=0$ in $D_{S,\alpha}$  a.e. on $I$.
\end{proof}

Consequently, the space $L^p(I; D_{S,\alpha})$ can be identified to a sub-space of $\mathcal{D}'( I; E_\infty)$. We would like to apply powers of $S$ to a distribution like we apply them to functions valued in $E_{-\infty}.$ The definition below covers this.
\begin{defn}
    For $\alpha \in \mathbb{R}$ and $u \in \mathcal{D}'( I ; E_\infty)$, we define the distribution $ S^\alpha u $ by setting  
    \begin{align*}
        \llangle S^\alpha u , \varphi \rrangle_{\mathcal{D}',\mathcal{D}} := \llangle  u , S^\alpha\varphi \rrangle_{\mathcal{D}',\mathcal{D}}, \ \forall \varphi \in \mathcal{D}(I; E_{-\infty}).
    \end{align*}
\end{defn}
\begin{rem}
    For $u \in \mathcal{D}'( I ; E_\infty)$, $S^\alpha u \in L^p(I; H)$ is equivalent to $ u \in L^p(I; D_{S,\alpha})$. Furthermore, powers of $S$ commute with  derivatives in $t$ : for all $ k \in \mathbb{N}$ and $ \alpha \in \mathbb{R}$, $\partial_t^k S^\alpha  = S^\alpha \partial_t^k $. 
\end{rem}
When $I=\mathbb{R}$, we can use the space of tempered distributions adapted to $S$. Let us start by the Schwartz class $\mathcal{S}(\mathbb{R};E_{-\infty})$ defined by
\begin{align*}
    \mathcal{S}(\mathbb{R};E_{-\infty}):= \left \{ \varphi \in C^\infty(\mathbb{R};E_{-\infty}) \ : \ \forall k, \ell \in \mathbb{N}, \ t^k \partial_t^\ell \varphi(t) \underset{\left | t \right |\to \infty}{\rightarrow}0 \ \mathrm{in} \ E_{-\infty}  \right \},
\end{align*}
which is a Fréchet space for a suitable countable family of norms.  Moreover, $\mathcal{D}(\mathbb{R}; E_{-\infty})$ is dense in $\mathcal{S}(\mathbb{R};E_{-\infty})$ by the same argument as for the usual distributions. We denote by $\mathcal{S'}(\mathbb{R};E_\infty)$ the topological dual space of $\mathcal{S}(\mathbb{R};E_{-\infty})$. It is a subspace of $\mathcal{D}'(\mathbb{R}; E_{\infty})$ containing $L^p(\mathbb{R}; D_{S,\alpha})$ for all $\alpha \in \mathbb{R}$ and $p \in [1,\infty]$. For the proofs of this and the theorems below, we refer to \cite{zuily2002elements} for the classical distributions and the same proofs work here. It can be proven that $\mathcal{S}(\mathbb{R};E_{-\infty})$ is dense in $\mathcal{S'}(\mathbb{R};E_\infty)$, and more generally, that $\mathcal{D}(I;E_{-\infty})$ is dense in $\mathcal{D'}(I;E_\infty)$ for any open set $I \subset \mathbb{R}$. However, this is not important for the discussion that follows, so we leave the verification to the interested reader.

As in the classical case, we will first define $\mathcal{F}$ on $L^1(\mathbb{R};H)$, then on $\mathcal{S}(\mathbb{R};E_{-\infty})$ and finally on $\mathcal{S'}(\mathbb{R};E_\infty)$ by duality.
\begin{defn} The Fourier transform is defined on $L^1(\mathbb{R};H)$ by setting for all $f \in L^1(\mathbb{R};H)$ and $\tau \in \mathbb{R}$
\begin{align*}
    \mathcal{F}(f)(\tau):=\hat{f}(\tau):= \int_{\mathbb{R}} e^{-\textit{i}\tau t} f(t)  \ \mathrm{d}t.
\end{align*}
We define $\overline{\mathcal{F}}$ by changing $-i\tau t $ to $i\tau t$ in the integral.
\end{defn}
The Fourier transform on $\mathcal{S}(\mathbb{R};E_{-\infty})$ enjoys many properties as we recall below.

\begin{prop}
    \label{TFS} The Fourier transform $\mathcal{F}$ enjoys the following properties : 
\begin{enumerate}
    \item $\mathcal{F}: \mathcal{S}(\mathbb{R};E_{-\infty}) \rightarrow \mathcal{S}(\mathbb{R};E_{-\infty})$ is an automorphism verifying for all $\varphi \in \mathcal{S}(\mathbb{R};E_{-\infty})$, $k \in \mathbb{N}$ and $\alpha \in \mathbb{R}$ $$\mathcal{F}(S^\alpha \varphi )=S^\alpha \mathcal{F}(\varphi ), \ \partial_\tau^k \mathcal{F}(\varphi )=  \mathcal{F}((-\textit{i}t)^k\varphi), \ \mathcal{F}(\partial_t^k\varphi )= (\textit{i}\tau )^k  \mathcal{F}(\varphi ).$$
    \item For all $\alpha \in \mathbb{R}$, $\mathcal{F}$ extends to an isomorphism on $ L^2(\mathbb{R};D_{S,\alpha})$ which verifies a Plancherel equality.
\end{enumerate}
\end{prop}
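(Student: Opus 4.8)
The plan is to transfer the classical (scalar, or Banach-valued) Fourier theory to $E_{-\infty}$-valued functions, using only two structural facts already established: $E_{-\infty}$ is a Fréchet space for the countable family of norms $(\|\cdot\|_{S,\alpha})_{\alpha\in\mathbb Z}$, and for every $\alpha\in\mathbb R$ the operator $S^\alpha$ restricts to a topological automorphism of $E_{-\infty}$ (in particular it is continuous on $E_{-\infty}$ and carries each norm $\|\cdot\|_{S,\beta}$ isometrically onto $\|\cdot\|_{S,\alpha+\beta}$). First I would check that for $\varphi\in\mathcal S(\mathbb R;E_{-\infty})$ the integral $\hat\varphi(\tau)=\int_{\mathbb R}e^{-i\tau t}\varphi(t)\,\mathrm dt$ converges as an $E_{-\infty}$-valued (Bochner, or even Riemann) integral, because $t\mapsto\|\varphi(t)\|_{S,\alpha}$ decays faster than any power of $|t|$ for every $\alpha$; for the same reason differentiation under the integral sign and integration by parts are legitimate and give $\partial_\tau^k\hat\varphi=\mathcal F((-it)^k\varphi)$ and $\mathcal F(\partial_t^k\varphi)=(i\tau)^k\mathcal F(\varphi)$. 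Since $S^\alpha$ is continuous on $E_{-\infty}$ it passes through the integral, yielding $\mathcal F(S^\alpha\varphi)=S^\alpha\mathcal F(\varphi)$.

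Next I would show $\hat\varphi\in\mathcal S(\mathbb R;E_{-\infty})$ and that $\mathcal F$ is continuous. Combining the three identities, $\tau^k\partial_\tau^\ell\hat\varphi$ equals, up to a unimodular constant, the Fourier transform of $g:=\partial_t^k\big((-it)^\ell\varphi\big)\in L^1(\mathbb R;E_{-\infty})$; in each norm $\|\hat g(\tau)\|_{S,\alpha}=\|\widehat{S^\alpha g}(\tau)\|_H\to 0$ as $|\tau|\to\infty$ by the vector-valued Riemann--Lebesgue lemma (since $S^\alpha g\in L^1(\mathbb R;H)$), so $t^k\partial_t^\ell\hat\varphi\to 0$ in $E_{-\infty}$; continuity of $\mathcal F$ for the Fréchet topologies then follows from the crude bound $\|\hat g(\tau)\|_{S,\alpha}\le\|S^\alpha g\|_{L^1(\mathbb R;H)}$ together with these identities. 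Bijectivity is the inversion formula $\overline{\mathcal F}\,\mathcal F=\mathcal F\,\overline{\mathcal F}=2\pi\,\mathrm{Id}$ on $\mathcal S(\mathbb R;E_{-\infty})$: the Gaussian-regularization proof of the scalar inversion theorem uses only the linear structure, dominated convergence, and the computation of the scalar Gaussian integral, so it applies verbatim with values in the Fréchet space $E_{-\infty}$ (alternatively, pair with elements of $E_\infty$ to reduce to the scalar statement). Hence $\mathcal F$ is an automorphism of $\mathcal S(\mathbb R;E_{-\infty})$ with inverse $\tfrac1{2\pi}\overline{\mathcal F}$, which is (1).

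For (2), I would treat $\alpha=0$ first. By Lemma \ref{Lemma density}, $\mathrm{span}(\mathcal D(\mathbb R)\otimes E_{-\infty})$ is dense in $L^2(\mathbb R;H)$, and on such finite sums the Plancherel identity $\|\hat f\|_{L^2(\mathbb R;H)}^2=2\pi\|f\|_{L^2(\mathbb R;H)}^2$ reduces to the scalar Plancherel theorem applied coordinatewise in an orthonormal basis of $H$ (equivalently, through the canonical isometry $L^2(\mathbb R;H)\cong L^2(\mathbb R)\otimes H$, the Hilbert-space tensor product, under which $\mathcal F$ becomes $\mathcal F_{L^2(\mathbb R)}\otimes\mathrm{Id}_H$). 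Thus $\mathcal F$ extends to a bounded operator on $L^2(\mathbb R;H)$ satisfying Plancherel, and the same applied to $\overline{\mathcal F}$ furnishes the inverse, so $\mathcal F$ is an isomorphism. For general $\alpha$, recall $f\in L^2(\mathbb R;D_{S,\alpha})$ iff $S^\alpha f\in L^2(\mathbb R;H)$ with $\|f\|_{L^2(\mathbb R;D_{S,\alpha})}=\|S^\alpha f\|_{L^2(\mathbb R;H)}$, and that $\mathcal F$ commutes with $S^\alpha$ (on $\mathcal D(\mathbb R;E_{-\infty})$ by part (1), hence everywhere by density and the definition of the extension). Therefore $\mathcal F$ maps $L^2(\mathbb R;D_{S,\alpha})$ into itself with $S^\alpha\hat f=\widehat{S^\alpha f}$ and
\[
\|\hat f\|_{L^2(\mathbb R;D_{S,\alpha})}^2=\|\widehat{S^\alpha f}\|_{L^2(\mathbb R;H)}^2=2\pi\|S^\alpha f\|_{L^2(\mathbb R;H)}^2=2\pi\|f\|_{L^2(\mathbb R;D_{S,\alpha})}^2,
\]
and surjectivity follows in the same way from $\tfrac1{2\pi}\overline{\mathcal F}$; this gives the Plancherel equality and the isomorphism property.

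The arguments are essentially bookkeeping: the single point that genuinely needs attention is that none of the classical proofs — existence and smoothness of $\hat\varphi$, the inversion formula, Plancherel — uses anything about the target beyond completeness and continuity of the operations involved, so replacing the scalar field by the Fréchet space $E_{-\infty}$ (and by the Hilbert space $H$ in the $L^2$ part) is harmless, the key input throughout being that each $S^\alpha$ acts continuously on $E_{-\infty}$ and isometrically between the homogeneous norms. I expect the most error-prone step, though routine, to be verifying the consistency of the three successive definitions of $\mathcal F$ (on $L^1(\mathbb R;H)$, on $\mathcal S(\mathbb R;E_{-\infty})$, and by density on $L^2$) and, relatedly, making the $E_{-\infty}$-valued integral and inversion formula fully rigorous in the Fréchet setting.
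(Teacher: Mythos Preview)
Your proposal is correct and matches the paper's approach: the paper does not give a proof but simply refers to the classical scalar-valued arguments (citing Zuily) and states that ``the same proofs work here,'' which is precisely what you have spelled out in detail. Your sketch is thus a faithful expansion of the one-line justification the authors intended.
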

We can now transport the Fourier transform $\mathcal{F}$ to $\mathcal{S'}(\mathbb{R};E_\infty)$ by sesquilinear duality.

\begin{defn}
We define the Fourier transform $\mathcal{F}$ on $\mathcal{S'}(\mathbb{R};E_\infty)$ by setting 
\begin{equation*}
    \llangle \mathcal{F}u,\varphi \rrangle_{\mathcal{S'},\mathcal{S}}:= \llangle \hat{u},\varphi \rrangle_{\mathcal{S'},\mathcal{S}} := \llangle u,\overline{\mathcal{F}}(\varphi) \rrangle_{\mathcal{S'},\mathcal{S}},\ u \in \mathcal{S'}(\mathbb{R};E_\infty), \ \varphi \in \mathcal{S}(\mathbb{R};E_{-\infty}).
\end{equation*}
\end{defn}
From Proposition \ref{TFS}, we deduce the following proposition regarding the Fourier transform on $\mathcal{S'}(\mathbb{R};E_\infty)$. 
\begin{prop}\label{TFS'}
     $\mathcal{F}: \mathcal{S'}(\mathbb{R};E_\infty) \rightarrow \mathcal{S'}(\mathbb{R};E_\infty)$ is an automorphism and satisfies the property (1) and its restriction to $ L^2(\mathbb{R};D_{S,\alpha})$ agrees with the operator in (2)  as in the statement above. 
\end{prop}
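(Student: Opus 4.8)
The statement to prove is Proposition \ref{TFS'}, asserting that $\mathcal{F}$ is an automorphism of $\mathcal{S}'(\mathbb{R};E_\infty)$ satisfying property (1) and restricting correctly to the $L^2$ spaces.

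\medskip

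The plan is to transport everything from the test-function level, Proposition \ref{TFS}, by sesquilinear duality, exactly as in the classical scalar theory in \cite{zuily2002elements}. First I would check that $\mathcal{F}$ is well-defined on $\mathcal{S}'(\mathbb{R};E_\infty)$: since $\overline{\mathcal{F}}\colon \mathcal{S}(\mathbb{R};E_{-\infty})\to\mathcal{S}(\mathbb{R};E_{-\infty})$ is continuous (this is part (1) of Proposition \ref{TFS}, which holds verbatim for $\overline{\mathcal{F}}$ by the change $i\tau t \leftrightarrow -i\tau t$), the composite $\varphi\mapsto \llangle u,\overline{\mathcal{F}}(\varphi)\rrangle_{\mathcal{S}',\mathcal{S}}$ is again a continuous anti-linear form, so $\mathcal{F}u\in\mathcal{S}'(\mathbb{R};E_\infty)$. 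To see it is an automorphism, I would introduce the inverse candidate $\mathcal{G}$ defined dually by $\overline{\mathcal{F}}$ in the same way $\mathcal{F}$ is defined by $\mathcal{F}$, i.e.\ $\llangle \mathcal{G}u,\varphi\rrangle := \llangle u,\mathcal{F}(\varphi)\rrangle$, and check $\mathcal{G}\circ\mathcal{F}=\mathcal{F}\circ\mathcal{G}=\mathrm{Id}$ on $\mathcal{S}'$; this reduces, upon unraveling the definitions, to the inversion identities $\overline{\mathcal{F}}\mathcal{F}=\mathcal{F}\overline{\mathcal{F}}=2\pi\,\mathrm{Id}$ (or whichever normalization is used here) on $\mathcal{S}(\mathbb{R};E_{-\infty})$, which is again contained in part (1) of Proposition \ref{TFS}. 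Continuity of $\mathcal{F}$ and $\mathcal{G}$ for the weak-$\star$ topology is immediate from the definition, and this is all that is needed for ``automorphism'' at the level of distributions.

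\medskip

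Next I would verify property (1), namely $\mathcal{F}(S^\alpha u)=S^\alpha\mathcal{F}(u)$, $\partial_\tau^k\mathcal{F}(u)=\mathcal{F}((-it)^k u)$, and $\mathcal{F}(\partial_t^k u)=(i\tau)^k\mathcal{F}(u)$, for $u\in\mathcal{S}'(\mathbb{R};E_\infty)$. Each of these is proved by testing against an arbitrary $\varphi\in\mathcal{S}(\mathbb{R};E_{-\infty})$, moving the operator ($S^\alpha$, $\partial_t$, or multiplication by $t$) onto the test function via the defining dualities (for $S^\alpha u$ and $\partial_t^k u$ these are the definitions of the corresponding operations on $\mathcal{D}'(I;E_\infty)$ recalled in the excerpt; note $S^\alpha$ is self-adjoint-compatible, $S^\alpha{}^\star=S^\alpha$ since we may take $\alpha$ real and $(t^\alpha)^\star=t^\alpha$ is real-valued, and $\partial_t^\star=-\partial_t$ contributes the usual sign which is absorbed correctly), and then invoking the corresponding identity from part (1) of Proposition \ref{TFS} for $\overline{\mathcal{F}}$ applied to $\varphi$. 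Care is needed only with signs and conjugations in the sesquilinear pairing — e.g.\ the identity $\overline{\mathcal{F}}((-i t)^k\varphi) = (i\tau)^k\overline{\mathcal{F}}(\varphi)$ versus its barred companion — but this is bookkeeping, not substance. Commutation of $S^\alpha$ with $\partial_t^k$ at the distributional level was already recorded in the remark preceding, so no issue there.

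\medskip

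Finally I would address the restriction to $L^2(\mathbb{R};D_{S,\alpha})$: since $L^2(\mathbb{R};D_{S,\alpha})\hookrightarrow \mathcal{S}'(\mathbb{R};E_\infty)$ via $J_\alpha$, and since on $\mathcal{S}(\mathbb{R};E_{-\infty})$ (which is dense in $L^2(\mathbb{R};D_{S,\alpha})$ by Lemma \ref{Lemma density}, as $\mathcal{D}\subset\mathcal{S}$ is dense and $\mathcal{D}(\mathbb{R};E_{-\infty})$ is dense in $L^2(\mathbb{R};D_{S,\alpha})$) the two definitions of $\mathcal{F}$ — the classical/Bochner integral one from Proposition \ref{TFS}(2) and the distributional one — agree, a density argument using the boundedness of $\mathcal{F}$ on $L^2(\mathbb{R};D_{S,\alpha})$ (Proposition \ref{TFS}(2)) shows they agree on all of $L^2(\mathbb{R};D_{S,\alpha})$: the distributional $\mathcal{F}$ is weak-$\star$ continuous while the $L^2$ operator is norm-continuous hence weak-$\star$ continuous on $L^2(\mathbb{R};D_{S,\alpha})$, and they coincide on the dense subspace. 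The main (though still mild) obstacle is organizational rather than conceptual: one must be scrupulous about the sesquilinear nature of all the pairings, so that the duality transpose of $\mathcal{F}$ is $\overline{\mathcal{F}}$ and not $\mathcal{F}$ itself, and about which normalization constant appears in the Fourier inversion formula so that ``automorphism'' is genuinely witnessed by an explicit two-sided inverse. Since the paper explicitly says ``the same proofs work here,'' the proof amounts to noting that the scalar arguments of \cite{zuily2002elements} carry over once Proposition \ref{TFS} is in hand, and I would write it at that level of brevity.
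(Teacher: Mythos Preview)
Your proposal is correct and matches the paper's approach exactly: the paper provides no explicit proof, instead remarking that the classical scalar arguments (as in \cite{zuily2002elements}) carry over verbatim once Proposition~\ref{TFS} is available, which is precisely the duality-and-density scheme you describe. Your write-up is in fact more detailed than what the paper records.
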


For $\alpha \in \mathbb{R}$, we denote by $D_t^\alpha$ the time-derivative of order $\alpha$. More precisely, if $u \in \mathcal{S'}(\mathbb{R};E_\infty)$ is such that $\left | \tau \right |^{\alpha} \mathcal{F}u  \in \mathcal{S'}(\mathbb{R};E_\infty)$, we set 
\begin{equation*}
    D_t^\alpha u := \mathcal{F}^{-1} \left ( \left | \tau \right |^{\alpha} \mathcal{F}u \right ) .
\end{equation*}

\section{Abstract heat equations }\label{Section 3}
 In this section, we study well-posedness of the abstract heat equation $\partial_t u + S^2u =f$ where the role of the Laplacian is played by the square of the self-adjoint operator $S$.  These well-posedness results will imply  embeddings and energy inequalities in the spirit of Lions that will be described in the next section. 
The abstract heat operator in $\mathbb{R}$ is $\partial_t +S^2$. The  
backward operator $-\partial_t +S^2$  corresponds to reversing time, and the results are exactly the same and are often proved and used simultaneously.

\subsection{Solving the abstract heat equation using the Fourier method}

Working on the real line makes the Fourier transform in time available and is a key tool to obtain homogeneous estimates in a simple way.

\subsubsection{Uniqueness in homogeneous energy space}
We begin with a uniqueness result which is key to our discussion. 
\begin{prop}[Uniqueness in homogeneous energy space]\label{Unicité}
    Let $u \in \mathcal{D}'(\mathbb{R}; E_\infty)$ be a solution of $\partial_t u + S^2 u =0$ in $\mathcal{D}'(\mathbb{R};E_\infty)$. If $u \in L^2(\mathbb{R};D_{S,\alpha})$ for some $\alpha \in \mathbb{R}$, then $u=0$.
\end{prop}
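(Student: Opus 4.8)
The plan is to use the Fourier transform in time, which is available since we are on $\mathbb{R}$ and $u \in L^2(\mathbb{R};D_{S,\alpha}) \subset \mathcal{S}'(\mathbb{R};E_\infty)$. Applying $\mathcal{F}$ to the equation $\partial_t u + S^2 u = 0$ and using Proposition \ref{TFS'}, which gives $\mathcal{F}(\partial_t u) = i\tau\, \mathcal{F}u$ and $\mathcal{F}(S^2 u) = S^2 \mathcal{F}u$, we obtain $(i\tau + S^2)\hat u(\tau) = 0$ in $\mathcal{S}'(\mathbb{R};E_\infty)$. The point is that for each fixed $\tau \in \mathbb{R}$, the operator $i\tau + S^2$ is injective on $E_\infty$: indeed, by the Borel functional calculus for the self-adjoint operator $S$, the spectrum of $S^2$ lies in $[0,\infty)$, so $i\tau + S^2$ has a bounded inverse $(i\tau + S^2)^{-1} = g_\tau(S)$ with $g_\tau(\lambda) = (i\tau + \lambda^2)^{-1}$ satisfying $|g_\tau(\lambda)| \le \min(|\tau|^{-1}, \lambda^{-2})$, hence $g_\tau(S)$ is bounded on $H$ and maps $E_{-\infty}$ into itself (it has at most polynomial behaviour $O(\lambda^{-2})$ at $0$ and at $\infty$), so it extends to a bounded operator on $E_\infty$ which is a two-sided inverse of $i\tau + S^2$ there.

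The main step is to make the pointwise-in-$\tau$ injectivity into a statement about the distribution $\hat u$. First I would reduce to $\hat u \in L^2(\mathbb{R};D_{S,\alpha})$, which holds by the Plancherel part of Proposition \ref{TFS'} (item (2) of Proposition \ref{TFS} transported to $\mathcal{S}'$). So $\hat u(\tau) \in D_{S,\alpha}$ for a.e.\ $\tau$, and the distributional equation $(i\tau + S^2)\hat u = 0$ then reads, after testing against $\theta \otimes a$ with $\theta \in \mathcal{D}(\mathbb{R})$ and $a \in E_{-\infty}$, as $(i\tau + S^2)\hat u(\tau) = 0$ in $E_\infty$ for a.e.\ $\tau \in \mathbb{R}$. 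Applying $g_\tau(S) = (i\tau + S^2)^{-1}$ to this identity (legitimate since $\hat u(\tau) \in E_\infty$ and $g_\tau(S)$ is an automorphism of $E_\infty$) gives $\hat u(\tau) = 0$ for a.e.\ $\tau$, hence $\hat u = 0$ in $\mathcal{S}'(\mathbb{R};E_\infty)$, and therefore $u = \mathcal{F}^{-1}(0) = 0$.

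I expect the one delicate point to be the justification that the functional calculus of $S$ on $E_\infty$ applies to the symbols $g_\tau(\lambda) = (i\tau+\lambda^2)^{-1}$ uniformly enough, and that "$(i\tau + S^2)\hat u = 0$ in $\mathcal{D}'$" can indeed be evaluated pointwise in $\tau$; both are handled by the machinery already set up in Section \ref{Section 2} (the extension of the functional calculus to $E_\infty$ via $\langle f(S)w,v\rangle = \langle w, f^\star(S)v\rangle$ for symbols with polynomial bounds at $0$ and $\infty$, together with the identification $D_{S,\alpha} = \{u \in E_\infty : S^\alpha u \in H\}$ and the injectivity of $J_\alpha$). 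Everything else is the routine observation that $\partial_t u + S^2 u = 0$ is stable under Fourier transform. An alternative, should one wish to avoid $\mathcal{S}'$ entirely, is to convolve $u$ with a compactly supported smooth $\psi \in \mathcal{D}(\mathbb{R})$ in time to get $u \ast \psi$ which is smooth in $t$, still solves the equation, still lies in $L^2(\mathbb{R};D_{S,\alpha})$, and then run the Fourier argument on it using that $\widehat{u\ast\psi} = \hat\psi\,\hat u$ has the same support considerations; but the direct $\mathcal{S}'$ route seems cleanest given the framework already in place.
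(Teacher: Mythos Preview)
Your proof is correct and follows essentially the same approach as the paper: apply the Fourier transform in time and use that $i\tau + S^2$ is injective. The paper's execution differs only slightly: it first reduces to $\alpha=0$ by replacing $u$ with $S^\alpha u$, then, instead of passing to a pointwise-in-$\tau$ statement, it directly substitutes test functions of the form $\tau\mapsto(-i\tau+S^2)^{-1}\varphi_k(\tau)$ (with $\varphi_k\to\hat u$ in $L^2(\mathbb{R};H)$ and $0\notin\mathrm{supp}(\varphi_k)$ so the resolvent is bounded on $H$) into the weak formulation, yielding $\int_{\mathbb{R}}\|\hat u(\tau)\|_H^2\,\mathrm{d}\tau=0$ in one stroke.
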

\begin{proof} 
As $S^\alpha$ is an isomorphism on $E_\infty$ which commutes with time derivatives, $v=S^\alpha u$ satisfies the same equation, hence we may assume $\alpha=0$ and $u\in L^2(\mathbb{R};H)$. As this is a subset of $ \mathcal{S}'(\mathbb{R};E_\infty)$, by applying the Fourier transform to this equation, we have for all $\varphi \in \mathcal{S}(\mathbb{R};E_{-\infty})$
\begin{equation}\label{v=0}
    \int_\mathbb{R} \langle \hat{u}(\tau ), (- i \tau+S^2 ) \varphi(\tau) \rangle_H \ \mathrm{d}\tau =0.
\end{equation}
Take a sequence $(\varphi_k)_{k \in \mathbb{N}} \in \mathcal{D}(\mathbb{R};E_{-\infty})^\mathbb{N}$ such that $\varphi_k \underset{}{\rightarrow}  \hat{u}$ in $L^2(\mathbb{R};H)$ and $0 \notin \mathrm{supp}(\varphi_k)$, for all $k \in \mathbb{N}$.
Taking $\tau \mapsto (-{i}\tau+S^2)^{-1}\varphi_k(\tau)$ as a test function in \eqref{v=0} and letting $k \to +\infty$, we have
\begin{equation*}
    \int_\mathbb{R}\left \| \hat{u}(\tau) \right \|_H^2 \ \mathrm{d}\tau =0 .
\end{equation*}
By Plancherel, we have then $u=0$.
\end{proof} 
\begin{cor}[Invertibility on abstract Schwartz functions and tempered distributions]\label{cor:iso} The operator $\partial_t  + S^2$ is an isomorphism on $\mathcal{S}(\mathbb{R};E_{-\infty})$  and on $\mathcal{S}'(\mathbb{R};E_{\infty})$.
\end{cor}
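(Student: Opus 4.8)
The plan is to prove Corollary \ref{cor:iso} by combining the uniqueness result of Proposition \ref{Unicité} with explicit Fourier-side formulas for the inverse. First I would show injectivity: if $(\partial_t+S^2)u = 0$ for $u \in \mathcal{S}'(\mathbb{R};E_\infty)$ (resp.\ $u \in \mathcal{S}(\mathbb{R};E_{-\infty})$), then in particular $u \in \mathcal{S}'(\mathbb{R};E_\infty)$ solves the equation in $\mathcal{D}'(\mathbb{R};E_\infty)$; since $\mathcal{S}'(\mathbb{R};E_\infty) = \bigcup_{\alpha} (\text{something in } L^2(\mathbb{R};D_{S,\alpha}))$ is not literally true, I would instead argue that any tempered distribution solution has Fourier transform supported where $(-i\tau + S^2)$ is invertible and run the same Plancherel/test-function argument as in Proposition \ref{Unicité} directly at the level of $\mathcal{S}'$, or simply invoke Proposition \ref{Unicité} after noting $\mathcal{S}(\mathbb{R};E_{-\infty}) \subset L^2(\mathbb{R};D_{S,\alpha})$ for every $\alpha$, which handles the Schwartz case immediately; for the tempered-distribution case I would observe that $\widehat{u}$ must vanish because testing against $\tau\mapsto(-i\tau+S^2)^{-1}\varphi(\tau)$ is legitimate once one knows $(-i\tau+S^2)^{-1}$ maps $\mathcal{S}(\mathbb{R};E_{-\infty})$ to itself (the symbol $(-i\tau+S^2)^{-1}$ and all its $\tau$-derivatives are uniformly bounded operators in the functional calculus, with the right decay, by estimate \eqref{Psi+}-type bounds).

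Next I would construct the inverse. On the Fourier side the equation $(\partial_t + S^2)u = f$ becomes $(i\tau + S^2)\widehat{u} = \widehat{f}$, so the candidate inverse is $u = \mathcal{F}^{-1}\big((i\tau+S^2)^{-1}\widehat{f}\big)$. The key point is that the operator-valued multiplier $m(\tau) := (i\tau + S^2)^{-1}$ is a well-defined element of $\mathcal{L}(H)$ for every $\tau \in \mathbb{R}$ (since $i\tau + S^2$ is invertible: $S^2 \geq 0$ and $S$ injective, so $i\tau + S^2$ has trivial kernel and bounded inverse via the functional calculus $t\mapsto(i\tau+t^2)^{-1}$, which is bounded by $\min(|\tau|^{-1}, \ldots)$), and moreover $\tau \mapsto m(\tau)$ together with all its derivatives $\partial_\tau^k m(\tau)$ are smooth, uniformly bounded, and in fact rapidly decaying in the relevant sense: $\partial_\tau^k m(\tau) = (-i)^k k!\, (i\tau+S^2)^{-k-1}$, whose operator norm is $O(|\tau|^{-k-1})$. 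This is exactly what is needed so that multiplication by $m$ preserves $\mathcal{S}(\mathbb{R};E_{-\infty})$ — here one uses that $m(\tau)$ commutes with all powers $S^\alpha$ and maps $E_{-\infty}$ into $E_{-\infty}$ boundedly (indeed $(i\tau+S^2)^{-1}S^\beta$ is bounded for $\beta \le 2$ and one can iterate) — hence also preserves $\mathcal{S}'(\mathbb{R};E_\infty)$ by duality, using Proposition \ref{TFS} and \ref{TFS'}. Composing with $\mathcal{F}$ and $\mathcal{F}^{-1}$, which are automorphisms of both spaces, gives a bounded two-sided inverse candidate $R$ for $\partial_t+S^2$ on each space.

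Then I would verify $R$ is genuinely a two-sided inverse: $(\partial_t+S^2)R = \mathrm{Id}$ and $R(\partial_t+S^2) = \mathrm{Id}$, which on the Fourier side is just $(i\tau+S^2)m(\tau) = \mathrm{Id} = m(\tau)(i\tau+S^2)$, a pointwise-in-$\tau$ identity in the functional calculus that transports back through $\mathcal{F}$ because $\mathcal{F}$ intertwines $\partial_t$ with multiplication by $i\tau$ and $S^\alpha$ with $S^\alpha$. Surjectivity follows since $R$ is a right inverse, and injectivity follows either from $R$ being a left inverse or from the uniqueness statement just proved; bijectivity plus continuity of $R$ (clear from the multiplier bounds and continuity of $\mathcal{F}^{\pm1}$) gives that $\partial_t+S^2$ is an isomorphism in the topological sense on both $\mathcal{S}(\mathbb{R};E_{-\infty})$ and $\mathcal{S}'(\mathbb{R};E_\infty)$.

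The main obstacle I anticipate is the careful bookkeeping that the multiplier $\tau\mapsto(i\tau+S^2)^{-1}$ really does act continuously on the Fréchet space $\mathcal{S}(\mathbb{R};E_{-\infty})$: one must control, for each seminorm indexed by $(k,\ell,\alpha)$, the quantity $\sup_\tau \|t^k \partial_\tau^\ell (m(\tau)S^\alpha\varphi(\tau))\|_H$ in terms of finitely many seminorms of $\varphi$, which requires the Leibniz expansion and the uniform bounds $\|\partial_\tau^j m(\tau) S^\alpha\|_{\mathcal{L}(H)} \lesssim \langle\tau\rangle^{-j} \cdot \|(i\tau+S^2)^{-1}S^\alpha\|$ together with boundedness of $(i\tau+S^2)^{-1}S^\alpha$ uniformly in $\tau$ when $\alpha \le 2$ and an iteration/interpolation argument for general $\alpha$. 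Everything else — injectivity, the formal inverse formula, the intertwining identities — is essentially a transcription of the classical scalar heat-equation argument into the operator-valued functional-calculus setting, using the framework set up in Section \ref{Section 2}.
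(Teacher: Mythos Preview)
Your overall approach via the Fourier-side multiplier $m(\tau)=(i\tau+S^2)^{-1}$ is the same as the paper's, but there is a genuine error in your claim that $m(\tau)\in\mathcal{L}(H)$ for every $\tau\in\mathbb{R}$: at $\tau=0$ this reads $m(0)=S^{-2}$, which is \emph{not} bounded on $H$ in general, since $S$ is assumed injective but not invertible (the symbol $t\mapsto t^{-2}$ is unbounded near $0$). Your decay estimate $\|\partial_\tau^k m(\tau)\|_{\mathcal{L}(H)}=O(|\tau|^{-k-1})$ has the same defect near the origin, so the Leibniz bookkeeping you outline in the final paragraph cannot be carried out using $\mathcal{L}(H)$-norms of $m$ and its derivatives alone.

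The fix, which is exactly what the paper does, is never to apply the bare resolvent: write instead
\[
(i\tau+S^2)^{-1}\hat f(\tau)\;=\;\big[(i\tau+S^2)^{-1}S^2\big]\, S^{-2}\hat f(\tau).
\]
The bracketed operator is uniformly bounded on $H$ (its symbol $t^2/(i\tau+t^2)$ has modulus at most $1$), and $S^{-2}\hat f\in\mathcal{S}(\mathbb{R};E_{-\infty})$ because $S^{-2}:E_{-\infty}\to E_{-\infty}$ is an isomorphism. This is precisely the mechanism behind your correct instinct that $(i\tau+S^2)^{-1}S^\alpha$ is uniformly bounded for $0\le\alpha\le 2$; you just need to invoke it from the start rather than as an afterthought. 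For the $\mathcal{S}'(\mathbb{R};E_\infty)$ case, the paper avoids your direct injectivity argument entirely: once $\partial_t+S^2$ and the backward operator $-\partial_t+S^2$ are both known to be isomorphisms on $\mathcal{S}(\mathbb{R};E_{-\infty})$, the result on $\mathcal{S}'(\mathbb{R};E_\infty)$ follows in one line by duality, which is considerably cleaner than justifying the test function $\tau\mapsto(-i\tau+S^2)^{-1}\varphi(\tau)$.
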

\begin{proof} We begin with the result on $\mathcal{S}(\mathbb{R};E_{-\infty})$.
        The boundedness is clear. The injectivity follows from the above proposition. The surjectivity is as follows. By Fourier transform, it suffices to show the surjectivity for $i\tau+S^2$. If $\hat f\in \mathcal{S}(\mathbb{R};E_{-\infty}) $, then $S^{-2}\hat f \in L^2(\mathbb{R};H)$ and by the uniform boundedness  of $(i\tau+S^2)^{-1} S^2$,   $g(\tau)=(i\tau+S^2)^{-1} S^2 (S^{-2}\hat f(\tau)) \in L^2(\mathbb{R};H)$ with $i\tau g(\tau)+ S^2 g (\tau)= \hat f(\tau)$. Shifting with powers of $S$, we have $g\in L^2(\mathbb{R}; E_{-\infty})$. Setting $\hat u=g$, we see that $\partial_t u=f-S^2u$, so $\partial_t u\in L^2(\mathbb{R}; E_{-\infty})$ and by iteration, we have $u\in C^\infty(\mathbb{R}; E_{-\infty})$. The decay is easily checked following the argument and using $\tau$-derivatives of the resolvent $(i\tau+S^2)^{-1}$. 

        This applies to the backward operator $-\partial_t+S^2$. Hence, by duality, we obtain the result on $\mathcal{S}'(\mathbb{R};E_{\infty})$.
\end{proof}

\begin{rem}
  In  the sequel, we shall focus on $\alpha=1$ in Proposition \ref{Unicité} to make $H$ the pivotal space, but clearly, one can shift to this case by applying powers of $S$. 
\end{rem}
\subsubsection{Solution and source spaces} We begin with recalling the following  result of Lions  for the sake of completeness, but we shall not use this result and prove a stronger one.
\begin{prop}[Solving the abstract heat equation à la Lions]\label{lem: Lions}
    If $f\in L^2(\mathbb{R}; D_{S,-1})$, then there exists $u\in  L^2(\mathbb{R}; D_{S,1})$ such that $\partial_t u + S^2 u =f$ in $\mathcal{D}'(\mathbb{R};E_\infty)$.
\end{prop}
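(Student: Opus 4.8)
The plan is to bypass the variational (Lax--Milgram) route suggested by the title of the proposition and instead solve the equation directly on the Fourier side, exploiting that $I=\mathbb{R}$ makes the time-Fourier transform of Section \ref{S2S1} available and that the symbol $i\tau+S^2$ is pointwise invertible with a uniformly bounded functional calculus. First I would pass to the frequency variable: given $f\in L^2(\mathbb{R};D_{S,-1})$, that is $S^{-1}f\in L^2(\mathbb{R};H)$, Plancherel (Proposition \ref{TFS'}, property (2), with $\alpha=-1$) gives $\hat f\in L^2(\mathbb{R};D_{S,-1})$, so $S^{-1}\hat f\in L^2(\mathbb{R};H)$. For $\tau\neq 0$ I would then set
\[
  w(\tau):=(i\tau+S^2)^{-1}\hat f(\tau)=\bigl(S^2(i\tau+S^2)^{-1}\bigr)\,S^{-1}\hat f(\tau),
\]
where the operators are understood via the extension of the Borel functional calculus of $S$ to $E_\infty$ (for which $i\tau+S^2$ and its inverse are automorphisms, so the factorization is legitimate). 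Measurability of $\tau\mapsto w(\tau)$ is routine, since $\tau\mapsto S^2(i\tau+S^2)^{-1}$ is norm-continuous into $\mathcal{L}(H)$ away from $0$ and $\tau\mapsto S^{-1}\hat f(\tau)$ is measurable into $H$.

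The key estimate is the uniform bound $\|S^2(i\tau+S^2)^{-1}\|_{\mathcal{L}(H)}\le 1$ for all $\tau\in\mathbb{R}$, which is immediate from the functional calculus since $\sup_{\lambda>0}\lambda^2/|i\tau+\lambda^2|=\sup_{\lambda>0}\lambda^2/\sqrt{\tau^2+\lambda^4}\le 1$. Combined with the factorization above this yields $\|Sw(\tau)\|_H\le\|S^{-1}\hat f(\tau)\|_H$ for a.e.\ $\tau$, hence $Sw\in L^2(\mathbb{R};H)$ with $\|Sw\|_{L^2(\mathbb{R};H)}\le\|S^{-1}\hat f\|_{L^2(\mathbb{R};H)}$; in other words $w\in L^2(\mathbb{R};D_{S,1})$. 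I would then define $u:=\mathcal{F}^{-1}w$, which again lies in $L^2(\mathbb{R};D_{S,1})$ by Proposition \ref{TFS'}, property (2).

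It remains to check that $u$ solves the equation. By construction $(i\tau+S^2)w(\tau)=\hat f(\tau)$ holds in $E_\infty$ for a.e.\ $\tau$; since $w\in L^2(\mathbb{R};D_{S,1})\subset L^1_{\mathrm{loc}}(\mathbb{R};D_{S,-1})$ and $S^2w=\hat f-i(\cdot)w\in L^1_{\mathrm{loc}}(\mathbb{R};D_{S,-1})$, this is an identity in $\mathcal{S}'(\mathbb{R};E_\infty)$. Because $\mathcal{F}$ commutes with powers of $S$ and turns $\partial_t$ into multiplication by $i\tau$ (Proposition \ref{TFS'}, property (1)), applying $\mathcal{F}^{-1}$ gives $\partial_t u+S^2u=f$ in $\mathcal{S}'(\mathbb{R};E_\infty)$, a fortiori in $\mathcal{D}'(\mathbb{R};E_\infty)$, which is the assertion. (Alternatively, one could invoke Corollary \ref{cor:iso} to get a unique solution $u\in\mathcal{S}'(\mathbb{R};E_\infty)$ and then identify $\hat u$ with $w$ using the uniqueness Proposition \ref{Unicité}, but the direct construction seems cleanest.)

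I do not expect a genuine analytic obstacle here: the substance is entirely in the elementary functional-calculus bound $\|S^2(i\tau+S^2)^{-1}\|_{\mathcal{L}(H)}\le 1$. The only points requiring care are bookkeeping ones --- verifying that the pointwise-in-$\tau$ manipulations of the (possibly unbounded) operators are licit in the ambient space $E_\infty$, that $\tau\mapsto w(\tau)$ is strongly measurable, and that the Fourier multiplier identity transfers correctly between $\mathcal{S}'(\mathbb{R};E_\infty)$-distributions and their pointwise representatives. Uniqueness in $L^2(\mathbb{R};D_{S,1})$ is not part of the statement but is anyway already covered by Proposition \ref{Unicité}.
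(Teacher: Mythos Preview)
Your argument is correct and is essentially the Fourier-multiplier construction that the paper develops two paragraphs later in the proof of Theorem~\ref{ISOMORPHISM} (the case $\alpha=-1$ there recovers exactly this proposition). The paper's own proof of Proposition~\ref{lem: Lions}, by contrast, is a one-line citation of the Lions representation theorem in the Hilbert space $L^2(\mathbb{R};D_{S,1})$ --- a purely variational existence argument with no Fourier transform and no explicit resolvent bound. So you have taken a genuinely different route: more hands-on, but self-contained and yielding the sharp estimate $\|u\|_{L^2(\mathbb{R};D_{S,1})}\le\|f\|_{L^2(\mathbb{R};D_{S,-1})}$ for free, whereas the variational argument gives only existence. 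The paper in fact remarks that it states Proposition~\ref{lem: Lions} only for completeness and will not use it, preferring the Fourier method you chose.

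One small slip: your displayed factorization should read $Sw(\tau)=S^2(i\tau+S^2)^{-1}\,S^{-1}\hat f(\tau)$ (or equivalently $w(\tau)=S(i\tau+S^2)^{-1}\,S^{-1}\hat f(\tau)$), not $w(\tau)=S^2(i\tau+S^2)^{-1}\,S^{-1}\hat f(\tau)$; the conclusion $\|Sw(\tau)\|_H\le\|S^{-1}\hat f(\tau)\|_H$ that you draw from it is nonetheless the right one.
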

\begin{proof}
    It is straightforward application of the Lions representation theorem \cite[Théorème 1.1]{lions2013equations} in the Hilbert space $L^2(\mathbb{R}; D_{S,1})$. 
\end{proof}

As we said, we now argue with in mind that $H$, or rather $L^2(\mathbb{R}; H)$,  is the pivotal Hilbert space. Define 
\begin{align*}
    V_1&:= L^2(\mathbb{R};D_{S,1}),\\
    V_{-1}&:=\left\{ u \in L^2(\mathbb{R};D_{S,1}) \ : \ \partial_t u \in L^2(\mathbb{R};D_{S,-1}) \right\}.
\end{align*}
$V_{1}$ is the uniqueness space and $V_{-1}$ is the space to which the solution belongs when the source is taken in $L^2(\mathbb{R}; D_{S,-1})$ according to Lions' result.
However, for the  heat equation, Fourier methods are particularly handy to prove this  and also allow more source spaces.  

We  introduce a hierarchy of intermediate solution and source spaces. For $ \alpha \in [-1,1]$, define the following respective solution and source spaces 
\begin{align*}
    V_\alpha &:=\left \{ u \in L^2(\mathbb{R};D_{S,1}) : D_t^{\frac{1-\alpha}{2}}u \in L^2(\mathbb{R};D_{S,\alpha})\right \},\\
    W_\alpha &:= \left \{ D_t^{\frac{1+\alpha}{2}}g \ : \ g \in L^2(\mathbb{R};D_{S,\alpha}) \right \}.
\end{align*}
with 
\begin{align*}
    \left \| u \right \|_{V_\alpha}&:=\left ( \left \| u \right \|_{L^2(\mathbb{R};D_{S,1})}^2+   \| D_t^{\frac{1-\alpha}{2}}u  \|_{L^2(\mathbb{R}; D_{S,\alpha})}^2\right )^{1/2},
    \\
    \| f\|_{W_\alpha}&:=\| D_t^{-\frac{1+\alpha}{2}}f\|_{L^2(\mathbb{R};D_{S,\alpha})}.
\end{align*}
  We can think of $V_\alpha =  L^2(\mathbb{R};D_{S,1}) \cap \dot{H}^{\frac{1-\alpha}{2}}(\mathbb{R};D_{S,\alpha})$ using homogeneous Sobolev spaces on the real line but this presentation avoids having to define these spaces. 
In the same manner, we think of $W_\alpha=\dot{H}^{-\frac{1+\alpha}{2}}(\mathbb{R};D_{S,\alpha})$. Remark that $W_{-1}=L^2(\mathbb{R}; D_{S,-1}).$

The following lemma summarizes some properties of the spaces $V_\alpha$ and $W_\alpha$ and their relation.
\begin{lem}[Properties of intermediate spaces]\label{Spaces V_alpha} Fix  $-1 \leq \alpha \leq \alpha' \leq 1$. We have the following assertions. 
\begin{enumerate}
        \item $V_\alpha $ is a well-defined  subspace of  $\mathcal{S'}(\mathbb{R};E_\infty)$, $\left ( V_\alpha, \left \| \cdot \right \|_{V_\alpha} \right )$ is a Hilbert space, and we have 
                 \begin{equation*}
                     V_\alpha = \left \{ u \in \mathcal{S'}(\mathbb{R};E_\infty) : S^\alpha (S+\left | \tau \right |^{1/2})^{1-\alpha} \hat{u} \in L^2(\mathbb{R};H)\right \}, \ \left \| u \right \|_{V_\alpha} \sim  \| S^\alpha (S+\left | \tau \right |^{1/2})^{1-\alpha}\hat{u}  \|_{L^2(\mathbb{R};H)}.
                 \end{equation*}
        \item We have the following chain of continuous and dense inclusions:
        \begin{align*}
            \mathcal{S}(\mathbb{R};E_{-\infty}) \hookrightarrow V_\alpha \hookrightarrow V_{\alpha'} \hookrightarrow \mathcal{S'}(\mathbb{R};E_{\infty}).
        \end{align*}
         \item { $W_\alpha$ is a subspace of $\mathcal{S'}(\mathbb{R}; E_\infty)$, and $\left( W_\alpha, \left\| \cdot \right\|_{W_\alpha} \right)$ is a Hilbert space. We have a dense inclusion $\mathcal{S}_0(\mathbb{R}; E_{-\infty}) \hookrightarrow W_\alpha$, where
$\mathcal{S}_0(\mathbb{R}; E_{-\infty}) := \{ f \in \mathcal{S}(\mathbb{R}; E_{-\infty}) \mid \hat{f}(0) = 0 \}. $}
        \item Let $V_\alpha^\star$ denote the anti-dual space of $V_\alpha$ with respect to $\langle \cdot , \cdot \rangle_{L^2(\mathbb{R};H)}$. It is a subspace of $\mathcal{S}'(\mathbb{R};E_\infty)$ and  $V_\alpha^\star = L^2(\mathbb{R};D_{S,-1}) + W_{-\alpha}$ with the following estimate
    \begin{align*}     
   \left \| \omega  \right \|_{V_\alpha^\star} \sim \inf  \left\{  \left \| f \right \|_{L^2(\mathbb{R};H)}+\left \| g \right \|_{L^2(\mathbb{R};D_{S,-\alpha})}  \ : \ {\omega=Sf+D_t^{\frac{1-\alpha}{2}}g}\right\}.
\end{align*}
         
    \end{enumerate}
\end{lem}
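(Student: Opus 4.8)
The whole lemma is an exercise in the Fourier-side description, so the plan is to reduce everything to the characterization in item (1) and then read off items (2)--(4) from elementary Hilbert-space and multiplier arguments.

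\emph{Item (1).} First I would unwind the definitions on the Fourier side. Since $\mathcal{F}$ is an automorphism of $\mathcal{S}'(\mathbb{R};E_\infty)$ commuting with powers of $S$ (Proposition~\ref{TFS'}), the condition $u\in L^2(\mathbb{R};D_{S,1})$ reads $S\hat u\in L^2(\mathbb{R};H)$, and $D_t^{(1-\alpha)/2}u\in L^2(\mathbb{R};D_{S,\alpha})$ reads $|\tau|^{(1-\alpha)/2}S^\alpha\hat u\in L^2(\mathbb{R};H)$. So $u\in V_\alpha$ iff both $S\hat u$ and $|\tau|^{(1-\alpha)/2}S^\alpha\hat u$ lie in $L^2(\mathbb{R};H)$, with the square sum of their norms as $\|u\|_{V_\alpha}^2$. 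The key elementary fact is the two-sided bound, for fixed $\tau$ and using the Borel functional calculus of the positive self-adjoint $S$,
\begin{equation*}
\bigl(\text{terms }\|Sv\|_H,\ |\tau|^{(1-\alpha)/2}\|S^\alpha v\|_H\bigr)\ \sim\ \bigl\|S^\alpha(S+|\tau|^{1/2})^{1-\alpha}v\bigr\|_H,
\end{equation*}
which one checks pointwise in the spectrum of $S$ by comparing $\lambda$ and $|\tau|^{(1-\alpha)/2}\lambda^\alpha$ with $\lambda^\alpha(\lambda+|\tau|^{1/2})^{1-\alpha}$ for $\lambda\in(0,\infty)$; since $\alpha\in[-1,1]$ the exponents are nonnegative and the comparison $(a+b)^{1-\alpha}\sim a^{1-\alpha}+b^{1-\alpha}$ applies (this is where injectivity of $S$, in force throughout Sections~2--4, is used so the functional calculus makes sense). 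Integrating in $\tau$ gives the stated equivalence of norms and the set equality. That $(V_\alpha,\|\cdot\|_{V_\alpha})$ is Hilbert then follows because it is, up to the isometry $\mathcal F$ and the multiplier weight, a closed subspace of a weighted $L^2(\mathbb{R};H)$; closedness is immediate from the Fourier characterization plus Fatou. Membership in $\mathcal S'(\mathbb{R};E_\infty)$ follows since $V_\alpha\subset L^2(\mathbb{R};D_{S,1})\subset \mathcal S'(\mathbb{R};E_\infty)$ as recorded earlier.

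\emph{Items (2) and (3).} For (2), continuity of $V_\alpha\hookrightarrow V_{\alpha'}$ for $\alpha\le\alpha'$ is again a pointwise multiplier inequality: one shows $S^{\alpha'}(S+|\tau|^{1/2})^{1-\alpha'}\lesssim S^{\alpha}(S+|\tau|^{1/2})^{1-\alpha}$ on the spectrum, i.e. $\lambda^{\alpha'-\alpha}\lesssim(\lambda+|\tau|^{1/2})^{\alpha'-\alpha}$, which holds since $\alpha'-\alpha\ge 0$ and $\lambda\le\lambda+|\tau|^{1/2}$. The inclusions $\mathcal S(\mathbb{R};E_{-\infty})\hookrightarrow V_\alpha$ and $V_{\alpha'}\hookrightarrow\mathcal S'(\mathbb{R};E_\infty)$ are routine (Schwartz functions have all weighted norms finite; the last embedding was noted). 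Density of $\mathcal S(\mathbb{R};E_{-\infty})$ in $V_\alpha$: given $u\in V_\alpha$, the weighted function $w(\tau):=S^\alpha(S+|\tau|^{1/2})^{1-\alpha}\hat u(\tau)\in L^2(\mathbb{R};H)$; approximate $w$ in $L^2(\mathbb{R};H)$ by $\mathcal D(\mathbb{R};E_{-\infty})$ functions supported away from $\tau=0$ (using Lemma~\ref{Lemma density} together with a cutoff removing a neighborhood of $0$, harmless since $\{0\}$ is null), pull back through the weight and $\mathcal F^{-1}$, and check the images lie in $\mathcal S(\mathbb{R};E_{-\infty})$ and converge in $V_\alpha$; density of $V_\alpha$ in $V_{\alpha'}$ follows from density of $\mathcal S$ in $V_{\alpha'}$. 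For (3), on the Fourier side $W_\alpha=\{\,|\tau|^{(1+\alpha)/2}S^\alpha\hat g: g\in L^2(\mathbb{R};D_{S,\alpha})\,\}$ with $\|f\|_{W_\alpha}=\|\,|\tau|^{-(1+\alpha)/2}S^{-\alpha}\hat f\,\|_{L^2}$; since $f\mapsto |\tau|^{-(1+\alpha)/2}S^{-\alpha}\hat f$ is a linear bijection $W_\alpha\to L^2(\mathbb{R};H)$ which is isometric by definition of the norm, $W_\alpha$ is Hilbert, and it sits in $\mathcal S'(\mathbb{R};E_\infty)$ because $D_t^{(1+\alpha)/2}$ maps there. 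Density of $\mathcal S_0(\mathbb{R};E_{-\infty})$ is the same cutoff-away-from-$0$ plus $\mathcal D(\mathbb{R};E_{-\infty})$ density argument, the vanishing at $\tau=0$ being exactly what makes $|\tau|^{-(1+\alpha)/2}S^{-\alpha}\hat f\in L^2$.

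\emph{Item (4).} This is the identification of the anti-dual. I would argue: every $\omega=Sf+D_t^{(1-\alpha)/2}g$ with $f\in L^2(\mathbb{R};H)$, $g\in L^2(\mathbb{R};D_{S,-\alpha})$ defines a bounded anti-linear functional on $V_\alpha$ via $u\mapsto\langle f,Su\rangle_{L^2(\mathbb{R};H)}+\langle g,D_t^{(1-\alpha)/2}u\rangle_{L^2(\mathbb{R};H)}$ (pairings legitimate by the $D_{S,\pm 1}$ and $D_{S,\pm\alpha}$ dualities), with norm $\le\|f\|_{L^2}+\|g\|_{L^2(\mathbb{R};D_{S,-\alpha})}$; this gives one inclusion and one inequality. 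Conversely, by the Fourier characterization of (1), the map $u\mapsto S^\alpha(S+|\tau|^{1/2})^{1-\alpha}\hat u$ identifies $V_\alpha$ isometrically (up to equivalence) with a closed subspace of $L^2(\mathbb{R};H)$; by Riesz/Hahn--Banach any $\omega\in V_\alpha^\star$ is represented by some $k\in L^2(\mathbb{R};H)$ with $\|k\|_{L^2}\sim\|\omega\|_{V_\alpha^\star}$, and unwinding the weight shows $\hat\omega=S^\alpha(S+|\tau|^{1/2})^{1-\alpha}\overline{(\cdot)}\,k$ in the appropriate sense; then splitting the multiplier $S^\alpha(S+|\tau|^{1/2})^{1-\alpha}\sim S\cdot m_1+|\tau|^{(1-\alpha)/2}S^{-\alpha}\cdot m_2$ via bounded Borel multipliers $m_1,m_2$ of $(S,|\tau|)$ (again a pointwise-on-the-spectrum partition, e.g. $m_1$ carrying the region $\lambda\gtrsim|\tau|^{1/2}$ and $m_2$ the rest) produces $f:=\mathcal F^{-1}(m_1\cdot(\ldots)k)$ and $g$ with $\hat g$ proportional to $m_2\cdot(\ldots)k$, giving $\omega=Sf+D_t^{(1-\alpha)/2}g$ with the required norm control, hence the reverse inequality and $V_\alpha^\star=L^2(\mathbb{R};D_{S,-1})+W_{-\alpha}$ (noting $D_t^{(1-\alpha)/2}g$ with $g\in L^2(\mathbb{R};D_{S,-\alpha})$ is exactly $W_{-\alpha}$).

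The main obstacle is nothing deep but rather bookkeeping: making the pointwise-on-the-spectrum multiplier comparisons in items (1) and (4) precise and uniform in $\tau$, i.e. constructing the bounded Borel functions of the joint ``variables'' $(\lambda,|\tau|^{1/2})$ that split $\lambda^\alpha(\lambda+|\tau|^{1/2})^{1-\alpha}$ into a part dominated by $\lambda$ and a part dominated by $|\tau|^{(1-\alpha)/2}\lambda^{-\alpha}$, and then invoking the joint functional calculus (spectral calculus of $S$ with the scalar parameter $\tau$) to turn these into bounded operators on $L^2(\mathbb{R};H)$. Everything else — Hilbert space structure, the chain of inclusions, densities — is a mechanical transcription through $\mathcal F$ and the weight.
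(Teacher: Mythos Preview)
Your proposal is correct and follows essentially the same route as the paper: a Fourier-side reduction, pointwise-on-the-spectrum multiplier comparisons via the Borel functional calculus of $S$ (with $|\tau|^{1/2}$ as a scalar parameter), density via approximation by test functions whose Fourier transform avoids $\tau=0$, and a Riesz-plus-multiplier-splitting argument for the dual. The paper phrases (1) and (2) in terms of the boundedness of operators such as $S^{1-\alpha}(S+|\tau|^{1/2})^{-(1-\alpha)}$ and $S^{\alpha'-\alpha}(S+|\tau|^{1/2})^{\alpha-\alpha'}$ rather than explicit pointwise inequalities, and for (4) simply declares the computation ``standard'', so your sketch is in fact somewhat more detailed than what the paper writes. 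One small slip: in your description of $W_\alpha$ the exponent on $S$ should be $+\alpha$, not $-\alpha$ (since $\|\cdot\|_{W_\alpha}=\|S^\alpha D_t^{-(1+\alpha)/2}\cdot\|_{L^2(\mathbb{R};H)}$); this is a typo, not a conceptual issue.
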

\begin{proof} Let us first prove (1) : $V_\alpha $ is a well-defined  subspace of  $\mathcal{S'}(\mathbb{R};E_\infty)$. In fact, if $u \in L^2(\mathbb{R};D_{S,1})$, then, by Proposition \ref{TFS'}, it follows that $ \left | \tau  \right |^{\frac{1-\alpha}{2}} S \hat{u} \in L^1_{\mathrm{loc}}(\mathbb{R};H)$. Furthermore, for $\varphi \in \mathcal{S}(\mathbb{R};E_{-\infty})$, we have using Cauchy-Schwarz inequality
\begin{align*}
     \int_{\mathbb{R}}  |\langle \left | \tau  \right |^{\frac{1-\alpha}{2}} S\hat{u}, S^{-1 }\varphi  \rangle_H |\, \mathrm d \tau  \leq \left \| S\hat{u} \right \|_{L^2(\mathbb{R};H)}   \||\tau|^{\frac{(1-\alpha)}{2}}  S^{-1 }\varphi   \|_{L^2(\mathbb{R};H)} , 
\end{align*}
and one can define $\left | \tau  \right |^{{\frac{1-\alpha}{2}}}\hat u \in \mathcal{S'}(\mathbb{R};E_\infty)$ by
\begin{align*}
 \llangle \left | \tau  \right |^{\frac{1-\alpha}{2}}\hat u, \varphi  \rrangle_{\mathcal{S'},\mathcal{S}} =  \int_{\mathbb{R}}  \langle \left | \tau  \right |^{\frac{1-\alpha}{2}} S\hat{u}, S^{-1 }\varphi  \rangle_H\,  \mathrm d \tau . 
\end{align*}
Finally, $ S^\alpha\left | \tau  \right |^{{\frac{1-\alpha}{2}}}\hat u$ exists in $\mathcal{S'}(\mathbb{R};E_\infty)$ and agrees with $ \left | \tau  \right |^{{\frac{1-\alpha}{2}}}S^\alpha\hat u$. The Hilbert space property (in particular, the completeness) is easy.
Next, the proof of the set equality and the norms equivalence in (1) is easy using the boundedness of the operators $S^{1-\alpha}(S+\left | \tau \right |^{1/2})^{-(1-\alpha)}$ and $|\tau|^{\frac{1-\alpha }2}(S+\left | \tau \right |^{1/2})^{-(1-\alpha)}$ on $L^2(\mathbb{R};H)$.

For the  point (2),  the inclusion of $\mathcal{S}(\mathbb{R};E_{-\infty})$ in $V_\alpha$ follows easily from (1). To check $V_\alpha \hookrightarrow V_{\alpha'}$ if $\alpha<\alpha'$, write
\begin{equation*}
    S^{\alpha'} (S+\left | \tau \right |^{1/2})^{1-\alpha'}=S^{\alpha'-\alpha}(S+\left | \tau \right|^{1/2})^{\alpha-\alpha'} S^\alpha (S+\left | \tau \right |^{1/2})^{1-\alpha},
\end{equation*}
and use (1) together with the boundedness of $S^{\alpha'-\alpha}(S+\left | \tau \right|^{1/2})^{\alpha-\alpha'}$ on $ L^2(\mathbb{R};H)$. The density of $\mathcal{S}(\mathbb{R};E_{-\infty})$ into $V_\alpha$ can be deduced using Lemma \ref{Lemma density} and that of $V_\alpha$ into $V_{\alpha'}$ follows. Finally, although we do not need this later on, the density of $V_\alpha$ in $\mathcal{S'}(\mathbb{R};E_{\infty})$ hold as $\mathcal{S}(\mathbb{R};E_{-\infty})$ is dense in $\mathcal{S'}(\mathbb{R};E_\infty)$. 

For point (3), since $-1\le \alpha\le 1$, the inclusions are clear together with the Hilbert space property. The density is as follows.  Let $f\in W_\alpha$.
By definition,  let $g \in L^2(\mathbb{R};D_{S,\alpha})$ such that $\hat{f}=\left| \tau \right|^{\frac{1+\alpha}{2}} \hat{g}$ in $\mathcal{S}'(\mathbb{R}; E_{\infty})$. Note that the right hand side also belongs to $L^1_{\mathrm{loc}}(\mathbb{R}; D_{S,\alpha})$. Take a sequence $(\varphi_k)_{k \in \mathbb{N}} \in \mathcal{S}(\mathbb{R}; E_{-\infty})^{\mathbb{N}}$ such that $\hat{\varphi_k } \to \hat{g}$ in $L^2(\mathbb{R};D_{S,\alpha})$ and $0 \notin \mathrm{supp}(\hat{\varphi_k}).$ Take $f_k:= \mathcal{F}^{-1}(\left| \tau \right|^{(1+\alpha) /2}\hat{\varphi_k})$, then $f_k \in \mathcal{S}_0(\mathbb{R}; E_{-\infty})$ and $f_k {\rightarrow} f$ in $W_\alpha$.

The proof of (4) is standard, using Fourier transform and that powers of $S$ commute with multiplication by powers of $|\tau|$, and the calculus occurs using the duality between $\mathcal{S'}(\mathbb{R};E_\infty)$ and $\mathcal{S}(\mathbb{R};E_{-\infty})$. \end{proof}

\subsubsection{The main Theorem}
Now, we come to the main result of this subsection.

\begin{thm}[Invertibility on intermediate spaces]\label{ISOMORPHISM}
    For all $\alpha\in [-1,1]$, the operator $ \partial_t + S^2 $, defined on $\mathcal{S}(\mathbb{R};E_{-\infty})$, extends to a bounded and invertible  operator $ A_\alpha :  V_\alpha \rightarrow V_{-\alpha}^\star$, which agrees with the restriction of $\partial_t + S^2$ acting on $\mathcal{S}'(\mathbb{R};E_\infty)$.
\end{thm}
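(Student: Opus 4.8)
The plan is to conjugate everything by the Fourier transform in time and reduce the statement to a single, elementary uniform bound for a scalar symbol in the two variables $(\tau,\lambda)\in\mathbb{R}\times(0,\infty)$. By Proposition~\ref{TFS'}, the operator $\partial_t+S^2$ acting on $\mathcal{S}'(\mathbb{R};E_\infty)$ becomes, after Fourier transform, multiplication by $i\tau+S^2$. By Lemma~\ref{Spaces V_alpha}(1) the map $\Theta_\alpha\colon u\mapsto S^\alpha(S+|\tau|^{1/2})^{1-\alpha}\hat u$ is an isomorphism of $V_\alpha$ onto $L^2(\mathbb{R};H)$ (it is injective with equivalent norms by the Lemma, and surjective because, writing $(S+|\tau|^{1/2})^{-(1-\alpha)}S^{-\alpha}=S^{-1}\bigl(S^{1-\alpha}(S+|\tau|^{1/2})^{-(1-\alpha)}\bigr)$ and noting that the bracket is bounded on $H$ uniformly in $\tau$ since $1-\alpha\ge0$, the element $\hat u:=(S+|\tau|^{1/2})^{-(1-\alpha)}S^{-\alpha}w$ lies in $L^2(\mathbb{R};D_{S,-1})\subset\mathcal{S}'(\mathbb{R};E_\infty)$ and satisfies $\Theta_\alpha u=w$). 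Dually, or directly from Lemma~\ref{Spaces V_alpha}(4), the map $\Lambda_\alpha\colon\omega\mapsto S^\alpha(S+|\tau|^{1/2})^{-(1+\alpha)}\hat\omega$ is an isomorphism of $V_{-\alpha}^\star$ onto $L^2(\mathbb{R};H)$. Since powers of $S$ commute with multiplication by functions of $\tau$, so that the outer factors $S^{\pm\alpha}$ cancel, substituting $\hat u=(S+|\tau|^{1/2})^{-(1-\alpha)}S^{-\alpha}w$ shows that, under these two identifications, the operator to be studied is exactly the Fourier multiplier
$$M\colon w\longmapsto m(\tau,S)\,w,\qquad m(\tau,\lambda):=\frac{i\tau+\lambda^2}{(\lambda+|\tau|^{1/2})^2},$$
on $L^2(\mathbb{R};H)$. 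Thus Theorem~\ref{ISOMORPHISM} reduces to the assertion that $M$ is bounded and invertible on $L^2(\mathbb{R};H)$.

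This is where the only genuine computation lies, and it is elementary. For fixed $\tau$, the operator $m(\tau,S)$ is the Borel functional calculus of $S$ applied to $\lambda\mapsto m(\tau,\lambda)$ — legitimate because the factor $(S+|\tau|^{1/2})^{-2}$ is bounded, so the product rule $f(S)g(S)=(fg)(S)$ with bounded $g(S)$ applies — whence $\|m(\tau,S)\|_{\mathcal{L}(H)}\le\sup_{\lambda>0}|m(\tau,\lambda)|$. From $|i\tau+\lambda^2|\le|\tau|+\lambda^2\le(\lambda+|\tau|^{1/2})^2$ we get $\|m(\tau,S)\|_{\mathcal{L}(H)}\le1$ for all $\tau$, and then $\|Mw\|_{L^2(\mathbb{R};H)}\le\|w\|_{L^2(\mathbb{R};H)}$ by integrating in $\tau$. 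For invertibility, the fibrewise inverse is $m(\tau,S)^{-1}=\psi_\tau(S)$ with $\psi_\tau(\lambda)=(\lambda+|\tau|^{1/2})^2/(i\tau+\lambda^2)$, and combining $|i\tau+\lambda^2|=(\tau^2+\lambda^4)^{1/2}\ge\tfrac1{\sqrt2}(|\tau|+\lambda^2)$ with $(\lambda+|\tau|^{1/2})^2\le2(\lambda^2+|\tau|)$ gives $\|m(\tau,S)^{-1}\|_{\mathcal{L}(H)}\le2\sqrt2$ uniformly in $\tau$; hence $w\mapsto m(\tau,S)^{-1}w$ is bounded on $L^2(\mathbb{R};H)$ and is a two-sided inverse of $M$. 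All of these scalar inequalities hold for every $(\tau,\lambda)\neq(0,0)$, which is all that is needed since $\{\tau=0\}$ is null and $S$ is injective.

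It remains to identify the resulting bounded and invertible operator $A_\alpha\colon V_\alpha\to V_{-\alpha}^\star$ with the claimed extension. Tracing the identifications $\Theta_\alpha,\Lambda_\alpha$ one checks that $A_\alpha$ restricted to $\mathcal{S}(\mathbb{R};E_{-\infty})$ coincides with $\partial_t+S^2$ there. Since $\mathcal{S}(\mathbb{R};E_{-\infty})$ is dense in $V_\alpha$ (Lemma~\ref{Spaces V_alpha}(2)) and both $V_\alpha$ and $V_{-\alpha}^\star$ embed continuously into the Hausdorff space $\mathcal{S}'(\mathbb{R};E_\infty)$, one approximates an arbitrary $u\in V_\alpha$ by $u_k\in\mathcal{S}(\mathbb{R};E_{-\infty})$ in $V_\alpha$, notes that $A_\alpha u_k=(\partial_t+S^2)u_k$ converges to $A_\alpha u$ in $V_{-\alpha}^\star$ and to $(\partial_t+S^2)u$ in $\mathcal{S}'(\mathbb{R};E_\infty)$, and concludes by uniqueness of limits that $A_\alpha u=\partial_t u+S^2u$ in $\mathcal{S}'(\mathbb{R};E_\infty)$, as required. (Surjectivity of $A_\alpha$ can also be re-derived by hand: for $\omega\in V_{-\alpha}^\star$ set $w:=m(\tau,S)^{-1}\Lambda_\alpha\omega$ and $\hat u:=(S+|\tau|^{1/2})^{-(1-\alpha)}S^{-\alpha}w$, and check directly that $(i\tau+S^2)\hat u=\hat\omega$.)

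I do not anticipate a serious obstacle here: the heart of the matter is a fibrewise functional-calculus estimate. The points that demand a little care are (i) passing from the uniform fibrewise bound on $m(\tau,S)$ to the boundedness of the multiplier $M$ on the direct integral $L^2(\mathbb{R};H)$ (measurability of $\tau\mapsto m(\tau,S)$ and the elementary norm estimate), and (ii) the surjectivity of $\Theta_\alpha$ and $\Lambda_\alpha$ onto $L^2(\mathbb{R};H)$ together with the norm equivalences of Lemma~\ref{Spaces V_alpha}, keeping in mind that $0$ may belong to the spectrum of $S$ — which is precisely why one splits $S^{-\alpha}=S^{-1}S^{1-\alpha}$ and absorbs the good power of $S$ into $(S+|\tau|^{1/2})^{-(1-\alpha)}$.
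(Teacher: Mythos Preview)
Your proof is correct and follows essentially the same Fourier--transform--and--functional--calculus route as the paper, though you package the argument more cleanly by conjugating to the single scalar symbol $m(\tau,\lambda)=(i\tau+\lambda^2)/(\lambda+|\tau|^{1/2})^2$ rather than checking four separate resolvent bounds as the paper does. One harmless slip: your $\hat u=(S+|\tau|^{1/2})^{-(1-\alpha)}S^{-\alpha}w$ actually lands in $L^2(\mathbb{R};D_{S,1})$, not $D_{S,-1}$, since $S^{-1}\colon H\to D_{S,1}$; this does not affect the argument as either space embeds in $\mathcal{S}'(\mathbb{R};E_\infty)$.
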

\begin{proof}
From Fourier transform, $u\in V_\alpha$ if and only if $S\hat u \in L^2(\mathbb{R}; H)$ and $|\tau|^{\frac{1-\alpha}{2}} S^\alpha \hat u \in L^2(\mathbb{R}; H)$. 
Hence, $f=\partial_tu + S^2u$ is easily seen to belong to  $  L^2(\mathbb{R};D_{S,-1}) + W_{\alpha}=V_{-\alpha}^\star$. The density of $\mathcal{S}(\mathbb{R};E_{-\infty})$ in $V_\alpha$ yields the bounded extension operator $A_{\alpha}$ and notice that $A_{\alpha}=\partial_t + S^2$ on $V_\alpha$.

To show the invertibility, it suffices to prove that the restriction of the inverse of $\partial_t + S^2$ on $\mathcal{S}'(\mathbb{R};E_\infty)$ to  $V_{-\alpha}^\star$ is bounded into $V_\alpha$. Let $w\in V_{-\alpha}^\star$ and again, by Fourier transform,  write $\hat w= S \hat f+ |\tau|^{\frac{1+\alpha}{2}}S^{-\alpha} \hat g$, with $f,g\in L^2(\mathbb{R};H)$.  Define $u\in \mathcal{S}'(\mathbb{R};E_\infty) $ by $\hat u(\tau)= \left ( i\tau+S^2 \right )^{-1} \hat{g}(\tau)$. That $ S\hat u \in L^2(\mathbb{R};H) $ follows from the uniform boundedness (with respect to $\tau$) of $S^2(i\tau + S^2)^{-1}$ and $|\tau|^{\frac{1+\alpha}{2}}S^{1-\alpha}(i\tau + S^2)^{-1}$ and that $|\tau|^{\frac{1-\alpha}{2}}S^{\alpha} \hat u \in L^2(\mathbb{R};H)  $ from that of $|\tau|(i\tau + S^2)^{-1}$ and  $|\tau|^{\frac{1-\alpha}{2}}S^{1+\alpha}(i\tau + S^2)^{-1}$.  Hence $u\in V_\alpha$ and the estimate follows by taken infimum over all choices of $f$ and $g$. 
\end{proof}

\begin{rem}
 When $\alpha=-1$, we recover Proposition \ref{lem: Lions} (Lions'~result) : existence in $V_{-1}$, and  uniqueness follows from Proposition \ref{Unicité}. 
 Note however that uniqueness occurs 
 when $u\in V_1$ which is the largest possible space in that scale.
\end{rem}

\begin{rem}
\label{rem:counterexample}The Fourier method is rather elementary once the setup has been designed, but does not furnish time continuity: we mostly used that $\partial_t$ and $S$ commute. Something specific to time derivatives is the classical embedding theorem of Lions \cite{lions1957problemes} mentioned earlier. This embedding is not true any longer when $V=D(S)$ and $V^\star=D(S^{-1})$ are replaced with their completions $D_{S,1}$ and $D_{S,-1}$  if $I$ is bounded. Indeed, as the embedding $D_{S,1} \hookrightarrow H  $ fails,  pick $v \in D_{S,1}\setminus H$ and define the function $u(t)=v$, $0\leq t \leq 1$. We have $u \in L^2((0,1);D_{S,1})$ and $\partial_t u =0$   but $u \notin C([0,1];H)$. 

However, this counterexample is ruled out if $I=\mathbb{R}$ or $I$ unbounded and in fact, the continuity holds.  This can be obtained when $\alpha=-1$ by approximation from Lions' result but we present a different approach, which has the advantage of allowing $\alpha<0$ to conclude for regularity. 
Note however, that when $\alpha=0$, continuity cannot hold for all sources in $V_0^\star$ by the isomorphism property. We would have otherwise that any $u\in V_0$ is continuous, valued in $H$, but this is not the case. 
\end{rem}

\subsection{Solving the abstract heat equation using the Duhamel method}

Since $-S^2$ generates a $C_0$ contraction semigroup on $H$, the Duhamel formula
\begin{equation}\label{Duhamel}
    Tf(t):=\int_{-\infty}^{t} e^{-(t-s)S^2}f(s) \ \mathrm{d}s
\end{equation}
is a way of constructing  solutions to $\partial_t u +S^2 u =f$ in $\mathcal{D}'(\mathbb{R}; E_\infty)$. Remark that the adjoint Duhamel formula 
\begin{equation}\label{backwardDuhamel}
    \tilde T\tilde f(s):=\int_{s}^{\infty} e^{-(t-s)S^2}\tilde f(t) \ \mathrm{d}t
\end{equation}
is a way of constructing solutions to the backward equation
$-\partial_s \tilde u +S^2 \tilde u =\tilde f$ in $\mathcal{D}'(\mathbb{R}; E_\infty)$. All what we shall prove for the (forward) heat equation applies to the backward one. We leave to the reader the care of checking it.
For the moment, we assume $f$ to be a test function. 

\begin{lem}[A priori properties for the Duhamel solution] \label{lem:solution} If $f\in \mathcal{S}(\mathbb{R}; E_{-\infty}) $, then  $Tf $ defined by \eqref{Duhamel} belongs to $ \mathcal{S}'(\mathbb{R}; E_{\infty})$ and is a solution of $\partial_t u +S^2 u =f$ in $\mathcal{S}'(\mathbb{R}; E_{\infty})$.
    \end{lem}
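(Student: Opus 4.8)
The plan is to verify the two claims separately: first that $Tf$ is a well-defined element of $\mathcal{S}'(\mathbb{R};E_\infty)$, and then that it solves the equation in the sense of distributions. For the first point, I would fix $\varphi \in \mathcal{S}(\mathbb{R};E_{-\infty})$ and examine $\llangle Tf,\varphi\rrangle$. The natural move is to write out the double integral
\begin{align*}
\llangle Tf,\varphi\rrangle = \int_{\mathbb{R}}\Big\langle \int_{-\infty}^{t} e^{-(t-s)S^2}f(s)\,\mathrm ds,\ \varphi(t)\Big\rangle_H\,\mathrm dt,
\end{align*}
and to bound it by exploiting the contractivity of $e^{-(t-s)S^2}$ on $H$ together with the Schwartz decay of $f$. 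Concretely, for the inner integral one has $\|e^{-(t-s)S^2}f(s)\|_H \le \|f(s)\|_H$, so $\|Tf(t)\|_H \le \int_{-\infty}^t \|f(s)\|_H\,\mathrm ds$; this is bounded on compact $t$-intervals but only grows at most linearly as $t\to+\infty$, which is not enough on its own to pair against a Schwartz function unless I use the decay of $f$ more carefully. A cleaner route is to split $\int_{-\infty}^t = \int_{-\infty}^{t/2} + \int_{t/2}^t$ (for $t$ large), using the smoothing bound $\|S^2 e^{-rS^2}\|_{\mathcal L(H)}\lesssim r^{-1}$ to gain decay in the first piece after writing $f(s)=S^{-2}(S^2 f(s))$ with $S^2 f(s)\in L^1$, and the integrability of $\|f(s)\|_H$ on $(t/2,t)$ with the fast decay of $\|f\|_H$ at $\pm\infty$ for the second. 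Either way one produces a finite seminorm estimate of the form $|\llangle Tf,\varphi\rrangle|\lesssim C(f)\,p(\varphi)$ for a suitable Schwartz seminorm $p$, which gives $Tf\in\mathcal S'(\mathbb{R};E_\infty)$. One should also observe $Tf$ takes values in the right space: since $f(s)\in E_{-\infty}$ and $e^{-rS^2}$ preserves $E_{-\infty}$, each $Tf(t)$ lies in $E_{-\infty}$, and the above estimates apply with $S^\gamma f$ for any $\gamma$, so $Tf$ is even a (slowly growing) $E_{-\infty}$-valued continuous function.

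For the second point — that $\partial_t(Tf) + S^2(Tf) = f$ in $\mathcal S'(\mathbb{R};E_\infty)$ — I would argue by the standard Duhamel/Fubini computation adapted to the abstract setting. Test against $\varphi\in\mathcal S(\mathbb{R};E_{-\infty})$: by definition of distributional derivative,
\begin{align*}
\llangle \partial_t(Tf),\varphi\rrangle = -\llangle Tf,\partial_t\varphi\rrangle = -\int_{\mathbb{R}}\int_{-\infty}^t \langle e^{-(t-s)S^2}f(s),\partial_t\varphi(t)\rangle_H\,\mathrm ds\,\mathrm dt.
\end{align*}
Swapping the order of integration (justified by the absolute-integrability bounds established in the first part, exploiting the decay of $\partial_t\varphi$), the inner $t$-integral becomes $\int_s^{+\infty}\langle e^{-(t-s)S^2}f(s),\partial_t\varphi(t)\rangle_H\,\mathrm dt$. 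For fixed $s$, I integrate by parts in $t$: writing $\partial_t[e^{-(t-s)S^2}f(s)] = -S^2 e^{-(t-s)S^2}f(s)$ (valid since $f(s)\in E_{-\infty}$ and $t\mapsto e^{-(t-s)S^2}f(s)$ is smooth into $E_{-\infty}$ for $t>s$), and using that $\varphi$ vanishes at $+\infty$ while the boundary term at $t=s$ contributes $-\langle f(s),\varphi(s)\rangle_H$ (using strong continuity $e^{-rS^2}f(s)\to f(s)$ as $r\to 0^+$), one obtains
\begin{align*}
\int_s^{+\infty}\langle e^{-(t-s)S^2}f(s),\partial_t\varphi(t)\rangle_H\,\mathrm dt = -\langle f(s),\varphi(s)\rangle_H - \int_s^{+\infty}\langle S^2 e^{-(t-s)S^2}f(s),\varphi(t)\rangle_H\,\mathrm dt.
\end{align*}
Integrating in $s$ and swapping back, the second term is exactly $-\llangle S^2(Tf),\varphi\rrangle$ (moving $S^2$ onto $\varphi$ via self-adjointness on $E_{-\infty}$), and the first term is $-\int_{\mathbb{R}}\langle f(s),\varphi(s)\rangle_H\,\mathrm ds = -\llangle f,\varphi\rrangle$. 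Collecting signs yields $\llangle \partial_t(Tf),\varphi\rrangle = -\llangle f,\varphi\rrangle + \llangle S^2(Tf),\varphi\rrangle$... wait, let me retrace: we get $\llangle\partial_t(Tf),\varphi\rrangle = -\big(-\llangle f,\varphi\rrangle - \llangle S^2(Tf),\varphi\rrangle\big) = \llangle f,\varphi\rrangle + \llangle S^2(Tf),\varphi\rrangle$, hence $\llangle \partial_t(Tf) - f,\varphi\rrangle = \llangle S^2(Tf),\varphi\rrangle$, i.e.\ $\partial_t(Tf) + S^2(Tf) = f$ after rearranging with the correct sign conventions for $-S^2$ being the generator. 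I would double-check the sign bookkeeping carefully in the write-up.

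\textbf{Main obstacle.} The genuinely delicate point is the justification of Fubini and of the integration by parts in the \emph{abstract} $E_\infty$-valued setting: one must ensure all the scalar integrals $\iint |\langle e^{-(t-s)S^2}f(s),\psi(t)\rangle_H|\,\mathrm ds\,\mathrm dt$ converge absolutely (for $\psi = \varphi$ and $\psi=\partial_t\varphi$), which forces the decay estimate on $Tf$ from the first part to be made quantitative enough — this is where I expect the only real work, balancing the semigroup contractivity/smoothing against the Schwartz decay of $f$ and the test function. Everything else (strong continuity of the semigroup at $0$, $e^{-rS^2}$ preserving $E_{-\infty}$, commutation of $S^\gamma$ with $e^{-rS^2}$) is available from the functional calculus set up in Section~\ref{Section 2}.
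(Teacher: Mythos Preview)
Your approach is essentially the paper's, but you are overthinking the first step. Since $f\in\mathcal S(\mathbb R;E_{-\infty})$ implies $f\in L^1(\mathbb R;H)$, the bound $\|Tf(t)\|_H\le\int_{-\infty}^t\|f(s)\|_H\,\mathrm ds\le\|f\|_{L^1(\mathbb R;H)}$ is \emph{uniform} in $t$, so $Tf\in L^\infty(\mathbb R;H)\subset\mathcal S'(\mathbb R;E_\infty)$ immediately; there is no linear growth and no need to split the integral or invoke smoothing bounds. The same observation gives the absolute integrability you flag as the ``main obstacle'': simply
\[
\iint_{\mathbb R^2}\mathbb 1_{t>s}\,|\langle e^{-(t-s)S^2}f(s),\psi(t)\rangle_H|\,\mathrm ds\,\mathrm dt\le\|f\|_{L^1(\mathbb R;H)}\|\psi\|_{L^1(\mathbb R;H)}
\]
for $\psi=\varphi,\partial_t\varphi,S^2\varphi$, which justifies all the Fubini swaps at once. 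This is exactly what the paper does.

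For the second part your Duhamel/Fubini computation is the paper's argument, though note your integration-by-parts line has a sign slip: with $g(t)=e^{-(t-s)S^2}f(s)$ and $\partial_t g=-S^2 g$, one gets
\[
\int_s^\infty\langle g(t),\partial_t\varphi(t)\rangle_H\,\mathrm dt=-\langle f(s),\varphi(s)\rangle_H+\int_s^\infty\langle S^2 g(t),\varphi(t)\rangle_H\,\mathrm dt,
\]
with a $+$ before the integral, after which the signs assemble cleanly into $\partial_t(Tf)+S^2(Tf)=f$.
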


    \begin{proof}  
    First using the regularity and contractivity of the semigroup,  
    \begin{equation*}
    \iint_{\mathbb{R}^2} 1_{t>s}|\langle e^{-(t-s)S^2}f(s), \varphi(t)\rangle| \ \ \mathrm{d} s \mathrm d t  \le \|f\|_{L^1(\mathbb{R};H)}  \|\varphi\|_{L^1(\mathbb{R};H)}
        \end{equation*}
        for any $f,\varphi \in \mathcal{S}(\mathbb{R};E_{-\infty})$. In particular $u$ is defined for all $t$ by a Bochner integral and belongs to $ L^\infty(\mathbb{R}; H)$.
        Hence we may apply Fubini's theorem freely, exchanging integrals and inner products in the calculation below:
      \begin{align*}
    -\int_{\mathbb{R}}  \langle u(t) , \partial_t\varphi(t)\rangle_H \ \mathrm{d} t 
    &
    = - \int_{\mathbb{R}} \int_{-\infty}^{t} \langle f(s) , e^{-(t-s)S^2}\partial_t\varphi(t)\rangle_H \ \mathrm{d} s \mathrm d t 
    \\ 
    & = - \int_{\mathbb{R}} \int_{s}^{+\infty} \langle f(s) , e^{-(t-s)S^2}\partial_t\varphi(t)\rangle_H \ \mathrm{d} t \mathrm d s  \\ & =- \int_{\mathbb{R}} \langle f(s) , \int_{s}^{+\infty}  e^{-(t-s)S^2}\partial_t\varphi(t)\rangle_H \ \mathrm{d} t \mathrm d s \\ & = - \int_{\mathbb{R}} \langle f(s) , -\varphi (s)+  \int_{s}^{+\infty} S^2e^{-(t-s)S^2} \varphi(t) \ \mathrm{d} t \rangle_H  \ \mathrm{d} s \\ & = \int_{\mathbb{R}} \langle f(s) , \varphi (s) \rangle_H  \ \mathrm{d} s -  \int_{\mathbb{R}} \int_{s}^{+\infty} \langle  e^{-(t-s)S^2}f(s), S^2 \varphi(t) \rangle_H \ \mathrm{d} t \mathrm d s. 
    \end{align*}
    Using Fubini once more, this shows that 
    $ -\llangle u, \partial_t\varphi \rrangle_{\mathcal{S}',\mathcal{S}}= \llangle f, \varphi \rrangle_{\mathcal{S}',\mathcal{S}} - \llangle u, S^2\varphi \rrangle_{\mathcal{S}',\mathcal{S}},$ 
which means $\partial_t u +S^2 u =f$ in $\mathcal{S}'(\mathbb{R};E_\infty)$. 
    \end{proof}

    We now gather a number of  a priori estimates  which are related to solving the heat equation  within $L^2(\mathbb{R};D_{S,1})$.

\begin{lem}[A priori estimates for the Duhamel operator] \label{lem:aprioriestimates} 
Let $ f \in \mathcal{S}(\mathbb{R}; E_{-\infty}) $, and define $ u = Tf $. For the inequalities involving $ \|f\|_{W_\alpha} $, we additionally assume that $ f \in \mathcal{S}_0(\mathbb{R}; E_{-\infty})$. 
    
    \begin{enumerate} \item $u\in C_0(\mathbb{R}; H)$ and one has the following uniform bounds
        \begin{align*}\sup_{t \in \mathbb{R}} \left\| u(t)\right\|_H &\le \|f\|_{L^1(\mathbb{R};H)}\\
            \sup_{t \in \mathbb{R}} \left\| u(t)\right\|_H  &\le \frac{1}{\sqrt{2}} \|f \|_{L^2(\mathbb{R}; D_{S,-1})}\\
            \sup_{t \in \mathbb{R}} \left\| u(t)\right\|_H &\le C(\alpha) \|f\|_{W_\alpha}, \ \alpha\in [-1,0).
        \end{align*} 
        \item  $u \in L^2(\mathbb{R};D_{S,1})$ and one has the following energy inequalities
        \begin{align*}\left\|u \right\|_{L^2(\mathbb{R}; D_{S,1})} &\le \frac{1}{\sqrt{2}} \|f\|_{L^1(\mathbb{R};H)}\\
            \left\|u \right\|_{L^2(\mathbb{R}; D_{S,1})}  &\le  \|f \|_{L^2(\mathbb{R}; D_{S,-1})}\\
            \left\|u \right\|_{L^2(\mathbb{R}; D_{S,1})} &\le C'(\alpha) \|f\|_{W_\alpha}, \ \alpha\in [-1,1].
        \end{align*} 
        \item  $u \in V_\alpha$ for all $\alpha\in [-1,1]$ and one has the following bound
        \begin{align*}\|D_t^{\frac{1-\alpha}{2}}S^\alpha u\|_{L^2(\mathbb{R};H)}  \le  \|f\|_{W_\alpha}.
        \end{align*} 
         
            \end{enumerate}
        
\end{lem}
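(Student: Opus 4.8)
The plan is to prove all three parts by working on the Fourier side, where the Duhamel operator becomes multiplication by the resolvent $(i\tau+S^2)^{-1}$, and to combine this with the elementary contractivity and regularity estimates for the semigroup $e^{-tS^2}$ that have already been used in Lemma \ref{lem:solution}. Since $f\in\mathcal S(\mathbb R;E_{-\infty})$ (and $\mathcal S_0$ when $\|f\|_{W_\alpha}$ appears), all manipulations are justified: $u=Tf\in L^\infty(\mathbb R;H)\cap\mathcal S'(\mathbb R;E_\infty)$ solves $\partial_t u+S^2u=f$, so $\hat u(\tau)=(i\tau+S^2)^{-1}\hat f(\tau)$ pointwise.

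For part (1), the first bound is immediate from $\|e^{-(t-s)S^2}\|_{\mathcal L(H)}\le 1$ and $u(t)=\int_{-\infty}^t e^{-(t-s)S^2}f(s)\,\mathrm ds$. For the second bound, I would freeze $t$, substitute $r=t-s$, write $e^{-rS^2}f(s)=e^{-rS^2}S\cdot S^{-1}f(s)$, apply Cauchy--Schwarz in $r$ against $\|S e^{-rS^2}\|_{\mathcal L(H)}\le C r^{-1/2}$ — but more cleanly, use the spectral theorem: $\|u(t)\|_H^2=\int_0^\infty\big|\int_{-\infty}^t e^{-(t-s)\lambda^2}\widehat{f(s)}\,\mathrm ds\big|^2$-type estimate is awkward in $s$, so instead pass to Fourier in $t$ and use $\|u(t)\|_H\le\|\hat u\|_{L^1(\mathbb R;H)}/(2\pi)$... that loses the constant. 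The efficient route for the $W_\alpha$ and $D_{S,-1}$ bounds is: $2\mathrm{Re}\langle f,Su\rangle$-style energy identity is unavailable yet (that is the point of later sections), so I would instead argue directly from Duhamel: $\|u(t)\|_H\le\int_0^\infty\|e^{-rS^2/2}\|\,\|e^{-rS^2/2}f(t-r)\|\,\mathrm dr$ and bound $\|e^{-rS^2/2}S^{-1}\|\lesssim r^{-1/2}$ to get, via Cauchy--Schwarz, $\|u(t)\|_H\le C\|S^{-1}f\|_{L^2}$; the sharp constant $1/\sqrt2$ comes from the spectral computation $\int_0^\infty e^{-r\lambda^2}\,\mathrm dr=1/\lambda^2$ combined with Cauchy--Schwarz giving $|u(t)|^2\le(\int e^{-r\lambda^2}\mathrm dr)(\int e^{-r\lambda^2}|\widehat{f}|^2)=\lambda^{-2}\int e^{-r\lambda^2}|\widehat{f(t-r)}(\lambda)|^2\mathrm dr$, then integrate/sup. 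For the $W_\alpha$ bound with $\alpha\in[-1,0)$ one interpolates or repeats the spectral estimate with the multiplier $|\tau|^{-(1+\alpha)/2}\lambda^\alpha$ on the Fourier side.

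For part (2), the cleanest proof is on the Fourier side: $\|S u\|_{L^2(\mathbb R;H)}^2=\int_{\mathbb R}\|S\hat u(\tau)\|_H^2\,\mathrm d\tau=\int_{\mathbb R}\|S(i\tau+S^2)^{-1}\hat f(\tau)\|_H^2\,\mathrm d\tau$ by Plancherel. Using the spectral theorem, this is $\int_{\mathbb R}\int_0^\infty\frac{\lambda^2}{\tau^2+\lambda^4}\,\mathrm d\mu_{\hat f(\tau)}(\lambda)\,\mathrm d\tau$; carrying out the $\tau$-integral gives $\int_0^\infty\frac{\pi}{\lambda^2}\,\cdots$ which against the three norms of $f$ yields the stated constants ($\int_{\mathbb R}\frac{\lambda^2}{\tau^2+\lambda^4}\mathrm d\tau=\pi$, pairing with $\|S^{-1}\hat f\|^2$ gives $\le\|f\|_{L^2(D_{S,-1})}^2$ after the $2\pi$ normalization, i.e. constant $1$; the $\|f\|_{L^1(\mathbb R;H)}$ bound follows by first passing through the $L^\infty_t\to$ estimate on the semigroup and a direct $L^2_t$ computation of $\|Se^{-\cdot S^2}\ast f\|$ via Young/the $r^{-1/2}$ kernel bound, giving $1/\sqrt2$; the $W_\alpha$ bound via the multiplier $\lambda\,|\tau|^{-(1+\alpha)/2}\lambda^{-\alpha}(i\tau+\lambda^2)^{-1}$ which is uniformly bounded — check $\sup_{\tau,\lambda}\frac{\lambda^{1-\alpha}|\tau|^{(1+\alpha)/2}}{(\tau^2+\lambda^4)^{1/2}}<\infty$ by scaling $\tau=\lambda^2 s$). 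Part (3) is then the most transparent: $D_t^{(1-\alpha)/2}S^\alpha u$ has Fourier transform $|\tau|^{(1-\alpha)/2}S^\alpha(i\tau+S^2)^{-1}\hat f(\tau)=|\tau|^{(1-\alpha)/2}S^\alpha(i\tau+S^2)^{-1}|\tau|^{(1+\alpha)/2}S^{-\alpha}\widehat{g}(\tau)$ where $f=D_t^{(1+\alpha)/2}g$, i.e. multiplication of $S^\alpha\hat g$... rather of $\hat g$ by $m(\tau,\lambda)=\frac{|\tau|\lambda^\alpha\cdot\lambda^{-\alpha}}{i\tau+\lambda^2}=\frac{|\tau|}{i\tau+\lambda^2}$, wait — one must be careful: write it as $\frac{|\tau|^{(1-\alpha)/2}\lambda^\alpha}{i\tau+\lambda^2}\cdot|\tau|^{(1+\alpha)/2}$ acting on $\widehat g$, and $\big|\frac{|\tau|^{(1-\alpha)/2}|\tau|^{(1+\alpha)/2}\lambda^\alpha}{i\tau+\lambda^2}\big|=\frac{|\tau|\lambda^\alpha}{(\tau^2+\lambda^4)^{1/2}}$; but we want this against $\|g\|_{L^2(D_{S,\alpha})}=\|S^\alpha\hat g\|_{L^2}$, so the multiplier acting on $S^\alpha\hat g$ is $\frac{|\tau|}{i\tau+\lambda^2}$, which has modulus $\le1$ — giving exactly $\|D_t^{(1-\alpha)/2}S^\alpha u\|_{L^2(\mathbb R;H)}\le\|f\|_{W_\alpha}$ with constant $1$. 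I would present (3) first, deduce $u\in V_\alpha$ from (2)+(3), and note $C(\alpha),C'(\alpha)$ blow up as $\alpha\to-1$ only through the $L^\infty_t$ endpoint, staying bounded for (2).

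The main obstacle is bookkeeping the sharp constants $\tfrac1{\sqrt2}$, $1$, $C(\alpha)$ rather than any conceptual difficulty: one must decide for each inequality whether to argue via the explicit Duhamel kernel bound $\|S e^{-rS^2}\|_{\mathcal L(H)}\le(2er)^{-1/2}$ (good for the $L^1(\mathbb R;H)\to$ estimates, via Young's inequality for the half-line convolution with an $L^{2,\infty}$ kernel, though Young gives the wrong exponent so one really uses Cauchy--Schwarz after splitting $e^{-rS^2}=e^{-rS^2/2}e^{-rS^2/2}$) or via Plancherel and a scaling computation of the resolvent multiplier (cleanest for the $D_{S,-1}$ and $W_\alpha$ estimates). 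I would handle the $L^1(\mathbb R;H)$-input bounds by the semigroup/Minkowski-integral-inequality route and everything with $D_{S,-1}$ or $W_\alpha$ input by the Fourier-multiplier route, checking in each case that the relevant function of $(\tau,\lambda)$ — after the substitution $\tau=\lambda^2 s$ which reduces every supremum to a one-variable computation in $s$ — is bounded, and recording the resulting numerical constant; the continuity $u\in C_0(\mathbb R;H)$ in (1) follows from strong continuity of $t\mapsto e^{-tS^2}$, dominated convergence in the Bochner integral, and the already-established uniform bound forcing decay at $\pm\infty$ (for $\alpha<0$ or $f\in\mathcal S_0$, the vanishing at $-\infty$ uses that the integrand is supported in $s<t$ with $f$ Schwartz, and at $+\infty$ it uses $\int_{-\infty}^t\to\int_{\mathbb R}$ with $\hat f(0)=0$ killing the non-decaying spectral mass at $0$).
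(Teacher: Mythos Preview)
Your Fourier-multiplier approach for the $W_\alpha$ and $L^2(\mathbb{R};D_{S,-1})$ source terms in parts (2) and (3) is essentially the paper's (its Steps 5 and 6), and your computation for (3) is correct: the multiplier acting on $S^\alpha\hat g$ is $|\tau|(i\tau+S^2)^{-1}$, of operator norm at most $1$. For the $L^2(D_{S,-1})\to L^2(D_{S,1})$ bound your Plancherel route (pointwise bound $\lambda^2/\sqrt{\tau^2+\lambda^4}\le 1$) is in fact cleaner than the paper's Fubini computation in its Step 3 --- though your aside about ``carrying out the $\tau$-integral'' is off; there is no $\tau$-integration of the multiplier, only the pointwise bound.

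There are, however, two genuine gaps where you do not recover the sharp $1/\sqrt 2$. First, your spectral Cauchy--Schwarz for $\sup_t\|u(t)\|_H$ gives only constant $1$: after $|u(t,\lambda)|^2\le\lambda^{-2}\int_0^\infty e^{-r\lambda^2}|f(t-r,\lambda)|^2\,\mathrm dr$, bounding $e^{-r\lambda^2}\le 1$ and taking $\sup_t$ yields $\|S^{-1}f\|_{L^2}^2$, not half of it. The paper instead dualizes against $a\in H$ and applies Cauchy--Schwarz in that direction, invoking the \emph{quadratic equality} $\int_0^\infty\|Se^{-rS^2}a\|_H^2\,\mathrm dr=\tfrac12\|a\|_H^2$ (spectrally $\int_0^\infty\lambda^2e^{-2r\lambda^2}\,\mathrm dr=\tfrac12$; the factor $2$ in the exponent is the whole point). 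Second, your route to $\|u\|_{L^2(D_{S,1})}\le\tfrac1{\sqrt2}\|f\|_{L^1(H)}$ via Young and the $r^{-1/2}$ kernel bound does not close: $r\mapsto\|Se^{-rS^2}\|_{\mathcal L(H)}$ is not in $L^2(0,\infty)$, and a Fourier argument is unavailable since $L^1(\mathbb{R};H)$ does not embed into $V_0^\star$. The paper's device is duality with the \emph{backward} Duhamel operator $\tilde T$: write $\langle Su,\tilde f\rangle=\langle f,\tilde T(S\tilde f)\rangle$ and invoke the already-proved $L^2(D_{S,-1})\to L^\infty(H)$ bound for $\tilde T$. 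Trading the $L^1\to L^2$ estimate for the adjoint $L^2\to L^\infty$ estimate is the idea your proposal is missing.
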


\begin{proof} That $u$ belongs to $L^\infty(\mathbb{R};H)$ with $\| u\|_{L^\infty(\mathbb{R};H)} \le \|f\|_{L^1(\mathbb{R};H)}$ has been already observed above. Note that $\partial_t u= f-T(S^2f)\in  L^\infty(\mathbb{R};H)$. Thus $u$ is Lipschitz, hence continuous. The limit 0 at $-\infty$ is clear from the fact that $\|f(s)\|_H$ has rapid decay and the contraction property of the semigroup.  As for the limit at $+\infty$, we write for fixed and large $A$ and $t>A$, 
$$ u(t)= e^{-(t-A)S^2}u(A)+ \int_A^t e^{-(t-s)S^2}f(s) \ \mathrm{d}s. $$
The first term tends to 0 in $H$ by properties of the semigroup and, for the second term, one uses again the contraction property and  rapid decay of $\|f(s)\|_H$. 

We are left with proving the remaining estimates. 

\

\paragraph{\textit{Step 1:  $\left\| u(t)\right\|_H \leq \frac{1}{\sqrt{2}} \left\|f \right\|_{L^2(\mathbb{R}; D_{S,-1})}$ for all $t \in \mathbb{R}$}}
Using Cauchy-Schwarz inequality, we have for all $t \in \mathbb{R}$ and $a \in H$, 
\begin{equation*}
    \int_{-\infty}^{t} | \langle  S^{-1}f(s) ,Se^{-(t-s)S^2} a \rangle_H | \ \mathrm d s \leq \frac{1}{\sqrt{2}}\left ( \int_{-\infty}^{t} \|S^{-1}f(s) \|^2_H \ \mathrm{d}s \right )^{1/2} \left\| a\right\|_H,
\end{equation*}
where we have used the quadratic equality 
\begin{equation*}
    \int_{0}^{\infty} \|Se^{-sS^2}a \|^2_H \mathrm d s =\frac{1}{2}\left\| a\right\|_H^2.
\end{equation*}
As 
\begin{equation*}
    \langle  u(t) , a \rangle_H = \int_{-\infty}^{t} \langle  S^{-1}f(s) , Se^{-(t-s)S^2}a \rangle_H \ \mathrm d s, 
\end{equation*}
we obtain the desired bound for $\|u(t)\|_H$.

\

\paragraph{\textit{Step 2:  $\left\|u \right\|_{L^2(\mathbb{R}; D_{S,1})} \le \frac{1}{\sqrt{2}} \|f\|_{L^1(\mathbb{R};H)}$}}
We observe that by Fubini's theorem, we have $\langle Su, \tilde f\rangle=  \langle u, S\tilde f\rangle=\langle f, \tilde u\rangle$, where $\tilde u=\tilde T(S\tilde f)$. Thus
\begin{equation*}
    |\langle Su, \tilde f\rangle| 
    \le  
    \left \| f  \right \|_{L^1(\mathbb{R};H)}  \| \tilde T(S\tilde f)   \|_{L^\infty(\mathbb{R};H)}
    \le 
    \frac{1}{\sqrt{2}} \left \| f  \right \|_{L^1(\mathbb{R};H)}  \| \tilde f   \|_{L^2(\mathbb{R};H)}
\end{equation*}
using step 1 for $\tilde T$. 

\

\paragraph{\textit{Step 3:  $\left\|u \right\|_{L^2(\mathbb{R}; D_{S,1})} \le  \|f\|_{L^2(\mathbb{R};D_{S,-1})}$}} We already know from step 2 that $\left\|u \right\|_{L^2(\mathbb{R}; D_{S,1})}$ is finite. To obtain the desired bound, we use again Fubini's theorem several times and obtain 
\begin{align*}
    \left\|Su \right\|_{L^2(\mathbb{R};H)}^2&
    =\int_{\mathbb{R}} \langle  Su(t) , Su(t) \rangle_H \ \mathrm{d} t 
    \\
    &
    = \int_{\mathbb{R}} \int_{-\infty}^{t}\int_{-\infty}^{t} \langle  S^2 e^{-(t-s)S^2}S^{-1}f(s) , S^2 e^{-(t-s')S^2}S^{-1}f(s')\rangle_H \ \mathrm{d} s  \mathrm{d} s'  \mathrm{d} t  \\ 
    &
    = \int_{\mathbb{R}} \int_{\mathbb{R}}\int_{\max(s,s')}^{+\infty} \langle  S^4 e^{-(2t-(s+s'))S^2}S^{-1}f(s) ,  S^{-1}f(s')\rangle_H \ \mathrm{d} t  \mathrm{d} s  \mathrm{d} s'\\
    & = \frac{1}{2} \int_{\mathbb{R}} \int_{\mathbb{R}} \langle  S^2 e^{-(2\max(s,s')-(s+s'))S^2}S^{-1}f(s) , S^{-1}f(s')\rangle_H  \ \mathrm{d} s  \mathrm{d} s'\\
    &=  \int_{\mathbb{R}} \int_{s\leq s'} \langle  S^2 e^{-(s'-s)S^2}S^{-1}f(s) , S^{-1}f(s')\rangle_H  \ \mathrm{d} s  \mathrm{d} s'\\
    &=  \int_{\mathbb{R}} \langle  Su(s') , S^{-1}f(s')\rangle_H \ \mathrm{d} s'.
\end{align*}
Using Cauchy-Schwarz inequality, we deduce that
\begin{align*}
     \left\|Su \right\|_{L^2(\mathbb{R};H)}^2 \leq  \left\|Su \right\|_{L^2(\mathbb{R};H)}  \left\|f \right\|_{L^2(\mathbb{R};D_{S,-1})}.
\end{align*}
Therefore 
\begin{align*}
     \left\|u \right\|_{L^2(\mathbb{R};D_{S,1})} \leq   \left\|f \right\|_{L^2(\mathbb{R};D_{S,-1})}.
\end{align*}

\

\paragraph{\textit{Step 4:  $\left\| u(t)\right\|_H \le C(\alpha) \|f\|_{W_\alpha}$, $ \alpha\in [-1,0)$}}
For all $a\in E_{-\infty}$, we define 
\begin{align*}
  \forall t \in \mathbb{R};\ \   \varphi_a(t):= \mathbb{1}_{(-\infty,0]}(t) e^{tS^2}a.
\end{align*}
Remark that when $a\in H$, $\varphi_a\in L^2(\mathbb{R}, D_{S,1})$, hence $\varphi_a \in L^2(\mathbb{R}, E_{-\infty})$.
 For  $t \in \mathbb{R}$,
\begin{align*}
    \langle u(t), a \rangle_H 
    &=  \int_{-\infty}^{0} \langle e ^{sS^2}f(s-t) , a \rangle_H \  \mathrm d s
    \\ 
    &= \int_{\mathbb{R}} \langle f(s-t), \varphi_a (s) \rangle_H \ \mathrm d s 
    \\ 
    &
    = \langle \tau_t g, D_t^{\frac{1+\alpha}{2}}S^{-\alpha}\varphi_a \rangle_{L^2(\mathbb{R};H),L^2(\mathbb{R};H)}. 
\end{align*}
In the calculation, we used $\hat f(0)=0$ (see Lemma \ref{Spaces V_alpha}, point (3)) and wrote $f=D_t^{\frac{1+\alpha}{2}}S^{-\alpha}g$ with $g\in L^2(\mathbb{R}; H)$, defined $\tau_tg(s)=g(s-t)$ and  used that translations commute with $D_t$.  If we show  that 
\begin{align*}\label{Hinfini estimation}
     \| D_t^{\frac{1+\alpha}{2}} S^{-\alpha}\varphi_a  \|_{L^2(\mathbb{R}; H)} = C(\alpha) \left \| a \right \|_H,
\end{align*} then
\begin{align*}
    \left | \langle u(t), a \rangle_H \right |\leq C(\alpha) \left \| g \right \|_{L^2(\mathbb{R}; H)}\left \| a \right \|_H,
\end{align*}
and  we may conclude using the density of $E_{-\infty}$ in $H$. To see this, 
applying Fourier transform to $\varphi_a$, we get 
\begin{align*}
    \forall \tau \in \mathbb{R}; \ \ \hat{\varphi_a}(\tau)= (-i \tau+S^2)^{-1}a, 
\end{align*}
so that
\begin{equation*}
    \left | \tau  \right |^{1+\alpha} \left \| S^{-\alpha}(-{i} \tau+S^2 )^{-1}a \right \|^2_H= |\tau|^{-1} \langle \psi(|\tau|^{-1/2}S) a, a \rangle_H
\end{equation*}
with $\psi(t)= t^{-2\alpha}(1+t^4)^{-1}$.  Using simple computations and Calder\'on's identity $$\int_{-\infty}
^\infty \psi(|\tau|^{-1/2}S)a \ \frac{\mathrm{d}\tau}{|\tau|}=  \int_0^\infty \frac{ t ^{-2\alpha }}{1+t^4} \frac{\mathrm d t}{ t }\  a,$$ we obtain 
\begin{align*}
    \int_{\mathbb{R}} \left | \tau  \right |^{1+\alpha} \left \|S^{-\alpha} \hat{\varphi_a }(\tau) \right \|^2_{H} \ \mathrm{d} \tau =   \int_0^\infty \frac{ t ^{-2\alpha }}{1+t^4} \frac{\mathrm d t}{ t }\ \left \| a \right \|^2_H,
\end{align*}
and conclude using Plancherel identity that $C(\alpha)^2= \frac{1}{2\pi}\int_0^\infty \frac{ t ^{-2\alpha }}{1+t^4} \frac{\mathrm d t}{ t }$.
\

\paragraph{\textit{Step 5:  $\left\|u \right\|_{L^2(\mathbb{R}; D_{S,1})}\le C'(\alpha) \|f\|_{W_{\alpha}}$, $ \alpha\in [-1,1]$}}
Since $f\in \mathcal{S}(\mathbb{R}; E_{-\infty})$, we know a priori that $u\in L^2(\mathbb{R}; D_{S,1})$ from Step 2 and $u$ agrees with the solution given by Fourier transform of Theorem \ref{ISOMORPHISM}. Hence, we can use Fourier transform to compute. We have 
\begin{align*}
    \hat{u}(\tau) = ({i}\tau +S^2)^{-1} \left | \tau  \right |^{\frac{1+\alpha}{2} }S^{-\alpha}g(\tau).
\end{align*}
where $f=D_t^{\frac{1+\alpha}{2}}S^{-\alpha}g$ with $g\in L^2(\mathbb{R}; H)$.
Hence
\begin{align*}
    \|S\hat u(\tau)\|^2_H
&= \langle (\tau^2+S^4)^{-1} |\tau|^{1+\alpha}S^{2-2\alpha} \hat g(\tau), \hat g(\tau)
\rangle_H \\
&= \|(|\tau|^{-1/2}S)^{1-\alpha}(1+(|\tau|^{-1/2}S)^4)^{-1/2}\hat g(\tau)\|_H^2 
\\
&
\le C'(\alpha)^2 \|\hat g(\tau)\|_H
^2
\end{align*}
with $C'(\alpha)= \sup_{t>0} t^{1-\alpha}(1+t^4)^{-1/2}<\infty$ when $-1\le \alpha \le 1$.

\

\paragraph{\textit{Step 6:  $\|D_t^{\frac{1-\alpha}{2}}S^\alpha u\|_{L^2(\mathbb{R};H)}  \le  \|f\|_{W_{\alpha}}$, $ \alpha\in [-1,1]$}}

We proceed as in step 5, and compute
\begin{align*}
    \||\tau|^{\frac{1-\alpha}{2}}S^\alpha \hat u(\tau)\|^2_H
= \langle (\tau^2+S^4)^{-1} |\tau|^{2} \hat g(\tau), \hat g(\tau)
\rangle_H 
\le  \|\hat g(\tau)\|_H
^2.
\end{align*}
 The conclusion follows. 
 \end{proof}

\begin{rem}
     As noted in the proof, we can identify the Duhamel solution with the Fourier solution. So this gives an indirect proof that the Duhamel solution belongs to 
     $\mathcal{S}(\mathbb{R}; E_{-\infty})$ for $f\in \mathcal{S}(\mathbb{R}; E_{-\infty}).$
\end{rem}
\subsection{Regularity of solutions}

We can now deduce existence and uniqueness results together with regularity.
We begin with the simplest case.
\begin{thm}[Regularity for source in $L^1$]\label{Régula Cas L1}
    Let $f \in L^1(\mathbb{R};H).$ Then there exists a unique $u \in L^2(\mathbb{R};D_{S,1})$ solution of the equation $\partial_t u +S^2 u =f$ in $\mathcal{S'}(\mathbb{R}; E_\infty)$. Moreover $u \in C_0(\mathbb{R}; H)$ with 
    \begin{equation*}
        \sup_{t \in \mathbb{R}} \left\| u(t)\right\|_H  \leq  \left\|f \right\|_{L^1(\mathbb{R};H) } \qquad \mathrm{and} \qquad \left\|u \right\|_{L^2(\mathbb{R}; D_{S,1})} \leq \frac{1}{\sqrt{2}} \left\|f \right\|_{L^1(\mathbb{R};H) } .
    \end{equation*}
\end{thm}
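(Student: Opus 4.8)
The statement is Theorem~\ref{Régula Cas L1}, which asserts existence, uniqueness, and the two quantitative bounds for the abstract heat equation with source in $L^1(\mathbb{R};H)$. I would split the argument into uniqueness, then existence with estimates, and handle the density passage to general $L^1$ sources at the end.

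\emph{Uniqueness} is immediate: if $u_1,u_2 \in L^2(\mathbb{R};D_{S,1})$ both solve the equation, then $w=u_1-u_2 \in L^2(\mathbb{R};D_{S,1})$ solves $\partial_t w + S^2 w = 0$ in $\mathcal{S}'(\mathbb{R};E_\infty)$. Since $L^2(\mathbb{R};D_{S,1})=V_1$ (indeed $V_1 \hookrightarrow L^2(\mathbb{R};D_{S,1})$ trivially, and the reverse is the definition of $V_1$ with $\alpha=1$ giving the half-derivative condition automatically — actually $V_1 = L^2(\mathbb{R};D_{S,1})$ by inspecting the definition with $\alpha=1$), Proposition~\ref{Unicité} with $\alpha=1$ forces $w=0$. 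Strictly, Proposition~\ref{Unicité} only needs $w\in L^2(\mathbb{R};D_{S,\alpha})$ for \emph{some} $\alpha$, so this applies directly with $\alpha=1$.

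\emph{Existence and estimates.} I would first treat $f\in \mathcal{S}(\mathbb{R};E_{-\infty})$. Lemma~\ref{lem:solution} shows $u=Tf$ given by the Duhamel formula~\eqref{Duhamel} is a solution in $\mathcal{S}'(\mathbb{R};E_\infty)$, and Lemma~\ref{lem:aprioriestimates}, parts (1) and (2) (the estimates involving $\|f\|_{L^1(\mathbb{R};H)}$, which require no $\mathcal{S}_0$ hypothesis), give precisely $\sup_t\|u(t)\|_H \le \|f\|_{L^1(\mathbb{R};H)}$ and $\|u\|_{L^2(\mathbb{R};D_{S,1})}\le \tfrac{1}{\sqrt2}\|f\|_{L^1(\mathbb{R};H)}$, together with $u\in C_0(\mathbb{R};H)$. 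Now let $f\in L^1(\mathbb{R};H)$ be arbitrary and pick $f_n\in \mathcal{S}(\mathbb{R};E_{-\infty})$ with $f_n\to f$ in $L^1(\mathbb{R};H)$ (possible by Lemma~\ref{Lemma density}). Set $u_n = Tf_n$. By linearity and the estimates applied to $f_n-f_m$, $(u_n)$ is Cauchy in both $L^2(\mathbb{R};D_{S,1})$ and $C_0(\mathbb{R};H)$; let $u$ be the common limit. Passing to the limit in the equation $\partial_t u_n + S^2 u_n = f_n$ tested against $\varphi\in\mathcal{S}(\mathbb{R};E_{-\infty})$ — each pairing $\llangle u_n,\partial_t\varphi\rrangle$, $\llangle u_n, S^2\varphi\rrangle$, $\llangle f_n,\varphi\rrangle$ converges since $u_n\to u$ in $L^2(\mathbb{R};D_{S,1})\hookrightarrow \mathcal{S}'(\mathbb{R};E_\infty)$ and $f_n\to f$ in $L^1\hookrightarrow\mathcal{S}'(\mathbb{R};E_\infty)$ — yields $\partial_t u + S^2 u = f$ in $\mathcal{S}'(\mathbb{R};E_\infty)$. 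The two bounds pass to the limit, and $u\in C_0(\mathbb{R};H)$ because $C_0$ is closed under uniform convergence.

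\emph{Main obstacle.} There is no serious obstacle here — all the analytic work is already packaged in Lemmas~\ref{lem:solution} and~\ref{lem:aprioriestimates} and in Proposition~\ref{Unicité}. The only point requiring a little care is making sure the convergence $u_n\to u$ in $L^2(\mathbb{R};D_{S,1})$ is strong enough to pass to the limit in the distributional equation against test functions valued in $E_{-\infty}$; this is fine because $S^2\varphi\in\mathcal{S}(\mathbb{R};E_{-\infty})\subset L^2(\mathbb{R};H)$ and $S^2 u_n\to S^2u$ weakly is not even needed — we have $Su_n\to Su$ in $L^2(\mathbb{R};H)$, so $\llangle u_n,S^2\varphi\rrangle = \langle Su_n, S\varphi\rangle_{L^2}\to\langle Su,S\varphi\rangle_{L^2}$. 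One should also note that the limit solution does not depend on the approximating sequence, which follows \emph{a posteriori} from the uniqueness statement already proved.
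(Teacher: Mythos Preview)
Your proof is correct and follows essentially the same approach as the paper: uniqueness via Proposition~\ref{Unicité}, existence and estimates for Schwartz sources via Lemmas~\ref{lem:solution} and~\ref{lem:aprioriestimates}, and a density argument to reach general $f\in L^1(\mathbb{R};H)$, using that $C_0(\mathbb{R};H)$ is closed under uniform limits. The brief digression about $V_1$ is unnecessary (as you yourself note, Proposition~\ref{Unicité} applies directly with $\alpha=1$), but otherwise your writeup is a faithful and slightly more detailed version of the paper's own argument.
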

\begin{proof}
Uniqueness in $L^2(\mathbb{R};D_{S,1})$ is provided by Proposition \ref{Unicité}.

The existence of such a regular solution with the estimates follow from Lemmas \ref{lem:solution} and \ref{lem:aprioriestimates}, when $f\in \mathcal{S}(\mathbb{R}; E_{-\infty}) $.

Density of $\mathcal{S}(\mathbb{R}; E_{-\infty})$ in $L^1(\mathbb{R};H)$ allows us to pass to the limit both in the weak formulation of the equation and in the estimates. That the limit stays in $C_0(\mathbb{R}; H)$  follows from the closedness of this space for the sup norm.
\end{proof}

We turn to the second result extending Proposition \ref{lem: Lions} (the case $\beta=-1$).

\begin{thm}[Regularity for source in $W_{-\beta}$]\label{cas Etheta}
    Let $\beta \in (0,1]$ and fix $f \in  W_{-\beta}$. Then there exists a unique $u \in L^2(\mathbb{R};D_{S,1})$ solution of $\partial_t u +S^2 u =f$ in $\mathcal{S'}(\mathbb{R}; E_\infty)$. Moreover $u \in V_{-\beta} \cap C_0(\mathbb{R}; H)$ and there exists a constant $C=C(\beta)>0$ independent of $f$ such that 
    \begin{equation*}
        \sup_{t \in \mathbb{R}} \left\| u(t)\right\|_H + \left\|u \right\|_{V_{-\beta}} \leq C \left\|f \right\|_{ W_{-\beta}}.
    \end{equation*}
\end{thm}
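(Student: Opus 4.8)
The plan is to assemble the statement from three ingredients that are already available: the uniqueness result of Proposition~\ref{Unicité}, the Fourier isomorphism of Theorem~\ref{ISOMORPHISM} (for existence and the $V_{-\beta}$-bound), and the Duhamel a priori estimates of Lemma~\ref{lem:aprioriestimates} (for the continuity and the sup bound), the passage from test functions to a general $f\in W_{-\beta}$ being handled by the density stated in Lemma~\ref{Spaces V_alpha}(3). Throughout I set $\alpha=-\beta\in[-1,0)$, so that $W_{-\beta}=W_\alpha\subset L^2(\mathbb{R};D_{S,-1})+W_\alpha=V_{-\alpha}^\star=V_{\beta}^\star\subset\mathcal{S}'(\mathbb{R};E_\infty)$ by Lemma~\ref{Spaces V_alpha}, so that the equation makes sense in $\mathcal{S}'(\mathbb{R};E_\infty)$.

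Uniqueness in $L^2(\mathbb{R};D_{S,1})$ is immediate: the difference of two such solutions lies in $L^2(\mathbb{R};D_{S,1})$ and solves the homogeneous equation, hence vanishes by Proposition~\ref{Unicité} applied with exponent $1$. For existence together with the $V_{-\beta}$ part of the estimate, I would invoke Theorem~\ref{ISOMORPHISM} with index $\alpha=-\beta$: it furnishes the bounded inverse $A_{-\beta}^{-1}\colon V_{\beta}^\star\to V_{-\beta}$, which is the restriction of $(\partial_t+S^2)^{-1}$ acting on $\mathcal{S}'(\mathbb{R};E_\infty)$, so that $u:=A_{-\beta}^{-1}f\in V_{-\beta}$ solves the equation and $\|u\|_{V_{-\beta}}\lesssim\|f\|_{V_{\beta}^\star}\lesssim\|f\|_{W_{-\beta}}$; by the uniqueness just established this is \emph{the} solution in $L^2(\mathbb{R};D_{S,1})$. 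It then remains to prove $u\in C_0(\mathbb{R};H)$ with $\sup_t\|u(t)\|_H\lesssim\|f\|_{W_{-\beta}}$, which the Fourier method cannot detect (cf.\ Remark~\ref{rem:counterexample}). Here I would first take $f\in\mathcal{S}_0(\mathbb{R};E_{-\infty})$: by Lemma~\ref{lem:solution} the Duhamel function $Tf$ defined by \eqref{Duhamel} solves the equation in $\mathcal{S}'(\mathbb{R};E_\infty)$ and, by Lemma~\ref{lem:aprioriestimates}(2), belongs to $L^2(\mathbb{R};D_{S,1})$, whence $Tf=u$ by uniqueness; Lemma~\ref{lem:aprioriestimates}(1), valid precisely for $\alpha\in[-1,0)$, then gives $u\in C_0(\mathbb{R};H)$ with $\sup_t\|u(t)\|_H\le C(\beta)\|f\|_{W_{-\beta}}$. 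For general $f\in W_{-\beta}$, choose $f_k\to f$ in $W_{-\beta}$ with $f_k\in\mathcal{S}_0(\mathbb{R};E_{-\infty})$ (Lemma~\ref{Spaces V_alpha}(3)) and let $u_k$ be the corresponding solutions; by linearity and the two estimates above, $(u_k)$ is Cauchy simultaneously in $V_{-\beta}$ and in $C_0(\mathbb{R};H)$ for the sup norm, its $V_{-\beta}$-limit is $u$ by boundedness of $A_{-\beta}^{-1}$, and since $C_0(\mathbb{R};H)$ is closed under uniform convergence we obtain $u\in C_0(\mathbb{R};H)$ with the claimed sup bound; adding the two inequalities yields the displayed estimate.

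The only genuinely delicate point is the bookkeeping that all three avatars of the solution — the image $A_{-\beta}^{-1}f$, the Duhamel integral $Tf$, and the uniform limit of the $u_k$ — coincide as one and the same function; this is exactly where the uniqueness in $L^2(\mathbb{R};D_{S,1})$ and the a priori membership $Tf\in L^2(\mathbb{R};D_{S,1})$ are used. Once that identification is in place, everything else is a routine passage to the limit in the weak formulation of the equation and in the norms.
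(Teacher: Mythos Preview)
Your proof is correct and follows essentially the same approach as the paper: uniqueness from Proposition~\ref{Unicité}, a priori estimates from Lemma~\ref{lem:aprioriestimates}, and passage to the limit via the density of $\mathcal{S}_0(\mathbb{R};E_{-\infty})$ in $W_{-\beta}$ from Lemma~\ref{Spaces V_alpha}(3). The only minor difference is that you invoke Theorem~\ref{ISOMORPHISM} separately to obtain existence and the $V_{-\beta}$ bound, whereas the paper extracts the $V_{-\beta}$ estimate directly from parts (2) and (3) of Lemma~\ref{lem:aprioriestimates} (which together give $\|u\|_{V_{-\beta}}\lesssim\|f\|_{W_{-\beta}}$ for the Duhamel solution), making the detour through the Fourier isomorphism unnecessary; your route is slightly longer but equally valid, and your explicit identification of the Fourier, Duhamel, and limit solutions is a welcome clarification.
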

\begin{proof}
It is a repetition of that of Theorem \ref{Régula Cas L1}, gathering uniqueness of Proposition \ref{Unicité}, estimates of Lemma \ref{lem:aprioriestimates} with $\beta=-\alpha$, and density from Lemma \ref{Spaces V_alpha}.
\end{proof}

\begin{rem}
    For $\beta\le 0$, there is a solution in $V_{-\beta}$ by Theorem \ref{ISOMORPHISM}, but it does not belong to $C_0(\mathbb{R}; H)$.
\end{rem}

\section{Embeddings and integral identities}\label{Section 4}

The study of the abstract heat equation leads to embeddings for functions spaces in the spirit of Lions and then to integral identities expressing absolute continuity.

\subsection{Embeddings}

\begin{cor}[Extended Lions' embedding]
    For $\alpha \in [-1,0)$, we have $V_\alpha \hookrightarrow C_0(\mathbb{R};H)$.
\end{cor}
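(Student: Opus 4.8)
The plan is to deduce the embedding $V_\alpha \hookrightarrow C_0(\mathbb{R};H)$ from the results already established for the abstract heat equation, by writing an arbitrary $u \in V_\alpha$ as a solution of $\partial_t u + S^2 u = f$ for a suitable source $f$ in a space we control, and then quoting the $C_0$-regularity of such solutions.

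First I would set $f := \partial_t u + S^2 u$, computed in $\mathcal{S}'(\mathbb{R};E_\infty)$, which makes sense since $V_\alpha \hookrightarrow \mathcal{S}'(\mathbb{R};E_\infty)$ by Lemma \ref{Spaces V_alpha}(2). By Theorem \ref{ISOMORPHISM} (with the roles of $\alpha$ and $-\alpha$ swapped relative to its statement, i.e. applying it to the exponent $-\alpha \in (0,1]$ so that the domain space is $V_{-(-\alpha)} = V_\alpha$), the map $\partial_t + S^2 \colon V_\alpha \to V_{\alpha}^\star = L^2(\mathbb{R};D_{S,-1}) + W_{-\alpha}$ is bounded, so $f \in L^2(\mathbb{R};D_{S,-1}) + W_{-\alpha}$ with $\|f\|_{V_\alpha^\star} \lesssim \|u\|_{V_\alpha}$. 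Since $-\alpha \in (0,1]$, set $\beta := -\alpha$; then $W_{-\beta} = W_\alpha$, and in fact $V_\alpha^\star = L^2(\mathbb{R};D_{S,-1}) + W_{-\beta} \subset W_{-\beta}$ is not literally true as sets, so more carefully I would split $f = f_1 + f_2$ with $f_1 \in L^2(\mathbb{R};D_{S,-1})$ and $f_2 \in W_{-\beta}$, and control each piece separately.

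Next, apply Theorem \ref{Régula Cas L1}? — no, $f_1$ lies in $L^2(\mathbb{R};D_{S,-1}) = W_{-1}$, so both $f_1$ and $f_2$ are covered by Theorem \ref{cas Etheta} (the case $\beta = 1$ for $f_1$, and the given $\beta \in (0,1]$ for $f_2$). By that theorem there is a unique $u_1 \in L^2(\mathbb{R};D_{S,1})$ solving $\partial_t u_1 + S^2 u_1 = f_1$ with $u_1 \in V_{-1} \cap C_0(\mathbb{R};H)$, and likewise $u_2 \in V_{-\beta} \cap C_0(\mathbb{R};H)$ solving with source $f_2$, with norm bounds by $\|f_1\|_{W_{-1}}$ and $\|f_2\|_{W_{-\beta}}$ respectively. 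Then $u_1 + u_2$ solves $\partial_t(\cdot) + S^2(\cdot) = f$ in $L^2(\mathbb{R};D_{S,1})$, and by the uniqueness clause of Proposition \ref{Unicité} (applied in $L^2(\mathbb{R};D_{S,1}) = L^2(\mathbb{R};D_{S,\alpha=1})$, noting $u, u_1+u_2$ both lie there since $V_\alpha \subset L^2(\mathbb{R};D_{S,1})$ by definition) we get $u = u_1 + u_2$. Hence $u \in C_0(\mathbb{R};H)$ with $\|u\|_{L^\infty(\mathbb{R};H)} \lesssim \|f_1\|_{W_{-1}} + \|f_2\|_{W_{-\beta}}$, and taking the infimum over all decompositions $f = f_1 + f_2$ gives $\|u\|_{L^\infty(\mathbb{R};H)} \lesssim \|f\|_{V_\alpha^\star} \lesssim \|u\|_{V_\alpha}$, which is the desired continuous embedding into $C_0(\mathbb{R};H)$.

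The main subtlety — not really an obstacle, but the point to get right — is the bookkeeping of indices: Theorem \ref{ISOMORPHISM} is stated as $A_\alpha \colon V_\alpha \to V_{-\alpha}^\star$, so to land with domain $V_\alpha$ for $\alpha \in [-1,0)$ one must invoke it with parameter $-\alpha \in (0,1]$, and then identify $V_{-(-\alpha)}^\star = V_\alpha^\star = L^2(\mathbb{R};D_{S,-1}) + W_{-(-\alpha)}$ via Lemma \ref{Spaces V_alpha}(4), matching $\beta = -\alpha$ so that the source spaces are exactly those handled by Theorems \ref{Régula Cas L1} and \ref{cas Etheta}. One should also double-check that the uniqueness space in Proposition \ref{Unicité} can be taken as $L^2(\mathbb{R};D_{S,1})$ (the case $\alpha = 1$ there, after shifting), which it is. Everything else is a direct quotation of the already-established heat-equation regularity theory.
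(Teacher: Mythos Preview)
Your strategy matches the paper's: set $f = \partial_t u + S^2 u$, split $f$ into two pieces each handled by Theorem~\ref{cas Etheta}, and invoke uniqueness (Proposition~\ref{Unicité}) to identify $u$ with the sum of the two $C_0(\mathbb{R};H)$ solutions. However, your index bookkeeping has a sign error that, if followed literally, breaks the argument. You do \emph{not} need to swap $\alpha \leftrightarrow -\alpha$ in Theorem~\ref{ISOMORPHISM}: it applies directly for $\alpha \in [-1,0)$ and yields $\partial_t + S^2 : V_\alpha \to V_{-\alpha}^\star$, not $V_\alpha^\star$. Then Lemma~\ref{Spaces V_alpha}(4) applied with parameter $-\alpha$ gives $V_{-\alpha}^\star = L^2(\mathbb{R};D_{S,-1}) + W_{\alpha}$, so the second piece lies in $W_\alpha = W_{-\beta}$ with $\beta = -\alpha \in (0,1]$, exactly the range of Theorem~\ref{cas Etheta}. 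Your version places $f_2$ in $W_{-\alpha}$, which has positive subscript $-\alpha \in (0,1]$ and is \emph{not} of the form $W_{-\beta}$ for any $\beta \in (0,1]$; Theorem~\ref{cas Etheta} would therefore not apply, and indeed the remark following it says there is no $C_0(\mathbb{R};H)$ conclusion for such sources.

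The paper sidesteps this detour altogether: rather than going through Theorem~\ref{ISOMORPHISM} and the duality description of Lemma~\ref{Spaces V_alpha}(4), it simply observes that $\partial_t u \in W_\alpha$ directly (because $D_t^{(1-\alpha)/2} u \in L^2(\mathbb{R};D_{S,\alpha})$ and $\partial_t = H_t D_t^{(1+\alpha)/2} \circ D_t^{(1-\alpha)/2}$, with $H_t$ bounded on $L^2$) and $S^2 u \in L^2(\mathbb{R};D_{S,-1}) = W_{-1}$. This gives the natural splitting $f = \partial_t u + S^2 u \in W_\alpha + W_{-1}$ with explicit control $\|\partial_t u\|_{W_\alpha} + \|S^2 u\|_{W_{-1}} \lesssim \|u\|_{V_\alpha}$, and the rest is exactly as you wrote.
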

\begin{proof}
    Fix $\alpha \in [-1,0)$ and let $u \in V_\alpha$. We have $\partial_t u \in W_\alpha$ and $S^2u \in L^2(\mathbb{R};D_{S,-1})=W_{-1}$, hence $f=\partial_t u+ S^2u \in W_{-1}+W_{\alpha}= V_{-\alpha}^\star$. As $V_\alpha\subset L^2(\mathbb{R};D_{S,1})$, by Proposition \ref{Unicité},  $u$ is the unique solution in $L^2(\mathbb{R};D_{S,1})$ of the equation 
\begin{equation*}
    \partial_t \Tilde{u} +S^2 \Tilde{u} = f \ \ \mathrm{in} \ \mathcal{S}'(\mathbb{R};E_\infty).
\end{equation*} Using linearity and Theorem \ref{cas Etheta} for $\beta=-\alpha$ and $\beta=1$, we deduce that  $u \in C_0(\mathbb{R};H)$ and we have 
\begin{align*}
    \sup_{t \in \mathbb{R}} \left\| u(t)\right\|_H \leq C(\alpha) \left\|\partial_t u \right\|_{ W_\alpha}+\left ( 1+\frac{1}{\sqrt{2}} \right ) \left \| S^2u \right \|_{L^2(\mathbb{R};D_{S,-1})} \leq \Tilde{C}(\alpha) \left \| u \right \|_{V_\alpha}.
\end{align*}
\end{proof} 
 {
\begin{rem} The case $\alpha=-1$ is the homogeneous version of Lions result mentioned before.
For $\alpha \in [0,1]$, there is no chance to have an embedding $V_\alpha \hookrightarrow C_0(\mathbb{R};H)$. In fact, the embedding $\dot{H}^{1/2}(\mathbb{R};H) \hookrightarrow L^\infty(\mathbb{R};H)$ fails (case $\alpha=0$), as the scalar embedding ${H}^{1/2}(\mathbb{R}) \hookrightarrow L^\infty(\mathbb{R})$ for the classical inhomogeneous Sobolev space of order 1/2 already fails. 
\end{rem}
We complete the embeddings by exploring further the cases $\alpha \in [0,1]$ although this does not require the heat operator $\partial_t + S^2$.
\begin{lem}[Hardy-Littlewood-Sobolev embedding]\label{Hardy-Littlewood-Sobolev}
    Let $\alpha \in (0,1]$ and let $r={2}/{\alpha} \in [2,\infty)$. Then,  we have $V_{\alpha} \hookrightarrow L^{r}(\mathbb{R};D_{S,\alpha}) $ and there is a constant $C=C(r)>0$ such that for all $u \in V_{\alpha}$,  \begin{equation*} 
     \left\| u \right\|_{L^r(\mathbb{R};D_{S,\alpha})} \leq C(r) \|D_t^{\frac{1-\alpha}{2}}u \|_{L^2(\mathbb{R};D_{S,\alpha})}.\end{equation*}
    Consequently, we have $ L^{r'}(\mathbb{R};D_{S,-\alpha}) \hookrightarrow W_{-\alpha}$, where  $r'$ is the H\"older conjugate of $r$. 
\end{lem}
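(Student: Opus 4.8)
The plan is to recognize the claimed embedding $V_\alpha \hookrightarrow L^r(\mathbb{R};D_{S,\alpha})$ with $r = 2/\alpha$ as the vector-valued, operator-adapted version of the classical Hardy--Littlewood--Sobolev (or Sobolev-type) embedding $\dot H^{(1-\alpha)/2}(\mathbb{R}) \hookrightarrow L^r(\mathbb{R})$ with $1/r = 1/2 - (1-\alpha)/2 \cdot 1 = \alpha/2$, applied coordinate-wise after freezing the spatial operator. Concretely, I would first reduce to the scalar statement: given $u \in V_\alpha$, set $v = S^\alpha u$, which by Lemma~\ref{Spaces V_alpha}(1) satisfies $v \in L^2(\mathbb{R};H)$ with $D_t^{(1-\alpha)/2} v = S^\alpha D_t^{(1-\alpha)/2} u \in L^2(\mathbb{R};H)$, i.e.\ $v \in \dot H^{(1-\alpha)/2}(\mathbb{R};H)$. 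The desired conclusion $\|u\|_{L^r(\mathbb{R};D_{S,\alpha})} = \|v\|_{L^r(\mathbb{R};H)} \lesssim \|D_t^{(1-\alpha)/2} v\|_{L^2(\mathbb{R};H)}$ is then exactly the Hilbert-space-valued homogeneous Sobolev embedding in the single time variable, with exponent $s = (1-\alpha)/2 \in [0, 1/2)$ and gain $1/2 - s = \alpha/2 = 1/r$.

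For the scalar (Banach-valued) embedding itself, the cleanest route is via the fractional integration / Riesz potential representation: $v = D_t^{-s}(D_t^s v) = I_s (D_t^s v)$ up to a constant, where $I_s$ is convolution in $t$ with the kernel $c_s |t|^{s-1}$ on $\mathbb{R}$ (valid since $0 \le s < 1/2$, so $s - 1 \in (-1, -1/2)$ and the kernel is locally integrable with the right decay). The vector-valued Hardy--Littlewood--Sobolev inequality then gives $\|I_s w\|_{L^r(\mathbb{R};H)} \lesssim \|w\|_{L^2(\mathbb{R};H)}$ with $1/r = 1/2 - s$, for $w = D_t^s v \in L^2(\mathbb{R};H)$; this is a standard consequence of the scalar HLS inequality applied to $t \mapsto \|w(t)\|_H$ together with the pointwise bound $\|I_s w(t)\|_H \le I_s(\|w(\cdot)\|_H)(t)$. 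When $s = 0$ (i.e.\ $\alpha = 1$, $r = 2$) the statement is trivial. One must double-check the endpoint $s$ close to $0$ and the constant's dependence only on $r$, which is automatic from the scalar HLS constant. The identification of $D_t^{-s}$ with convolution against the homogeneous kernel should be justified on the Fourier side: $\widehat{I_s w}(\tau) = c_s |\tau|^{-s} \widehat w(\tau)$, matching the definition of $D_t^{-s}$ in the text, and the needed manipulations live inside $\mathcal{S}'(\mathbb{R};E_\infty)$ via Proposition~\ref{TFS'}; a density argument reducing to $w \in \mathcal{S}_0(\mathbb{R};E_{-\infty})$ makes everything rigorous.

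Finally, the dual statement $L^{r'}(\mathbb{R};D_{S,-\alpha}) \hookrightarrow W_{-\alpha}$ follows by duality: $W_{-\alpha} = \dot H^{-(1-\alpha)/2}(\mathbb{R};D_{S,-\alpha})$ is (by Lemma~\ref{Spaces V_alpha}(3)--(4) and the definition of the norms) the anti-dual, with respect to the $L^2(\mathbb{R};H)$ pairing, of $\dot H^{(1-\alpha)/2}(\mathbb{R};D_{S,\alpha})$ intersected appropriately; equivalently, one applies $D_t^{-(1+\alpha)/2}$ and reads off that $f = D_t^{(1+\alpha)/2} g$ with $g \in L^2(\mathbb{R};D_{S,\alpha})$ can be controlled given $f \in L^{r'}(\mathbb{R};D_{S,-\alpha})$. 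The quickest argument: for $f \in L^{r'}(\mathbb{R};D_{S,-\alpha})$ and any $\varphi \in V_\alpha$, pair $\langle f, \varphi\rangle_{L^2(\mathbb{R};H)}$ and bound it by $\|f\|_{L^{r'}(\mathbb{R};D_{S,-\alpha})} \|\varphi\|_{L^r(\mathbb{R};D_{S,\alpha})} \lesssim \|f\|_{L^{r'}(\mathbb{R};D_{S,-\alpha})} \|\varphi\|_{V_\alpha}$ using Hölder in $t$, the $D_{S,-\alpha}$--$D_{S,\alpha}$ duality in the spatial variable, and the just-proven embedding; this exhibits $f$ as an element of $V_\alpha^\star = L^2(\mathbb{R};D_{S,-1}) + W_{-\alpha}$, and chasing the decomposition (the $L^2(\mathbb{R};D_{S,-1})$ component is killed because $f$ already has the right frequency profile, or more simply one argues directly on the Fourier side) pins $f$ into $W_{-\alpha}$ with the quantitative bound. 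I expect the main obstacle to be bookkeeping rather than a genuine difficulty: making the passage between the abstract fractional time-derivative $D_t$ of the text, the convolution realization of $D_t^{-s}$, and the vector-valued HLS inequality fully rigorous on $\mathcal{S}'(\mathbb{R};E_\infty)$ — in particular handling the homogeneous (as opposed to inhomogeneous) nature of the norms and the low-frequency behavior, which is exactly why the source-side statement is phrased with $\mathcal{S}_0$ and why density through $\mathcal{S}_0(\mathbb{R};E_{-\infty})$ is needed.
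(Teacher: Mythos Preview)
Your approach is correct and essentially identical to the paper's: both reduce to the one-dimensional vector-valued Hardy--Littlewood--Sobolev inequality by recognizing $D_t^{-(1-\alpha)/2}$ as a Riesz potential, argue first on a dense class, and obtain the dual embedding $L^{r'}(\mathbb{R};D_{S,-\alpha}) \hookrightarrow W_{-\alpha}$ by duality. Your detour through $V_\alpha^\star$ in the dual step is unnecessary---it is cleaner to dualize the embedding $\dot H^{(1-\alpha)/2}(\mathbb{R};D_{S,\alpha}) \hookrightarrow L^r(\mathbb{R};D_{S,\alpha})$ directly to get $L^{r'}(\mathbb{R};D_{S,-\alpha}) \hookrightarrow \dot H^{-(1-\alpha)/2}(\mathbb{R};D_{S,-\alpha}) = W_{-\alpha}$---but the argument is sound.
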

\begin{proof} The inequality holds for $u \in \mathcal{S}(\mathbb{R};E_{-\infty})$ using the  Sobolev embedding in $\mathbb{R}$ extended to $D_{S,\alpha}$-valued functions as the inverse of $D_t^{\frac{1-\alpha}{2}}$ is the Riesz potential with exponent $\frac{1-\alpha}{2}$.
    We conclude by density and a duality argument.
\end{proof}

The next result shows that $V_0$ and $V_1\cap L^{\infty}(\mathbb{R}; H)$ share similar embeddings.

\begin{prop}[Mixed norm embeddings]\label{prop:embed r>0} For $r\in (2,\infty)$ and $\alpha= 2/ r$, we have $V_1\cap L^{\infty}(\mathbb{R}; H)\hookrightarrow L^{r}(\mathbb{R};D_{S,\alpha})$ and $V_{0} \hookrightarrow L^{r}(\mathbb{R};D_{S,\alpha}) $,  
with 
$$   \left\| u \right\|_{L^r(\mathbb{R};D_{S,\alpha})} \leq \|u \|_{L^2(\mathbb{R};D_{S,1})}^\alpha \|u \|_{L^\infty(\mathbb{R};H)}^{1-\alpha}$$
and
 $$  \hspace{1cm} \left\| u \right\|_{L^r(\mathbb{R};D_{S,\alpha})} \leq  \|u \|_{L^2(\mathbb{R};D_{S,1})}^\alpha \|D_t^{{1}/{2}}u \|_{L^2(\mathbb{R};H)}^{1-\alpha}.$$
    Consequently, $L^{r'}(\mathbb{R};D_{S,-\alpha}) \hookrightarrow L^2(\mathbb{R}; D_{S,-1}) + L^1(\mathbb{R};H)$ and $L^{r'}(\mathbb{R};D_{S,-\alpha}) \hookrightarrow L^2(\mathbb{R}; D_{S,-1}) + W_0$.
\end{prop}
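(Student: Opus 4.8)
The plan is to obtain both inequalities from the moments (spectral interpolation) inequality for $S$ together with H\"older's inequality---pointwise in $t$ for the first, pointwise in the time-frequency variable for the second---and then to read off the four source-space inclusions by duality. For the first inequality, take $u\in V_1\cap L^\infty(\mathbb{R};H)=L^2(\mathbb{R};D_{S,1})\cap L^\infty(\mathbb{R};H)$; for a.e.\ $t$ one has $u(t)\in D_{S,1}\cap H=D(S)$, so the moments inequality (extended from $E_{-\infty}$ to $D(S)$ by the spectral theorem) gives $\|S^\alpha u(t)\|_H\le\|Su(t)\|_H^{\alpha}\|u(t)\|_H^{1-\alpha}$. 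Raising this to the power $r$, using $\alpha r=2$ and $\|u(t)\|_H\le\|u\|_{L^\infty(\mathbb{R};H)}$, integrating in $t$, and taking the $r$-th root yields $\|S^\alpha u\|_{L^r(\mathbb{R};H)}\le\|u\|_{L^2(\mathbb{R};D_{S,1})}^{\alpha}\|u\|_{L^\infty(\mathbb{R};H)}^{1-\alpha}$, with no constant.

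For the second inequality the pointwise-in-$t$ argument is unavailable, since $V_0$ does not embed into $L^\infty(\mathbb{R};H)$ (not even into $C_0(\mathbb{R};H)$, as recalled in the text), so the half-time-derivative must be used. Since $V_0\hookrightarrow V_\alpha$ by Lemma~\ref{Spaces V_alpha}(2), Lemma~\ref{Hardy-Littlewood-Sobolev} already gives $u\in L^r(\mathbb{R};D_{S,\alpha})$ with $\|u\|_{L^r(\mathbb{R};D_{S,\alpha})}\le C(r)\|D_t^{(1-\alpha)/2}S^\alpha u\|_{L^2(\mathbb{R};H)}$, so it remains to estimate the right-hand factor. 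I would pass to the Fourier transform in time (Proposition~\ref{TFS'}): from $u\in L^2(\mathbb{R};D_{S,1})$ and $D_t^{1/2}u\in L^2(\mathbb{R};H)$ one gets $\hat{u}(\tau)\in D_{S,1}\cap H=D(S)$ for a.e.\ $\tau$, and the moments inequality gives
\[
|\tau|^{1-\alpha}\|S^\alpha\hat{u}(\tau)\|_H^2\le\bigl(\|S\hat{u}(\tau)\|_H^2\bigr)^{\alpha}\bigl(|\tau|\,\|\hat{u}(\tau)\|_H^2\bigr)^{1-\alpha}.
\]
Integrating in $\tau$, applying H\"older with exponents $1/\alpha$ and $1/(1-\alpha)$, and using Plancherel consistently (so that the $2\pi$ normalisations cancel) yields $\|D_t^{(1-\alpha)/2}S^\alpha u\|_{L^2(\mathbb{R};H)}\le\|Su\|_{L^2(\mathbb{R};H)}^{\alpha}\|D_t^{1/2}u\|_{L^2(\mathbb{R};H)}^{1-\alpha}$, which together with the Hardy--Littlewood--Sobolev bound gives the second inequality (up to the constant $C(r)$ of Lemma~\ref{Hardy-Littlewood-Sobolev}).

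Finally, the four inclusions of source spaces follow by duality with respect to the $L^2(\mathbb{R};H)$ pairing $(u,\omega)\mapsto\int_{\mathbb{R}}\langle S^\alpha u,S^{-\alpha}\omega\rangle_H\,\mathrm{d}t$, under which $(L^r(\mathbb{R};D_{S,\alpha}))^\star=L^{r'}(\mathbb{R};D_{S,-\alpha})$. Dualizing $V_0\hookrightarrow L^r(\mathbb{R};D_{S,\alpha})$ and invoking $V_0^\star=L^2(\mathbb{R};D_{S,-1})+W_0$ from Lemma~\ref{Spaces V_alpha}(4) gives $L^{r'}(\mathbb{R};D_{S,-\alpha})\hookrightarrow L^2(\mathbb{R};D_{S,-1})+W_0$; dualizing $L^2(\mathbb{R};D_{S,1})\cap L^\infty(\mathbb{R};H)\hookrightarrow L^r(\mathbb{R};D_{S,\alpha})$ gives the remaining inclusion, using that the dual of the intersection is $L^2(\mathbb{R};D_{S,-1})+L^1(\mathbb{R};H)$. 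The one delicate point is this last $L^\infty$-duality: a functional represented by an $L^{r'}$-function is countably additive, so it has no singular part in $(L^\infty(\mathbb{R};H))^\star$ and does land in the $L^1$ summand, which I would make rigorous via a Yosida--Hewitt decomposition or by localising in time. I expect the second inequality to be the only genuinely non-formal step, precisely because it forces the use of the half-time-derivative through a Sobolev-type embedding instead of an elementary pointwise estimate.
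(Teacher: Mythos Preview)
Your proof is correct and matches the paper's approach: the moments inequality pointwise in $t$ for the first embedding, the moments inequality on the Fourier side combined with H\"older, Plancherel, and Lemma~\ref{Hardy-Littlewood-Sobolev} for the second, and density plus duality for the consequences. Your observation that the second inequality carries the constant $C(r)$ is accurate (the paper's own proof invokes that same lemma), and your attention to the $L^\infty$-duality subtlety actually goes beyond the paper's terse ``standard by density and duality and we skip details''.
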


\begin{proof} 
    For the first inequality, use the moment inequality
    $$\|S^\alpha u(t)\|_H\le \|Su(t)\|_H^\alpha \|u(t)\|_H^{1-\alpha}\le \|Su(t)\|_H^\alpha \|u \|_{L^\infty(\mathbb{R};H)}^{1-\alpha} $$
    and integrate its $r$-power.
    
    For the second inequality, start with the moment inequality expressed in Fourier side when $u\in  \mathcal{S}(\mathbb{R};E_{-\infty}),$ for fixed $\tau$,
    $$\||\tau|^{\frac{1-\alpha}{2}}S^\alpha \hat u(\tau)\|_H\le \|S \hat u(\tau)\|_H^\alpha \||\tau|^{{1}/{2}}\hat u(\tau)\|_H^{1-\alpha}.$$
    Next, square the expression, integrate with respect to $\tau$, and apply H\"older inequality, Plancherel’s identity, density argument, and Lemma \ref{Hardy-Littlewood-Sobolev} to conclude.

    The consequences are standard by density and duality and we skip details.
\end{proof}

\begin{rem}
Note that the first inequality and its dual version in the statement hold whenever $\mathbb{R}$ is replaced by any interval. However, the second one and its dual version have a meaning only on  $\mathbb{R}$.
\end{rem}
\begin{rem}
Let $\alpha \in (0,1]$. Let $S^2=-\Delta_x$,  more precisely $S=(-\Delta_x)^{1/2}$, where $\Delta_x$ is the usual Laplace operator  defined as a self-adjoint operator on $L^2(\mathbb{R}^n)$.  
When $2\alpha < n$,  
Sobolev embedding in $\mathbb{R}^n$ gives us  
$$D(S^\alpha)\subset L^q(\mathbb{R}^n), \ \mathrm{with}\  \|v\|_{L^q} \le C \|S^\alpha v\|_{L^2}, \ q=\frac{2n}{n-2\alpha}.$$
 This is true for  $0\le \alpha\le 1$ if $n \geq 3$,  or $\alpha \in (0,\frac{1}{2})$ if $n=1$ or $\alpha<1$ if $n=2$.  Thus $D_{S,\alpha} \hookrightarrow L^q(\mathbb{R}^n)$. When $r=2/\alpha$, we have then $ V_\alpha \hookrightarrow L^r(\mathbb{R};L^q(\mathbb{R}^n))$. The constraints are equivalent to 
 $$ \frac{1}{r}+\frac{n}{2q}=\frac{n}{4} \ \ \mathrm{and} \ \ \ 2 \leq r,q < \infty.$$
Thus, we recover the mixed space $L^r_t L^q_x$ that appears in the classical theory \cite[chp.~3]{ladyzhenskaia1968linear} and deduce for them the classical embedding $\|u\|_{L^r_tL^q_x}\le C \|\nabla_x u\|_{L^2_tL^2_x}^{{2}/{r}} \|u\|_{L^\infty_tL^2_x}^{1-{2}/{r}} $ from the first inequality in  Proposition \ref{prop:embed r>0}. This argument  is inspired from the one in \cite{auscher2023universal}.
\end{rem}

\subsection{Integral identities}\label{S2S4} 
The Lions' embedding using domains of $S$ and $S^{-1}$ comes with integral identities. 
We now prove they hold using completions of the domains of $S$ and $S^{-1}$, and allowing more general right hand sides.   
\begin{prop}[Integral identities: the real line case]\label{energy lemma}
    Let $u \in L^2(\mathbb{R};D_{S,1})$ and let $\rho \in (2,\infty]$. Assume that $\partial_t u =  f+g$ with $f \in L^2(\mathbb{R};D_{S,-1})$ and 
     $g \in L^{\rho'}(\mathbb{R};D_{S,-{\beta}})$, where ${\beta}={2}/{\rho} \in [0,1)$ and $\rho'$ is the H\"older conjugate of $\rho$. Then $ u \in C_0(\mathbb{R};H)$, $ t \mapsto \left \| u(t) \right \|^2_H$ is absolutely continuous on $\mathbb{R}$ and for all $ \sigma < \tau $,
    \begin{align}\label{eq:integralidentity}
        \left \| u(\tau) \right \|^2_H-\left \| u(\sigma) \right \|^2_H = 2\mathrm{Re}\int_{\sigma}^{\tau} \langle f(t),u(t)\rangle_{H,-1}  + \langle g(t),u(t)\rangle_{H, -\beta}\  \mathrm d t.
    \end{align}
    {In particular, if $\rho=\infty$, then we infer that 
    \begin{align}\label{important existence cas L1}
        \sup_{t\in \mathbb{R}}\|u(t) \|_{H} \leq \sqrt{2 \left\| u \right\|_{L^2(\mathbb{R};D_{S,1})} \left\| f \right\|_{L^2(\mathbb{R};D_{S,-1})} }+(1+\sqrt{2})\left\| g \right\|_{L^1(\mathbb{R};H)}.
    \end{align}}
\end{prop}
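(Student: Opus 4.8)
The strategy is to reduce this to the results already proven for the abstract heat equation on $\mathbb{R}$. Given $u\in L^2(\mathbb{R};D_{S,1})$ with $\partial_t u = f+g$, I would first observe that since $S^2 u\in L^2(\mathbb{R};D_{S,-1})=W_{-1}$ and $g\in L^{\rho'}(\mathbb{R};D_{S,-\beta})$, Lemma \ref{Hardy-Littlewood-Sobolev} (the Hardy--Littlewood--Sobolev embedding, applied with $\alpha=\beta$ when $\beta>0$, or trivially when $\beta=0$) gives $g\in W_{-\beta}$. Hence the source
$$
F := \partial_t u + S^2 u = f + g + S^2 u
$$
belongs to $W_{-1}+W_{-\beta} = V_{\beta}^\star$. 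By Proposition \ref{Unicité}, $u$ is \emph{the} solution in $L^2(\mathbb{R};D_{S,1})$ of $\partial_t \tilde u + S^2\tilde u = F$, and by Theorem \ref{cas Etheta} (used for $\beta$ and for $1$, via linearity, as in the proof of the Extended Lions' embedding), $u\in C_0(\mathbb{R};H)$ with the corresponding bound. So continuity and vanishing at infinity come for free.

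For the integral identity \eqref{eq:integralidentity} itself, the plan is a density/approximation argument. When $f,g$ (hence $u$, via the explicit Duhamel or Fourier solution of Lemma \ref{lem:solution}) lie in the smooth class $\mathcal{S}(\mathbb{R};E_{-\infty})$, the function $t\mapsto\|u(t)\|_H^2$ is genuinely $C^1$ and one differentiates directly:
$$
\tfrac{\mathrm d}{\mathrm dt}\|u(t)\|_H^2 = 2\,\mathrm{Re}\,\langle \partial_t u(t), u(t)\rangle_H = 2\,\mathrm{Re}\,\langle f(t)+g(t), u(t)\rangle_H,
$$
and pairing $f(t)$ against $u(t)$ as $\langle S^{-1}f(t), Su(t)\rangle_H$ and $g(t)$ as $\langle S^{-\beta}g(t), S^{\beta}u(t)\rangle_H$ recovers the duality brackets $\langle\cdot,\cdot\rangle_{H,-1}$ and $\langle\cdot,\cdot\rangle_{H,-\beta}$; integrating from $\sigma$ to $\tau$ gives \eqref{eq:integralidentity} in the smooth case. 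Then one approximates: pick $f_k\to f$ in $L^2(\mathbb{R};D_{S,-1})$ and $g_k\to g$ in $L^{\rho'}(\mathbb{R};D_{S,-\beta})$ with $f_k,g_k\in\mathcal{S}_0(\mathbb{R};E_{-\infty})$ (Lemma \ref{Lemma density} and Lemma \ref{Spaces V_alpha}(3)), let $u_k$ be the associated solutions; by the linear estimates of Lemma \ref{lem:aprioriestimates} and Theorem \ref{cas Etheta}, $u_k\to u$ in $L^2(\mathbb{R};D_{S,1})\cap V_{-\beta}$ and in $L^\infty(\mathbb{R};H)$, so each term in \eqref{eq:integralidentity} passes to the limit — the left side because $u_k(t)\to u(t)$ in $H$ uniformly, the right side by Cauchy--Schwarz pairing $D_{S,-1}$ with $D_{S,1}$ and $L^{\rho'}(D_{S,-\beta})$ with $L^{\rho}(D_{S,\beta})$ (using the HLS embedding $V_{-\beta}\subset\dots$, or rather $u\in L^\rho(\mathbb{R};D_{S,\beta})$ from $u\in V_{-\beta}$ combined with Lemma \ref{Hardy-Littlewood-Sobolev} in the scaled form). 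Absolute continuity of $t\mapsto\|u(t)\|_H^2$ follows because the right-hand side of \eqref{eq:integralidentity} is an integral of an $L^1_{\mathrm{loc}}$ function of $t$.

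Finally, for the case $\rho=\infty$ ($\beta=0$, $g\in L^1(\mathbb{R};H)$): split $u = u_f + u_g$ where $u_f$ solves the heat equation with source $f$ and $u_g$ with source $g$ (both well-defined by Theorems \ref{Régula Cas L1} and \ref{cas Etheta}). Apply \eqref{eq:integralidentity} with $g=0$ to $u_f$ — letting $\sigma\to-\infty$ where $u_f(\sigma)\to 0$ — to get $\|u_f(\tau)\|_H^2 \le 2\,\|u_f\|_{L^2(\mathbb{R};D_{S,1})}\|f\|_{L^2(\mathbb{R};D_{S,-1})}$ via Cauchy--Schwarz, so $\sup_t\|u_f(t)\|_H\le\sqrt{2\|u\|_{L^2(\mathbb{R};D_{S,1})}\|f\|_{L^2(\mathbb{R};D_{S,-1})}}$ after bounding $\|u_f\|_{L^2(D_{S,1})}\le\|u\|_{L^2(D_{S,1})}+\ldots$; and for $u_g$ use the first bound of Lemma \ref{lem:aprioriestimates}(1) together with the Duhamel representation to get $\sup_t\|u_g(t)\|_H\le\|g\|_{L^1(\mathbb{R};H)}$, while $\|u_g\|_{L^2(D_{S,1})}\le\tfrac1{\sqrt2}\|g\|_{L^1}$ by Lemma \ref{lem:aprioriestimates}(2); assembling gives the stated constant $1+\sqrt 2$.

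\textbf{Main obstacle.} The delicate point is the justification that every term in \eqref{eq:integralidentity} is stable under the approximation — in particular that $u_k\to u$ in the mixed-norm space $L^\rho(\mathbb{R};D_{S,\beta})$ so the pairing $\int\langle g_k,u_k\rangle_{H,-\beta}\to\int\langle g,u\rangle_{H,-\beta}$ holds; this requires invoking the Hardy--Littlewood--Sobolev embedding of Lemma \ref{Hardy-Littlewood-Sobolev} in its scaled form and carefully tracking which norms control which, rather than any genuinely new estimate. The smooth-case differentiation and the final arithmetic with constants are routine.
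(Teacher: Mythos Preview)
Your strategy for $u\in C_0(\mathbb{R};H)$ is correct and matches the paper. The gap is in the approximation step for the integral identity. You approximate $f_k\to f$ and $g_k\to g$ and then let $u_k$ be the \emph{heat-equation} solution with source $f_k+g_k$ (this is what invoking Lemma~\ref{lem:aprioriestimates} and Theorem~\ref{cas Etheta} means). But then $u_k$ converges to the heat-equation solution with source $f+g$, which is \emph{not} $u$: recall $u$ solves $\partial_t u + S^2 u = S^2 u + f + g$, not $\partial_t u + S^2 u = f+g$. The paper's fix is to approximate $f_k\to S^2 u + f$ in $L^2(\mathbb{R};D_{S,-1})$ (and $g_k\to g$ as you have it); then the heat-equation solution $u_k$ with source $h_k=f_k+g_k$ does converge to $u$, and writing $\partial_t u_k = (f_k - S^2 u_k) + g_k$ one sees $f_k-S^2u_k\to f$ in $L^2(\mathbb{R};D_{S,-1})$, so the smooth identity passes to the desired limit.

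Your derivation of \eqref{important existence cas L1} is also flawed: the decomposition $u=u_f+u_g$ with $u_f,u_g$ solving heat equations with sources $f,g$ is simply false, for the same reason as above. The paper instead applies the already-proved identity \eqref{eq:integralidentity} directly to $u$, lets $\sigma\to-\infty$, chooses $\tau$ realizing $X:=\sup_t\|u(t)\|_H$, and bounds the right side by $2\|u\|_{L^2(D_{S,1})}\|f\|_{L^2(D_{S,-1})} + 2X\|g\|_{L^1(H)}$ (using $\langle g(t),u(t)\rangle_{H,0}=\langle g(t),u(t)\rangle_H$ when $\beta=0$); solving the resulting quadratic inequality $X^2\le 2AB+2CX$ gives the stated bound. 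This is both simpler and avoids any splitting.
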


Remark that with our notation, $\mathrm{Re} \langle f(t),u(t)\rangle_{H,-1}= \mathrm{Re}  \langle u(t),f(t)\rangle_{H,1}$.

\begin{proof}
The assumption $u \in L^2(\mathbb{R};D_{S,1})$ is equivalent to $S^2u \in L^2(\mathbb{R}; D_{S,-1})$, hence $u$ verifies the equation 
\begin{align*}
    \partial_t u+S^2u =S^2u + f+g=: h.
\end{align*}
Using Theorem \ref{cas Etheta} when $\rho<\infty$ and Theorem \ref{Régula Cas L1} when $\rho=\infty$, we know that $u\in L^2(\mathbb{R};D_{S,1})\cap C_0(\mathbb{R};H)$. It remains to prove the identity. 

Let $f_k,g_k \in \mathcal{S}(\mathbb{R};E_{-\infty})$ with $f_k \to S^2u +f$ in $L^2(\mathbb{R};D_{S,-1})$ and $g_k \to g$ in  $ L^{\rho'}(\mathbb{R};D_{S,-{\beta}})$ and set $h_k=f_k+g_k$.
Let $u_k \in L^2(\mathbb{R};D_{S,1})$ be the unique solution of the equation $\partial_t u_k + S^2 u_k = h_k$ given by Corollary \ref{cor:iso}.  We have $u_k \in \mathcal{S}(\mathbb{R};E_{-\infty})$.

The regularity of $u_k$ allows us to write for all $ \sigma < \tau $,
\begin{equation*}
    \left \| u_k (\tau) \right \|^2_H-\left \| u_k (\sigma) \right \|^2_H = 2 \mathrm{Re}\int_{\sigma}^{\tau} \langle \partial_t u_k (t),u_k (t)\rangle_H \ \mathrm d t.
\end{equation*}
Since $ \partial_t u_k=-S^2u_k+h_k$ by the equation, we have  for all $ \sigma < \tau $,
\begin{equation*}
    \left \| u_k (\tau) \right \|^2_H-\left \| u_k (\sigma) \right \|^2_H = 2 \mathrm{Re}\int_{\sigma}^{\tau} \langle f_k (t)-S^2u_k (t),u_k (t)\rangle_H + \langle g_k(t), u_k(t) \rangle_H \ \mathrm{d}t.
\end{equation*}
To pass to the limit when $k\to \infty$, we observe that  $u_k\to u$ in $L^2(\mathbb{R};D_{S,1})$ and in $C_0(\mathbb{R};H)$ in all cases, and also in $L^\rho(\mathbb{R};D_{S,\beta})$ when $\rho<\infty$. In particular $f_k-S^2u_k \to f$ in $L^2(\mathbb{R};D_{S,-1})$. We obtain \eqref{eq:integralidentity} at the limit.

In the case $\rho=\infty$, letting $\sigma\to -\infty$ and taking $\tau$ at which $\|u(\tau) \|_H=\sup\|u(t)\|_{H}=X$, we obtain 
\begin{align*}
        X^2= \|u(\tau) \|_H^2 \le 2\int_{-\infty}^{\infty}  \|S^{-1}f(t)\|_H\|Su(t)\|_{H} \  \mathrm d t+ 2X\int_{-\infty}^{\infty}  \| g(t)\|_H\  \mathrm d t .
    \end{align*}
Solving the inequality for $X$, we obtain the conclusion.
\end{proof}

We stress that the above result is false on bounded intervals as evidenced by the counter-example in Remark \ref{rem:counterexample}. But it remains valid on half-lines. On $(0,\infty)$ say, it can be shown either using the backward heat equation or an extension method. We describe the second method below.

\begin{cor}[Integral identities: the half-line case]\label{corenergy}
    Let $I$ be an open half-line of $\mathbb{R}$. Let $u \in L^2(I;D_{S,1})$ and let $\rho \in (2,\infty]$. Assume that $\partial_t u =  f+g$ with $f \in L^2(I;D_{S,-1})$ and 
     $g \in L^{\rho'}(I;D_{S,-{\beta}})$, where ${\beta}={2}/{\rho} \in [0,1)$. Then $ u \in C_0(\Bar{I},H)$, $ t \mapsto \left \| u(t) \right \|^2_H$ is absolutely continuous on $\Bar{I}$ and \eqref{eq:integralidentity} holds for all $ \sigma, \tau \in \Bar{I}$ such that $  \sigma < \tau$.
\end{cor}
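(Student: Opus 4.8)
The strategy is to reduce the half-line statement to the already-established result on $\mathbb{R}$, namely Proposition~\ref{energy lemma}, by a reflection/extension argument. Say $I=(0,\infty)$; the case $I=(-\infty,0)$ is symmetric. The difficulty is that a naive even or odd reflection of $u$ across $t=0$ produces, in the distributional time derivative on all of $\mathbb{R}$, a singular contribution supported at $\{0\}$ (a Dirac mass times the trace $u(0)$), which would not be of the admissible form $f+g$ with $f\in L^2(\mathbb{R};D_{S,-1})$, $g\in L^{\rho'}(\mathbb{R};D_{S,-\beta})$. To avoid this, I would first record that $u\in C_0(\overline{I};H)$ needs to be established \emph{before} extending, so that the trace $u(0)\in H$ makes sense and can be subtracted off.

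First, I would prove the qualitative regularity $u\in C(\overline I;H)$ on the half-line directly. Since $u\in L^2(I;D_{S,1})$, we have $S^2u\in L^2(I;D_{S,-1})$, so $u$ solves $\partial_t u+S^2u = S^2u+f+g =: h$ on $I$, with $h\in L^2(I;D_{S,-1})+L^{\rho'}(I;D_{S,-\beta})$. Using the Duhamel representation for the heat semigroup restricted to a half-line — or, more cheaply, extending $h$ by zero to $(-\infty,\infty)$, solving on $\mathbb{R}$ by Theorem~\ref{cas Etheta} (resp.\ Theorem~\ref{Régula Cas L1} when $\rho=\infty$) to get $v\in L^2(\mathbb{R};D_{S,1})\cap C_0(\mathbb{R};H)$, and observing that $v$ and $u$ both solve the same equation on $I$ in the class $L^2(I;D_{S,1})$ — one gets that $u-v$ solves the homogeneous heat equation on $I$ with zero source. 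A localized version of the uniqueness Proposition~\ref{Unicité} (or the semigroup representation $u(t)=e^{-(t-\sigma)S^2}u(\sigma)+\int_\sigma^t e^{-(t-s)S^2}h(s)\,ds$ for $\sigma\in I$, which is classical once one knows $u\in L^2_{\mathrm{loc}}$) then shows $u$ has a continuous-in-$H$ representative on $\overline I$ with $u(t)\to 0$ as $t\to\infty$. This is the step I expect to require the most care, because one must justify the semigroup representation for the weak solution on a half-line without a priori pointwise regularity.

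Once $u\in C_0(\overline I;H)$ with trace $a:=u(0)\in H$, I extend to $\mathbb{R}$ cleanly. Set $w(t):=u(t)$ for $t>0$ and, for $t<0$, let $w(t):= e^{-|t|S^2}a$ be the backward/reflected heat evolution of the trace (equivalently, solve the backward heat equation on $(-\infty,0)$ with data $a$ at $0$). Then $w\in C_0(\mathbb{R};H)$: continuity at $0$ holds because $e^{-|t|S^2}a\to a$ in $H$; and $w\in L^2(\mathbb{R};D_{S,1})$ because on $(-\infty,0)$ one has $\|Se^{-|t|S^2}a\|_H^2$ integrable with $\int_{-\infty}^0\|Se^{-|t|S^2}a\|_H^2\,dt=\tfrac12\|a\|_H^2$. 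On $(-\infty,0)$, $w$ solves $-\partial_t w + S^2 w = 0$, i.e.\ $\partial_t w = S^2 w = S^2 w + 0$, which is of the admissible form with $f_-:=S w\in L^2$ (after the obvious shift) and $g_-:=0$. The key point is that \emph{because the two one-sided traces agree} (both equal $a$), the distributional derivative $\partial_t w$ on all of $\mathbb{R}$ has \emph{no} jump term at $0$: for $\varphi\in\mathcal D(\mathbb{R};E_{-\infty})$, $-\int_{\mathbb{R}}\langle w,\partial_t\varphi\rangle = -\int_0^\infty\langle u,\partial_t\varphi\rangle - \int_{-\infty}^0\langle w,\partial_t\varphi\rangle$, and integrating by parts on each side the boundary terms $\pm\langle a,\varphi(0)\rangle$ cancel. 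Hence $\partial_t w = \tilde f + \tilde g$ globally on $\mathbb{R}$ with $\tilde f := (S^2u+f)\mathbf 1_{(0,\infty)} + (Sw)\mathbf 1_{(-\infty,0)}\cdot S\in L^2(\mathbb{R};D_{S,-1})$ (schematically — one checks $S^2w\in L^2(\mathbb{R};D_{S,-1})$ on the left piece) and $\tilde g := g\,\mathbf 1_{(0,\infty)}\in L^{\rho'}(\mathbb{R};D_{S,-\beta})$, using that $L^{\rho'}$ functions restrict/extend by zero.

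Finally, apply Proposition~\ref{energy lemma} to $w$ on $\mathbb{R}$: it gives that $t\mapsto\|w(t)\|_H^2$ is absolutely continuous on $\mathbb{R}$ with the integral identity \eqref{eq:integralidentity} for all $\sigma<\tau$. Restricting to $\sigma,\tau\in\overline I$ and noting that on $\overline I$ we have $w=u$, $\tilde f=S^2u+f-S^2u$... — more precisely, on $(0,\infty)$ the ``$f$-part'' of $\partial_t w$ equals exactly $f$ once $S^2u$ is moved back (since $\partial_t u = \partial_t w = (S^2u + f)-S^2u+g$ there is no actual splitting ambiguity: the pairing $\langle \tilde f(t),w(t)\rangle_{H,-1}$ on $(0,\infty)$ equals $\langle S^2u(t)+f(t),u(t)\rangle_{H,-1}$, but the identity from Proposition~\ref{energy lemma} is applied to the equation $\partial_t w = \tilde f+\tilde g$ and then one re-expresses; in fact it is cleanest to track $\partial_t w + S^2 w = S^2w + \tilde f + \tilde g$ throughout so that the $S^2w$ terms cancel in the identity exactly as in the proof of Proposition~\ref{energy lemma}). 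This yields \eqref{eq:integralidentity} for $u$ on $\overline I$, together with $u\in C_0(\overline I;H)$ and absolute continuity of $t\mapsto\|u(t)\|_H^2$ on $\overline I$, which is the claim. The only genuinely delicate points are (i) establishing the continuous trace $u(0)\in H$ before extending, and (ii) verifying that the glued function $w$ has the claimed global structure with no spurious distributional mass at $0$ — both handled above.
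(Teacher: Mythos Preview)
Your overall strategy—reduce to Proposition~\ref{energy lemma} on $\mathbb{R}$ by extending $u$ across $t=0$—is the right idea, and it is also what the paper does. However, your execution has a genuine circularity in Step~1 that the paper's proof avoids entirely.

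\medskip

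\textbf{The gap in Step~1.} You want to establish $u\in C_0(\overline I;H)$ \emph{before} extending, by writing $u=v+w$ where $v\in C_0(\mathbb{R};H)$ comes from Theorem~\ref{cas Etheta}/\ref{Régula Cas L1} with zero-extended source, and $w=u-v\in L^2((0,\infty);D_{S,1})$ solves $\partial_t w+S^2w=0$ on $(0,\infty)$. But neither of your two proposed tools closes the argument. A ``localized version of Proposition~\ref{Unicité}'' on a half-line is simply false: $w(t)=e^{-tS^2}a$ for any $a\in H$ is a nonzero solution. And the semigroup representation $w(t)=e^{-(t-\sigma)S^2}w(\sigma)$ presupposes that $w(\sigma)\in H$ for some $\sigma$, which is exactly what you are trying to prove; recall that here $u$ is only assumed in $L^2(I;D_{S,1})$, not in $L^1_{\mathrm{loc}}(I;H)$, and $D_{S,1}\not\subset H$ in general. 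One can show weakly that $t\mapsto\langle w(t),a\rangle$ lies in $W^{1,2}(0,\infty)$ for each $a\in E_{-\infty}$, but the resulting bound on $\sup_t|\langle w(t),a\rangle|$ is controlled by $\|S^{-1}a\|_H^{1/2}\|Sa\|_H^{1/2}$, not by $\|a\|_H$, so it does not yield $w(t)\in H$.

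\medskip

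\textbf{How the paper sidesteps this.} The paper does not try to establish the trace $u(0)\in H$ first. Instead it takes the \emph{even} extension $u_e$ of $u$ and the \emph{odd} extensions $f_o,g_o$ of $f,g$. The point is that the boundary term at $t=0$ in the integration by parts vanishes \emph{automatically by parity}: testing against $\phi\otimes a$, one pairs $\langle u,a\rangle(t)$ (which is only known to be in $W^{1,1}_{\mathrm{loc}}$, not to have an $H$-valued trace) against $\overline{\phi'(t)}+\overline{\phi'(-t)}=\tfrac{d}{dt}[\overline{\phi(t)}-\overline{\phi(-t)}]$, and the bracket vanishes at $t=0$. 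Hence $\partial_t u_e=f_o+g_o$ globally in $\mathcal{D}'(\mathbb{R};E_\infty)$ with no Dirac mass, \emph{without} any a priori knowledge of $u(0)$. Proposition~\ref{energy lemma} then gives $u_e\in C_0(\mathbb{R};H)$ and the integral identity in one stroke.

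\medskip

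If you grant Step~1, your Steps~2--3 (extension by backward heat flow of the trace, then Proposition~\ref{energy lemma}) are correct but more elaborate than needed. Ironically, the cleanest way to prove your Step~1 is to apply the paper's even/odd extension to $w$ itself—at which point one might as well apply it to $u$ directly and skip the decomposition $u=v+w$ altogether.
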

\begin{proof}
We assume that $I=(0,\infty)$ because it is always possible to go back to this case. We will construct an even extension $u_e$ of $u$ and odd extensions $g_o,f_o$ of $g, f$  to  $\mathbb{R}$.   These extensions belong to the same spaces  as $u,f,g$ but in $\mathbb{R}$ and  $\partial_t u_e =  f_o+g_o$. Thus, Proposition \ref{energy lemma} applies to ${u_e}$. We obtain the conclusion by restricting to $\Bar I$.

We start by defining for all $a \in E_{-\infty}$ the distribution $\langle u, a \rangle  $ on $(0,\infty)$ by setting
\begin{align*}
    \forall \phi \in \mathcal{D}((0,\infty);\mathbb{C}), \langle \langle u, a \rangle, \phi \rangle_{\mathcal{D}',\mathcal{D}}:= \llangle u, \phi \otimes a \rrangle_{\mathcal{D}',\mathcal{D}}= \int_0^\infty \langle Su(t), S^{-1}a \rangle_H \Bar{\phi}(t) \ \mathrm{d}t. 
\end{align*}
Hence $\langle u, a \rangle$ is locally integrable and agrees with  $\langle u, a \rangle(t)=\langle Su(t), S^{-1}a \rangle_H$ almost everywhere. We have 
\begin{align*}
    \frac{\mathrm{d} }{\mathrm{d} t} \langle u,a \rangle = \langle g,a \rangle_{H,-\alpha}+ \langle f,a \rangle_{H,-1} \ \  in \ \mathcal{D}'((0,\infty);\mathbb{C}).
\end{align*}
The assumptions on $u, f,g$ imply that    $  \langle u,a \rangle \in W^{1,1}(0,T)$ for any $T>0$. It follows that $\langle u,a \rangle$ can be identified with a absolutely continuous function on $[0,\infty)$. We define ${u_e} \in \mathcal{D}'(\mathbb{R};E_\infty)$ by 
\begin{align*}
    \llangle {u_e},\phi \otimes a \rrangle_{\mathcal{D}',\mathcal{D}} := \int_{0}^{\infty} \langle u,a \rangle (t) (\Bar{\phi}(t)+\Bar{\phi}(-t)) \ \mathrm{d}t.
\end{align*}
using that distributions $\mathcal{D}'(\mathbb{R};E_{\infty})$ are uniquely determined on tensor products $\phi \otimes a$ with $\phi \in \mathcal{D}(\mathbb{R})$ and $a \in E_{-\infty}$. We have ${u_e}=u$ in $\mathcal{D}'((0,\infty);E_\infty)$ by taking $\phi$ supported in $(0,\infty).$ Next, integration by parts shows that 
\begin{align*}
    \llangle {u_e},\frac{\mathrm{d}}{\mathrm{d}t}(\phi \otimes a) \rrangle_{\mathcal{D}',\mathcal{D}}&= -\int_{0}^{\infty} (\langle S^{-\beta}g(t),S^{\beta}a \rangle_H + \langle S^{-1}f(t),Sa \rangle_H)(\Bar{\phi}(t)+\Bar{\phi}(-t)) \ \mathrm{d}t
    \\ & = - \int_{\mathbb{R}} (\langle g_o(t),a \rangle_{H,-\beta} + \langle f_o(t),a \rangle_{H,-1}) \Bar{\phi}(t) \ \mathrm{d}t,
\end{align*}
where $g_o$ and $f_o$ are the odd extensions of $g$ and $f$, respectively. Hence $\partial_t {u_e} = g_o+f_o$ in $\mathcal{D}'(\mathbb{R};E_\infty).$ Lastly,
\begin{align*}
   \llangle S{u_e},\phi \otimes a \rrangle_{\mathcal{D}',\mathcal{D}}= \llangle {u_e},S(\phi \otimes a) \rrangle_{\mathcal{D}',\mathcal{D}} &= \int_{0}^{\infty} \langle Su(t),a \rangle_H (\Bar{\phi}(t)+\Bar{\phi}(-t)) \ \mathrm{d}t 
    \\ & = \int_{\mathbb{R}} \langle (Su)_e(t),a \rangle_H \Bar{\phi}(t) \ \mathrm d t,
\end{align*}
where $(Su)_e$ is the even extension of $Su$, so that $S{u_e}=(Su)_e$ in $\mathcal{D}'(\mathbb{R};E_\infty)$. 
\end{proof}

The conclusion of Corollary \ref{corenergy} can be polarized, given two functions $u$, $\Tilde{u}$ that verify the assumptions of Corollary \ref{corenergy} with the same exponent $\rho \in (2,\infty]$ and $\beta = 2/\rho$. Thanks to the extendability seen in the previous proof, the same also works with open, half-infinite intervals and the conclusion is as follows.
\begin{cor}[Polarized integral identities]\label{EnergyPol}
    Assume that $u$, $\Tilde{u}$ satisfy the same assumptions as in Corollary \ref{corenergy} on two open infinite intervals $I$ and $J$ with non empty intersection. Then $t \mapsto \langle u(t), \Tilde{u}(t) \rangle_H$ is absolutely continuous on $\Bar{I}\cap \Bar{J}$ and  we have for all $ \sigma, \tau \in \Bar{I}\cap \Bar{J}$ such that $  \sigma < \tau$
    \begin{align*}
        \langle u(\tau), \Tilde{u}(\tau) \rangle_H - \langle u(\sigma), \Tilde{u}(\sigma) \rangle_H =\int_{\sigma}^{\tau} &\langle f(t),\Tilde{u}(t)\rangle_{H,-1} + \langle g(t),\Tilde{u}(t)\rangle_{H,-{\beta}}\ \mathrm d t \\&+ \int_{\sigma}^{\tau} \langle u(t),\Tilde{f}(t)\rangle_{H,1} + \langle u(t), \Tilde{g}(t)\rangle_{H,{\beta}}\ \mathrm d t.
    \end{align*}
\end{cor}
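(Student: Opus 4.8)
The plan is to derive the identity by polarization from the real-valued identity \eqref{eq:integralidentity} of Corollary~\ref{corenergy} (and Proposition~\ref{energy lemma}); the only real issue is that $u$ and $\widetilde u$ are a priori defined on different infinite intervals. Recall the complex polarization formula
\begin{equation*}
4\langle w,\widetilde w\rangle_H=\|w+\widetilde w\|_H^2-\|w-\widetilde w\|_H^2+i\,\|w+i\widetilde w\|_H^2-i\,\|w-i\widetilde w\|_H^2 .
\end{equation*}
Because $u$ and $\widetilde u$ satisfy the hypotheses of Corollary~\ref{corenergy} with the \emph{same} exponents $\rho$ and $\beta=2/\rho$, each of the four functions $u+\widetilde u$, $u-\widetilde u$, $u+i\widetilde u$, $u-i\widetilde u$ again satisfies these hypotheses, with $\partial_t$ of it equal to the corresponding combination of $f,\widetilde f$ plus the corresponding combination of $g,\widetilde g$. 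Applying \eqref{eq:integralidentity} to these four functions yields absolute continuity of $t\mapsto\|u\pm\widetilde u\|_H^2$ and $t\mapsto\|u\pm i\widetilde u\|_H^2$, hence of $t\mapsto\langle u(t),\widetilde u(t)\rangle_H$; then, substituting the four identities into the polarization formula, a routine bookkeeping with the real parts and with the relation $\overline{\langle\widetilde f(t),u(t)\rangle_{H,-1}}=\langle u(t),\widetilde f(t)\rangle_{H,1}$ (and its $\beta$-analogue) makes the diagonal contributions $\langle f,u\rangle$, $\langle\widetilde f,\widetilde u\rangle$, $\langle g,u\rangle$, $\langle\widetilde g,\widetilde u\rangle$ cancel and leaves precisely the stated sesquilinear identity.

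It remains to put $u$ and $\widetilde u$ on a common infinite interval so that Corollary~\ref{corenergy} or Proposition~\ref{energy lemma} can be applied to these four combinations. If $I\cap J$ is infinite — a half-line or $\mathbb{R}$ — one simply restricts $u$ and $\widetilde u$ to $I\cap J$, which preserves all the hypotheses, and runs the preceding paragraph there, obtaining the identity on $\overline{I\cap J}=\bar I\cap\bar J$. If $I\cap J$ is bounded, then, up to exchanging the roles of $I$ and $J$, we have $I=(-\infty,q)$ and $J=(p,\infty)$ with $p<q$. In this case I first use Corollary~\ref{corenergy} to get that $u$ extends to a continuous $H$-valued function on $(-\infty,q]$, and then extend $u$ to all of $\mathbb{R}$ by even reflection across $q$, with $f$ and $g$ extended by odd reflection across $q$, exactly as in the proof of Corollary~\ref{corenergy}. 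Continuity of $u$ at $q$ ensures no Dirac mass appears there, so the extension $u_e$ belongs to $L^2(\mathbb{R};D_{S,1})$ with $\partial_t u_e=f_o+g_o$, $f_o\in L^2(\mathbb{R};D_{S,-1})$, $g_o\in L^{\rho'}(\mathbb{R};D_{S,-\beta})$, and $u_e=u$ on $I\cap J$. Restricting $u_e$ to $J$ now places $u_e$ and $\widetilde u$ on the common half-line $J$, and one concludes as before, restricting the resulting identity on $\bar J$ back to $\bar I\cap\bar J=[p,q]$.

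The main — and essentially only — obstacle is this last configuration: since \eqref{eq:integralidentity} genuinely fails on bounded intervals (Remark~\ref{rem:counterexample}), the extension step cannot be avoided, and one must verify that reflecting across the finite endpoint does not introduce a singular term in the distributional derivative, which is exactly why the a priori continuity of $u$ up to that endpoint provided by Corollary~\ref{corenergy} is invoked first. Everything else is the polarization bookkeeping described above.
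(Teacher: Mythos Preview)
Your proposal is correct and follows essentially the same approach as the paper: polarize the identity of Corollary~\ref{corenergy} and, when $I\cap J$ is bounded, use the even/odd reflection construction from the proof of Corollary~\ref{corenergy} to extend one of the functions to a common infinite interval. The paper's own argument is stated in a single sentence (``polarized \ldots\ thanks to the extendability seen in the previous proof''), and you have filled in exactly the details that sentence compresses.
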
 

\begin{rem}
We note that by linearity, the above identities hold with $g$ replaced by a sum of several terms in $L^{\rho'}(\mathbb{R};D_{S,-{\beta}})$ for different pairs $(\rho,\beta)$ and similarly for the polarized version. However, the inequality \eqref{important existence cas L1} should be modified accordingly.
\end{rem}

On a bounded interval there is a statement with  an extra  $L^1(I;H)$ hypothesis on $u$.

\begin{cor}[Integral identities: the bounded case]\label{corenergybounded}
    Let $I$ be a  bounded, open interval of $\mathbb{R}$. Let $u \in L^2(I;D_{S,1})\cap L^1(I;H)$ and let $\rho \in (2,\infty]$. Assume that $\partial_t u =  f+g$ with $f \in L^2(I;D_{S,-1})$ and 
     $g \in L^{\rho'}(I;D_{S,-{\beta}})$, where ${\beta}={2}/{\rho} \in [0,1)$. Then $ u \in C(\Bar{I},H)$, $ t \mapsto \left \| u(t) \right \|^2_H$ is absolutely continuous on $\Bar{I}$ and \eqref{eq:integralidentity} holds for all $ \sigma, \tau \in \Bar{I}$ such that $  \sigma < \tau$.
\end{cor}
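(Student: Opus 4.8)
The plan is to reduce the bounded-interval statement to the half-line case already proved in Corollary \ref{corenergy}, using the extra hypothesis $u \in L^1(I;H)$ only qualitatively, to get started, exactly as announced in the discussion following Theorem \ref{thm:energyboundedinterval}. Write $I=(a,b)$. First I would fix some $t_0\in I$ at which $u(t_0)\in H$; such a point exists, in fact almost every $t_0$ works, because $u\in L^1(I;H)$ forces $u(t)\in H$ for a.e.\ $t$. (More precisely, one argues as in Corollary \ref{corenergy} that for each $\tilde a\in E_{-\infty}$ the scalar function $\langle u,\tilde a\rangle$ lies in $W^{1,1}_{\mathrm{loc}}(I)$, hence has an absolutely continuous representative; combined with $u\in L^1(I;H)$ one upgrades this to: $t\mapsto u(t)$ has a representative with values in $H$ which is at least defined pointwise on a dense set, and one picks $t_0$ there. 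Actually it is cleanest to pick $t_0$ as a Lebesgue point of $t\mapsto\|u(t)\|_H$ from the $L^1$ hypothesis.)

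Next I would localize: it suffices to prove the continuity and identity \eqref{eq:integralidentity} on $\overline{(t_0,b)}$ and on $\overline{(a,t_0)}$ separately, and these two half-open subintervals are of the same type as a truncated half-line after an affine change of variables. So fix attention on $J:=(t_0,b)$. On $J$ we still have $u\in L^2(J;D_{S,1})$, $f\in L^2(J;D_{S,-1})$, $g\in L^{\rho'}(J;D_{S,-\beta})$, and $\partial_t u=f+g$, but $J$ is bounded, so Corollary \ref{corenergy} does not apply verbatim. The trick is to extend: set $\tilde u:=\mathbb{1}_{(t_0,b)}u$, $\tilde f:=\mathbb{1}_{(t_0,b)}f$, and adjust the source to account for the jump at $t_0$ by adding a point mass. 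Concretely, one checks in $\mathcal{D}'((t_0,\infty);E_\infty)$ that $\partial_t\tilde u=\tilde f+\tilde g+\delta_{t_0}\otimes u(t_0)$ is false (a Dirac is not in our source classes), so instead I would use the half-line version with \emph{initial data}: apply the existence/uniqueness theory for $\partial_t v+S^2v=S^2\tilde u+\tilde f+\tilde g$ on $(t_0,\infty)$ with initial value $u(t_0)\in H$ at time $t_0$ — but that theory is only developed later (Section \ref{sec:inhomogenousCP}), so to stay within the excerpt I would instead extend $u$ itself to the left of $t_0$ by the constant-in-$H$ heat flow.

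The concrete construction I would carry out: on $(-\infty,b)$ define $\overline u$ by $\overline u=u$ on $(t_0,b)$ and $\overline u(t)=e^{(t_0-t)S^2}u(t_0)$ for $t\le t_0$; since $u(t_0)\in H$ the semigroup part lies in $L^2((-\infty,t_0);D_{S,1})\cap C_0((-\infty,t_0];H)$ with $\partial_t\overline u+S^2\overline u=0$ there, and $\overline u(t_0^-)=u(t_0)=\overline u(t_0^+)$, so no Dirac appears and $\partial_t\overline u=\overline f+\overline g$ on $(-\infty,b)$ with $\overline f:=\mathbb{1}_{(t_0,b)}f-\mathbb{1}_{(-\infty,t_0)}S^2\overline u\in L^2((-\infty,b);D_{S,-1})$ and $\overline g:=\mathbb{1}_{(t_0,b)}g\in L^{\rho'}((-\infty,b);D_{S,-\beta})$. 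Now $\overline u$ satisfies the hypotheses of Corollary \ref{corenergy} on the half-line $(-\infty,b)$, so $\overline u\in C_0(\overline{(-\infty,b)};H)$, $t\mapsto\|\overline u(t)\|_H^2$ is absolutely continuous on $\overline{(-\infty,b)}$, and \eqref{eq:integralidentity} holds there. Restricting to $[t_0,b]$ and noting $\overline u=u$, $\overline f=f$, $\overline g=g$ on $(t_0,b)$ gives continuity and the identity on $[t_0,b]$; doing the symmetric construction (extend to the right of $t_0$ by the backward flow) on $(a,t_0)$ gives it on $[a,t_0]$; gluing at $t_0$ yields $u\in C(\overline I;H)$ with absolute continuity of $t\mapsto\|u(t)\|_H^2$ and \eqref{eq:integralidentity} for all $\sigma<\tau$ in $\overline I$.

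The main obstacle is the initial step — producing the point $t_0$ with $u(t_0)\in H$ and making sure the extension is genuinely continuous across $t_0$ (no hidden singular part in $\partial_t\overline u$ at $t_0$). This is exactly where the qualitative hypothesis $u\in L^1(I;H)$ is used, and it must be used carefully: a priori $u$ is only an $E_\infty$-valued distribution with $Su\in L^2(I;H)$, so "$u(t_0)\in H$" has to be extracted from the $W^{1,1}_{\mathrm{loc}}$ regularity of the scalar traces $\langle u,\tilde a\rangle$ together with the $L^1(I;H)$ bound, via a uniform (in $\tilde a$) estimate identifying the pointwise value as an element of $H$ of norm $\lesssim$ the $L^1$-average plus the $L^2$ energy. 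Once $t_0$ is secured, everything else is a routine transcription of the half-line proof.
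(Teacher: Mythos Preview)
Your approach is correct but more elaborate than the paper's. Both reduce to the half-line case (Corollary \ref{corenergy}); the difference is the reduction itself. The paper simply multiplies $u$ by a smooth cutoff $\chi$ equal to $1$ near one endpoint of $I$ and $0$ near the other: then $\chi u$ extends by zero to a half-line, $\partial_t(\chi u)=\chi(f+g)+\chi' u$, and the extra term $\chi' u$ lies in $L^1(I;H)$ precisely by the hypothesis $u\in L^1(I;H)$ --- this is the only place that hypothesis enters. Two cutoffs (one per endpoint) and gluing finish the proof in a few lines. Your route --- selecting a Lebesgue point $t_0$ of $u$, extending by the semigroup, and matching scalar traces so that no Dirac appears at $t_0$ --- also works, but concentrates all the delicacy at the matching step: you must argue that the $W^{1,1}_{\mathrm{loc}}$ continuous representative of $t\mapsto\langle u(t),\tilde a\rangle$ agrees at $t_0$ with $\langle u(t_0),\tilde a\rangle_H$, which is exactly why $t_0$ must be a Lebesgue point of $u$ in $L^1(I;H)$. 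Two minor slips: your semigroup should read $e^{-(t_0-t)S^2}u(t_0)$ (with the minus sign, otherwise it is unbounded for $t<t_0$), and then $\partial_t\overline u=S^2\overline u$ on $(-\infty,t_0)$, not $\partial_t\overline u+S^2\overline u=0$; this is harmless since all you need is $\partial_t\overline u\in L^2((-\infty,t_0);D_{S,-1})$, which holds. The cutoff method is cleaner and avoids both the semigroup and the Lebesgue-point argument; your method is closer in spirit to the Cauchy-problem viewpoint and makes explicit where a single $H$-value of $u$ is needed, in line with the remark after Theorem \ref{thm:energyboundedinterval}.
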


\begin{proof} Assume that $I=(0,\mathfrak{T})$. 
    If we take $\chi$ a smooth real-valued function that is equal to 1 near 0 and 0 near $\mathfrak{T}$ and set $v=\chi u$ on $(0,\infty)$ then we can see  that  
     $v  \in L^2((0,\infty); D_{{S},1})$ and  
     $\partial_tv \in L^2((0,\infty); D_{{S},-1})+ L^{\rho'}((0,\infty);D_{{S},-\beta})+ L^1((0,\infty);H).$ We may apply  Corollary \ref{corenergy} with the above remark, and by restriction we have the conclusion on any subinterval $[0,\mathfrak{T}']$ with $\mathfrak{T}'<\mathfrak{T}$. If we now do  this with  a smooth real-valued function that is equal to 0 near 0 and 1 near $\mathfrak{T}$, and apply Corollary \ref{corenergy} {on $(-\infty,\mathfrak{T})$} we have by restriction the conclusion for $u$ on any subinterval $[\mathfrak{T}',\mathfrak{T}]$ with
     $0<\mathfrak{T}'$. We conclude on $[0,\mathfrak{T}]$ by gluing.
    \end{proof}

\section{Abstract parabolic equations}\label{Section 5}
In this section, we study parabolic equations of type $$ \partial_t u + \mathcal{B}u=f, $$
where $\partial_t  + \mathcal{B}$ is a parabolic operator with a time-dependent elliptic part $\mathcal{B}$ under ``divergence structure". Here, we do not assume any time-regularity on $\mathcal{B}$ apart its weak measurability. We provide a complete framework to prove well-posedness and to construct propagators and fundamental solution operators avoiding density arguments from parabolic operators with  time regular elliptic part. We also avoid time regularization like Steklov approximations. Uniqueness implies that our construction agrees with others under common hypotheses.
\subsection{Setup}\label{sec:homogenoussetup}
Throughout this section, we fix an operator $$T: D(T)\subset H \rightarrow K$$ which is injective, closed and densely defined from $ D(T)\subset H $ to another complex separable Hilbert space $K$. The operator $T^\star T$ is an injective, positive self-adjoint operator on $H$, so is $S:=(T^\star T)^{1/2}$. Moreover, by the Kato's second representation theorem \cite{kato2013perturbation}, we have 
    \begin{equation*}
      D(S)=D(T) \  \ \mathrm{and} \ \  \forall u,v \in D(T), \  \langle Su, Sv \rangle_H = \langle T^\star Tu, v\rangle_H= \langle Tu, Tv\rangle_K.
    \end{equation*}
As a result, $D_{S,1}$ is a completion of $D(T)$ for the norm $\left\|T\cdot \right\|_K$.

Next, $(B_t)_{t\in \mathbb{R}}$ is a fixed family of bounded and coercive sesquilinear forms on $D(T)\times D(T)$ {with respect to the homogeneous norm on $D(T)$} and with uniform bounds (independent of $t$). To be precise, $B_t : D(T) \times D(T) \rightarrow \mathbb{C}$ is a sesquilinear form verifying 
\begin{align}\label{EllipticityAbstract}
   \left | B_t(u,v) \right |\leq M \left \| u \right \|_{S,1}\left \| v \right \|_{S,1}\ ,\ \  \nu \left \| u \right \|_{S,1}^2 \leq \mathrm{Re} (B_t(u,u)),
\end{align}
for some $ M, \nu >0$ and for all $t \in \mathbb{R}$ and $u,v \in D(S)$. This is the equivalent to saying that for all $t \in \mathbb{R}$, there exists a bounded and strictly accretive linear map  $A(t)$ on $ \overline{\mathrm{ran}(T)}$ such that
\begin{align}\label{eq:TstarAT}
    \forall u,v \in D(T),\ B_t(u,v)=\langle A(t) Tu, Tv \rangle_K.
\end{align}
We assume in addition that the family $(B_t)_{t\in \mathbb{R}}$ is weakly measurable, \textit{i.e.}, $t \mapsto B_t(u,v)$ is a measurable function on $\mathbb{R}$, for all $u,v \in D(T)$. 

We keep denoting by $B_t$ the unique extension of $B_t$ to $D_{S,1}\times D_{S,1}$. Remark that the family $(B_t)_{t\in \mathbb{R}}$ is automatically weakly measurable for the reason that for all $u,v \in D_{S,1}$ the function $t \mapsto B_t(u,v)$ is a pointwise limit of a sequence of measurable functions. 

Note that the adjoint forms $B_t^\star$ defined by $B_t^\star(u,v)=\overline{B_t(v,u)}$ have the same properties and are associated to $A(t)^*$.

As
\begin{equation*}
       \int_\mathbb{R}\left |   B_t(u(t),v(t)) \right |\mathrm dt \leq M \left \| u \right \|_{L^2(\mathbb{R};D_{S,1})} \left \| v \right \|_{L^2(\mathbb{R};D_{S,1})},
\end{equation*}
the operator $\mathcal{B}$ defined by $$\llangle \mathcal{B}u, v \rrangle = \int_{\mathbb{R}} B_t(u(t),v(t))  \ \mathrm{d}t \ \ \mathrm{when} \  u,v \in L^2(\mathbb{R};D_{S,1})$$ is a bounded operator from $L^2(\mathbb{R};D_{S,1})$ to $L^2(\mathbb{R};D_{S,-1})$ with
\begin{equation*}
        \left \| \mathcal{B}u \right \|_{L^2(\mathbb{R};D_{S,-1})} \leq M \left \| u \right \|_{L^2(\mathbb{R};D_{S,1})}.
\end{equation*}
Next, the partial derivative is a well-defined tempered distribution given by 
   $$
   \llangle \partial_t u  , \varphi \rrangle_{\mathcal{S}',\mathcal{S}}= -\llangle  u, \partial_t\varphi \rrangle_{\mathcal{S}',\mathcal{S}}.
   $$
   When $u\in L^2(\mathbb{R};D_{S,1})$, one can compute the right hand side as 
   $$\llangle  u, \partial_t\varphi \rrangle_{\mathcal{S}',\mathcal{S}}= \llangle  Su, S^{-1}\partial_t\varphi \rrangle_{\mathcal{S}',\mathcal{S}}= \int_{\mathbb{R}}  \langle Su(t), S^{-1}\partial_t \varphi(t) \rangle_{H} \ \mathrm{d}t.
   $$

\begin{defn}[The forward parabolic operator associated to the family $(B_t)_{t \in \mathbb{R}}$]\label{Définition opérateur B}
The operator $$\partial_t + \mathcal{B}: L^2(\mathbb{R};D_{S,1}) \rightarrow \mathcal{S}'(\mathbb{R};E_{\infty})$$ defined using the weak formulation 
    \begin{equation*}
    \forall u \in L^2(\mathbb{R};D_{S,1}), \forall \varphi \in \mathcal{S}(\mathbb{R};E_{-\infty}), \  \llangle \partial_t u + \mathcal{B}u , \varphi \rrangle_{\mathcal{S}',\mathcal{S}}:=   -\llangle  u, \partial_t\varphi \rrangle_{\mathcal{S}',\mathcal{S}}+   \int_{\mathbb{R}}   B_t (u(t),\varphi(t)) \ \mathrm{d}t  
    \end{equation*}
     is called the parabolic operator associated to the family $(B_t)_{t \in \mathbb{R}}$. 
     The definition is the same as above when $\mathbb{R}$ is substituted by an open interval $I \subset \mathbb{R}$, replacing $\mathcal{S}'(\mathbb{R};E_{\infty})$ by $\mathcal{D}'(I;E_{\infty})$ and $\mathcal{S}(\mathbb{R};E_{-\infty})$ by $\mathcal{D}(I;E_{-\infty})$. In both cases, we  formally write $\partial_t  + \mathcal{B} = \partial_t  + T^\star A(t) T $.
     \end{defn}
Remark that this definition needs  no assumption $u\in L^1_{\mathrm{loc}}(\mathbb{R}; H)$ but if it is the case one can use also $\llangle  u, \partial_t\varphi \rrangle_{\mathcal{S}',\mathcal{S}}=\int_{\mathbb{R}}  \langle u(t), \partial_t \varphi(t) \rangle_{H} \ \mathrm{d}t$ (see Section \ref{subsection 4.9}). For $u,v \in \mathcal{S}(\mathbb{R};E_{-\infty})$,  an integration by parts then  yields 
$$ \llangle \partial_t u + \mathcal{B}u , v \rrangle_{\mathcal{S}',\mathcal{S}} = \overline{\llangle  -\partial_t v +\mathcal{B}^\star v , u \rrangle}_{\mathcal{S}',\mathcal{S}},  $$
where $-\partial_t + \mathcal{B}^\star$ is the backward parabolic operator associated to the adjoint family of forms $(B_t^\star)_{t \in \mathbb{R}}$ defined similarly.

We wish to find (weak) solutions $u\in L^2(\mathbb{R};D_{S,1})$ to  $\partial_tu+\mathcal{B}u= f $ for appropriate source terms. The challenge here is that we cannot use Fourier Transform anymore, nor a semi-group. We could start with Lions representation theorem but we choose a different route, introducing a variational 
parabolic operator.

We denote by $H_t$ the Hilbert transform with symbol $i \tau/ \left | \tau \right |$. More precisely, if $u \in \mathcal{S'}(\mathbb{R};E_\infty)$ is such that we have $i \tau/ \left | \tau \right | \mathcal{F}u \in \mathcal{S'}(\mathbb{R};E_\infty)$, then we set 
\begin{equation*}
    H_t u := \mathcal{F}^{-1}\left (  i \frac{\tau}{\left | \tau \right |} \mathcal{F}u \right ).
\end{equation*}

We define a bounded sesquilinear form $B_{V_0}: V_0 \times V_0 \rightarrow \mathbb{C}$ by  
\begin{equation*}\label{B_{V_0}}
    \forall u,v \in V_0, \ B_{V_0}(u,v):=\int_\mathbb{R}\langle H_t D_t^{1/2}u(t), D_t^{1/2}v(t) \rangle_H + B_t(u(t),v(t))\ \mathrm dt. 
\end{equation*}
By the Riesz representation theorem, there exists a unique $\mathcal{H} \in \mathcal{L}(V_0,V_0^\star)$ such that 
\begin{equation*}\label{eq:mathcal{H}}\llangle \mathcal{H}u, v \rrangle_{V_0^\star, V_0}:= B_{V_0}(u,v), \ u,v \in V_0.
\end{equation*}
We have 
\begin{equation*}\label{H et part + B}
   \left ( \partial_t+\mathcal{B} \right )_{\scriptscriptstyle{\vert V_0}} = \mathcal{H} \ , \ \ \left ( -\partial_t+\mathcal{B}^\star \right )_{\scriptscriptstyle{\vert V_0}} = \mathcal{H}^\star, 
\end{equation*}
where $\mathcal{H}^\star : V_0 \rightarrow V_0^\star$ is the adjoint of $\mathcal{H}$.
Indeed, we have the almost everywhere equality $$\langle H_t D_t^{1/2}u(t), D_t^{1/2}v(t) \rangle_H=-\langle u(t), \partial_t v(t) \rangle_{H}=-\langle Su(t), S^{-1}\partial_t v(t) \rangle_{H} $$
when $u,v \in \mathcal{S}(\mathbb{R};E_{-\infty})$ so that $\mathcal{H}$ and $\partial_t+\mathcal{B} $ agree on $\mathcal{S}(\mathbb{R};E_{-\infty})$ and we conclude by density in $V_0$. Thus, we may call $\mathcal{H}$ the variational parabolic operator associated to $\mathcal{B}$ as it comes from the sesquilinear form $B_{V_0}$  and $V_0$ plays the role of a variational space.

\subsection{Existence and uniqueness results}\label{S2S5} 
We now prove our main results.

\subsubsection{Source term in $V_0^\star$ : Kaplan's method}

The following lemma is essentially due to Kaplan \cite{kaplan1966abstract}. It expresses hidden coercivity of the variational parabolic  operator $\mathcal{H}$. We reproduce the argument for completeness.
\begin{lem}[Kaplan's lemma: invertibility on the pivotal variational space]\label{lemme Hidden Coerc} 
    For each $f \in V_0^\star$, there exists a unique $u \in V_0$ such that $\mathcal{H}u=f$. Moreover, 
    \begin{align*}\label{!}
         \left \| u \right \|_{V_0} \leq C(M,\nu)
         \left \| f \right \|_{V_0^\star}.
    \end{align*}
\end{lem}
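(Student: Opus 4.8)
The plan is to establish invertibility of $\mathcal{H}$ via the Lax--Milgram / Lions--Lax--Milgram mechanism, but since $B_{V_0}$ is \emph{not} coercive on $V_0$ in the usual sense (the Hilbert transform term $\langle H_t D_t^{1/2}u, D_t^{1/2}u\rangle_H$ is purely imaginary after integration and contributes nothing to the real part), the key is Kaplan's trick of testing against a rotated function. First I would record the elementary spectral fact that for $u\in V_0$ one has $\mathrm{Re}\,\langle H_tD_t^{1/2}u, D_t^{1/2}u\rangle_{L^2(\mathbb{R};H)} = 0$ and, more importantly, $\mathrm{Re}\,\langle H_tD_t^{1/2}u, iD_t^{1/2}u\rangle_{L^2(\mathbb{R};H)} = \|D_t^{1/2}u\|_{L^2(\mathbb{R};H)}^2$ up to a harmless constant, because $H_t$ has symbol $i\,\mathrm{sgn}(\tau)$ so $\langle H_t v,v\rangle = i\int |\hat v|^2 \mathrm{sgn}(\tau)$ and pairing with $iv$ produces $\int|\hat v|^2 \mathrm{sgn}(\tau)\cdot(\pm1)$; one must be slightly careful that this yields $\||\tau|^{1/2}\hat v\|^2$ with the right sign, which is exactly $\|D_t^{1/2} u\|^2$ on $V_0$.

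Next I would show the key lower bound: for a suitable constant $\kappa>0$ (depending on $M,\nu$) and the test function $v = u + \kappa\, i\, u$ (or more precisely $v$ built from $u$ by a rotation designed to recover control of the half-derivative), one has
\begin{equation*}
\mathrm{Re}\, B_{V_0}(u, v) \gtrsim_{M,\nu} \|u\|_{L^2(\mathbb{R};D_{S,1})}^2 + \|D_t^{1/2}u\|_{L^2(\mathbb{R};H)}^2 = \|u\|_{V_0}^2,
\end{equation*}
where the $D_{S,1}$-part comes from the coercivity of $B_t$ in \eqref{EllipticityAbstract}, the $D_t^{1/2}$-part comes from the Hilbert-transform identity above paired against the rotated component, and the cross terms ($B_t$ against $iu$, and $H_tD_t^{1/2}u$ against $u$ itself) are absorbed using boundedness of $B_t$ by $M$, boundedness of $H_t$ on $L^2$, and Young's inequality with $\kappa$ chosen small enough. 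Simultaneously $\|v\|_{V_0}\le (1+\kappa)\|u\|_{V_0}$, so the bilinear form $B_{V_0}$ satisfies the hypotheses of the (generalized) Lax--Milgram theorem: it is bounded on $V_0\times V_0$ and satisfies an inf--sup condition $\inf_{u\ne 0}\sup_{v\ne 0} |B_{V_0}(u,v)|/(\|u\|_{V_0}\|v\|_{V_0}) \ge c(M,\nu)>0$. The same argument applied to $B_{V_0}^\star$ (equivalently, using that $B_t^\star$ has the same bounds and $H_t^\star = -H_t$ so the adjoint form has the analogous structure) gives the transpose inf--sup condition, and hence $\mathcal{H}\colon V_0\to V_0^\star$ is an isomorphism with $\|u\|_{V_0}\le C(M,\nu)\|f\|_{V_0^\star}$.

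The main obstacle I expect is pinning down the precise constant in the Hilbert-transform pairing identity and making sure the rotation that recovers the half-derivative norm is compatible with the sign conventions for $H_t$, $D_t^{1/2}$, and the complex inner product used here (anti-linear in the second slot); getting this wrong flips a sign and destroys coercivity. A secondary technical point is justifying the computations on the dense subspace $\mathcal{S}(\mathbb{R};E_{-\infty})$ (where Fourier-side manipulations are legitimate) and then passing to the limit in $V_0$, using the density established in Lemma \ref{Spaces V_alpha}(2) together with the boundedness of all operators involved. Uniqueness is then immediate from the lower bound (if $\mathcal{H}u=0$ then $\|u\|_{V_0}^2 \lesssim \mathrm{Re}\,B_{V_0}(u,v)=0$), and existence follows from the surjectivity half of Lax--Milgram once both inf--sup conditions are in hand.
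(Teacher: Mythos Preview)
Your overall strategy is correct --- the proof is indeed by hidden coercivity and Lax--Milgram --- but the specific test function you propose, $v = u + \kappa i u = (1+\kappa i)u$, does not work. With the inner product anti-linear in the second argument,
\[
\mathrm{Re}\,\langle H_t D_t^{1/2}u,\, i D_t^{1/2}u\rangle_{L^2(\mathbb{R};H)} = \mathrm{Re}\bigl(-i\,\langle H_t w, w\rangle\bigr) = \frac{1}{2\pi}\int_{\mathbb{R}} \mathrm{sgn}(\tau)\,\|\widehat{w}(\tau)\|_H^2\,\mathrm{d}\tau
\]
with $w = D_t^{1/2}u$, i.e.\ the \emph{signed} quantity $\frac{1}{2\pi}\int_{\mathbb{R}}\tau\,\|\hat u(\tau)\|_H^2\,\mathrm{d}\tau$, which can vanish or be negative for general $u\in V_0$. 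Multiplying by a complex scalar cannot extract $\|D_t^{1/2}u\|^2$ from the Hilbert-transform term: you need an operator that removes the sign of $\tau$ on the Fourier side, not one that multiplies the whole function by a constant.

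The paper's proof (and Kaplan's original device) tests against $v = (1+\delta H_t)u$ for small $\delta>0$. Since $H_t$ is a skew-adjoint isometry on $V_0$ commuting with $D_t^{1/2}$ and $S$, the operator $1+\delta H_t$ is an isomorphism on $V_0$ (and on $V_0^\star$) with norm $\sqrt{1+\delta^2}$, and one computes
\[
\mathrm{Re}\,B_{V_0}\bigl(u,(1+\delta H_t)u\bigr) \ge \delta\,\|D_t^{1/2}u\|_{L^2(\mathbb{R};H)}^2 + (\nu - \delta M)\,\|u\|_{L^2(\mathbb{R};D_{S,1})}^2,
\]
because $\mathrm{Re}\,\langle H_t D_t^{1/2}u, D_t^{1/2}u\rangle = 0$ by skew-adjointness while $\langle H_t D_t^{1/2}u, H_t D_t^{1/2}u\rangle = \|D_t^{1/2}u\|^2$ since $H_t$ is an isometry. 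Choosing $\delta = \nu/(1+M)$ gives coercivity of the twisted form $B_{V_0}(\cdot,(1+\delta H_t)\cdot)$, and ordinary Lax--Milgram combined with the invertibility of $(1+\delta H_t)^\star$ on $V_0^\star$ finishes the proof. Once you replace the scalar $i$ by the operator $H_t$ in your test function, the rest of your outline (absorbing cross terms by choosing $\delta$ small, uniqueness from the lower bound) goes through essentially as you describe.
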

\begin{proof}
By the Plancherel theorem and the fact that the Hilbert transform $H_t$ commutes with $D_t^{1/2}$ and $S$,  it is a bijective isometry on $V_0$. As  it is skew-adjoint, for all $\delta \in \mathbb{R}$,  $1+\delta H_t$ is an isomorphism on $V_0$ and $\|(1+\delta H_t)u\|_{V_0}^2= (1+\delta^2)\|u\|^2_{V_0}$. The same equality holds on $V_0^\star$.

Let $\delta>0$ to be chosen later. The modified sesquilinear form $B_{V_0}(\cdot,(1+\delta H_t)\cdot)$ is bounded on $V_0\times V_0$ and for all $u \in V_0$ 
\begin{align*}
    \mathrm{Re}\hspace{0.1cm} B_{V_0}(u, (1+\delta H_t)u) &=  \mathrm{Re}\int_\mathbb{R}\langle H_t D_t^{1/2}u, D_t^{1/2}(1+\delta H_t)u \rangle_H + B_t(u(t),(1+\delta H_t)u(t))\ \mathrm dt 
    \\
    & = \mathrm{Re}\int_\mathbb{R}  \delta \langle H_t D_t^{1/2}u(t), H_t D_t^{1/2}u(t) \rangle_H +  B_t(u(t),u(t))+\delta B_t(u(t),H_t u(t)) \ \mathrm dt. 
\end{align*}
where we have used that $H_t$ is skew-adjoint, hence 
\begin{align*}
    \mathrm{Re}\int_\mathbb{R}\langle H_t D_t^{1/2}u(t), D_t^{1/2}u(t) \rangle_H \ \mathrm dt =0.
\end{align*}
We obtain
\begin{align*}
    \mathrm{Re} (B_{V_0}(u, (1+\delta H_t)u)) \geq \delta  \| D_t^{1/2}u  \|^2_{L^2(\mathbb{R};H)}+(\nu -\delta M)\left \| u \right \|_{L^2(\mathbb{R};D_{S,1})}^2.
\end{align*}
Choosing $\delta = \frac{\nu}{1+M}$, it becomes 
\begin{align*}
    \mathrm{Re} (B_{V_0}(u, (1+\delta H_t)u)) \geq \frac{\nu}{1+M} \left \| u \right \|_{V_0}^2, \ \forall u \in V_0.
\end{align*}
Fix $f \in V_0^\star$. The Lax-Milgram lemma implies that  there exists a unique $u \in V_0$ such that $$B_{V_0}(u, (1+\delta H_t)\cdot)=  (1+\delta H_t)^\star \circ f.$$
Furthermore, we have the estimate 
\begin{align*}
    \left \| u \right \|_{V_0} \leq \frac{1+M}{\nu}\left \| (1+\delta H_t)^\star \circ f \right \|_{V_0^\star}.
\end{align*}
Using the fact that $(1+\delta H_t)^\star$ is an isomorphism on $V_0^\star$ with operator norm equal to $\sqrt{1+\delta^2}$, we have that for each $f \in V_0^\star$ there exists a unique $u \in V_0$ such that $B_{V_0}(u,\cdot)=f$ with 
\begin{align*}
    \left \| u \right \|_{V_0} \leq \frac{1+M}{\nu }\times \sqrt{ 1+\left ( \frac{\nu}{1+M} \right )^2 } \left \| f \right \|_{V_0^\star}. 
\end{align*}
\end{proof}

\noindent Now, we come to the uniqueness result below.
\begin{prop}[Uniqueness in energy space] \label{prop:uniqueness}
    Let $I$ be an interval which is a neighbourhood of $-\infty$. If $u \in L^2(I;D_{S,1})$ is a solution of $\partial_t u + \mathcal{B}u=0$ in $\mathcal{D}'(I;E_\infty)$, then $u=0$. 
\end{prop}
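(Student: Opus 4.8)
The plan is to reduce the uniqueness statement on the half-infinite interval $I$ to the invertibility result for the variational parabolic operator $\mathcal{H}$ on the pivotal space $V_0$ (Lemma \ref{lemme Hidden Coerc}), combined with the energy identity on half-lines (Corollary \ref{corenergy}). The delicate point is that $u$ is only given on $I$, and the equation holds only in $\mathcal{D}'(I;E_\infty)$, so we cannot directly test against $\mathcal{H}$; we first have to localize and cut off in time near the finite endpoint of $I$, then extend by zero to a neighborhood of $-\infty$ that is still a half-line, and finally exploit that at that stage the solution lives in $V_0$, or at least in a space to which Kaplan's uniqueness applies.

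Concretely, first I would reduce to the model case $I=(-\infty,b)$ for some $b\in\mathbb{R}$ (or $I=\mathbb{R}$, which is even easier). Next, fix $b'<b$ and choose a smooth real-valued cutoff $\chi$ equal to $1$ on $(-\infty,b']$ and supported in $(-\infty,b)$. Set $v=\chi u$. Then $v\in L^2(\mathbb{R};D_{S,1})$ (extending by zero), and from $\partial_t u+\mathcal{B}u=0$ on $I$ one computes $\partial_t v+\mathcal{B}v=\chi'u=:r$ in $\mathcal{D}'(\mathbb{R};E_\infty)$, where $r\in L^2(\mathbb{R};D_{S,1})\subset L^2(\mathbb{R};H)\subset V_0^\star$ and, crucially, $r$ is supported in $[b',b]$, i.e. away from $-\infty$. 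Now apply Corollary \ref{corenergy} on the half-line $(-\infty,b')$ (where $v=u$ and $\partial_t v=0$): since $t\mapsto\|v(t)\|_H^2$ is absolutely continuous on $\overline{(-\infty,b')}$, tends to $0$ at $-\infty$, and has zero derivative there, we get $v\equiv 0$ on $(-\infty,b')$, hence $u=0$ on $(-\infty,b')$. Letting $b'\uparrow b$ gives $u=0$ on all of $I$.

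The step I expect to be the main obstacle is justifying the energy identity and the decay at $-\infty$ for $v$ on the half-line $(-\infty,b')$: Corollary \ref{corenergy} requires $\partial_t v=f+g$ with $f\in L^2(D_{S,-1})$ and $g\in L^{\rho'}(D_{S,-\beta})$ on that half-line, which is fine here since on $(-\infty,b')$ we simply have $\partial_t v=-\mathcal{B}v$ with $\mathcal{B}v\in L^2(\mathbb{R};D_{S,-1})$, so one takes $f=-\mathcal{B}v$, $g=0$. The subtlety is that Corollary \ref{corenergy} is stated for $u\in L^2(I;D_{S,1})$ with $\partial_t u = f+g$ \emph{on $I$}; here $I=(-\infty,b')$ and everything is in place, so the corollary applies verbatim and yields $v\in C_0(\overline{(-\infty,b')};H)$ with $\|v(t)\|_H\to 0$ as $t\to-\infty$ and
\begin{align*}
\|v(\tau)\|_H^2-\|v(\sigma)\|_H^2 = 2\,\mathrm{Re}\int_\sigma^\tau \langle -\mathcal{B}v(t),v(t)\rangle_{H,-1}\,\mathrm dt
= -2\,\mathrm{Re}\int_\sigma^\tau B_t(v(t),v(t))\,\mathrm dt
\end{align*}
for $\sigma<\tau<b'$. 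Letting $\sigma\to-\infty$ and using coercivity \eqref{EllipticityAbstract}, $\|v(\tau)\|_H^2 = -2\,\mathrm{Re}\int_{-\infty}^\tau B_t(v,v)\,\mathrm dt \le -2\nu\int_{-\infty}^\tau\|v(t)\|_{S,1}^2\,\mathrm dt \le 0$, forcing $v=0$ a.e. on $(-\infty,b')$. Alternatively, and perhaps more in the spirit of the paper, one can invoke Kaplan's Lemma \ref{lemme Hidden Coerc} directly: after the cutoff, on the half-line $(-\infty,b')$ the function $v=u$ solves the homogeneous equation and lies in $V_0$ once one checks the half-time-derivative integrability coming from the embedding, so uniqueness in $V_0$ gives $v=0$. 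Either route closes the argument; the coercivity route is cleaner since it avoids having to verify $u\in V_0$ on the half-line.
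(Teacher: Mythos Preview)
Your proposal is correct and the core argument---apply Corollary \ref{corenergy} on a half-line containing $-\infty$ to get the energy identity, then let $\sigma\to-\infty$ and use coercivity of $B_t$---is exactly the paper's proof. However, you overcomplicate matters: the cutoff $\chi$ is unnecessary, since $I$ itself is already an open half-line (or $\mathbb{R}$), so Corollary \ref{corenergy} applies directly to $u$ on $I$ with $f=-\mathcal{B}u\in L^2(I;D_{S,-1})$ and $g=0$, yielding $u\in C_0(\overline I;H)$ and the identity $\|u(\tau)\|_H^2-\|u(\sigma)\|_H^2=-2\,\mathrm{Re}\int_\sigma^\tau B_t(u,u)\,\mathrm dt\le 0$; sending $\sigma\to-\infty$ gives $u(\tau)=0$.

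Two small slips to note (neither affects the main line): the inclusion $L^2(\mathbb{R};D_{S,1})\subset L^2(\mathbb{R};H)$ you use for $r$ is false in this homogeneous setting ($D_{S,1}$ is a completion not contained in $H$), and the remark ``$\partial_t v=0$'' on $(-\infty,b')$ is incorrect---as you later write, $\partial_t v=-\mathcal{B}v$ there. Your alternative route via Kaplan's Lemma \ref{lemme Hidden Coerc} is also valid when $I=\mathbb{R}$ (the paper notes this in the remark following the proposition), but on a strict half-line the energy argument is the clean one.
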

\begin{proof}
We have $u \in L^2(I;D_{S,1})$ and $\partial_t u = - \mathcal{B}u \in L^2(I;D_{S,-1})$. Using Corollary \ref{corenergy}, we have $u \in C_0(\overline{I};H)$ and verifies for $ \sigma, \tau \in \Bar{I}$ such that $  \sigma < \tau$, 
    \begin{align*}
        \left \| u(\tau) \right \|^2_H-\left \| u(\sigma) \right \|^2_H = - 2\mathrm{Re}\int_{\sigma}^{\tau} B_t (u(t), u(t)) \ \mathrm d t \leq 0 \ .
    \end{align*}
When $\sigma \to -\infty$, we deduce that $u(\tau)=0$, for all $\tau \in \overline{I}$.   
\end{proof}
\begin{rem}
When $I=\mathbb{R}$, one can directly prove Proposition \ref{prop:uniqueness} using uniqueness in Lemma \ref{lemme Hidden Coerc}. In fact, if $u \in L^2(\mathbb{R};D_{S,1})$ is a solution of $\partial_t u + \mathcal{B}u=0$ in $\mathcal{D}'(\mathbb{R};E_\infty)$ then $\partial_t u=-\mathcal{B}u \in L^2(\mathbb{R};D_{S,-1})$, so $u \in V_{-1} \subset V_0$ and $\mathcal{H}u=0$, therefore $u=0$ by Lemma \ref{lemme Hidden Coerc}.
\end{rem}
\subsubsection{Source term in $W_{-\beta}$, $\beta \in (0,1]$} Let us start with the following theorem.
\begin{prop}[Existence and uniqueness for $W_{-\beta}$ source] \label{prop:W-beta}
Let $\beta \in (0,1]$ and let $f \in W_{-\beta}$. Then, there exists a unique $u \in L^2(\mathbb{R};D_{S,1})$ solution to $\partial_t u +\mathcal{B} u =f$ in $\mathcal{D}'(\mathbb{R};E_\infty)$. Moreover, $u \in C_0(\mathbb{R}; H)\cap V_{-\beta}$ and there exists $C=C(M,\nu,\beta)>0$ such that 
\begin{equation*}\label{,}
       \sup_{t\in \mathbb{R}} \| u(t) \|_{H}+ \left \| u \right \|_{V_{-\beta}} \leq C\left \| f \right \|_{W_{-\beta}}.
\end{equation*}
    \end{prop}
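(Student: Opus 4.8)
The strategy is to reduce to the already-solved abstract heat equation. As in the proof of the integral identities, rewrite the equation $\partial_t u + \mathcal{B}u = f$ as $\partial_t u + S^2 u = S^2 u - \mathcal{B}u + f =: F$, so that whenever $u \in L^2(\mathbb{R};D_{S,1})$ is a candidate solution, $F$ lies in $L^2(\mathbb{R};D_{S,-1}) + W_{-\beta} = V_{\beta}^\star$ by the boundedness of $\mathcal{B}\colon L^2(\mathbb{R};D_{S,1})\to L^2(\mathbb{R};D_{S,-1})$. Uniqueness is immediate: by Proposition~\ref{prop:uniqueness} (applied with $I=\mathbb{R}$), any two solutions in $L^2(\mathbb{R};D_{S,1})$ coincide.

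For existence, I would first produce the solution in the variational space. Since $f \in W_{-\beta} \subset W_{-1} + W_{-\beta}$ and $W_{-\beta}\hookrightarrow V_{\beta}^\star$, one checks that $f \in V_0^\star$ (using $-1\le -\beta \le 0$ and the inclusions of Lemma~\ref{Spaces V_alpha}(4), noting $W_{-\beta}\subset L^2(\mathbb{R};D_{S,-1})+W_{-\beta}\subset V_0^\star$ since $\beta\le 1$). Kaplan's Lemma~\ref{lemme Hidden Coerc} then furnishes a unique $u \in V_0 \subset L^2(\mathbb{R};D_{S,1})$ with $\mathcal{H}u = f$, i.e.\ $\partial_t u + \mathcal{B}u = f$ in $\mathcal{D}'(\mathbb{R};E_\infty)$, together with the bound $\|u\|_{V_0}\le C(M,\nu)\|f\|_{V_0^\star}\lesssim \|f\|_{W_{-\beta}}$. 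This already gives $\|u\|_{L^2(\mathbb{R};D_{S,1})}\lesssim \|f\|_{W_{-\beta}}$.

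Now bootstrap the regularity: with this $u$ in hand, set $F = S^2 u - \mathcal{B}u + f$. Since $S^2 u - \mathcal{B}u \in L^2(\mathbb{R};D_{S,-1}) = W_{-1}$ and $f \in W_{-\beta}$, we have $F \in W_{-1} + W_{-\beta}$. By linearity and Theorem~\ref{cas Etheta} applied to the two pieces (with exponents $1$ and $\beta$), the unique $L^2(\mathbb{R};D_{S,1})$-solution of $\partial_t v + S^2 v = F$ lies in $C_0(\mathbb{R};H)\cap V_{-1}\cap V_{-\beta} = C_0(\mathbb{R};H)\cap V_{-\beta}$; by Proposition~\ref{Unicité} this solution is exactly $u$. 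Theorem~\ref{cas Etheta} also yields
\[
\sup_{t\in\mathbb{R}}\|u(t)\|_H + \|u\|_{V_{-\beta}} \lesssim \|S^2u-\mathcal{B}u\|_{W_{-1}} + \|f\|_{W_{-\beta}} \lesssim \|u\|_{L^2(\mathbb{R};D_{S,1})} + \|f\|_{W_{-\beta}} \lesssim \|f\|_{W_{-\beta}},
\]
where the last inequality uses the $V_0$-bound from Kaplan's lemma. This closes the estimate.

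The one genuinely delicate point is the very first reduction, namely checking that $W_{-\beta}\subset V_0^\star$ so that Kaplan's lemma applies: one must verify the chain of continuous inclusions among the $W$- and $V^\star$-scales (Lemma~\ref{Spaces V_alpha}) carefully for the borderline exponents, and confirm that the $f$ appearing on the right is treated in the same duality. Everything else — the identification of the Kaplan solution with the heat-equation solution via the uniqueness statements, and the combination of estimates — is routine, but one should be attentive that Theorem~\ref{cas Etheta} is stated for $\beta\in(0,1]$, which is exactly our range, and that the case $\beta=1$ piece coming from $S^2u-\mathcal{B}u$ is also covered there.
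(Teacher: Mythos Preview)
Your proof is correct and follows essentially the same route as the paper: use Kaplan's Lemma~\ref{lemme Hidden Coerc} via the inclusion $W_{-\beta}\hookrightarrow V_\beta^\star\hookrightarrow V_0^\star$ to produce $u\in V_0$ with the $L^2(\mathbb{R};D_{S,1})$ bound, invoke Proposition~\ref{prop:uniqueness} for uniqueness, then rewrite the equation as $\partial_t u + S^2 u = (S^2 u - \mathcal{B}u) + f$ and apply Theorem~\ref{cas Etheta} to each piece to upgrade to $C_0(\mathbb{R};H)\cap V_{-\beta}$ with the stated estimate. The only cosmetic point is that your justification of $W_{-\beta}\subset V_0^\star$ is cleaner if phrased as $W_{-\beta}\subset V_\beta^\star$ (trivially, by Lemma~\ref{Spaces V_alpha}(4)) followed by $V_\beta^\star\hookrightarrow V_0^\star$ (dual of $V_0\hookrightarrow V_\beta$ from Lemma~\ref{Spaces V_alpha}(2)), rather than trying to land directly in $L^2(\mathbb{R};D_{S,-1})+W_0$.
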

\begin{proof}
 Since $W_{-\beta} \hookrightarrow V_{\beta}^\star \hookrightarrow V_0^\star$, Lemma \ref{lemme Hidden Coerc} provides us with a solution $u=\mathcal{H}^{-1}f \in V_0$ with the estimate  $\left \| u \right \|_{V_0} \leq C(M,\nu) \left \| f \right \|_{V_0^\star}$, so  in  particular,  $\left \| u \right \|_{L^2(\mathbb{R};D_{S,1})} \leq C(M,\nu, \beta) \left \| f \right \|_{W_{-\beta}}.$ Uniqueness in $L^2(\mathbb{R};D_{S,1})$  is provided by Proposition \ref{prop:uniqueness}.
Writing the equation as $$\partial_t u +S^2 u= S^2u -\mathcal{B}u+f \ \ \mathrm{in} \ \mathcal{D}'(\mathbb{R};E_\infty),$$ 
we may combine Theorems \ref{Régula Cas L1}, \ref{cas Etheta} together with $\mathcal{B}u \in L^2(\mathbb{R};D_{S,-1})$, to see that $u \in C_0(\mathbb{R};H)\cap V_{-\beta}$ with 
\begin{align*}
    \sup_{t\in \mathbb{R}} \| u(t) \|_{H}+\left \| u \right \|_{V_{-\beta}}&\leq C(M,\nu, \beta) \left ( \left \| u \right \|_{L^2(\mathbb{R};D_{S,1})} + \left \| f \right \|_{W_{-\beta}}  \right ).
\end{align*}
Therefore, 
$$\sup_{t\in \mathbb{R}} \| u(t) \|_{H}+ \left \| u \right \|_{V_{-\beta}} \leq C(M,\nu,\beta)\left \| f \right \|_{W_{-\beta}}.$$
\end{proof}

\begin{cor}[Boundedness properties of $\mathcal{H}^{-1}  $]\label{Cas L^2 pour L^r}
Fix $\rho \in [2,\infty)$ and set $\beta={2}/{\rho}\in (0,1]$. Then, 
$$\mathcal{H}^{-1} : L^{\rho'}(\mathbb{R};D_{S,-\beta}) \rightarrow V_{-\beta} \cap C_0(\mathbb{R};H) \ is \ bounded.$$  The same holds for $(\mathcal{H}^\star)^{-1}$.
\end{cor}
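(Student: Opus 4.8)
The plan is to read off the statement from Proposition \ref{prop:W-beta} once the source space has been correctly embedded into $W_{-\beta}$.

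First I would record the embedding $L^{\rho'}(\mathbb{R};D_{S,-\beta}) \hookrightarrow W_{-\beta}$. For $\rho>2$, i.e. $\beta=2/\rho\in(0,1)$, this is exactly the ``consequently'' part of Lemma \ref{Hardy-Littlewood-Sobolev} applied with $\alpha=\beta$ and $r=2/\beta=\rho$, so that $r'=\rho'$. For the endpoint $\rho=2$, i.e. $\beta=1$, there is nothing to prove since $W_{-1}=L^2(\mathbb{R};D_{S,-1})$ by definition. In either case one obtains a constant $C=C(\rho)$ such that $\|f\|_{W_{-\beta}}\le C\|f\|_{L^{\rho'}(\mathbb{R};D_{S,-\beta})}$ for every $f\in L^{\rho'}(\mathbb{R};D_{S,-\beta})$.

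Next, given $f\in L^{\rho'}(\mathbb{R};D_{S,-\beta})\subset W_{-\beta}\subset V_0^\star$, Lemma \ref{lemme Hidden Coerc} produces $u=\mathcal{H}^{-1}f\in V_0\subset L^2(\mathbb{R};D_{S,1})$, and Proposition \ref{prop:W-beta} (whose uniqueness class is precisely $L^2(\mathbb{R};D_{S,1})$) asserts that this $u$ lies in $V_{-\beta}\cap C_0(\mathbb{R};H)$ with
\[
\sup_{t\in\mathbb{R}}\|u(t)\|_H+\|u\|_{V_{-\beta}}\le C(M,\nu,\beta)\,\|f\|_{W_{-\beta}}\le C(M,\nu,\rho)\,\|f\|_{L^{\rho'}(\mathbb{R};D_{S,-\beta})}.
\]
This is exactly the claimed boundedness of $\mathcal{H}^{-1}\colon L^{\rho'}(\mathbb{R};D_{S,-\beta})\to V_{-\beta}\cap C_0(\mathbb{R};H)$. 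For $(\mathcal{H}^\star)^{-1}$ I would note that the adjoint forms $(B_t^\star)_{t\in\mathbb{R}}$ satisfy the same uniform bounds \eqref{EllipticityAbstract} and that $\mathcal{H}^\star$ is the variational parabolic operator attached to the backward operator $-\partial_t+\mathcal{B}^\star$; composing with the time reversal $t\mapsto -t$ turns this into a forward parabolic operator of the very same type, so Lemma \ref{lemme Hidden Coerc}, Proposition \ref{prop:uniqueness} and Proposition \ref{prop:W-beta} apply verbatim and yield the identical estimate with $\mathcal{H}^\star$ in place of $\mathcal{H}$.

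I do not anticipate a real obstacle: the corollary is a dictionary translation between the $W_{-\beta}$-source formulation and the mixed-norm source $L^{\rho'}(\mathbb{R};D_{S,-\beta})$, and the only points that deserve a sentence of care are the endpoint $\rho=2$ and the identification of $\mathcal{H}^\star$ with the backward operator associated to $(B_t^\star)_t$.
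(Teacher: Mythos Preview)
Your proof is correct and follows the paper's own approach: the paper's one-line argument simply says to combine Proposition \ref{prop:W-beta} with Lemma \ref{Hardy-Littlewood-Sobolev} (and mentions Proposition \ref{prop:embed r>0}, though that is only used in the subsequent remark). Your write-up is just a more explicit version of this, including the correct handling of the endpoint $\rho=2$ via $W_{-1}=L^2(\mathbb{R};D_{S,-1})$ and the standard time-reversal remark for $(\mathcal{H}^\star)^{-1}$.
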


\begin{proof}
 Combine Proposition \ref{prop:W-beta}, Lemma \ref{Hardy-Littlewood-Sobolev} and Proposition \ref{prop:embed r>0}.
\end{proof}
\begin{rem}
   For fixed $f\in L^{\rho'}(\mathbb{R};D_{S,-\beta})$, we have $\mathcal{H}^{-1}f \in V_{-\beta} \subset V_0$. In particular, using Proposition \ref{prop:embed r>0}, we have $\mathcal{H}^{-1}f \in L^r(\mathbb{R};D_{S,\alpha})$ for any $ r\in (2,\infty)$ where $\alpha={2}/{r} $ and there exists a constant $C=C(M,\nu,\beta)>0$ such that 
$$ \left \| \mathcal{H}^{-1}f \right \|_{L^r(\mathbb{R};D_{S,\alpha})}\leq C\left \| f \right \|_{L^{\rho'}(\mathbb{R};D_{S,-\beta})}.$$ 
The same is true for $(\mathcal{H}^\star)^{-1}$.
\end{rem}

}

\subsubsection{Source term in $L^1(\mathbb{R};H)$}
The previous theorems rely on Lemma \ref{lemme Hidden Coerc} to prove the existence, so they do not apply anymore when $f \in L^1(\mathbb{R};H)$ since $L^1(\mathbb{R};H) \nsubseteq  V_0^\star$. Yet, we can solve with such source terms using a duality scheme.
 
\begin{prop}[Existence and uniqueness for source in $L^1$] \label{prop:L1}
    Let $f \in L^1(\mathbb{R};H)$. Then there exists a unique $u \in L^2(\mathbb{R},D_{S,1})$ solution to
   $ \partial_t u +\mathcal{B} u =f$ in $\mathcal{D'}(\mathbb{R}; E_\infty).$
    Moreover, $u \in C_0(\mathbb{R}; H)\cap L^2(\mathbb{R};D_{S,1})$ and there exists a constant $C=C(M,\nu)>0$ such that 
    \begin{equation}\label{asked estimate}
        \sup_{t\in \mathbb{R}} \| u(t) \|_{H}+\left \| u \right \|_{L^2(\mathbb{R};D_{S,1})}
        \leq C\left \| f \right \|_{L^1(\mathbb{R};H)}.
    \end{equation}
\end{prop}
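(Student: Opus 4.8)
The plan is to obtain uniqueness for free from the energy estimate already at our disposal, and to produce the solution by a duality argument against the backward variational operator $\mathcal{H}^\star$, as the phrase ``using a duality scheme'' suggests. Uniqueness in $L^2(\mathbb{R};D_{S,1})$ is immediate: if $u \in L^2(\mathbb{R};D_{S,1})$ solves $\partial_t u + \mathcal{B}u = 0$ in $\mathcal{D}'(\mathbb{R};E_\infty)$, then Proposition~\ref{prop:uniqueness} applied on $I=\mathbb{R}$ gives $u=0$.

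For existence, the starting observation is that $\mathcal{H}^\star$, which is invertible from $V_0^\star$ onto $V_0$ by Kaplan's Lemma~\ref{lemme Hidden Coerc} (applied to the adjoint family $(B_t^\star)_{t\in\mathbb{R}}$), restricts on $L^2(\mathbb{R};D_{S,-1})=W_{-1}\subset V_0^\star$ to a bounded operator into $V_{-1}\cap C_0(\mathbb{R};H)$; this is the case $\rho=2$, $\beta=1$ of Corollary~\ref{Cas L^2 pour L^r} (equivalently Proposition~\ref{prop:W-beta} with $\beta=1$), so in particular $\|(\mathcal{H}^\star)^{-1}w\|_{L^\infty(\mathbb{R};H)}\le C(M,\nu)\,\|w\|_{L^2(\mathbb{R};D_{S,-1})}$ for all $w\in L^2(\mathbb{R};D_{S,-1})$. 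On the Hilbert space $L^2(\mathbb{R};D_{S,-1})$ I would then introduce the antilinear functional $\ell(w):=\int_\mathbb{R}\langle f(t),((\mathcal{H}^\star)^{-1}w)(t)\rangle_H\,\mathrm{d}t$, which is bounded since
\begin{align*}
|\ell(w)|\le \|f\|_{L^1(\mathbb{R};H)}\,\|(\mathcal{H}^\star)^{-1}w\|_{L^\infty(\mathbb{R};H)}\le C(M,\nu)\,\|f\|_{L^1(\mathbb{R};H)}\,\|w\|_{L^2(\mathbb{R};D_{S,-1})}.
\end{align*}
Since $L^2(\mathbb{R};D_{S,1})$ is the antidual of $L^2(\mathbb{R};D_{S,-1})$ for the pairing extending $\langle\cdot,\cdot\rangle_{L^2(\mathbb{R};H)}$, the Riesz representation theorem furnishes a unique $u\in L^2(\mathbb{R};D_{S,1})$ with $\ell(w)=\int_\mathbb{R}\langle u(t),w(t)\rangle_{H,1}\,\mathrm{d}t$ for all $w$, and $\|u\|_{L^2(\mathbb{R};D_{S,1})}\le C(M,\nu)\,\|f\|_{L^1(\mathbb{R};H)}$.

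To check that this $u$ solves the equation, I would test against $\varphi\in\mathcal{D}(\mathbb{R};E_{-\infty})\subset\mathcal{S}(\mathbb{R};E_{-\infty})\subset V_0$ and set $w:=(-\partial_t+\mathcal{B}^\star)\varphi$. Since $\partial_t\varphi\in\mathcal{S}(\mathbb{R};E_{-\infty})\subset L^2(\mathbb{R};D_{S,-1})$ and $\mathcal{B}^\star\colon L^2(\mathbb{R};D_{S,1})\to L^2(\mathbb{R};D_{S,-1})$ is bounded, we have $w\in L^2(\mathbb{R};D_{S,-1})$, and $w=\mathcal{H}^\star\varphi$ because $\mathcal{H}^\star$ agrees with $-\partial_t+\mathcal{B}^\star$ on $\mathcal{S}(\mathbb{R};E_{-\infty})$, so that $(\mathcal{H}^\star)^{-1}w=\varphi$. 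Unwinding the definition of $\partial_t+\mathcal{B}$ together with the adjoint relation between $\partial_t+\mathcal{B}$ and $-\partial_t+\mathcal{B}^\star$ (extended from $\mathcal{S}(\mathbb{R};E_{-\infty})$ to $u\in L^2(\mathbb{R};D_{S,1})$ by density) then yields
\begin{align*}
\llangle \partial_t u + \mathcal{B}u,\varphi\rrangle_{\mathcal{D}',\mathcal{D}}=\int_\mathbb{R}\langle u(t),w(t)\rangle_{H,1}\,\mathrm{d}t=\ell(w)=\int_\mathbb{R}\langle f(t),\varphi(t)\rangle_H\,\mathrm{d}t=\llangle f,\varphi\rrangle_{\mathcal{D}',\mathcal{D}},
\end{align*}
so $\partial_t u + \mathcal{B}u = f$ in $\mathcal{D}'(\mathbb{R};E_\infty)$.

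It remains to upgrade regularity and close the estimate, which I would do via the integral identities. The equation gives $\partial_t u=-\mathcal{B}u+f$ with $-\mathcal{B}u\in L^2(\mathbb{R};D_{S,-1})$ and $f\in L^1(\mathbb{R};H)$, that is, exactly the hypotheses of Proposition~\ref{energy lemma} with $\rho=\infty$ and $\beta=0$. Hence $u\in C_0(\mathbb{R};H)$, $t\mapsto\|u(t)\|_H^2$ is absolutely continuous, and \eqref{important existence cas L1} gives
\begin{align*}
\sup_{t\in\mathbb{R}}\|u(t)\|_H\le \sqrt{2\,\|u\|_{L^2(\mathbb{R};D_{S,1})}\,\|\mathcal{B}u\|_{L^2(\mathbb{R};D_{S,-1})}}+(1+\sqrt{2})\,\|f\|_{L^1(\mathbb{R};H)};
\end{align*}
bounding $\|\mathcal{B}u\|_{L^2(\mathbb{R};D_{S,-1})}\le M\|u\|_{L^2(\mathbb{R};D_{S,1})}$ and inserting $\|u\|_{L^2(\mathbb{R};D_{S,1})}\le C(M,\nu)\|f\|_{L^1(\mathbb{R};H)}$ from the construction yields \eqref{asked estimate}. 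The main difficulty is not analytic but organisational: one must make sure that $(\mathcal{H}^\star)^{-1}$ is legitimately available as a bounded operator $W_{-1}\to L^\infty(\mathbb{R};H)$ (which uses $W_{-1}\subset V_0^\star$ and the $\beta=1$ endpoint of the $W_{-\beta}$ theory) and that the Riesz element genuinely satisfies the weak equation against \emph{every} test function, which rests on the inclusions $\mathcal{D}(\mathbb{R};E_{-\infty})\subset\mathcal{S}(\mathbb{R};E_{-\infty})\subset V_0$ and on the identity $(\mathcal{H}^\star)^{-1}\bigl((-\partial_t+\mathcal{B}^\star)\varphi\bigr)=\varphi$.
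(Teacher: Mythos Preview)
Your proof is correct and follows essentially the same duality scheme as the paper. Both constructions define $u$ via the boundedness of $(\mathcal{H}^\star)^{-1}:L^2(\mathbb{R};D_{S,-1})\to C_0(\mathbb{R};H)$ and the pairing with $f\in L^1(\mathbb{R};H)$; both obtain the $L^2(\mathbb{R};D_{S,1})$ bound from the resulting functional estimate, and both finish with Proposition~\ref{energy lemma} (case $\rho=\infty$) for the $C_0(\mathbb{R};H)$ regularity and the sup bound.

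The only organisational difference is in checking that $u$ solves the equation. The paper first shows that its operator $\mathcal{T}$ coincides with $\mathcal{H}^{-1}$ on $f\in\mathcal{D}(\mathbb{R};E_{-\infty})$ (where $\mathcal{H}^{-1}f$ is already known to be a solution), and then passes to general $f\in L^1(\mathbb{R};H)$ by density, whereas you verify the weak formulation directly for every $f$ by choosing $w=(-\partial_t+\mathcal{B}^\star)\varphi=\mathcal{H}^\star\varphi$ and using $(\mathcal{H}^\star)^{-1}w=\varphi$. Your route is marginally more direct and avoids the approximation in $f$, but the content is the same.
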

\begin{proof}
Uniqueness is provided by Proposition \ref{prop:uniqueness}. To prove the existence, we remark that  
 Corollary \ref{Cas L^2 pour L^r} for the backward operator $\mathcal{H}^\star$ in the case  $\rho=2$ implies that $(\mathcal{H}^\star)^{-1}$ is bounded from $L^2(\mathbb{R};D_{S,-1})$ into $C_0(\mathbb{R}; H)$. We define 
\begin{align*}
    \mathcal{T}: L^1(\mathbb{R};H) \rightarrow \mathcal{D'}(\mathbb{R};E_\infty), \   \llangle \mathcal{T}f, \varphi  \rrangle_{\mathcal{D}',\mathcal{D}}:= \llangle f, (\mathcal{H}^\star)^{-1}\varphi  \rrangle_{L^1(\mathbb{R};H),L^\infty(\mathbb{R};H)},
\end{align*}
and we have
\begin{equation}\label{1}
   \left \| \mathcal{T}f \right \|_{L^2(\mathbb{R};D_{S,1})}
   \leq C(M,\nu) \left \| f \right \|_{L^1(\mathbb{R};H)}.
\end{equation}
 Next, let $f$ in $\mathcal{D}(\mathbb{R};E_{-\infty})$. We write for all $\varphi \in \mathcal{D}(\mathbb{R};E_{-\infty})$, observing that $\mathcal{D}(\mathbb{R};E_{-\infty})\subset V_0^\star$,
\begin{align*}
   \llangle \mathcal{T}f, \varphi  \rrangle_{\mathcal{D}',\mathcal{D}} = \llangle \mathcal{H}\mathcal{H}^{-1}f, (\mathcal{H}^\star)^{-1}\varphi  \rrangle_{V_0^\star, V_0}= \llangle \mathcal{H}^{-1}f, \varphi  \rrangle_{\mathcal{D}',\mathcal{D}}.
\end{align*}
Hence, $\mathcal{T}f=\mathcal{H}^{-1}f \in C_0(\mathbb{R}; H)$ and
\begin{equation}\label{Eq}
    \forall \varphi \in \mathcal{D}(\mathbb{R};E_{-\infty}), \ \ -\llangle \mathcal{T}f, \partial_t \varphi \rrangle_{\mathcal{D}',\mathcal{D}} + \int_{\mathbb{R}} B_t(\mathcal{T}f(t),\varphi(t))\ \mathrm d t = \int_{\mathbb{R}} \langle f(t), \varphi(t) \rangle_H \ \mathrm d t.
\end{equation} 
Using  \eqref{important existence cas L1}, we obtain
\begin{equation}\label{2}
    \sup_{t\in \mathbb{R}} \| \mathcal{T}f(t) \|_{H} \leq \sqrt{2M} \left\| \mathcal{T}f \right\|_{L^2(\mathbb{R};D_{S,1})}+(1+\sqrt{2})\left\| f \right\|_{L^1(\mathbb{R};H)} \leq C(M,\nu) \left\| f \right\|_{L^1(\mathbb{R};H)}.
\end{equation}

Now, let us pick $f \in L^1(\mathbb{R};H).$ Let $(f_k)_{k \in \mathbb{N}} \in \mathcal{D}(\mathbb{R};E_{-\infty})^{\mathbb{N}}$ such that $ f_k \to f$ in $L^1(\mathbb{R};H)$. By \eqref{1} and \eqref{2}, we have $\mathcal{T}f_k \to \mathcal{T}f$ in $L^2(\mathbb{R};D_{S,1}) \cap C_0(\mathbb{R}; H)$.
Using \eqref{Eq} with $\mathcal{T}f_k$ for a fixed $\varphi \in \mathcal{D}(\mathbb{R};E_{-\infty})$ and letting $k \to \infty$ imply that $\partial_t (\mathcal{T}f) + \mathcal{B}(\mathcal{T}f) = f$ in $\mathcal{D}'(\mathbb{R};E_{\infty}).$
\end{proof}

\subsubsection{Source term is a bounded measure on $H$}  
First, we define the space of bounded $H$-valued measures on $\mathbb{R}$, denoted $\mathcal{M}(\mathbb{R};H)$, as the topological anti-dual space of $C_0(\mathbb{R};H)$ with respect to the sup-norm. We denote by $\llangle \cdot, \cdot \rrangle_{\mathcal{M},C_0}$ the anti-duality bracket. We equip the space $\mathcal{M}(\mathbb{R};H)$ with the operator norm, that is 
\begin{equation*}
    \left \|\mu   \right \|_{\mathcal{M}}:= \sup_{\varphi \in C_0(\mathbb{R};H)\setminus\left \{ 0 \right \}} \frac{\left | \llangle \mu, \varphi \rrangle_{\mathcal{M},C_0} \right |}{\ \ \ \ \left \|\varphi  \right \|_{L^\infty(\mathbb{R};H)}}.
\end{equation*}
It is a Banach space containing a subspace isometric to $L^1(\mathbb{R};H)$.

For $\mu \in \mathcal{M}(\mathbb{R};H)$, $\mathrm{supp}(\mu)$ is the complement of the largest open set of $\mathbb{R}$ on which $\mu$ is equal to $0$. More precisely, we say that $\mu$ equals $0$ on an open set $\Omega \subset \mathbb{R}$ if for all $\phi \in C_0(\mathbb{R},H)$ with support contained in $\Omega$, $\llangle \mu, \phi \rrangle_{\mathcal{M},C_0}=0$. Let $\mathcal{N}$ denotes the set of all such open sets. We have 
\begin{align*}
    \mathrm{supp}(\mu ):= \mathbb{R}\setminus \bigcup_{\Omega \in \mathcal{N}} \Omega.
\end{align*}
An important example is the class of Dirac measures.  For any $s \in \mathbb{R}$ and $a \in H$, we denote by $\delta_s \otimes a$ the Dirac measure on $s$ carried by $a$ which is defined by 
\begin{equation*}
    \llangle \delta_s \otimes a,  \phi \rrangle_{\mathcal{M},C_0} =  \langle a,  \varphi(s) \rangle_H, \ \varphi \in C_0(\mathbb{R};H).
\end{equation*}

 We first state the classical lemma below for later use.
\begin{lem}\label{Radon}
    Let $\mu \in \mathcal{M}(\mathbb{R};H).$ Then there exists a sequence $(f_\varepsilon)_{\varepsilon>0}$ in $L^1(\mathbb{R};H)$ such that 
    $f_\varepsilon \rightharpoonup \mu$  (weak-$\star$  convergence)  and  $\sup_{\varepsilon >0} \left \| f_\varepsilon  \right \|_{L^1(\mathbb{R};H)} \leq \left \| \mu  \right \|_{\mathcal{M}}.$
   \end{lem}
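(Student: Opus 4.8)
The plan is to regularize $\mu$ by mollification in the time variable and then take a weak-$\star$ limit. The setup is exactly the classical proof that finite measures can be weakly-$\star$ approximated by $L^1$ functions, transported to the Hilbert-valued setting; the fact that $C_0(\mathbb{R};H)$ is separable (since $H$ is separable) is what keeps everything in line.

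\textbf{Mollification.} Fix a standard mollifier $\eta\in\mathcal{D}(\mathbb{R})$, $\eta\ge 0$, $\int_\mathbb{R}\eta=1$, set $\eta_\varepsilon(t)=\varepsilon^{-1}\eta(t/\varepsilon)$, and additionally fix a cutoff $\chi\in\mathcal{D}(\mathbb{R})$ with $0\le\chi\le 1$ and $\chi=1$ on $[-1,1]$, writing $\chi_\varepsilon(t)=\chi(\varepsilon t)$. For $\mu\in\mathcal{M}(\mathbb{R};H)$ define $f_\varepsilon\in C_0(\mathbb{R};H)$ by
$$
\langle f_\varepsilon(t), a\rangle_H := \chi_\varepsilon(t)\, \overline{\llangle \mu, \eta_\varepsilon(t-\cdot)\otimes a \rrangle}_{\mathcal{M},C_0}, \qquad a\in H,
$$
where the conjugation is just to account for the anti-linear convention of the bracket; concretely $f_\varepsilon(t)$ is the Bochner integral of $\mu$ against the scalar test function $\chi_\varepsilon(t)\eta_\varepsilon(t-\cdot)$ (one checks $f_\varepsilon$ is $C^\infty$ and compactly supported, hence in $L^1(\mathbb{R};H)$, using the continuity estimate of $\mu$ on $C_0$). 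First I would verify the norm bound: for $\varphi\in C_0(\mathbb{R};H)$,
$$
\llangle f_\varepsilon, \varphi\rrangle_{\mathcal{M},C_0}=\int_\mathbb{R}\langle f_\varepsilon(t),\varphi(t)\rangle_H\,\mathrm dt = \llangle \mu, \psi_\varepsilon\rrangle_{\mathcal{M},C_0},
\qquad \psi_\varepsilon(s):=\chi_\varepsilon(s)\int_\mathbb{R}\eta_\varepsilon(t-s)\varphi(t)\,\mathrm dt,
$$
after an application of Fubini (justified since everything is absolutely integrable). Since $\|\psi_\varepsilon\|_{L^\infty(\mathbb{R};H)}\le\|\varphi\|_{L^\infty(\mathbb{R};H)}$ by the $L^1$-normalization of $\eta_\varepsilon$ and $0\le\chi_\varepsilon\le 1$, we get $|\llangle f_\varepsilon,\varphi\rrangle|\le\|\mu\|_{\mathcal{M}}\|\varphi\|_{L^\infty(\mathbb{R};H)}$, i.e.\ $\|f_\varepsilon\|_{L^1(\mathbb{R};H)}\le\|\mu\|_{\mathcal{M}}$.

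\textbf{Weak-$\star$ convergence.} For the convergence $f_\varepsilon\rightharpoonup\mu$, again using the identity above it suffices to show $\psi_\varepsilon\to\varphi$ in $C_0(\mathbb{R};H)$ (sup norm) for each fixed $\varphi\in C_0(\mathbb{R};H)$, whence $\llangle f_\varepsilon,\varphi\rrangle=\llangle\mu,\psi_\varepsilon\rrangle\to\llangle\mu,\varphi\rrangle$ by continuity of $\mu$. This is a standard fact: $\int\eta_\varepsilon(t-\cdot)\varphi(t)\,\mathrm dt\to\varphi$ uniformly on $\mathbb{R}$ because $\varphi$ is uniformly continuous and vanishes at infinity (split into a large compact set plus the rest), and multiplying by $\chi_\varepsilon$ changes the function only on $\{|s|>1/\varepsilon\}$ where $\varphi$ is already uniformly small. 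Combining, $\|\psi_\varepsilon-\varphi\|_{L^\infty(\mathbb{R};H)}\to 0$.

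\textbf{Main obstacle.} The only genuinely technical point — and where I'd spend the most care — is justifying that $f_\varepsilon$ is a well-defined element of $L^1(\mathbb{R};H)$ and strongly (Bochner) measurable: one must see $t\mapsto f_\varepsilon(t)$ as $H$-valued, not merely weakly defined. This follows because, for fixed $t$, the map $a\mapsto\llangle\mu,\chi_\varepsilon(t)\eta_\varepsilon(t-\cdot)\otimes a\rrangle$ is a bounded anti-linear functional on $H$ (its norm is at most $\|\mu\|_{\mathcal{M}}\|\chi_\varepsilon(t)\eta_\varepsilon(t-\cdot)\|_{L^1}\le\varepsilon^{-1}\|\eta\|_{L^1}\|\mu\|_{\mathcal M}$), so Riesz representation produces $f_\varepsilon(t)\in H$; smoothness and compact support in $t$ then give Bochner measurability and integrability. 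Everything else is the routine mollification argument above, and one could alternatively invoke the general identification of $\mathcal{M}(\mathbb{R};H)=C_0(\mathbb{R};H)^*$ with vector measures of bounded variation and cite a textbook density statement, but the explicit mollification is cleaner and self-contained here.
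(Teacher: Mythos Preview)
Your approach is essentially the same as the paper's: the authors simply say ``convolute $\mu$ with a scalar mollifying sequence $(\varphi_\varepsilon)_{\varepsilon>0}$ and check the required properties,'' and you have filled in those details. One small slip: after Fubini the cutoff should read $\psi_\varepsilon(s)=\int_\mathbb{R}\chi_\varepsilon(t)\eta_\varepsilon(t-s)\varphi(t)\,\mathrm dt$ (the $\chi_\varepsilon$ stays attached to the $t$-variable), but this does not affect either the norm bound or the uniform convergence, so the argument goes through unchanged.
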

\begin{proof}
We obtain the sequence $(f_\varepsilon)_{\varepsilon >0}$ by convoluting $\mu$ with a scalar mollifying sequence $(\varphi_\varepsilon)_{\varepsilon >0}$ and we easily check  that we have all the required properties.
\end{proof} 
\begin{prop}[Existence and uniqueness for bounded measure source] \label{CasRadon}
    Let $\mu \in \mathcal{M}(\mathbb{R};H)$. Then there exists a unique $u \in L^2(\mathbb{R};D_{S,1})$ solution to
       $ \partial_t u +\mathcal{B} u =\mu$ in $\mathcal{D'}(\mathbb{R}; E_\infty).$ Moreover, $u \in L^\infty(\mathbb{R};H)$ f and there is a constant $C=C(M,\nu)>0$ such that 
    \begin{equation}\label{ESTT}
     \left \| u \right \|_{L^2(\mathbb{R};D_{S,1})}+\left \| u \right \|_{L^\infty(\mathbb{R};H)} \leq C(M,\nu) \left \| \mu \right \|_{\mathcal{M}}.
    \end{equation}
    If $I \subset \mathbb{R}\setminus \mathrm{supp}(\mu)$ an unbounded open interval, then $u \in C_0(\overline{I},H)$ and $t \mapsto \left \| u(t) \right \|_H^2$ is absolutely continuous on $\overline{I}$. Moreover, if $I$ is a neighbourhood of $-\infty$, then $u=0$ on $\overline{I}$.
\end{prop}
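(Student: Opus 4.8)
The plan is to reduce the case of a general bounded measure $\mu$ to the already-treated case of an $L^1$ source (Proposition~\ref{prop:L1}) by approximation, using the uniform estimate \eqref{asked estimate} together with weak-$\star$ compactness. First I would invoke Lemma~\ref{Radon} to produce a sequence $(f_\varepsilon)_{\varepsilon>0}$ in $L^1(\mathbb{R};H)$ with $f_\varepsilon \rightharpoonup \mu$ weak-$\star$ and $\sup_\varepsilon \|f_\varepsilon\|_{L^1(\mathbb{R};H)} \le \|\mu\|_{\mathcal{M}}$. For each $\varepsilon$, Proposition~\ref{prop:L1} gives a unique $u_\varepsilon \in L^2(\mathbb{R};D_{S,1}) \cap C_0(\mathbb{R};H)$ solving $\partial_t u_\varepsilon + \mathcal{B}u_\varepsilon = f_\varepsilon$ with
\[
\sup_{t}\|u_\varepsilon(t)\|_H + \|u_\varepsilon\|_{L^2(\mathbb{R};D_{S,1})} \le C(M,\nu)\|f_\varepsilon\|_{L^1(\mathbb{R};H)} \le C(M,\nu)\|\mu\|_{\mathcal{M}}.
\]
Since $L^2(\mathbb{R};D_{S,1})$ is a Hilbert space and $(u_\varepsilon)$ is bounded there, along a subsequence $u_\varepsilon \rightharpoonup u$ weakly in $L^2(\mathbb{R};D_{S,1})$; the uniform $L^\infty(\mathbb{R};H)$ bound passes to the limit by weak-$\star$ lower semicontinuity (after noting $L^\infty(\mathbb{R};H)$ is the dual of $L^1(\mathbb{R};H)$), giving $u \in L^\infty(\mathbb{R};H)$ with the bound \eqref{ESTT}. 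Passing to the limit in the weak formulation is routine: for fixed $\varphi \in \mathcal{D}(\mathbb{R};E_{-\infty})$, the term $-\llangle u_\varepsilon, \partial_t\varphi\rrangle$ and $\int_{\mathbb{R}} B_t(u_\varepsilon(t),\varphi(t))\,\mathrm dt$ are continuous linear (resp. antilinear) functionals of $u_\varepsilon \in L^2(\mathbb{R};D_{S,1})$, hence converge, while the right-hand side $\int_{\mathbb{R}}\langle f_\varepsilon(t),\varphi(t)\rangle_H\,\mathrm dt \to \llangle \mu,\varphi\rrangle_{\mathcal{M},C_0}$ by weak-$\star$ convergence (noting $\varphi \in \mathcal{D}(\mathbb{R};E_{-\infty}) \subset C_0(\mathbb{R};H)$). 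This yields $\partial_t u + \mathcal{B}u = \mu$ in $\mathcal{D}'(\mathbb{R};E_\infty)$. Uniqueness in $L^2(\mathbb{R};D_{S,1})$ is immediate from Proposition~\ref{prop:uniqueness}.

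For the regularity statement on an unbounded open interval $I \subset \mathbb{R}\setminus\mathrm{supp}(\mu)$, the key observation is that $\mu$ vanishes on $I$, so on $I$ the equation reads $\partial_t u = -\mathcal{B}u \in L^2(I;D_{S,-1})$. Thus $u \in L^2(I;D_{S,1})$ with $\partial_t u \in L^2(I;D_{S,-1})$, and Corollary~\ref{corenergy} (the half-line integral identity, applicable since $I$ is an open half-line up to translation, taking $g=0$, $f = -\mathcal{B}u$) gives $u \in C_0(\overline{I};H)$ and absolute continuity of $t \mapsto \|u(t)\|_H^2$ on $\overline{I}$. If moreover $I$ is a neighbourhood of $-\infty$, the energy identity on $\overline{I}$ reads, for $\sigma < \tau$ in $\overline{I}$,
\[
\|u(\tau)\|_H^2 - \|u(\sigma)\|_H^2 = -2\,\mathrm{Re}\int_\sigma^\tau B_t(u(t),u(t))\,\mathrm dt \le 0
\]
by coercivity, and letting $\sigma \to -\infty$ (using $u \in C_0(\overline{I};H)$) forces $u(\tau)=0$ for all $\tau \in \overline{I}$, exactly as in the proof of Proposition~\ref{prop:uniqueness}.

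The main obstacle I anticipate is not any single estimate but making the limiting argument clean: one must be careful that the weak $L^2(\mathbb{R};D_{S,1})$ limit and the weak-$\star$ $L^\infty(\mathbb{R};H)$ limit are compatible (they are, since both sit inside $\mathcal{D}'(\mathbb{R};E_\infty)$ and weak/weak-$\star$ convergence there implies convergence of the $E_\infty$-valued distribution, identifying the two limits), and that the duality bracket $\llangle \mu,\varphi\rrangle$ makes sense for $\varphi \in \mathcal{D}(\mathbb{R};E_{-\infty})$ — which it does because such $\varphi$ lie in $C_0(\mathbb{R};H)$. Everything else follows the template already established for the $L^1$ case and the energy method of Proposition~\ref{prop:uniqueness}.
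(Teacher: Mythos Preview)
Your proposal is correct and follows essentially the same approach as the paper: approximate $\mu$ by $L^1$ data via Lemma~\ref{Radon}, apply Proposition~\ref{prop:L1} to the approximants, extract a weak/weak-$\star$ limit via Banach--Alaoglu, pass to the limit in the weak formulation, and conclude the regularity and vanishing on $I$ via Corollary~\ref{corenergy} and the energy argument of Proposition~\ref{prop:uniqueness}. The paper's proof is slightly more terse (it invokes Proposition~\ref{prop:uniqueness} directly for the last step rather than rewriting the energy inequality), but the content is the same.
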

\begin{proof}
    Uniqueness is provided by Proposition \ref{prop:uniqueness}. To prove the existence, we use lemma \ref{Radon} to pick  $\left ( f_n \right )_{n \in \mathbb{N}}\in L^1(\mathbb{R};H)^\mathbb{N}$ such that 
    $f_n  \rightharpoonup \mu$   and   $\sup_{n\in \mathbb{N}} \left \| f_n \right \|_{L^1(\mathbb{R};H)} \leq \left \| \mu \right \|_{\mathcal{M}}.$
By Proposition \ref{prop:L1}, for all $n \in \mathbb{N}$, there is a unique $u_n \in L^2(\mathbb{R};D_{S,1})$ solution of the equation $\partial_t u_n+\mathcal{B}u_n= f_n$ in $\mathcal{D}'(\mathbb{R};E_\infty)$ and we have \eqref{asked estimate}, implying
\begin{equation}\label{EST}
    \left \| u_n \right \|_{L^2(\mathbb{R};D_{S,1})}+\left \| u_n \right \|_{L^\infty(\mathbb{R};H)} \leq C(M,\nu) \left \| \mu \right \|_{\mathcal{M}}.
\end{equation}
Using the Banach-Alaoglu theorem, there exists $u \in L^2(\mathbb{R};D_{S,1}) \cap L^\infty(\mathbb{R};H)$ such that, up to extracting a sub-sequence, $u_n \rightharpoonup u$ weakly in $L^2(\mathbb{R};D_{S,1})$ and weakly-$\star$ in $L^\infty(\mathbb{R};H)$ for the duality pairing $L^\infty$-$L^1$. We have \eqref{ESTT} and  easily  pass to the limit in the equation to obtain the desired solution.

Now, if $I \subset \mathbb{R}$ is an unbounded open interval such that $ I \cap \mathrm{supp}(\mu)= \varnothing$, then by Corollary \ref{corenergy}, $u \in C_0(\overline{I},H)$ and $t \mapsto \left \| u(t) \right \|_H^2$ is absolutely continuous on $\overline{I}$. If $I$ is a neighbourhood of $-\infty$, then $u=0$ by Proposition \ref{prop:uniqueness} on $I$. 
 \end{proof}

The next corollary is crucial to construct fundamental solution and Green operators.
\begin{cor}[Existence and uniqueness for Dirac measure source]\label{CorRadon}
Let $s \in \mathbb{R}$ and $a \in H$. Then there exists a unique $u \in L^2(\mathbb{R};D_{S,1})$ solution to
   $ \partial_t u + \mathcal{B}u = \delta_s \otimes a \ \ in \ \mathcal{D}'(\mathbb{R};E_{\infty}).$
Moreover, $u\in C_0(\mathbb{R}\setminus  \{ s  \}; H)$, equals  $0$ on $(-\infty,s)$ and $\lim_{t \to s^+} u(t)=a$ in $H$,  and there is a constant $C=C(M,\nu)>0$ such that 
    \begin{equation*}
        \sup_{t\in \mathbb{R}\setminus\{s\}}\left \|u(t)  \right \|_H + \left \| u \right \|_{L^2(\mathbb{R};D_{S,1})}
        \leq C \left \| a \right \|_H.
    \end{equation*}
Furthermore, {$u_{\scriptscriptstyle{\vert (s,\infty)}}$ is the restriction of an element in $V_{-1}$}.
\end{cor}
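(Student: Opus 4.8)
The plan is to specialize Proposition \ref{CasRadon} to the measure $\mu = \delta_s \otimes a$ and then sharpen the conclusions near $t=s$. First I would check that $\delta_s\otimes a \in \mathcal{M}(\mathbb{R};H)$ with $\|\delta_s\otimes a\|_{\mathcal{M}} = \|a\|_H$: the upper bound is immediate from $|\langle a,\varphi(s)\rangle_H| \le \|a\|_H\|\varphi\|_{L^\infty(\mathbb{R};H)}$, and the lower bound follows by testing against functions $\varphi$ with $\varphi(s)=a/\|a\|_H$. Proposition \ref{CasRadon} then produces a unique $u\in L^2(\mathbb{R};D_{S,1})\cap L^\infty(\mathbb{R};H)$ solving $\partial_t u + \mathcal Bu = \delta_s\otimes a$ in $\mathcal{D}'(\mathbb{R};E_\infty)$, together with the estimate $\|u\|_{L^2(\mathbb{R};D_{S,1})}+\|u\|_{L^\infty(\mathbb{R};H)}\le C(M,\nu)\|a\|_H$. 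Since $\mathrm{supp}(\delta_s\otimes a)=\{s\}$, the intervals $(-\infty,s)$ and $(s,\infty)$ are disjoint from the support, so Proposition \ref{CasRadon} already gives $u\in C_0(\overline{(-\infty,s)};H)\cap C_0(\overline{(s,\infty)};H)$ with $t\mapsto\|u(t)\|_H^2$ absolutely continuous on each, and $u=0$ on $(-\infty,s]$ because $(-\infty,s)$ is a neighbourhood of $-\infty$. This yields $u\in C_0(\mathbb{R}\setminus\{s\};H)$, the vanishing on $(-\infty,s)$, and the stated $\sup$-estimate over $\mathbb{R}\setminus\{s\}$.

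The remaining points are the jump relation $\lim_{t\to s^+}u(t)=a$ and the statement that $u_{\vert(s,\infty)}$ is the restriction of an element of $V_{-1}$. For the jump, I would test the equation against $\varphi=\phi\otimes\tilde a$ with $\phi\in\mathcal{D}(\mathbb{R})$ and $\tilde a\in E_{-\infty}$: the weak formulation gives, for all such $\phi$,
\begin{equation*}
-\int_{\mathbb{R}}\langle u(t),\tilde a\rangle_H\,\phi'(t)\,\mathrm dt + \int_{\mathbb{R}}B_t(u(t),\tilde a)\phi(t)\,\mathrm dt = \langle a,\tilde a\rangle_H\,\phi(s).
\end{equation*}
Writing $v_{\tilde a}(t):=\langle u(t),\tilde a\rangle_H$ (well-defined a.e. since $u(t)\in D_{S,1}$ and using the homogeneous duality), this says $v_{\tilde a}'= -B_\bullet(u,\tilde a) + \langle a,\tilde a\rangle_H\,\delta_s$ in $\mathcal{D}'(\mathbb{R})$, where $B_\bullet(u,\tilde a)\in L^1_{\mathrm{loc}}$ since $t\mapsto B_t(u(t),\tilde a)$ is bounded by $M\|u(t)\|_{S,1}\|\tilde a\|_{S,1}$ and $u\in L^2(\mathbb{R};D_{S,1})\subset L^1_{\mathrm{loc}}$. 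Hence $v_{\tilde a}$ has absolutely continuous branches on $(-\infty,s)$ (where it is $0$) and on $(s,\infty)$, with a jump of size $\langle a,\tilde a\rangle_H$ at $s$; combined with $\lim_{t\to s^-}v_{\tilde a}(t)=0$ this forces $\lim_{t\to s^+}v_{\tilde a}(t)=\langle a,\tilde a\rangle_H$. Since $u\in C_0(\overline{(s,\infty)};H)$, the limit $\lim_{t\to s^+}u(t)$ exists strongly in $H$, call it $b$; then $\langle b,\tilde a\rangle_H=\langle a,\tilde a\rangle_H$ for all $\tilde a\in E_{-\infty}$, and density of $E_{-\infty}$ in $H$ gives $b=a$.

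For the last assertion, restrict to $(s,\infty)$: there $u$ solves $\partial_t u + \mathcal Bu=0$, so $\partial_t u = -\mathcal Bu \in L^2((s,\infty);D_{S,-1})$, hence $u_{\vert(s,\infty)}\in\{w\in L^2((s,\infty);D_{S,1}) : \partial_t w\in L^2((s,\infty);D_{S,-1})\}$, which is $V_{-1}$ localized to the half-line. To exhibit a genuine element of $V_{-1}=\{w\in L^2(\mathbb{R};D_{S,1}):\partial_t w\in L^2(\mathbb{R};D_{S,-1})\}$ whose restriction is $u_{\vert(s,\infty)}$, I would use that $u\in C_0(\overline{(s,\infty)};H)$ with boundary value $a$ and form an extension: concretely, define $\tilde u$ on $\mathbb{R}$ by $\tilde u = u$ on $(s,\infty)$ and $\tilde u(t)=e^{-(s-t)S^2}a$ for $t<s$ — this matches at $t=s$ in $H$, lies in $L^2(\mathbb{R};D_{S,1})$ since $t\mapsto e^{-(s-t)S^2}a$ does (by the quadratic estimate $\int_0^\infty\|Se^{-rS^2}a\|_H^2\,\mathrm dr=\tfrac12\|a\|_H^2$), and has $\partial_t\tilde u\in L^2(\mathbb{R};D_{S,-1})$ because both branches do and the traces match in $H$ so no distributional jump is created (the matching is exactly what makes $\partial_t\tilde u$ have no singular part at $s$, using the integration-by-parts/trace computation as in the proof of Corollary \ref{corenergybounded}). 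Thus $\tilde u\in V_{-1}$ and $\tilde u_{\vert(s,\infty)}=u_{\vert(s,\infty)}$.

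The main obstacle I anticipate is the clean justification that gluing the two branches at $t=s$, with matching strong $H$-traces, produces no distributional singularity in $\partial_t\tilde u$ across $s$ — i.e. that $\partial_t\tilde u\in L^2(\mathbb{R};D_{S,-1})$ rather than $L^2+(\text{point mass})$. This is handled exactly by the trace/integration-by-parts argument already used in Corollary \ref{corenergybounded} (testing against $\phi\otimes\tilde a$ and checking the boundary terms cancel because $u(s^+)=a=\tilde u(s^-)$), so it is routine but requires care; everything else is a direct specialization of Proposition \ref{CasRadon} and the absolute-continuity statements from Corollary \ref{corenergy}.
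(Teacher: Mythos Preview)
Your proof is correct and mostly follows the paper's own argument: you specialize Proposition~\ref{CasRadon} to $\mu=\delta_s\otimes a$, and your computation of the jump $u(s^+)=a$ via the scalar distributional equation $v_{\tilde a}' = -B_\bullet(u,\tilde a)+\langle a,\tilde a\rangle_H\,\delta_s$ is equivalent to the paper's use of the polarized identity (Corollary~\ref{EnergyPol}) on the two half-lines $(-\infty,s]$ and $[s,\infty)$, just phrased in elementary scalar terms.

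The one genuine difference is the $V_{-1}$ extension. The paper does not build a semigroup tail: it simply reuses the even/odd extension machinery from the proof of Corollary~\ref{corenergy} (extend $u$ evenly and $f=-\mathcal{B}u$ oddly with respect to $s$), which lands directly in $V_{-1}$ on $\mathbb{R}$ without needing to know $u(s^+)$ first. Your semigroup extension $\tilde u(t)=e^{-(s-t)S^2}a$ for $t<s$ also works and is pleasantly explicit, but it requires the jump value $a$ to be identified beforehand and then needs the trace-matching argument to rule out a Dirac in $\partial_t\tilde u$ at $s$; for that step the appropriate reference is Corollary~\ref{EnergyPol} (polarized identity on half-lines), not Corollary~\ref{corenergybounded}, since you are gluing across two unbounded intervals. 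Both routes are sound; the paper's has the minor advantage of being order-independent and of reusing an already-proved construction verbatim.
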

\begin{proof} Applying the previous result Proposition \ref{CasRadon}, we have existence and uniqueness of $u$ with the estimates and  $u \in C_0(\mathbb{R}\setminus  \{ s  \};H)$, equals  $0$ on $(-\infty,s)$ and has a limit $u(s^{+})$ when  $t \to s^+$. 

 That $u_{\scriptscriptstyle{\vert (s,\infty)}}$ is a restriction to $(s,\infty)$ of an element in $V_{-1}$ follows from the fact that $\partial_t u=f$ {in $\mathcal{D}'((s,\infty);E_\infty)$} with $f= -\mathcal{B}u$ on $(s,\infty)$ and the method of Corollary \ref{corenergy}, by taking the even extension of $u$ and the odd extension of $f$ with respect to $s$. 
 
It remains to show $u(s^{+})=a$. Let $\Tilde{a}\in E_{-\infty}$ and $\theta\in \mathcal{D}(\mathbb{R})$ with $\theta(s)=1$ and set $\varphi=\theta\otimes \Tilde{a}$. Using the absolute continuity of $t\mapsto \langle u(t), \varphi(t)\rangle_H$ on both $[s,\infty)$ and $(-\infty,s]$ using Corollary \ref{EnergyPol}, we have
\begin{align*}
    - \langle u(s^+), \Tilde{a}\rangle_H &= \int_s^\infty - B_t(u(t),\varphi(t)) + \langle u(t), \partial_t\varphi(t)\rangle_H \  \mathrm d t
    \\
    + \langle u(s^-), \Tilde{a}\rangle_H &= \int_{-\infty}^s - B_t(u(t),\varphi(t)) + \langle u(t), \partial_t\varphi(t)\rangle_H \  \mathrm d t
    \end{align*}
    By the equation for $u$ on $\mathbb{R}$, and since $u\in L^1_{\mathrm{loc}}(\mathbb{R}; H)$,   we obtain
    \begin{equation*}
    \int_{-\infty}^\infty  B_t(u(t),\varphi(t)) - \langle u(t), \partial_t\varphi(t)\rangle_H \  \mathrm d t = \llangle \delta_s \otimes a,  \varphi \rrangle_{\mathcal{M},C_0}= 
    \langle a, \Tilde{a}\rangle_H.
\end{equation*}
Summing up,  $- \langle u(s^+), \Tilde{a}\rangle_H+ \langle u(s^-), \Tilde{a}\rangle_H= - \langle a, \Tilde{a}\rangle_H$. As $u(s^-)=0$, this yields 
$\langle u(s^+), \Tilde{a}\rangle_H= \langle a, \Tilde{a}\rangle_H$ and we conclude by density of $E_{-\infty}$ in $H$.
\end{proof}
\subsection{Green operators}\label{S2S6}
The notion below of Green operators was first introduced by J.-L. Lions \cite{lions2013equations}.
\begin{defn}[Green operators]
Let $t,s \in \mathbb{R}$ and $a,\Tilde{a} \in H$.
\begin{enumerate}
    \item For $t \neq s$, $G(t,s)a$ is defined as the value at time $t$ of the solution $u \in L^2(\mathbb{R};D_{S,1})$ of the equation $\partial_t u + \mathcal{B}u = \delta_s \otimes a$ in Corollary \ref{CorRadon}. 
    \item For $s \neq t$, $\Tilde{G}(s,t)\Tilde{a}$ is defined as the value at time $s$ of the solution $u \in L^2(\mathbb{R};D_{S,1})$ of the equation $-\partial_s u + \mathcal{B^\star}u = \delta_t \otimes \Tilde{a}$ in Corollary \ref{CorRadon}. 
\end{enumerate}
The operators $G(t,s)$ and $\Tilde{G}(s,t)$ are called the Green operators for the parabolic operator $\partial_t +\mathcal{B}$ and the backward parabolic operator $ -\partial_t +\mathcal{B^\star}$, respectively.
\end{defn}
 The properties discussed in the last section can be summarized in the following corollary.
\begin{cor}[Estimates for Green operators]\label{Cor Green}  There is a constant $C=C(M,\nu)>0$ such that one has the following statements.
\begin{enumerate}
    \item For all $t<s \in \mathbb{R}$, $G(t,s)=0$ and for all $a \in H$, $t \mapsto G(t,s)a \in C_0([s,\infty);H)$ with $ G(s,s)a =a $ {and it is a restriction to $(s,\infty)$ of an element in $V_{-1}$}, and for any $r \in [2,\infty)$, $a \in H$ and $s\in \mathbb{R}$, we have $G(\cdot,s)a \in L^r((s,\infty);D_{S,\alpha})$ where $\alpha={2}/{r}$ with
    \begin{equation*}
        \sup_{t \geq s} \left \| G(t,s)  \right \|_{\mathcal{L}(H)} \leq C \ \ \mathrm{and} \ \  \int_{s}^{+\infty} \left \| G(t,s)a \right \|^r_{S,\alpha}\mathrm d t \leq C^r \left \| a \right \|_H^r.
    \end{equation*}
    \item For all $s>t \in \mathbb{R}$, $\Tilde{G}(s,t)=0$ and for all $\Tilde{a} \in H$, $s \mapsto \Tilde{G}(s,t)\Tilde{a} \in C_0((-\infty,t];H)$ with $\Tilde{G}(t,t)\Tilde{a} =\Tilde{a}$ {and it is a restriction to $(-\infty,t)$ of an element in $V_{-1}$}, and for any $r \in [2,\infty)$,  $t\in \mathbb{R}$, we have $\Tilde{G}(s,\cdot)a \in L^r((-\infty,t);D_{S,\alpha})$ where $\alpha={2}/{r}$  with 
    \begin{equation*}
       \sup_{t \geq s}  \| \Tilde{G}(s,t)   \|_{\mathcal{L}(H)} \leq C \ \ \mathrm{and} \ \ \int_{-\infty}^{t}  \| \Tilde{G}(s,t)\Tilde{a}  \|^r_{S,\alpha}\mathrm d s \leq C^r \| \Tilde{a}  \|_H^r.
    \end{equation*}
\end{enumerate}
\end{cor}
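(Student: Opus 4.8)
The plan is to read off everything from Corollary \ref{CorRadon} applied to the parabolic operator $\partial_t + \mathcal{B}$ (for part (1)) and to the backward operator $-\partial_t + \mathcal{B}^\star$ (for part (2)); the two parts are symmetric, so I would only write (1) in detail and say (2) follows \emph{verbatim} by reversing time. First I would fix $s\in\mathbb{R}$ and $a\in H$ and let $u=u_{s,a}\in L^2(\mathbb{R};D_{S,1})$ be the unique solution of $\partial_t u + \mathcal{B}u = \delta_s\otimes a$ furnished by Corollary \ref{CorRadon}. By definition $G(t,s)a = u(t)$ for $t\neq s$, and $G(t,s)=0$ for $t<s$ since $u$ vanishes on $(-\infty,s)$; the corollary also gives $u\in C_0(\mathbb{R}\setminus\{s\};H)$, $\lim_{t\to s^+}u(t)=a$, so setting $G(s,s)a=a$ makes $t\mapsto G(t,s)a$ continuous on $[s,\infty)$ with limit $0$ at $+\infty$, i.e. in $C_0([s,\infty);H)$; and $u_{|(s,\infty)}$ is the restriction of an element of $V_{-1}$, again directly from the corollary. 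The sup bound $\sup_{t\ge s}\|G(t,s)a\|_H \le C\|a\|_H$ with $C=C(M,\nu)$ is exactly the pointwise bound in Corollary \ref{CorRadon}, and since it is uniform in $s$ and linear in $a$ this gives $\sup_{t,s}\|G(t,s)\|_{\mathcal{L}(H)}\le C$.

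For the $L^r$ estimate, I would argue as follows. We know $u_{|(s,\infty)}$ extends to some $\tilde u\in V_{-1}\subset V_0$. By Corollary \ref{Cas L^2 pour L^r} / the remark following it (Proposition \ref{prop:embed r>0}), any element of $V_0$ lies in $L^r(\mathbb{R};D_{S,\alpha})$ for $r\in(2,\infty)$, $\alpha=2/r$, with $\|\tilde u\|_{L^r(\mathbb{R};D_{S,\alpha})}\le C\|\tilde u\|_{V_0}$; and the $V_0$-norm of $\tilde u$ is controlled, via the extension procedure in the proof of Corollary \ref{corenergy} (even/odd extensions across $s$) together with the estimate $\|u\|_{L^2(\mathbb{R};D_{S,1})}+\sup_t\|u(t)\|_H\le C\|a\|_H$ from Corollary \ref{CorRadon} and $\partial_t\tilde u = -(\mathcal{B}u)_{o}$ on $\mathbb{R}$ which lies in $L^2(\mathbb{R};D_{S,-1})$ with norm $\lesssim M\|a\|_H$. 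Hence $\int_s^\infty \|G(t,s)a\|_{S,\alpha}^r\,\mathrm dt \le \|\tilde u\|_{L^r(\mathbb{R};D_{S,\alpha})}^r \le C^r\|a\|_H^r$ for a constant $C=C(M,\nu)$ (absorbing the dependence on $r$, or keeping it if one prefers $C=C(M,\nu,r)$; I would follow the paper's convention). The case $r=2$ is already contained in $u\in L^2(\mathbb{R};D_{S,1})$ with the same bound, so the range $r\in[2,\infty)$ is covered.

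The only mildly delicate point — and the one I would write out carefully — is bounding $\|\tilde u\|_{V_0}$ by $C(M,\nu)\|a\|_H$ \emph{uniformly in $s$}: one must check that the even/odd reflection across $s$ does not introduce a half-derivative jump, which is precisely what Corollary \ref{CorRadon}'s last assertion (that $u_{|(s,\infty)}$ is a restriction of a $V_{-1}$ element, proved there by exactly this reflection) guarantees, and that the constants are translation-invariant in $s$, which is clear since $\mathcal{B}$'s ellipticity constants $M,\nu$ do not depend on $t$ and reflection is an isometry. Everything else is a transcription, and part (2) is obtained by applying part (1) to $-\partial_t + \mathcal{B}^\star$, whose associated form family $(B_t^\star)$ has the same constants $M,\nu$; I would simply state this and refer back.
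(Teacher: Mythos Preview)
Your argument is correct, but you take a longer route for the $L^r$ estimate than the paper does. The paper's proof is one line: everything except the $L^r$ bound comes straight from Corollary~\ref{CorRadon}, and for the $L^r$ bound one applies the \emph{first} interpolation inequality in Proposition~\ref{prop:embed r>0},
\[
\|u\|_{L^r((s,\infty);D_{S,\alpha})} \le \|u\|_{L^2((s,\infty);D_{S,1})}^{\alpha}\,\|u\|_{L^\infty((s,\infty);H)}^{1-\alpha},
\]
which is just the moment inequality and holds on any interval (see the remark after that proposition). Both factors on the right are bounded by $C(M,\nu)\|a\|_H$ directly from Corollary~\ref{CorRadon}, so no extension to $\mathbb{R}$, no $V_{-1}$ or $V_0$ membership, and no half-derivative control is needed.

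Your detour through the even/odd extension $\tilde u\in V_{-1}\subset V_0$ and the $V_0\hookrightarrow L^r$ embedding works, but it costs you: that embedding goes through Hardy--Littlewood--Sobolev (Lemma~\ref{Hardy-Littlewood-Sobolev}) and carries a constant $C(r)$, so your final constant is $C(M,\nu,r)$ rather than the $C(M,\nu)$ claimed in the statement. The paper's route avoids this and yields a constant independent of $r$. Your ``mildly delicate point'' about bounding $\|\tilde u\|_{V_0}$ uniformly in $s$ is handled correctly, but it is simply not needed.
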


\begin{proof}
The properties follows from construction in Corollary \ref{CorRadon} and the interpolation inequalities in Proposition \ref{prop:embed r>0} for $2<r<\infty$.
\end{proof}

 Moreover, expected adjointness and Chapman-Kolmogorov relations  hold.
\begin{prop}[Adjointess and Chapman-Kolmogorov identities]\label{Prop Green} The following statements hold.
\begin{enumerate}
    \item For all $s <t $, $G(t,s)$ and $\Tilde{G}(s,t)$ are  adjoint operators.
    \item  For any $s < r <t $, we have $G(t,s)=G(t,r)G(r,s)$.
\end{enumerate}
 
\end{prop}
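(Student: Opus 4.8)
The plan is to derive both statements from the polarized integral identity (Corollary \ref{EnergyPol}) applied to the Green functions on appropriate half-lines, using causality and the normalization $G(s,s)a=a$, $\tilde G(t,t)\tilde a=\tilde a$ established in Corollary \ref{CorRadon}. For (1), fix $s<t$, $a,\tilde a\in H$, and set $u(\cdot)=G(\cdot,s)a$, which solves $\partial_t u+\mathcal{B}u=0$ on $(s,\infty)$, vanishes at $s^-$ and satisfies $u(s^+)=a$; set $v(\cdot)=\tilde G(\cdot,t)\tilde a$, which solves $-\partial_\sigma v+\mathcal{B}^\star v=0$ on $(-\infty,t)$ with $v(t^-)=\tilde a$. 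On the overlap interval $(s,t)$ both functions lie in $L^2((s,t);D_{S,1})$ with time derivatives in $L^2((s,t);D_{S,-1})$, so Corollary \ref{EnergyPol} (the case $\rho=\infty$, $\beta=0$, $g=\tilde g=0$, here adapted by restriction to the bounded interval via the cutoff argument of Corollary \ref{corenergybounded}, or directly since $u,v$ are restrictions of $V_{-1}$-elements) gives that $\sigma\mapsto\langle u(\sigma),v(\sigma)\rangle_H$ is absolutely continuous on $[s,t]$ with
\begin{equation*}
\langle u(t),v(t)\rangle_H-\langle u(s),v(s)\rangle_H=\int_s^t -B_\sigma(u(\sigma),v(\sigma))+B_\sigma(u(\sigma),v(\sigma))\ \mathrm d\sigma=0,
\end{equation*}
where the two form-terms cancel because one comes from the equation for $u$ (tested against $v$) and the other from the equation for $v$ (with $u$ against $\mathcal{B}^\star v$, i.e. $B_\sigma(u,v)$). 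Hence $\langle u(t),v(t)\rangle_H=\langle u(s),v(s)\rangle_H$, that is $\langle G(t,s)a,\tilde a\rangle_H=\langle a,\tilde G(s,t)\tilde a\rangle_H$, which is the adjoint relation. A small technical point: the one-sided limits $u(s^+)$ and $v(t^-)$ must be inserted as the boundary values, which is exactly what Corollary \ref{CorRadon} provides.

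For (2), fix $s<r<t$ and $a\in H$. Let $u(\cdot)=G(\cdot,s)a$ and let $w(\cdot)=G(\cdot,r)\big(G(r,s)a\big)$, so that $w$ solves $\partial_t w+\mathcal{B}w=0$ on $(r,\infty)$ with $w(r^+)=G(r,s)a=u(r)$, while $u$ also solves $\partial_t u+\mathcal{B}u=0$ on $(r,\infty)$ with the same value $u(r)$ at time $r$. Both are restrictions to $(r,\infty)$ of elements of $V_{-1}$ (Corollary \ref{CorRadon}), hence the difference $u-w$ lies in $L^2((r,\infty);D_{S,1})$, solves the homogeneous equation on $(r,\infty)$, and its value at $r^+$ is $0$. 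Extending as in Proposition \ref{prop:uniqueness} (or applying Corollary \ref{corenergy} on the half-line $(r,\infty)$ and running the energy identity from $\sigma=r$), absolute continuity of $\sigma\mapsto\|(u-w)(\sigma)\|_H^2$ together with coercivity gives $\|(u-w)(\sigma)\|_H^2\le\|(u-w)(r)\|_H^2=0$ for all $\sigma\ge r$. Evaluating at $\sigma=t$ yields $G(t,s)a=G(t,r)G(r,s)a$.

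The main obstacle is purely bookkeeping: ensuring that the energy/polarization identities of Section \ref{Section 4}, which were stated on $\mathbb{R}$ and on infinite intervals, are applied correctly here where the natural intervals are the half-lines $(s,\infty)$, $(r,\infty)$ and the bounded overlap $(s,t)$, and that the correct one-sided boundary values from Corollary \ref{CorRadon} are used at the initial times. Once the right interval and boundary data are identified, in (1) the two sesquilinear-form contributions cancel by the adjoint relation $B_\sigma$ vs. $B_\sigma^\star$, and in (2) the uniqueness argument of Proposition \ref{prop:uniqueness} applies verbatim on $(r,\infty)$; both conclusions are then immediate. No delicate estimate is needed beyond what Corollaries \ref{corenergy}, \ref{EnergyPol}, \ref{corenergybounded} and \ref{CorRadon} already supply.
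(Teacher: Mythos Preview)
Your proof of (1) is essentially identical to the paper's: apply Corollary \ref{EnergyPol} to $u=G(\cdot,s)a$ on $(s,\infty)$ and $v=\Tilde{G}(\cdot,t)\Tilde{a}$ on $(-\infty,t)$, observe the integrand cancels, and read off the adjoint relation from the boundary values. Your remarks about needing a cutoff on the bounded overlap are unnecessary, since Corollary \ref{EnergyPol} is already stated for two half-lines with nonempty intersection, which yields the conclusion directly on $[s,t]$.

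For (2) you take a different route from the paper. The paper reuses the polarized identity once more, now between $r$ and $t$ with the same $u$ and the backward solution $\Tilde{G}(\cdot,t)\Tilde{a}$, and then invokes the adjoint relation from (1) to convert $\langle G(r,s)a,\Tilde{G}(r,t)\Tilde{a}\rangle_H$ into $\langle G(t,r)G(r,s)a,\Tilde{a}\rangle_H$. Your argument instead sets $w(\cdot)=G(\cdot,r)\big(G(r,s)a\big)$ and appeals to uniqueness for the homogeneous Cauchy problem on $(r,\infty)$ with initial data $G(r,s)a$ at time $r$, via the energy equality of Corollary \ref{corenergy}. Both are correct and short. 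The paper's version is marginally more economical in that it recycles a single tool and does not require reinvoking coercivity; yours is closer in spirit to the evolution-family viewpoint and makes the ``restart at time $r$'' interpretation explicit. Either approach buys the same conclusion with no extra ingredients beyond what the preceding corollaries provide.
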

\begin{proof}
We first prove point (1). We fix $t,s \in \mathbb{R}$ such that $s<t$. For $a,\Tilde{a} \in H$, we can apply the integral identity of Corollary \ref{EnergyPol} to $u:= G(\cdot,s)a$ and $v = \Tilde{G}(\cdot,t)\Tilde{a}$ between $s$ and $t$. Note that by duality the integrand vanishes almost everywhere, hence
\begin{equation*}
 \langle G(t,s)a, \Tilde{a}\rangle_H=    \langle G(t,s)a, \Tilde{G}(t,t)\Tilde{a}\rangle_H =   \langle G(s,s)a, \Tilde{G}(s,t)\Tilde{a}\rangle_H= \langle a, \Tilde{G}(s,t)\Tilde{a}\rangle_H.
    \end{equation*}
    The adjunction property follows. For point (2), we apply the same equality between $r$ and $t$ and use that the adjoint of $G(t,r)$ is $\Tilde{G}(r,t)$ from point (1), to obtain
    \begin{equation*}
 \langle G(t,s)a, \Tilde{a}\rangle_H=    \langle G(t,s)a, \Tilde{G}(t,t)\Tilde{a}\rangle_H =  \langle G(r,s)a, \Tilde{G}(r,t)\Tilde{a}\rangle_H= \langle G(t,r)G(r,s) a, \Tilde{a}\rangle_H.
    \end{equation*}
    
\end{proof}

\subsection{Fundamental solution}\label{sec:FS} We define the fundamental solution as representing
the inverse of $\partial_t + \mathcal{B}$.

\begin{defn}[Fundamental solution for $\partial_t + \mathcal{B}$] \label{FS}
    A fundamental solution for $\partial_t + \mathcal{B}$ is a family $\Gamma=(\Gamma(t,s))_{t,s \in \mathbb{R}}$ such that : 
    \begin{enumerate}
        \item $\sup_{t,s \in \mathbb{R}} \left\|\Gamma(t,s) \right\|_{\mathcal{L}(H)} < +\infty.$
        \item $\Gamma(t,s)=0$ if $s>t$.
        \item For all $a,\Tilde{a}\in E_{-\infty}$, the function $(t,s) \mapsto \langle \Gamma(t,s)a, \Tilde{a} \rangle_H$ is Borel measurable on $\mathbb{R}^2$.
        \item For all $\phi \in \mathcal{D}(\mathbb{R})$ and $a \in E_{-\infty}$, the solution $u \in L^2(\mathbb{R};D_{S,1})$ of the equation $\partial_t u + \mathcal{B}u = \phi \otimes a $ in $\mathcal{D}'(\mathbb{R};E_\infty)$ satisfies for  all $\Tilde{a} \in E_{-\infty}$,  $\langle u(t), \Tilde{a} \rangle_H = \int_{-\infty}^{t} \phi(s) \langle \Gamma (t,s)a, \Tilde{a} \rangle_H \ \mathrm{d}s,$ for almost every $t\in \mathbb{R}$.
    \end{enumerate}
    One defines a fundamental solution $(\Tilde{\Gamma}(s,t))_{s,t \in \mathbb{R}}$ to the backward operator $-\partial_s + \mathcal{B}^\star$ analogously and (ii) is replaced by $\Tilde{\Gamma}(s,t)=0$ if $s>t$.
\end{defn}
\begin{lem}[Uniqueness of fundamental solutions]\label{Unicité sol fonda} There is at most one fundamental solution to $\partial_t + \mathcal{B}$ in the sense of Definition \ref{FS}.
\end{lem}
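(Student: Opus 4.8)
The plan is to show that any two families $\Gamma_1,\Gamma_2$ satisfying (1)--(4) of Definition \ref{FS} coincide. First I would put $\Delta(t,s):=\Gamma_1(t,s)-\Gamma_2(t,s)$, which by (1) is uniformly bounded on $H$ and by (2) vanishes whenever $s>t$, and note that the only substantive input required is the uniqueness of weak solutions for sources in $L^1(\mathbb{R};H)$, namely Proposition \ref{prop:L1}. Indeed, for fixed $\phi\in\mathcal{D}(\mathbb{R})$ and $a\in E_{-\infty}$ the equation $\partial_t u+\mathcal{B}u=\phi\otimes a$ in $\mathcal{D}'(\mathbb{R};E_\infty)$ has exactly one solution $u\in L^2(\mathbb{R};D_{S,1})$, so feeding this same $u$ into the representation property (4) for $\Gamma_1$ and for $\Gamma_2$ and subtracting gives, for every $\tilde a\in E_{-\infty}$,
\[
\int_{-\infty}^{t}\phi(s)\,\langle \Delta(t,s)a,\tilde a\rangle_H\,\mathrm{d}s=0\qquad\text{for a.e. }t\in\mathbb{R}.
\]

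The hard part is that the exceptional null set above depends a priori on $\phi$, $a$ and $\tilde a$, so one cannot at once ``divide out $\phi$'' and let it concentrate at a point. I would dispose of this by a separability argument: $\mathcal{D}(\mathbb{R})$ is separable (exhaust $\mathbb{R}$ by compacts $K_n$ and use that each $\mathcal{D}(K_n)$ is a separable Fréchet space), and since $E_{-\infty}$ is dense in the separable Hilbert space $H$ (Lemma \ref{density E_infty}) it contains countable subsets $\{a_k\}$, $\{\tilde a_l\}$ that are dense in $H$. Choosing a countable dense family $\{\phi_j\}\subset\mathcal{D}(\mathbb{R})$, the displayed identity holds off one common null set $N$ for all triples $(\phi_j,a_k,\tilde a_l)$. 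For $t\notin N$ it then extends to all $\phi\in\mathcal{D}(\mathbb{R})$ and all $a,\tilde a\in H$: if $\phi_{j_m}\to\phi$ in some $\mathcal{D}(K)$ then $\|\phi_{j_m}-\phi\|_\infty\to 0$, and the uniform bound $\sup_{t,s}\|\Delta(t,s)\|_{\mathcal{L}(H)}<\infty$ from (1) (together with $|K|<\infty$) lets one pass to the limit; density of $\{a_k\},\{\tilde a_l\}$ in $H$ and the same uniform bound handle $a$ and $\tilde a$.

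Finally I would localize in $s$. Fix $t\notin N$ and $a,\tilde a\in H$; by (1) and (3) the function $s\mapsto\langle\Delta(t,s)a,\tilde a\rangle_H$ is bounded and measurable, hence locally integrable, and it is annihilated by every $\phi\in\mathcal{D}(\mathbb{R})$, so it vanishes for a.e. $s<t$, and by causality (2) also for $s>t$, thus for a.e. $s\in\mathbb{R}$. Running over the countable dense set $\{(a_k,\tilde a_l)\}$ and using boundedness of $\Delta(t,s)$ once more gives $\Delta(t,s)=0$ in $\mathcal{L}(H)$ for a.e. $s$, for every $t\notin N$; by Fubini, $\Gamma_1(t,s)=\Gamma_2(t,s)$ for a.e. $(t,s)\in\mathbb{R}^2$, and for all $(t,s)$ with $s>t$ by (2). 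This is the asserted uniqueness. (Should one want equality at every pair $t>s$ rather than almost everywhere, I would pin down the common representative by comparing it, via property (4) and a mollification of the Dirac source, with the Green operators from Corollary \ref{CorRadon}, whose values depend continuously on $t$.) Aside from the separability bookkeeping, the whole argument is soft: uniqueness of weak solutions, a duality manipulation, and density.
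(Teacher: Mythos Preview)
Your proof is correct and follows essentially the same route as the paper's: uniqueness of the weak solution forces the two representations to agree, and a countable density argument in $H$ promotes this to $\Gamma_1=\Gamma_2$ almost everywhere. The only difference is cosmetic: rather than handling the a.e.-in-$t$ null set via separability of $\mathcal{D}(\mathbb{R})$, the paper integrates the identity from (4) against a second test function $\tilde\phi(t)$ and applies Fubini directly to obtain $\iint \tilde\phi(s)\phi(t)\langle(\Gamma_1-\Gamma_2)(t,s)a,\tilde a\rangle_H\,\mathrm{d}s\,\mathrm{d}t=0$, which slightly shortens the bookkeeping.
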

\begin{proof}
     Assume $\Gamma_1$, $\Gamma_2$ are two fundamental solutions to $\partial_t + \mathcal{B}$. Fix $a$ and $\Tilde{a}$ in $E_{-\infty}$. The function $(t,s) \mapsto \langle \Gamma_k(t,s)a, \Tilde{a} \rangle_H$ is bounded and measurable for $k \in \left\{1,2 \right\}$ by (1) and (3), hence Fubini's Theorem with (2) and (4) yield  for all $\phi, \Tilde{\phi} \in \mathcal{D}(\mathbb{R})$,
\begin{equation*}
    \iint_{\mathbb{R}^2} \Tilde{\phi}(s) {\phi}(t) \langle \Gamma_1 (t,s)a, \Tilde{a} \rangle_H \ \mathrm{d}s\mathrm{d}t = \iint_{\mathbb{R}^2} \Tilde{\phi}(s) \phi(t) \langle \Gamma_2 (t,s)a, \Tilde{a} \rangle_H \ \mathrm{d}s\mathrm{d}t.
\end{equation*}
 Therefore, we obtain $\langle \Gamma_1 (t,s)a, \Tilde{a} \rangle_H = \langle \Gamma_2 (t,s)a, \Tilde{a} \rangle_H$ for almost every $(t,s) \in \mathbb{R}^2$. At this stage, the almost everywhere equality can depend on $a$ and $\Tilde{a}$. Applying this for test elements $a,\Tilde{a} \in E_{-\infty}$ describing a countable set in $E_{-\infty}$ that is dense in $H$ and using that the  operators $\Gamma_1(t,s)$, $\Gamma_2(t,s)$ are bounded on $H$ by (1), one deduces that $\Gamma_1$ and $\Gamma_2$ agree almost everywhere.
\end{proof}

\subsection{The Green operators are {the} fundamental solution operators} The two notions are well defined and we show that they lead to the same families. We borrow partially ideas from \cite{auscher2023universal}.
\begin{thm}[Green operators and fundamental solution operators agree]\label{thm:identification}
    The family of Green operators is the fundamental solution (up to almost everywhere equality) and (4) holds for all $t\in \mathbb{R}$.
\end{thm}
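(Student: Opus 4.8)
The plan is to verify that the family $(G(t,s))_{t,s\in\mathbb{R}}$ of Green operators satisfies the four defining properties of Definition \ref{FS}, and then to invoke the uniqueness Lemma \ref{Unicité sol fonda} to conclude that it \emph{is} the fundamental solution; along the way the representation formula in property (4) will come out for \emph{every} $t\in\mathbb{R}$, not only almost every $t$.

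Properties (1) and (2) require nothing new: the uniform bound $\sup_{t,s}\|G(t,s)\|_{\mathcal{L}(H)}<\infty$ and the causality $G(t,s)=0$ for $s>t$ are precisely what is recorded in Corollary \ref{Cor Green}. For the measurability property (3), I fix $a,\tilde a\in E_{-\infty}$ and observe that the function $(t,s)\mapsto\langle G(t,s)a,\tilde a\rangle_H$ vanishes on the open set $\{t<s\}$ and equals the constant $\langle a,\tilde a\rangle_H$ on the diagonal (since $G(s,s)a=a$ by Corollary \ref{Cor Green}), so it suffices to handle the open set $\{t>s\}$. There it is continuous in $t$ for each fixed $s$ by the strong continuity $t\mapsto G(t,s)a\in C_0([s,\infty);H)$ of Corollary \ref{Cor Green}; and it is continuous in $s$ for each fixed $t$, because $\langle G(t,s)a,\tilde a\rangle_H=\langle a,\tilde G(s,t)\tilde a\rangle_H$ by the adjointness relation of Proposition \ref{Prop Green}, while $s\mapsto\tilde G(s,t)\tilde a\in C_0((-\infty,t];H)$ again by Corollary \ref{Cor Green}. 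A separately continuous function on an open subset of $\mathbb{R}^2$ is of the first Baire class, hence Borel measurable; gluing the three pieces of $\mathbb{R}^2$ gives (3).

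For property (4) I fix $\phi\in\mathcal{D}(\mathbb{R})$ and $a\in E_{-\infty}$, and let $u\in L^2(\mathbb{R};D_{S,1})\cap C_0(\mathbb{R};H)$ be the solution of $\partial_t u+\mathcal{B}u=\phi\otimes a$ provided by Proposition \ref{prop:L1}. Fix $\tau\in\mathbb{R}$ and $\tilde a\in E_{-\infty}$, and set $\tilde w:=\tilde G(\cdot,\tau)\tilde a$, the solution of $-\partial_s\tilde w+\mathcal{B}^\star\tilde w=\delta_\tau\otimes\tilde a$ of Corollary \ref{CorRadon}: it vanishes on $(\tau,\infty)$, lies in $C_0((-\infty,\tau];H)$ with $\tilde w(\tau)=\tilde a$, and on the half-line $(-\infty,\tau)$ it solves the homogeneous backward equation, so that $\partial_s\tilde w=\mathcal{B}^\star\tilde w\in L^2((-\infty,\tau);D_{S,-1})$. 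The polarized integral identity of Corollary \ref{EnergyPol}, applied to $u$ on $\mathbb{R}$ and $\tilde w$ on $(-\infty,\tau)$ with exponent $\rho=\infty$ (so $\beta=0$), then gives, for all $\sigma<\tau'\le\tau$,
\[
\langle u(\tau'),\tilde w(\tau')\rangle_H-\langle u(\sigma),\tilde w(\sigma)\rangle_H=\int_\sigma^{\tau'}\bigl(-B_t(u(t),\tilde w(t))+\overline{B_t^\star(\tilde w(t),u(t))}+\phi(t)\langle a,\tilde w(t)\rangle_H\bigr)\,\mathrm dt,
\]
and the first two terms under the integral cancel because $\overline{B_t^\star(\tilde w(t),u(t))}=B_t(u(t),\tilde w(t))$. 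Letting $\sigma\to-\infty$ (the boundary term tends to $0$ since $\|\tilde w(\sigma)\|_H\to0$ and $\sup_t\|u(t)\|_H<\infty$) and $\tau'\to\tau^-$ (using continuity of the absolutely continuous function $s\mapsto\langle u(s),\tilde w(s)\rangle_H$ on $(-\infty,\tau]$ together with $\tilde w(\tau)=\tilde a$), one obtains $\langle u(\tau),\tilde a\rangle_H=\int_{-\infty}^{\tau}\phi(s)\langle a,\tilde w(s)\rangle_H\,\mathrm ds$. Since $\langle a,\tilde w(s)\rangle_H=\langle a,\tilde G(s,\tau)\tilde a\rangle_H=\langle G(\tau,s)a,\tilde a\rangle_H$ for $s<\tau$ by Proposition \ref{Prop Green}, this is exactly the formula in (4), valid for every $\tau\in\mathbb{R}$.

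Having checked (1)--(4), Lemma \ref{Unicité sol fonda} identifies $(G(t,s))$, up to almost everywhere equality, with the unique fundamental solution of $\partial_t+\mathcal{B}$, which completes the proof. I expect the main obstacle to be step (4): setting up the polarized integral identity correctly against the Dirac-sourced backward solution $\tilde w$ — choosing the right pair of intervals and exponent so that the elliptic form contributions cancel — and then carefully passing to the limits at $-\infty$ and at $\tau$ using the precise continuity and limit information from Corollaries \ref{CorRadon} and \ref{EnergyPol}. By comparison, the measurability in (3) is routine once one notices separate continuity, and (1), (2) together with the concluding uniqueness step are immediate.
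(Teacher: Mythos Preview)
Your proof is correct and follows essentially the same approach as the paper: properties (1) and (2) from Corollary \ref{Cor Green}, property (3) from separate continuity off the diagonal, and property (4) by pairing $u$ against the backward Green function $\tilde G(\cdot,\tau)\tilde a$ via the polarized integral identity (Corollary \ref{EnergyPol}), observing the cancellation of the elliptic form terms, and using adjointness (Proposition \ref{Prop Green}) to rewrite the result. One minor simplification: since Corollary \ref{EnergyPol} gives absolute continuity on the closed set $(-\infty,\tau]$, you may evaluate directly at $\tau'=\tau$ rather than passing to the limit $\tau'\to\tau^-$.
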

\begin{proof}
As there is at most one fundamental solution, it suffices to show that the family of Green operators satisfies the requirements (1)-(4) in Definition \ref{FS}.

The Green operators verify (1) and (2) of the Definition \ref{FS} by Corollary \ref{Cor Green}. For the measurability issue (3), remark that for all $a,\Tilde{a} \in E_{-\infty}$, we have $(t,s)\mapsto \langle G(t,s)a, \Tilde{a} \rangle_H$ is separately continuous on $\mathbb{R}^2\setminus \left \{ (t,t), t \in \mathbb{R} \right \}$, so Borel measurable on $\mathbb{R}^2$. We only have to prove (4), namely that for any $\phi \in \mathcal{D}(\mathbb{R})$ and $a,\Tilde{a}\in E_{-\infty},$  if $u$ is the weak solution for the source term $\phi\otimes a$, we have  for  all (not just almost all) $t \in \mathbb{R}$,
\begin{equation}\label{GreenLemma}
        \left\langle u(t), \Tilde{a} \right \rangle_H = \int_{-\infty}^{t}\phi(s)  \left\langle G(t,s)a, \Tilde{a} \right \rangle_H \ \mathrm{d}s.
\end{equation}
Fix $t\in \mathbb{R}$. Introduce $\Tilde{u}=\Tilde{G}(\cdot, t)\Tilde{a}$. Using the absolute continuity of $s\mapsto \langle u(s),\Tilde{u}(s)\rangle_H$ on $(-\infty, t]$ with its zero limit at $-\infty$, and seing $\phi\otimes a \in L^1(\mathbb{R};H)$, we have by Corollary \ref{EnergyPol}  
\begin{equation*}
    \langle u(t),\Tilde{a}\rangle_H= \langle u(t),\Tilde{u}(t)\rangle_H= \int_{-\infty}^t \langle \phi(s) a, \Tilde{u}(s)\rangle_H \ \mathrm ds.
\end{equation*}
For $s\le t$,  using Proposition \ref{Prop Green} in the last equality below
\begin{equation*}
      \langle \phi(s) a, \Tilde{u}(s)\rangle_H =  \phi(s)\langle  a, \Tilde{G}(s,t)\Tilde{a}\rangle_H=  \phi(s)\langle  G(t,s)a, \Tilde{a}\rangle_H
\end{equation*}
and we are done.
\end{proof}
\subsection{Representation with the fundamental solution operators} Having identified Green operators to fundamental solution operators, the latter inherits the properties of the former. From now on, we use the more traditional notation $\Gamma(t,s)$. We can now state a complete representation theorem
for all the distributional solutions seen in the last subsections, with specified convergence issues.

\begin{thm}\label{ThmGreenReprese}
Let $s\in \mathbb{R}$, $a \in H$, $g \in L^{\rho'}(\mathbb{R};H)$, where $\rho \in [2,\infty]$ and $\beta={2}/{\rho}$. Then the unique solution $u \in L^2(\mathbb{R};D_{S,1})$ of the equation 
    \begin{equation*}
        \partial_t u +\mathcal{B}u= \delta_s \otimes a + S^\beta g  \ \ \mathrm{in} \ \mathcal{D}'(\mathbb{R};E_\infty)
    \end{equation*}
obtained by  combining Propositions  \ref{prop:W-beta}, \ref{prop:L1} and Corollary \ref{CorRadon} can be represented pointwisely by the equation 
    \begin{equation*}
        u(t) = \Gamma(t,s)a 
        + \int_{-\infty}^{t} \Gamma(t,\tau) S^\beta g(\tau)\ \mathrm d \tau,
    \end{equation*}
    where the integral is weakly defined in $H$ when $\rho<\infty$ and strongly defined when $\rho=\infty$ (i.e.,  in the Bochner sense). More precisely, for all $\Tilde{a}\in H$, we have the equality with absolutely converging integral
    \begin{equation}\label{WeaksensGreen}
        \langle u(t), \Tilde{a} \rangle_H = \langle \Gamma(t,s)a, \Tilde{a} \rangle_H 
        + \int_{-\infty}^{t} \langle  g(\tau),S^\beta\Tilde{\Gamma}(\tau,t) \Tilde{a} \rangle_{H} \ \mathrm d \tau.
    \end{equation}
    
\end{thm}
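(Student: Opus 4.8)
\textbf{Proof plan for Theorem \ref{ThmGreenReprese}.}

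The plan is to decompose the source into its two pieces and treat each by the corresponding construction, then glue the representations together by linearity. For the Dirac part $\delta_s\otimes a$, the solution is $G(\cdot,s)a=\Gamma(\cdot,s)a$ by the very \emph{definition} of the Green operators in Definition 5.x and Corollary \ref{CorRadon}, so there is nothing to prove there. For the part $S^\beta g$ with $g\in L^{\rho'}(\mathbb{R};H)$, one first notes $S^\beta g\in W_{-\beta}$ when $\rho<\infty$ (by Lemma \ref{Hardy-Littlewood-Sobolev}, since $L^{\rho'}(\mathbb{R};H)=L^{\rho'}(\mathbb{R};D_{S,0})\hookrightarrow W_{-\beta}$ for $\beta=2/\rho$), and $S^\beta g=Sg\in S L^1(\mathbb{R};H)$ when $\rho=\infty$, i.e.\ $\beta=0$; so Proposition \ref{prop:W-beta} (resp.\ Proposition \ref{prop:L1}) supplies the unique solution $v\in L^2(\mathbb{R};D_{S,1})\cap C_0(\mathbb{R};H)$. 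By uniqueness (Proposition \ref{prop:uniqueness}), $u=\Gamma(\cdot,s)a+v$, and it remains to establish the integral formula for $v$.

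The core step is the representation of $v$ against a test element $\Tilde a\in E_{-\infty}$. Fix $t\in\mathbb{R}$ and set $\Tilde v=\Tilde\Gamma(\cdot,t)\Tilde a=\Tilde G(\cdot,t)\Tilde a$, which by Corollary \ref{Cor Green} lies in $C_0((-\infty,t];H)$, is supported in $(-\infty,t]$, vanishes at $-\infty$, equals $\Tilde a$ at time $t$, and is the restriction of an element of $V_{-1}$; in particular $\partial_s\Tilde v=\mathcal B^\star\Tilde v\in L^2((-\infty,t);D_{S,-1})$ and $\Tilde v\in L^{\rho}((-\infty,t);D_{S,\beta})$ by the interpolation inequality in Proposition \ref{prop:embed r>0}. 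On the other hand $v$ solves $\partial_t v+\mathcal B v=S^\beta g$ and satisfies $\partial_t v=S^\beta g-\mathcal B v$, which decomposes as an element of $L^2(\mathbb R;D_{S,-1})$ plus $S^\beta g\in L^{\rho'}(\mathbb R;D_{S,-\beta})$ (writing $S^\beta g=D_t^0 (S^\beta g)$ so that it fits the form $f+g$ with the exponent $\rho$ of Corollary \ref{corenergy}/\ref{EnergyPol}). Thus $v$ and $\Tilde v$ both satisfy the hypotheses of the polarized identity Corollary \ref{EnergyPol} with the same pair $(\rho,\beta)$, on the two half-lines $\mathbb R$ and $(-\infty,t)$, whose intersection is $(-\infty,t]$. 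Applying Corollary \ref{EnergyPol} between $\sigma$ and $t$, using that the $\mathcal B$-terms cancel by the adjoint relation ($B_\tau(v(\tau),\Tilde v(\tau))$ against $\langle v(\tau),\mathcal B^\star\Tilde v(\tau)\rangle$ match), and letting $\sigma\to-\infty$ where $\langle v(\sigma),\Tilde v(\sigma)\rangle_H\to 0$, yields
\begin{equation*}
\langle v(t),\Tilde a\rangle_H=\langle v(t),\Tilde v(t)\rangle_H=\int_{-\infty}^{t}\langle S^\beta g(\tau),\Tilde v(\tau)\rangle_{H,-\beta}\ \mathrm d\tau=\int_{-\infty}^{t}\langle g(\tau),S^\beta\Tilde\Gamma(\tau,t)\Tilde a\rangle_H\ \mathrm d\tau,
\end{equation*}
where the last equality uses $\langle S^\beta g(\tau),\Tilde v(\tau)\rangle_{H,-\beta}=\langle S^{-\beta}S^\beta g(\tau),S^\beta\Tilde v(\tau)\rangle_H=\langle g(\tau),S^\beta\Tilde v(\tau)\rangle_H$. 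The integral converges absolutely since $g\in L^{\rho'}$ and $S^\beta\Tilde v\in L^{\rho}((-\infty,t);H)$ by Proposition \ref{prop:embed r>0}; when $\rho=\infty$ one has $S^0\Tilde v=\Tilde v\in L^\infty$, and the integrand $\langle\Gamma(t,\tau)S^0 g(\tau),\Tilde a\rangle_H$ is then controlled by $\|\Gamma(t,\tau)\|_{\mathcal L(H)}\|g(\tau)\|_H\|\Tilde a\|_H\le C\|g(\tau)\|_H\|\Tilde a\|_H$, giving a genuine Bochner integral $\int_{-\infty}^t\Gamma(t,\tau)g(\tau)\,\mathrm d\tau$ in $H$.

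Finally one extends $\Tilde a$ from $E_{-\infty}$ to all of $H$: both sides of \eqref{WeaksensGreen} are continuous in $\Tilde a\in H$ (left side because $v(t)\in H$; right side because $\tau\mapsto S^\beta\Tilde\Gamma(\tau,t)\Tilde a$ is bounded in $L^\rho$ by $C\|\Tilde a\|_H$ uniformly, via Proposition \ref{prop:embed r>0} applied to $\Tilde G(\cdot,t)\Tilde a$, and $g\in L^{\rho'}$), and $E_{-\infty}$ is dense in $H$ by Lemma \ref{density E_infty}; adding back the term $\langle\Gamma(t,s)a,\Tilde a\rangle_H$ from the Dirac part finishes the proof, with the identity now valid for every $t\in\mathbb R$ since every ingredient (the $C_0$-representatives of $v$ and $\Tilde v$, the absolutely convergent integral) is defined pointwise in $t$. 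The main obstacle is bookkeeping: one must check carefully that $v$ and $\Tilde\Gamma(\cdot,t)\Tilde a$ really satisfy the hypotheses of Corollary \ref{EnergyPol} with a \emph{common} exponent $\rho$ (in particular that $\Tilde v\in L^\rho((-\infty,t);D_{S,\beta})$, which is exactly the dual range to $g\in L^{\rho'}$), and that the bilinear $\mathcal B$-contributions cancel thanks to $\Tilde\Gamma$ being built from the adjoint operator $\mathcal B^\star$ — everything else is a routine limiting argument.
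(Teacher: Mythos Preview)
Your approach is correct and rests on the same key device as the paper: pair the solution against $\Tilde v=\Tilde\Gamma(\cdot,t)\Tilde a$ via the polarized identity (Corollary \ref{EnergyPol}), use the cancellation between $B_\tau(v,\Tilde v)$ and $\overline{B_\tau^\star(\Tilde v,v)}$, and send $\sigma\to-\infty$. The paper organizes the $S^\beta g$ part slightly differently: it first quotes the representation for tensor-product sources $\phi\otimes c$ (equation \eqref{GreenLemma}, proved in Theorem \ref{thm:identification} by exactly the argument you wrote) and then extends to general $S^\beta g\in L^{\rho'}(\mathbb R;D_{S,-\beta})$ by density, using the uniform estimate $\int_{-\infty}^t|\langle h(\tau),\Tilde\Gamma(\tau,t)\Tilde a\rangle_{H,-\beta}|\,\mathrm d\tau\le C\|h\|_{L^{\rho'}}\|\Tilde a\|_H$. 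Your direct route is equally valid and in fact slightly cleaner, since Corollary \ref{EnergyPol} already accommodates the general right-hand side.

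One minor slip: when $\rho=\infty$ you have $\beta=0$, so $S^\beta g=g\in L^1(\mathbb R;H)$, not $Sg$; the relevant existence result is then Proposition \ref{prop:L1} with source $g$ itself.
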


\begin{rem}
    Remark that by Proposition \ref{prop:embed r>0}, one could even reduce to proving the result when $\rho=2$ and $\rho=\infty$.
\end{rem}
\begin{proof}
It is enough to prove \eqref{WeaksensGreen}. By uniqueness and linearity, we start by writing $u=u_1+u_2$ where $u_k$ is the solution of the equation considering only the $k^{th}$ term in the right-hand side of the equation. Recall that we have identification of $G$ and $\Gamma$. Fix $t \in \mathbb{R}.$

The first term involving $a$ is  $u_1(t)=\Gamma(t,s)a$ by construction and identification.

The argument for $u_2$ is as follows.  
According to the proof of Theorem \ref{Unicité sol fonda} the weak solution $v$ obtained from source $\phi\otimes c$, where $\phi\in \mathcal{D}(\mathbb{R})$ and $c \in E_{-\infty}$, satisfies for any 
$\Tilde{a} \in E_{-\infty}$ and $t\in \mathbb{R}$, 
\begin{align*}
    \langle v(t), \Tilde{a} \rangle_H &= \int_{-\infty}^{t}\phi(\tau)  \langle \Gamma(t,\tau)c, \Tilde{a} \rangle_H \ \mathrm{d}\tau
    \\&= \int_{-\infty}^{t} \langle (\phi\otimes c)(\tau), \Tilde{\Gamma}(\tau,t)\Tilde{a}  \rangle_H \ \mathrm{d}\tau.
    \end{align*}
For any $\rho\in [2,\infty]$, we have  $$ \int_{-\infty}^{t} |\langle h(\tau),\Tilde{\Gamma}(\tau,t) \Tilde{a}  \rangle_{H,-\beta}| \ \mathrm{d}\tau 
\le C \|h\|_{L^{\rho'}(\mathbb{R};D_{S,-\beta})} \|\Tilde{a}\|_H$$
 by using Cauchy-Schwarz inequality in $H$ and  H\"older inequality invoking   estimates for  $\Tilde{G}$ in Proposition \ref{Prop Green}.
 Writing $\langle (\phi\otimes c)(\tau), \Tilde{\Gamma}(\tau,t)\Tilde{a}  \rangle_H$ as $\langle (\phi\otimes c)(\tau), \Tilde{\Gamma}(\tau,t)\Tilde{a}  \rangle_{H,-\beta}$, we can conclude for $u_2$ 
 by density  of the span of tensor products $\phi\otimes c$  in $L^{\rho'}(\mathbb{R};D_{S,-\beta})$, 
 and density of $E_{-\infty}$ in $H$.   
For $\rho=\infty$, we may also verify the strong convergence.
\end{proof}
We record the following operator-valued Schwartz kernel result.
\begin{prop}
    Let $\phi,\Tilde{\phi}\in \mathcal{D}(\mathbb{R})$ and $a,\Tilde{a}\in H$. Then,
    \begin{equation*}
        \llangle \mathcal{H}^{-1}(\phi  \otimes a), \Tilde{\phi}\otimes \Tilde{a} \rrangle_{\mathcal{D}', \mathcal{D}} = \iint \phi(s) \langle \Gamma(t,s)a,  \Tilde{a} \rangle_H \overline{\Tilde{\phi}}(t) \ \mathrm d s\mathrm d t.
    \end{equation*}
    In other words, one can see $\langle \Gamma(t,s)a,  \Tilde{a} \rangle_H$ as the Schwartz kernel of the sesquilinear map $(\phi,\Tilde{\phi})\mapsto \langle \mathcal{H}^{-1}(\phi  \otimes a), \Tilde{\phi}\otimes \Tilde{a} \rangle_{\mathcal{D}', \mathcal{D}}$ on $\mathcal{D}(\mathbb{R})\times \mathcal{D}(\mathbb{R}).$
\end{prop}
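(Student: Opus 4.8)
The plan is to unwind the definitions and apply Theorem \ref{ThmGreenReprese} directly. Fix $\phi,\Tilde\phi\in\mathcal{D}(\mathbb{R})$ and $a,\Tilde a\in H$. The left-hand side $\llangle \mathcal{H}^{-1}(\phi\otimes a),\Tilde\phi\otimes\Tilde a\rrangle_{\mathcal{D}',\mathcal{D}}$ is, by the identification of $L^2(\mathbb{R};D_{S,1})$ as a subspace of $\mathcal{D}'(\mathbb{R};E_\infty)$ via $J_0$, equal to $\int_{\mathbb{R}}\langle u(t),\Tilde a\rangle_H\,\overline{\Tilde\phi}(t)\,\mathrm d t$, where $u=\mathcal{H}^{-1}(\phi\otimes a)$ is the unique weak solution in $L^2(\mathbb{R};D_{S,1})$ of $\partial_t u+\mathcal{B}u=\phi\otimes a$ given by Proposition \ref{prop:L1} (note $\phi\otimes a\in L^1(\mathbb{R};H)$, so this solution is the same as $\mathcal{H}^{-1}(\phi\otimes a)$ on $V_0$ as was shown in the proof of Proposition \ref{prop:L1}). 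So the first step is simply to record this rewriting, being slightly careful with the complex conjugate coming from the sesquilinear pairing $\llangle\cdot,\cdot\rrangle_{\mathcal{D}',\mathcal{D}}$: testing against $\Tilde\phi\otimes\Tilde a$ produces $\overline{\Tilde\phi(t)}$ and $\langle u(t),\Tilde a\rangle_H$ (antilinear in the test argument $\Tilde a$, matching the antilinearity of $j$).

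The second step handles $\Tilde a\in H$ rather than $E_{-\infty}$. Theorem \ref{thm:identification} gives \eqref{GreenLemma} for $a,\Tilde a\in E_{-\infty}$: $\langle u(t),\Tilde a\rangle_H=\int_{-\infty}^t\phi(s)\langle G(t,s)a,\Tilde a\rangle_H\,\mathrm d s$ for all $t$. But Theorem \ref{ThmGreenReprese} (applied with $g=0$, $s$ replaced by integrating against $\phi$ — more precisely, one reruns the argument of Theorem \ref{ThmGreenReprese} for the source $\phi\otimes a$, or invokes the $u_2$-type argument there with $\rho=\infty$) already upgrades this to all $a\in E_{-\infty}$ and all $\Tilde a\in H$; and then density of $E_{-\infty}$ in $H$ together with the uniform bound $\sup_{t,s}\|\Gamma(t,s)\|_{\mathcal L(H)}<\infty$ from Corollary \ref{Cor Green}, plus the estimate $\sup_t\|u(t)\|_H\lesssim\|\phi\otimes a\|_{L^1(\mathbb{R};H)}\lesssim\|a\|_H$ from Proposition \ref{prop:L1}, extends it to all $a\in H$. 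Thus for all $a,\Tilde a\in H$ and all $t$,
\begin{equation*}
\langle u(t),\Tilde a\rangle_H=\int_{-\infty}^{t}\phi(s)\langle\Gamma(t,s)a,\Tilde a\rangle_H\,\mathrm d s.
\end{equation*}

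The third step multiplies by $\overline{\Tilde\phi(t)}$ and integrates in $t$ over $\mathbb{R}$. The integrand is dominated by $\|\Gamma\|_\infty\|a\|_H\|\Tilde a\|_H\,|\phi(s)|\,|\Tilde\phi(t)|\,\mathbb{1}_{s<t}$, which is integrable on $\mathbb{R}^2$ since $\phi,\Tilde\phi$ have compact support; hence Fubini applies and gives
\begin{equation*}
\llangle\mathcal{H}^{-1}(\phi\otimes a),\Tilde\phi\otimes\Tilde a\rrangle_{\mathcal{D}',\mathcal{D}}=\int_{\mathbb{R}}\langle u(t),\Tilde a\rangle_H\,\overline{\Tilde\phi(t)}\,\mathrm d t=\iint\phi(s)\langle\Gamma(t,s)a,\Tilde a\rangle_H\,\overline{\Tilde\phi(t)}\,\mathrm d s\,\mathrm d t,
\end{equation*}
which is the claimed identity (the causality factor $\mathbb{1}_{s<t}$ being absorbed into the fact that $\Gamma(t,s)=0$ for $s>t$, so the double integral over $\mathbb{R}^2$ equals the one over $s<t$). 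The final sentence of the statement, interpreting $\langle\Gamma(t,s)a,\Tilde a\rangle_H$ as a Schwartz kernel, is then just a reformulation requiring no further argument. I do not expect any genuine obstacle here — the only points needing care are the bookkeeping of conjugates in the sesquilinear brackets and making sure the extension from $E_{-\infty}$ to $H$ in both slots is justified by the uniform operator bound, both of which are routine given the cited results.
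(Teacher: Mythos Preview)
Your proposal is correct and follows essentially the same route as the paper's proof: both rewrite the pairing as $\int_{\mathbb{R}}\langle u(t),\Tilde a\rangle_H\,\overline{\Tilde\phi(t)}\,\mathrm d t$, extend \eqref{GreenLemma} from $E_{-\infty}$ to $H$ by density and the uniform bound on $\Gamma(t,s)$, substitute the representation, and apply Fubini together with causality $\Gamma(t,s)=0$ for $s>t$. Your write-up is slightly more explicit about the extension step, but the argument is the same.
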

\begin{proof}
By density of $E_{-\infty}$ in $H$ and boundedness of the Green operators, we may use \eqref{GreenLemma} for $a,\Tilde{a}\in H$ and we obtain,
\begin{align*}
     \llangle \mathcal{H}^{-1}(\phi  \otimes a), \Tilde{\phi}\otimes \Tilde{a} \rrangle_{\mathcal{D}', \mathcal{D}} &= \int_{\mathbb{R}} \langle \mathcal{H}^{-1}(\phi  \otimes a)(t), \Tilde{a} \rangle_H \overline{\Tilde{\phi}}(t)\ \mathrm{d}t
     \\& = \int_{\mathbb{R}} \left ( \int_{-\infty}^{t} \phi(s) \langle \Gamma(t,s)a,\Tilde{a}\rangle_H \ \mathrm ds \right ) \overline{\Tilde{\phi}}(t)\ \mathrm{d}t
     \\& = \iint \phi(s) \langle \Gamma(t,s)a,  \Tilde{a} \rangle_H \overline{\Tilde{\phi}}(t) \ \mathrm d s\mathrm d t,
\end{align*}
where we have used Fubini's theorem  and $\Gamma(s,t)=0$ for $s>t$ in the last line. 
\end{proof}
\subsection{The Cauchy problem and the fundamental solution}
In this section, we consider the Cauchy problem on the interval $(0,\infty)$. The coefficients $B_t$ are defined on $(0,\infty)$ and satisfy \eqref{EllipticityAbstract} there.  We fix $\rho \in [2,\infty]$ and  set $\beta={2}/{\rho}$. The Cauchy problem  with initial condition $a\in H$ and  
$g\in L^{\rho'}((0,\infty);H)$ consists in finding a weak solution to
\begin{align}\label{Pb Cauchy homogène}
\left\{
    \begin{array}{ll}
         \partial_t u +\mathcal{B}u = S^\beta g   \ \mathrm{in} \ \mathcal{D}'((0,\infty);E_\infty), \\
        u(0)=a \  \mathrm{in} \ E_\infty.
    \end{array}
\right.
\end{align}

\begin{rem}
Note that when $f \in L^2((0,\infty);K)$, there exists $g\in L^2((0,\infty); H)$ such that $T^*f=Sg$, hence the  $\beta=1$ case  covers the classical Lions equation.
\end{rem}
\begin{defn}
     A weak solution to \eqref{Pb Cauchy homogène} is a function 
    $
        u \in L^2((0,\infty); D_{S,1}),
    $
    with 
    \begin{enumerate}
        \item [(i)]  $u$ solves the the first equation in $\mathcal{D}'((0,\infty);E_\infty)$, that is, for all $\varphi \in \mathcal{D}((0,\infty);E_{-\infty})$
        \begin{align*}
       \int_{0}^{\infty} -\langle u(t), \partial_t{\varphi }(t)\rangle_{H,1} + B_t(u(t),\varphi(t))\  \mathrm d t = 
       \int_{0}^{\infty} \langle g(t), S^\beta \varphi(t)\rangle_{H} \ \mathrm d t.
    \end{align*}
    
     \item [(ii)] $
     \forall \Tilde{a} \in E_{-\infty}, \    \langle u(t),\Tilde{a} \rangle_{H} \rightarrow \langle a, \Tilde{a}\rangle_{H}$, along a sequence tending to $0$.
    
    \end{enumerate}
     
    \end{defn}

A weaker formulation testing against functions $\varphi \in \mathcal{D}([0,\infty);E_{-\infty})$
with right hand side containing the additional term $\langle a, \varphi(0) \rangle_{H}$ is often encountered.
In the end it amounts to the same solutions thanks to \textit{a priori} continuity in $H$,  which only uses the upper bound on $B_t$.

\begin{prop}\label{prop:weaksolCP} Any weak solution to (i) belongs to $C_0([0,\infty); H)$ and $t\mapsto\|u(t)\|_H^2$ satisfies the energy equality
for any  $ \sigma, \tau \in [0,\infty]$ such that $  \sigma < \tau$,
    \begin{align*}
        \left \| u(\tau) \right \|^2_H + 2\mathrm{Re}\int_{\sigma}^{\tau} B_t(u(t),u(t))\  \mathrm d t  = \left \| u(\sigma) \right \|^2_H +  2\mathrm{Re}\int_{\sigma}^{\tau}  
        \langle g(t),S^\beta u(t)\rangle_{H} \  \mathrm d t.
    \end{align*}
    \end{prop}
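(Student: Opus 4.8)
The plan is to reduce Proposition \ref{prop:weaksolCP} to the half-line integral identity of Corollary \ref{corenergy}. The key observation is that a weak solution $u$ to (i) lies in $L^2((0,\infty);D_{S,1})$, so that $S^2u \in L^2((0,\infty);D_{S,-1})$, and the equation written against test functions reads $\partial_t u = -\mathcal{B}u + S^\beta g$ in $\mathcal{D}'((0,\infty);E_\infty)$. Here $-\mathcal{B}u \in L^2((0,\infty);D_{S,-1})$ because $\|\mathcal{B}u\|_{L^2((0,\infty);D_{S,-1})}\le M\|u\|_{L^2((0,\infty);D_{S,1})}$, and $S^\beta g = S^\beta g$ with $g\in L^{\rho'}((0,\infty);H)$, i.e. $S^\beta g \in L^{\rho'}((0,\infty);D_{S,-\beta})$ since $\|S^{-\beta}(S^\beta g)\|_H = \|g\|_H$. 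Thus $\partial_t u = f + \tilde g$ with $f := -\mathcal{B}u \in L^2((0,\infty);D_{S,-1})$ and $\tilde g := S^\beta g \in L^{\rho'}((0,\infty);D_{S,-\beta})$, with $\beta = 2/\rho \in [0,1)$ precisely in the range covered by Corollary \ref{corenergy}.

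Next I would simply apply Corollary \ref{corenergy} on the half-line $I = (0,\infty)$: it yields $u \in C_0(\overline{I};H) = C_0([0,\infty);H)$, absolute continuity of $t\mapsto\|u(t)\|_H^2$ on $[0,\infty)$, and for all $\sigma < \tau$ in $[0,\infty]$ the identity
\begin{align*}
    \|u(\tau)\|_H^2 - \|u(\sigma)\|_H^2 = 2\mathrm{Re}\int_\sigma^\tau \langle f(t), u(t)\rangle_{H,-1} + \langle \tilde g(t), u(t)\rangle_{H,-\beta}\ \mathrm d t.
\end{align*}
Then I would identify the two integrand terms: by definition of the pairing $\langle\cdot,\cdot\rangle_{H,-1}$ and of $\mathcal{B}$ via the forms $B_t$, one has $\langle f(t), u(t)\rangle_{H,-1} = \langle -\mathcal{B}u(t), u(t)\rangle_{H,-1} = -B_t(u(t),u(t))$ for a.e.\ $t$ (the extension of $B_t$ to $D_{S,1}\times D_{S,1}$ making this legitimate, as recorded in Section \ref{sec:homogenoussetup}). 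Similarly $\langle \tilde g(t), u(t)\rangle_{H,-\beta} = \langle S^\beta g(t), u(t)\rangle_{H,-\beta} = \langle S^{-\beta}(S^\beta g(t)), S^\beta u(t)\rangle_H = \langle g(t), S^\beta u(t)\rangle_H$. Substituting and rearranging gives exactly the stated energy equality.

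The one point requiring a word of care — and the only place where the argument is not entirely mechanical — is checking that the distributional identity $\partial_t u = -\mathcal{B}u + S^\beta g$ in $\mathcal{D}'((0,\infty);E_\infty)$ really holds in the form needed to feed Corollary \ref{corenergy}, i.e.\ that the weak formulation (i) against $\varphi \in \mathcal{D}((0,\infty);E_{-\infty})$ is equivalent to $\langle\langle \partial_t u, \varphi\rangle\rangle = \langle\langle -\mathcal{B}u + S^\beta g, \varphi\rangle\rangle$ for all such $\varphi$. This is immediate from the definitions: $-\langle\langle u, \partial_t\varphi\rangle\rangle = \int_0^\infty -\langle u(t),\partial_t\varphi(t)\rangle_{H,1}\,\mathrm dt$ by the $J_1$ identification, and the form term $\int_0^\infty B_t(u(t),\varphi(t))\,\mathrm dt = \langle\langle \mathcal{B}u,\varphi\rangle\rangle$ while $\int_0^\infty \langle g(t),S^\beta\varphi(t)\rangle_H\,\mathrm dt = \langle\langle S^\beta g,\varphi\rangle\rangle$. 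So the only real content is the bookkeeping on which $L^p$-in-time, $D_{S,\alpha}$-in-space space each term inhabits, after which Corollary \ref{corenergy} does all the work; there is no genuine obstacle. Finally, I would note that the $C_0$ conclusion, which Corollary \ref{corenergy} delivers, combined with density of $E_{-\infty}$ in $H$, shows the a priori continuity alluded to in the remark preceding the proposition and in particular that the weak initial condition (ii) can be read as $u(0) = a$ in $H$.
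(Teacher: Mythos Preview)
Your proof is correct and follows essentially the same approach as the paper: identify $\partial_t u = -\mathcal{B}u + S^\beta g$ with $-\mathcal{B}u \in L^2((0,\infty);D_{S,-1})$ and $S^\beta g \in L^{\rho'}((0,\infty);D_{S,-\beta})$, then invoke Corollary \ref{corenergy}. You supply more detail on the pairings than the paper does, but the argument is the same.
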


    \begin{proof}
        We assume $u \in L^2((0,\infty); D_{S,1})$ and the equation implies that $\partial_tu \in L^2((0,\infty); D_{S,-1})+ L^{\rho'}((0,\infty);D_{S,-\beta}).$
        We may apply  Corollary \ref{corenergy}.
    \end{proof}

The main result of this section is the following theorem which puts together all the theory developed so far. 

\begin{thm}\label{ThmCauchy homog} Consider the above assumptions on $B_t$, $\rho, \beta$ and $g, a$.
\begin{enumerate}
   \item There exists a unique weak solution $u$ to the problem \eqref{Pb Cauchy homogène}. Moreover, 
    $u \in C_0([0,\infty);H)\cap L^r((0,\infty);H)$ for any $r \in (2,\infty)$ with $\alpha={2}/{r}$ $($if  $\rho<\infty$, then u is also the restriction to $(0,\infty)$ of an element in $V_{-\beta})$
        and
        $$ \sup_{t\in [0,\infty)} \| u(t)  \|_{H}+\left \| u \right \|_{L^2((0,\infty);D_{S,1})}+\left \| u \right \|_{L^r((0,\infty);D_{S,\alpha})} \leq C \left (  
        \left \| g \right \|_{L^{\rho'}(\mathbb{R};H)}+ \left \| a \right \|_H \right ),$$
where $C=C(M,\nu, \rho)>0$ is a constant independent of $g, a$.
        \item There exists a unique fundamental solution $\Gamma=\{\Gamma(t,s)\}_{0\leq s \leq t <\infty}$ for $\partial_t+\mathcal{B}$ in the sense of Definition \ref{FS} in $(0,\infty)$. In particular, for all $t \in [0,\infty)$, we have the following representation of $u$ :   
        \begin{align}\label{eq:repCauchy0infty}
            u(t)=\Gamma(t,0)a+ \int_{0}^{t} \Gamma(t,s)S^\beta g(s)\ \mathrm ds, 
        \end{align}
         where the integral is weakly defined in $H$ when $\rho<\infty$ and strongly defined in $H$ when $\rho=\infty$ (i.e.,  in the Bochner sense). More precisely, for all $\Tilde{a} \in H$ and $t \in [0,\infty)$, we have equality with  absolutely converging integral
        \begin{align}\label{faible}
            \langle u(t) , \Tilde{a}\rangle_H = \langle \Gamma(t,0)a , \Tilde{a}\rangle_H 
            + \int_{0}^{t} \langle g(s) , S^\beta \Tilde{\Gamma}(s,t)\Tilde{a}\rangle_{H}\  \mathrm ds. 
        \end{align} 
    \end{enumerate}
\end{thm}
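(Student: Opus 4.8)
The theorem assembles three ingredients already prepared in the text: the Cauchy problem on $(0,\infty)$ is solved by restriction from the equation on $\mathbb{R}$; uniqueness comes from the energy equality; and the representation with $\Gamma$ is transported from Theorem \ref{ThmGreenReprese}. I will treat parts (1) and (2) in that order.

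\textbf{Part (1): existence.} The idea is to reduce to the equation on the whole line. Extend $g$ by zero to $(-\infty,0)$, keeping its $L^{\rho'}(\mathbb{R};H)$ norm, and put $\mu = \delta_0\otimes a + S^\beta g\in \mathcal{M}(\mathbb{R};H) + W_{-\beta} + L^1(\mathbb{R};H)$ (more precisely: $\delta_0\otimes a$ is a bounded $H$-measure supported at $0$, and $S^\beta g\in W_{-\beta}$ if $\rho<\infty$, while $S^\beta g\in \mathcal D'(\mathbb R;E_\infty)$ coming from $g\in L^1$ if $\rho=\infty$ after noting $L^1(\mathbb R;D_{S,-\beta})\hookrightarrow L^2(\mathbb R;D_{S,-1})+W_0$ via Proposition \ref{prop:embed r>0}). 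Apply the existence results: Corollary \ref{CorRadon} for the $\delta_0\otimes a$ part and Proposition \ref{prop:W-beta} (resp. Proposition \ref{prop:L1}) for the $S^\beta g$ part, then add by linearity, obtaining $v\in L^2(\mathbb{R};D_{S,1})\cap C_0(\mathbb{R}\setminus\{0\};H)$ solving $\partial_t v+\mathcal Bv=\mu$ on $\mathbb{R}$, with $v=0$ on $(-\infty,0)$ by the uniqueness/vanishing statement of Corollary \ref{CorRadon} (note $\mathrm{supp}(\mu)\subset[0,\infty)$), and $\lim_{t\to 0^+}v(t)=a$ in $H$ from the $\delta_0\otimes a$ component (the $W_{-\beta}$ or $L^1$ component is continuous across $0$ with value $0$ there since $g$ was extended by zero). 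Set $u=v|_{(0,\infty)}$. Then $u$ solves (i) on $(0,\infty)$ and satisfies (ii) with the strong limit $u(t)\to a$, hence is a weak solution. Regularity and the quantitative bound follow from Proposition \ref{prop:weaksolCP} together with the mixed-norm embedding Proposition \ref{prop:embed r>0} applied on $(0,\infty)$: the energy equality gives $\sup_t\|u(t)\|_H + \|u\|_{L^2((0,\infty);D_{S,1})}\le C(\|g\|_{L^{\rho'}}+\|a\|_H)$ (Young's inequality absorbing the $\langle g,S^\beta u\rangle$ term), and then interpolating between $L^2(D_{S,1})$ and $L^\infty(H)$ yields the $L^r(D_{S,\alpha})$ bound.

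\textbf{Part (1): uniqueness.} If $u_1,u_2$ are two weak solutions, $w=u_1-u_2\in L^2((0,\infty);D_{S,1})$ solves $\partial_t w+\mathcal Bw=0$ on $(0,\infty)$ with $w(t)\to 0$ weakly in $E_\infty$ as $t\to 0$. By Corollary \ref{corenergy} (applicable since $\partial_tw=-\mathcal Bw\in L^2((0,\infty);D_{S,-1})$), $w\in C_0([0,\infty);H)$ and $t\mapsto\|w(t)\|_H^2$ is absolutely continuous with $\|w(\tau)\|_H^2-\|w(\sigma)\|_H^2 = -2\mathrm{Re}\int_\sigma^\tau B_t(w(t),w(t))\,\mathrm dt\le 0$; the \emph{a priori} continuity forces the weak limit at $0$ to be the strong limit, so $w(0)=0$, hence $\|w(\tau)\|_H^2\le\|w(0)\|_H^2=0$ for all $\tau$, i.e. $w=0$.

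\textbf{Part (2): the fundamental solution and representation.} Restrict the family of Green operators $(\Gamma(t,s))$ built on $\mathbb R$ to $0\le s\le t<\infty$. Properties (1),(2),(3) of Definition \ref{FS} adapted to $(0,\infty)$ are immediate from Corollary \ref{Cor Green} and the separate continuity of $(t,s)\mapsto\langle\Gamma(t,s)a,\tilde a\rangle_H$. For property (4), given $\phi\in\mathcal D((0,\infty))$ and $a\in E_{-\infty}$, extend $\phi$ by zero to $\mathbb R$ and apply \eqref{GreenLemma} from Theorem \ref{thm:identification}: the solution on $\mathbb R$ restricted to $(0,\infty)$ is the weak solution of the restricted problem (its restriction to $(-\infty,0)$ vanishes since $\mathrm{supp}(\phi\otimes a)\subset(0,\infty)$), and the integral $\int_{-\infty}^t$ reduces to $\int_0^t$. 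Uniqueness of the fundamental solution on $(0,\infty)$ follows by the \emph{verbatim} argument of Lemma \ref{Unicité sol fonda}. Finally \eqref{eq:repCauchy0infty} and \eqref{faible}: decompose $u=u_1+u_2$ with $u_1$ the solution for source $\delta_0\otimes a$ and $u_2$ for source $S^\beta g$ on $(0,\infty)$; by construction and the $G=\Gamma$ identification, $u_1(t)=\Gamma(t,0)a$, while for $u_2$ one applies the $u_2$ part of the proof of Theorem \ref{ThmGreenReprese} verbatim on $(0,\infty)$ — the absolute convergence of $\int_0^t\langle g(s),S^\beta\tilde\Gamma(s,t)\tilde a\rangle_H\,\mathrm ds$ uses Cauchy--Schwarz in $H$ and Hölder in $s$ against the $L^r((-\infty,t);D_{S,\alpha})$ bound for $\tilde\Gamma(s,t)$ from Corollary \ref{Cor Green}, and density of $\mathrm{span}(\mathcal D\otimes E_{-\infty})$ in $L^{\rho'}$ together with density of $E_{-\infty}$ in $H$ upgrades to all $g$ and all $\tilde a\in H$; for $\rho=\infty$ the convergence is strong (Bochner) by the same bound with $r=2$.

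\textbf{Main obstacle.} The delicate point is the matching of the two meanings of the initial condition at $t=0$: showing that the restriction procedure genuinely produces $\lim_{t\to0^+}u(t)=a$ in $H$ (not merely weakly) and, conversely, that \emph{any} weak solution satisfying only the weak convergence (ii) is forced into $C_0([0,\infty);H)$ with strong limit $a$ so that the energy equality can be used for uniqueness. This hinges on invoking Corollary \ref{corenergy} to upgrade regularity and then reconciling the zero-extension of $g$ with the jump carried by $\delta_0\otimes a$; once that bookkeeping is done, everything else is assembling already-proved statements.
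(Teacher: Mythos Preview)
Your proposal is correct and follows essentially the same route as the paper: reduce existence to the full-line problem via zero-extension of $g$ and a Dirac source $\delta_0\otimes a$, invoke Corollary \ref{CorRadon} together with Proposition \ref{prop:W-beta} or \ref{prop:L1}, restrict to $(0,\infty)$, and then derive uniqueness from the energy equality and the representation from Theorem \ref{ThmGreenReprese}. One bookkeeping point you gloss over: since in this subsection $(B_t)$ is only assumed defined for $t\in(0,\infty)$, you must first extend it to all of $\mathbb{R}$ (the paper sets $B_t=\nu\langle S\cdot,S\cdot\rangle_H$ for $t\le 0$) before the full-line results apply; this is harmless but should be said. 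Also, the paper reads off the quantitative estimates directly from the $\mathbb{R}$-bounds on $\tilde u$ rather than re-deriving them from the energy equality on $(0,\infty)$ as you do---both work, yours is slightly longer.
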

\begin{proof}
We start with the existence of such a solution. We extend  $g$ by $0$ on $(-\infty,0]$ and keep the same notation for the extensions. We also extend the family $({B}_t)_t$ to $\mathbb{R}$ by setting ${B}_t=\nu\langle S\cdot, S\cdot \rangle_H$ on $\mathbb{R}\setminus(0,\infty)$ and we keep calling ${\mathcal{B}}$ the operator associated to this family.

Using Proposition \ref{prop:W-beta} when $\rho<\infty$ or Proposition \ref{prop:L1} when $\rho=\infty$ and Corollary \ref{CorRadon}, there exists a unique $ \Tilde{u} \in L^2(\mathbb{R};D_{S,1})$ solution of the equation
\begin{equation*}
    \partial_t \Tilde{u} + \mathcal{B}\Tilde{u} = \delta_0 \otimes a + S^\beta g \ \ \mathrm{in} \ \mathcal{D}'(\mathbb{R};E_\infty).
\end{equation*}
Moreover, $\Tilde{u} \in C_0(\mathbb{R}\setminus\left \{ 0 \right \};H)$, $\Tilde{u}=0$ on $(-\infty,0)$ with $\lim_{t \to 0^+}\Tilde{u}(t)=a$ in $H$ and  if $\rho <\infty$ then the restriction to $(0,\infty)$ of $\Tilde{u}$ is an element in $V_{-\beta}$. Furthermore, we have $\Tilde{u} \in L^r(\mathbb{R};D_{S,\alpha})$ for any $r \in (2,\infty)$ with $\alpha={2}/{r}$ by Proposition \ref{prop:embed r>0} and we have the  estimate
$$ \left \| \Tilde{u} \right \|_{L^\infty(\mathbb{R};H)}+\left \| \Tilde{u} \right \|_{L^2(\mathbb{R};D_{S,1})}+\left \| \Tilde{u} \right \|_{L^r(\mathbb{R};D_{S,\alpha})} \leq C(M,\nu,\rho) \left ( 
\| S^\beta g  \|_{L^{\rho'}(\mathbb{R};H)}+ \left \| a \right \|_H \right ).$$
In addition,  \eqref{WeaksensGreen} in Theorem \ref{ThmGreenReprese} implies \eqref{faible} and \eqref{eq:repCauchy0infty} for $\Tilde{u}(t)$ for all $t\in \mathbb{R}$ with the fundamental solution defined on $\mathbb{R}$.
The candidate $u:= \Tilde{u}_{\scriptscriptstyle{\vert (0,\infty)}}$ satisfies all the required properties of the theorem, proving existence and representation.  

Next, we check uniqueness in the space $L^2((0,\infty);D_{S,1})$. We assume that $u$ is a solution to \eqref{Pb Cauchy homogène} with $a=0$, $f=0$ and $g=0$. By 
Proposition \ref{prop:weaksolCP},
\begin{align*}
          2\mathrm{Re}\int_{0}^{\infty} B_t(u(t),u(t)) \  \mathrm d t = 0.
\end{align*}
Using the coercivity of $B_t$,  we deduce that $u=0$ on $(0,\infty)$. Definition, existence and uniqueness of the fundamental solution in $(0,\infty)$ is \textit{verbatim} as in Section \ref{sec:FS}. 
\end{proof}
\begin{rem}\label{Cauchy Pb homo}
    Uniqueness in the previous proof does not work if we are working on a bounded interval $(0,\mathfrak{T})$ because Corollary \ref{corenergy} fails in this case.
\end{rem}

\begin{rem}
    Of course, by linearity, we can replace $S^\beta g$ by a linear combination of terms  in $L^{\rho'}((0,\infty);D_{S,-\beta})$ for different $\rho$.
\end{rem}

 \section{Inhomogeneous version}\label{sec:Inhomogenous} 
One would like to treat parabolic operators with  elliptic part being ${\mathcal{B}}$ plus lower order terms allowing $T$ to be not injective (e.g., differential operators with Neumann boundary conditions). Here is a setup for doing this effortlessly given the earlier developments.
\subsection{Setup for the inhomogeneous theory}\label{sec:inhomogenouessetup}
As before, consider $T$ and $S=({T}^*{T})^{1/2}$ without assuming that $T$ is injective. One can still define a Borel functional calculus associated to $S$ as in Subsection \ref{Section: calcul bor} by replacing $(0,\infty)$ by $[0,\infty)$. In  the right hand side of the Calder\'on reproducing formula  \eqref{eq:Calderon}, $v$ is replaced by its orthogonal projection onto  $\overline{\mathrm{ran}(S)}$. The most important fact is that for any $\alpha \ge 0$, we can still define $S^\alpha$ as the closed operator $\mathbf{t^{\alpha}}(S)$, which is also positive and self-adjoint but not necessarily injective, having the same null space as $S$.

Let $\Tilde{T}: D(\Tilde{T})= D(T) \rightarrow H\oplus K$ the operator defined by $\Tilde{T}u:=(\lambda u,Tu)$ where $\lambda \in \mathbb{R}_+$. Assume  $\lambda>0$. Then $\Tilde{T}$ is injective and  $\Tilde{S}_\lambda=(\Tilde{T}^*\Tilde{T})^{1/2}=(\lambda^2+S^2)^{{1}/{2}}$  is a  self-adjoint, positive and invertible  operator on $H$, with domain
$D(\Tilde{S}_\lambda)=D(\Tilde{T})=D(T)=D(S)$.

Using that $\Tilde{S}_\lambda =(\lambda^2+S^2)^{{1}/{2}}$, we have that for $\lambda> 0$ and    $\alpha \ge 0$,
\begin{equation*}
    D_{\Tilde{S}_\lambda,\alpha}= D(S^\alpha) \ , \ \ \|  \cdot \|_{\Tilde{S}_\lambda,\alpha} \simeq \|  \cdot \|_{S,\alpha} + \| \cdot \|_H.
\end{equation*}
For $\alpha<0$, we know that the sesquilinear form $(u,v) \mapsto \langle \Tilde{S}_\lambda^\alpha u, \Tilde{S}_\lambda^{-\alpha} v \rangle_H$ defines a canonical duality pairing between $D_{\Tilde{S}_\lambda,\alpha}$ and $D_{\Tilde{S}_\lambda,-\alpha}$. Therefore, for any $u \in D_{\Tilde{S}_\lambda,\alpha}$, there exists $(w,\Tilde{w}) \in H^2$ such that
 $$\langle \Tilde{S}_\lambda^\alpha u, \Tilde{S}_\lambda^{-\alpha} v \rangle_H = \langle w, S^{-\alpha} v \rangle_H+ \langle \Tilde{w},  v \rangle_H,$$
for all $v \in D(S^{-\alpha})$. In this sense, we write $D_{\Tilde{S}_\lambda,\alpha}=S^{-\alpha}H+H$ with norm equivalent to the quotient norm 
$$\|  u \|_{\Tilde{S}_\lambda,\alpha} \simeq \inf_{u=S^{-\alpha}w+\Tilde{w}}(\| w \|_H+\| \Tilde{w} \|_H).$$
  From now on and as before, we set $\Tilde{S}:=\Tilde{S}_1=(1+S^2)^{1/2}$. In conclusion, the ``inhomogeneous" fractional spaces for $S$ become the ``homogeneous" fractional spaces for $\Tilde{S}$, so that  applying the above theory with $\Tilde{S}$ leads to the inhomogeneous theory for $S$ (even if $S$ is non injective).

Finally, we set 
$$ \Tilde{E}_{-\infty}:= \bigcap_{\alpha \in \mathbb{R}} D(\Tilde{S}^{\alpha }) .$$

\subsection{Embeddings and Integral identities}
We begin by noting that Proposition \ref{prop:embed r>0} holds \textit{verbatim} with the same proof, even if $S$ is not necessarily injective. As for continuity and integral identities, we have to modify the statement as follows.
\begin{prop}\label{prop:energyboundedintervalinhomo}
Let $\mathfrak{T} \in (0, \infty]$. Let $u \in L^1((0, \mathfrak{T}); D(S))$ with  $
\int_0^{\mathfrak{T}} \| S u(t) \|_H^2 \, \mathrm{d}t < \infty$
if $\mathfrak{T} < \infty$, or $u \in L^1((0, \mathfrak{T}'); D(S))$ for all $\mathfrak{T}' < \infty$,
with $\int_0^{\infty} \| S u(t) \|_H^2 \, \mathrm{d}t < \infty$ if $\mathfrak{T} = \infty$. Assume that $ \partial_t u = S f + S^\beta g $ in $\mathcal{D}'((0, \mathfrak{T}); \Tilde{E}_{\infty}) $, where $f \in L^2((0, \mathfrak{T}); H)$ and $g \in L^{\rho'}((0, \mathfrak{T}); H)$, with ${\beta} = {2}/{\rho} \in [0, 1)$. When $\mathfrak{T} < \infty$, then $u \in C([0, \mathfrak{T}]; H)$, and the function $t \mapsto \|u(t)\|_H^2$ is absolutely continuous on $[0, \mathfrak{T}]$. For all $\sigma, \tau \in [0, \mathfrak{T}]$ such that $\sigma < \tau$, the following integral identity holds:
\begin{align*}
    \| u(\tau) \|_H^2 - \| u(\sigma) \|_H^2 = 2 \, \mathrm{Re} \int_{\sigma}^{\tau} \langle f(t), S u(t) \rangle_H \, \mathrm{d}t
    + \int_{\sigma}^{\tau} \langle g(t), S^\beta u(t) \rangle_H \, \mathrm{d}t.
\end{align*}
If $\mathfrak{T} = \infty$, then the same conclusion holds on any bounded interval, and $u$ is bounded in $H$.
\end{prop}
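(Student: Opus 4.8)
The strategy is to reduce this inhomogeneous statement for the non-injective operator $S$ to the homogeneous results already established (in particular Corollary \ref{corenergybounded} and Corollary \ref{corenergy}) applied to the injective, invertible operator $\Tilde{S}=(1+S^2)^{1/2}$. The key observation, recorded in Section \ref{sec:inhomogenouessetup}, is that the inhomogeneous fractional scale for $S$ coincides with the homogeneous fractional scale for $\Tilde{S}$: one has $D_{\Tilde{S},\alpha}=D(S^\alpha)$ with equivalent norms for $\alpha\ge 0$, $D_{\Tilde{S},1}$ carries the norm $\|Su\|_H+\|u\|_H\simeq\|\Tilde{S}u\|_H$, and for $\alpha<0$, $D_{\Tilde{S},\alpha}=S^{-\alpha}H+H$ with the quotient norm. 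Since $\Tilde{S}$ is injective and everything in Sections \ref{Section 2}--\ref{Section 4} applies to it, the point is simply to rewrite the hypotheses and the conclusion in $\Tilde{S}$-language.

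First I would show that the hypothesis $u\in L^1((0,\mathfrak{T});D(S))$ with $Su\in L^2((0,\mathfrak{T});H)$ is exactly $u\in L^2((0,\mathfrak{T});D_{\Tilde{S},1})$ (using the bounded interval to absorb the $\|u\|_H$ term: $u\in L^1\cap$ ``$Su\in L^2$'' gives $\|u\|_H\in L^2$ via the $L^\infty$ bound obtained below, or more directly one first argues qualitatively) — actually the cleanest route is to first establish the $L^1$-type hypothesis is what is needed for Corollary \ref{corenergybounded}, where the extra $L^1(I;H)$ assumption on $u$ appears precisely to handle bounded intervals. Next, I would rewrite the source: $Sf=\Tilde{S}\cdot(\Tilde{S}^{-1}Sf)$ and since $\Tilde{S}^{-1}S$ is bounded on $H$, the term $Sf$ lies in $\Tilde{S}L^2((0,\mathfrak{T});H)=L^2((0,\mathfrak{T});D_{\Tilde{S},-1})$; similarly $S^\beta g=\Tilde{S}^\beta\cdot(\Tilde{S}^{-\beta}S^\beta)g$ with $\Tilde{S}^{-\beta}S^\beta$ bounded on $H$, so $S^\beta g\in \Tilde{S}^\beta L^{\rho'}((0,\mathfrak{T});H)=L^{\rho'}((0,\mathfrak{T});D_{\Tilde{S},-\beta})$. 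Thus $\partial_t u=\tilde f+\tilde g$ with $\tilde f\in L^2((0,\mathfrak{T});D_{\Tilde{S},-1})$ and $\tilde g\in L^{\rho'}((0,\mathfrak{T});D_{\Tilde{S},-\beta})$, $\beta=2/\rho\in[0,1)$, which is exactly the hypothesis of Corollary \ref{corenergybounded} (bounded case) or Corollary \ref{corenergy} (half-line case), applied to $\Tilde{S}$.

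Applying those corollaries then yields $u\in C(\overline{I};H)$, absolute continuity of $t\mapsto\|u(t)\|_H^2$, and the identity
\begin{align*}
\|u(\tau)\|_H^2-\|u(\sigma)\|_H^2 = 2\mathrm{Re}\int_\sigma^\tau \langle \tilde f(t),u(t)\rangle_{H,-1}+\langle\tilde g(t),u(t)\rangle_{H,-\beta}\ \mathrm dt.
\end{align*}
The final step is to identify the pairings: by definition of $\langle\cdot,\cdot\rangle_{H,\alpha}$ via the duality $D_{\Tilde{S},\alpha}\times D_{\Tilde{S},-\alpha}$, and since $\tilde f=Sf$ (as elements of $D_{\Tilde{S},-1}$) and $u\in D_{\Tilde{S},1}=D(S)$, one has $\langle\tilde f(t),u(t)\rangle_{H,-1}=\langle Sf(t),u(t)\rangle=\langle f(t),Su(t)\rangle_H$ wherever the expressions make sense, using that $E_{-\infty}$-approximation reduces everything to genuine $H$-inner products; likewise $\langle\tilde g(t),u(t)\rangle_{H,-\beta}=\langle g(t),S^\beta u(t)\rangle_H$. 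This recovers the stated identity. For $\mathfrak{T}=\infty$, one restricts to each bounded subinterval and uses Corollary \ref{corenergy} on the half-line $(0,\infty)$ for $\Tilde{S}$ — note $\Tilde{S}$ is invertible so one cannot invoke the refined decay-at-infinity statements, but the boundedness of $u$ in $H$ follows from the $L^\infty$ bound in the half-line corollary (the analogue of \eqref{important existence cas L1}), since $Su\in L^2((0,\infty);H)$ is globally controlled. The main obstacle is purely bookkeeping: verifying that the duality brackets $\langle\cdot,\cdot\rangle_{H,\alpha}$ for $\Tilde{S}$ really do collapse to the claimed $H$-pairings $\langle f,Su\rangle_H$ and $\langle g,S^\beta u\rangle_H$ — this needs the compatibility of the various identifications in Lemmas \ref{Interpolation Jalpha bien définie}--\ref{lem: Jalpha bien définie} together with the density of $\Tilde{E}_{-\infty}$, but involves no new idea.
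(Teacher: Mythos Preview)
Your reduction to $\Tilde{S}$ has a real gap at the very first step. To apply Corollary \ref{corenergybounded} (or Corollary \ref{corenergy}) with $\Tilde{S}$ in place of $S$, you need $u\in L^2(I;D_{\Tilde{S},1})$. But $\Tilde{S}=(1+S^2)^{1/2}$ is invertible, so $D_{\Tilde{S},1}=D(S)$ equipped with the norm $\|\Tilde{S}\cdot\|_H\simeq\|\cdot\|_H+\|S\cdot\|_H$; hence $u\in L^2(I;D_{\Tilde{S},1})$ forces $u\in L^2(I;H)$. The proposition only assumes $u\in L^1(I;H)$ (or $L^1_{\mathrm{loc}}$ when $\mathfrak{T}=\infty$), and that weakening from $L^2$ to $L^1$ is precisely the content of the statement. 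You notice the mismatch yourself, but the two proposed fixes do not work: invoking ``the $L^\infty$ bound obtained below'' is circular, and ``one first argues qualitatively'' is not an argument. The same issue recurs for $\mathfrak{T}=\infty$: Corollary \ref{corenergy} with $\Tilde{S}$ would need $u\in L^2((0,\infty);H)$ globally, which is nowhere assumed.

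The paper's proof sidesteps this by a different decomposition. It first rewrites $Sf+S^\beta g=S\Tilde{f}+h$ with $\Tilde{f}\in L^2(I;H)$ and $h\in L^1(I;H)$ via Proposition \ref{prop:embed r>0}, then splits orthogonally $H=\overline{\mathrm{ran}(S)}\oplus\mathrm{nul}(S)$ and writes $u=u_1+u_2$ accordingly. The null-space component satisfies $\partial_t u_2=h_2\in L^1$, so $u_2\in W^{1,1}\hookrightarrow C$ directly. The range component is handled by Corollary \ref{corenergybounded} applied to the \emph{restricted} operator $S_1:=S|_{\overline{\mathrm{ran}(S)}}$, which is injective but \emph{not} invertible: here $D_{S_1,1}$ is a genuine homogeneous completion, so $u_1\in L^2(I;D_{S_1,1})$ requires only $Su_1=Su\in L^2(I;H)$, which is given. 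The $L^1(I;H)$ hypothesis on $u$ then enters only as the side condition already present in Corollary \ref{corenergybounded}, exactly where it belongs.
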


\begin{proof}
Using Proposition \ref{prop:embed r>0}, we can express $S f + S^\beta g = S \Tilde{f} + h$, with $\Tilde{f} \in L^2((0, \mathfrak{T}); H)$ and $h \in L^1((0, \mathfrak{T}); H)$.

We start with the case $\mathfrak{T} < \infty$. Consider the orthogonal decomposition $H = \overline{\mathrm{ran}(S)} \oplus \mathrm{nul}(S)$, and write $u = u_1 + u_2$, where $u_1 \in L^1((0, \mathfrak{T}); \overline{\mathrm{ran}(S)})$ satisfies $\int_0^{\mathfrak{T}} \| S u_1(t) \|_H^2 \, \mathrm{d}t < \infty,$
and $u_2 \in L^1((0, \mathfrak{T}); \mathrm{nul}(S))$. Similarly, we decompose $\Tilde{f} = \Tilde{f}_1 + \Tilde{f}_2$ and $h = h_1 + h_2$. We have $\partial_t u_1 = S \Tilde{f}_1 + h_1$ and $ \partial_t u_2 = h_2$ where both equalities hold in $ \mathcal{D}'((0, \mathfrak{T}); \tilde{E}_{\infty})$. We obtain that $\partial_t u_2 \in L^1((0, \mathfrak{T}); \mathrm{nul}(S))$, hence $u_2 \in W^{1,1}((0, \mathfrak{T}); \mathrm{nul}(S)) \hookrightarrow C([0, \mathfrak{T}]; \mathrm{nul}(S))$. Using Corollary \ref{corenergybounded} with $S_{\vert \overline{\mathrm{ran}(S)}}$ which is injective, we conclude that $u_1 \in C([0, \mathfrak{T}]; \overline{\mathrm{ran}(S)})$. Finally, we obtain the energy equality using orthogonality.

When $\mathfrak{T} = \infty$, the conclusion  is already valid on $[0,\infty)$.  To see the behavior at $\infty$, we can use the same decomposition and $u_{1}\in C_{0}([0, \mathfrak{T}]; \overline{\mathrm{ran}(S)})$ from Corollary \ref{corenergy}. As for $u_{2}$ we have by direct integration, that for all $t\ge 0$, $\|u_{2}(t)\|_{H} \le \inf_{\tau\ge 0}\|u_{2}(\tau)\|_{H}+ \| h_{2} \|_{L^1((0, \infty); \mathrm{nul}(S))}$.
\end{proof}

\subsection{The Cauchy problem}\label{sec:inhomogenousCP}
In this section, we are interested in the Cauchy problem on  segments and half-lines, in a non-homogeneous manner. Recall that $\Tilde{S}_\lambda=(\lambda^2+S^2)^{{1}/{2}}$ with $D(\Tilde{S}_\lambda)=D({T})$ and we assume $\lambda\ge 0$ for the moment. Let  $0<\mathfrak{T}\le \infty$. First, let us consider $(\Tilde{B}_t)_{t \in (0,\mathfrak{T})}$ a weakly measurable family of bounded sesquilinear forms on $D({T})\times D({T})$. More precisely, we assume that 
\begin{equation}\label{eq:unifbdd}
     |\Tilde{B}_t(u,v) |\leq  M \| \Tilde{S}_\lambda u \|_H \| \Tilde{S}_\lambda v\|_H,
\end{equation}

for some $M>0$ and for all $u,v \in D({T})$ and $t \in (0,\mathfrak{T})$.
In addition, we assume that the family $(\Tilde{B}_t+\kappa)_{t \in (0,\mathfrak{T})}$ is uniformly coercive for some $\kappa \in \mathbb{R}_+$, \textit{i.e.}, 
\begin{equation}\label{eq:unifcoercive}
     \mathrm{Re}(\Tilde{B}_t(v,v))+\kappa \left\|v \right\|_H^2 \geq \nu  \|\Tilde{S}_\lambda v \|_H^2,
\end{equation}
for some $\nu>0$ and for all $v \in D(T)$ and $t \in (0,\mathfrak{T})$. Notice that $\Tilde{B}_t+\kappa$ satisfies the lower bound in \eqref{EllipticityAbstract} with  $\Tilde{S}$ replacing $S$ and the upper bound with $M+\kappa$ on $(0,\mathfrak{T})$. We denote by $\Tilde{\mathcal{B}}$ the  operator associated to the family $(\Tilde{B}_t)_{t \in (0,\mathfrak{T})}$.
One may represent  $\Tilde{B}_t$ as $ \Tilde{T}^*\Tilde{A}(t)\Tilde{T}$, where $\Tilde{A}(t)$ is bounded on $H\oplus \overline{\mathrm{ran(T)}}$.
If we decide to represent $\Tilde{A}(t)$ in $2\times 2$ matrix form, then $\Tilde{\mathcal{B}}$ writes as ${\mathcal{B}}$ plus lower order terms with bounded operator-valued coefficients.

{On segments, say $(0, \mathfrak{T})$, we can consider the Cauchy problem for all possible values of $\lambda \geq 0$ and $\kappa \geq 0$. On half-lines, say $(0, \infty)$, we restrict the range of the parameters. This leads to the following cases (we will not attempt to track $\lambda$ and $\kappa$ quantitatively).}
\begin{enumerate}
    \item [(a)]  $\mathfrak{T}<\infty$. 
    \item[(b)] $\mathfrak{T}=\infty$, $\lambda>0$ and $\kappa=0$. 
\end{enumerate}
See Remark \ref{rem : cas dégénéré} and  Remark \ref{rem:tinfty,k>0}  for more when  $\mathfrak{T}=\infty$.

We fix $\rho \in [2,\infty]$ and set $\beta={2}/{\rho}$. Given an initial condition $a\in H$ and 
$g\in L^{\rho'}((0,\mathfrak{T});H)$, we wish to solve  the Cauchy problem

\begin{align}\label{Pb Cauchy inhomogène}
\left\{
    \begin{array}{ll}
         \partial_t u +\Tilde{\mathcal{B}}u = 
        \Tilde{S}^\beta {g}    \ \ \mathrm{in} \ \mathcal{D}'((0,\mathfrak{T});\Tilde{E}_{\infty}), \\
         u(0)=a \ \ \mathrm{in} \ \Tilde{E}_{\infty}.
    \end{array}
\right.
\end{align}

Recall that $\Tilde{S}^\beta {g}$ can be written as ${S}^\beta {g_1}+ {g_2}$, with $g_i\in L^{\rho'}((0,\mathfrak{T});H)$, $i=1,2$.

\begin{defn} 
A weak solution to  \eqref{Pb Cauchy inhomogène} is a function $u \in L^1((0,\mathfrak{T});D(S))$ with $\int_{0}^{\mathfrak{T}} \left\|Su(t) \right\|_H^2 \mathrm d t <\infty$ if $\mathfrak{T}<\infty$ or $u \in L^1((0,\mathfrak{T}'); D(S))$ for all $\mathfrak{T'}<\infty$ with $\int_{0}^{\infty} \left\|Su(t) \right\|_H^2 \ \mathrm d t <\infty$ if $\mathfrak{T}=\infty$
and such that
\begin{enumerate}
        \item [(i)] $u$ solves the first equation in $\mathcal{D}'((0,\mathfrak{T});\Tilde{E}_{\infty})$, that is, for all $\varphi \in \mathcal{D}((0,\mathfrak{T});\Tilde{E}_{-\infty})$
    \begin{align*}
         \int_{0}^{\mathfrak{T}} -\langle u(t), \partial_t{\varphi }(t)\rangle_H + \Tilde{B}_t(u(t),\varphi(t))\ \mathrm d t 
        = \int_{0}^{\mathfrak{T}} 
        \langle {g}(t), \Tilde{S}^\beta \varphi(t)\rangle_{H} \ \mathrm d t.
    \end{align*}
    
    \item[(ii)] $
        \forall \Tilde{a} \in \Tilde{E}_{-\infty}, \ \langle u(t),\Tilde{a} \rangle_H \rightarrow \langle  a, \Tilde{a}\rangle_H
   $ along a sequence tending to 0.
\end{enumerate}

\end{defn}

The difference with the homogeneous situation is the global or local $L^1(H)$ condition.
        
Again, a weaker formulation testing against functions $\varphi \in \mathcal{D}([0,\mathfrak{T});\Tilde{E}_{-\infty})$
with right hand side containing the additional term $\langle a, \varphi(0) \rangle_{H}$ can be considered. In the end it amounts to the same solutions thanks to \textit{a priori} continuity in $H$,  which only uses the upper bound on $\Tilde{B}_t$.

\begin{lem}\label{lem:weaksolCPlocal} In case (a), any weak solution to (i) belongs to $C([0,\mathfrak{T}]; H)$, and $t\mapsto\|u(t)\|_H^2$ satisfies the energy equality
for any  $ \sigma, \tau \in [0,\mathfrak{T}]$ such that $  \sigma < \tau$, 
    \begin{align*}
         \left \| u(\tau) \right \|^2_H + 2\mathrm{Re}\int_{\sigma}^{\tau} \Tilde{B}_t(u(t),u(t))\  \mathrm d t
        = \left \| u(\sigma) \right \|^2_H +  2\mathrm{Re}\int_{\sigma}^{\tau}
        \langle {g}(t), \Tilde{S}^\beta u(t)\rangle_{H}\ \mathrm d t.
    \end{align*}
In case (b), we have the same conclusion on any bounded interval.
\end{lem}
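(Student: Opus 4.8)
The plan is to reduce Lemma \ref{lem:weaksolCPlocal} to the integral identity already established in Proposition \ref{prop:energyboundedintervalinhomo}, which is itself a consequence of Corollary \ref{corenergybounded}. First I would observe that the weak formulation (i) says exactly that $\partial_t u = -\Tilde{\mathcal{B}}u + \Tilde{S}^\beta g$ in $\mathcal{D}'((0,\mathfrak{T});\Tilde{E}_\infty)$. The regularity hypothesis on $u$ in case (a) is that $u \in L^1((0,\mathfrak{T});D(S))$ with $\int_0^{\mathfrak{T}}\|Su(t)\|_H^2\,\mathrm dt < \infty$; since $D(S)=D(\Tilde{S})$ with equivalent graph norms, this says $u \in L^1((0,\mathfrak{T});H)$ and $\Tilde{S}u \in L^2((0,\mathfrak{T});H)$.

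Next I would identify the right-hand side terms in the form required by Proposition \ref{prop:energyboundedintervalinhomo}, namely $\partial_t u = \Tilde{S}F + \Tilde{S}^\beta G$ with $F \in L^2$ and $G \in L^{\rho'}$. The source $\Tilde{S}^\beta g$ is already of the second type with $G = g \in L^{\rho'}((0,\mathfrak{T});H)$. For the elliptic term, the uniform boundedness \eqref{eq:unifbdd} gives $|\Tilde{B}_t(u(t),v)| \le M\|\Tilde{S}_\lambda u(t)\|_H\|\Tilde{S}_\lambda v\|_H$, so that $\Tilde{\mathcal{B}}u \in L^2((0,\mathfrak{T});D_{\Tilde{S}_\lambda,-1})$; using $D_{\Tilde{S}_\lambda,-1}=\Tilde{S}_\lambda^{-1}H$ (equivalently, comparable to $\Tilde{S}^{-1}H$ since $\lambda$ is fixed), we may write $-\Tilde{\mathcal{B}}u = \Tilde{S}F$ with $F \in L^2((0,\mathfrak{T});H)$ and $\|F\|_{L^2} \lesssim \|u\|_{L^2((0,\mathfrak{T});D_{S,1})}$ (the qualitative point that $u$ is also in $L^2$ of the domain follows from $\Tilde{S}u \in L^2$). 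Hence $\partial_t u = \Tilde{S}F + \Tilde{S}^\beta G$ with $F \in L^2$, $G \in L^{\rho'}$, so Proposition \ref{prop:energyboundedintervalinhomo} applies verbatim with $S$ replaced by $\Tilde{S}$: we obtain $u \in C([0,\mathfrak{T}];H)$, absolute continuity of $t\mapsto\|u(t)\|_H^2$, and the identity
\begin{align*}
\|u(\tau)\|_H^2 - \|u(\sigma)\|_H^2 = 2\,\mathrm{Re}\int_\sigma^\tau \langle F(t),\Tilde{S}u(t)\rangle_H\,\mathrm dt + \int_\sigma^\tau \langle G(t),\Tilde{S}^\beta u(t)\rangle_H\,\mathrm dt.
\end{align*}
Finally I would unwind the pairings: $\mathrm{Re}\langle F(t),\Tilde{S}u(t)\rangle_H = \mathrm{Re}\langle \Tilde{S}F(t),u(t)\rangle_{H,\cdot} = -\mathrm{Re}\,\Tilde{B}_t(u(t),u(t))$ by the definition of the duality pairing extending the inner product and the representation of $\Tilde{\mathcal{B}}$, and similarly $\langle G(t),\Tilde{S}^\beta u(t)\rangle_H = \langle g(t),\Tilde{S}^\beta u(t)\rangle_H$; substituting and moving the $\Tilde{B}_t$ term to the left gives precisely the stated energy equality. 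For case (b), one restricts to an arbitrary bounded subinterval $(0,\mathfrak{T}')$, where the hypotheses of case (a) are met (the local $L^1$ condition and $\Tilde{S}u \in L^2$ on all of $(0,\infty)$ restrict appropriately), and the same argument yields the conclusion there.

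The main obstacle I anticipate is purely bookkeeping rather than conceptual: carefully justifying that the coercivity constant $\kappa$ in \eqref{eq:unifcoercive} plays no role — it does not, because the energy identity only uses the \emph{upper} bound \eqref{eq:unifbdd}, exactly as the text remarks after the statement — and checking that the duality pairing $\langle \Tilde{S}F(t),u(t)\rangle$ coincides with $-\Tilde{B}_t(u(t),u(t))$ as elements paired against $u(t) \in D_{\Tilde{S}_\lambda,1}$, which requires knowing that $\Tilde{\mathcal{B}}u \in L^2((0,\mathfrak{T});D_{\Tilde{S}_\lambda,-1})$ is represented by $\Tilde{B}_t(u(t),\cdot)$ pointwise a.e. This identification is immediate from the definition of $\Tilde{\mathcal{B}}$ via the form $\Tilde{B}_t$ and the canonical duality between $D_{\Tilde{S}_\lambda,1}$ and $D_{\Tilde{S}_\lambda,-1}$, so the proof should be short — essentially a one-paragraph invocation of Proposition \ref{prop:energyboundedintervalinhomo}.
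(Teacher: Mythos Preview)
There is a genuine gap. You assert that the hypotheses on a weak solution --- namely $u \in L^1((0,\mathfrak{T});D(S))$ with $\int_0^{\mathfrak{T}}\|Su(t)\|_H^2\,\mathrm dt < \infty$ --- imply $\Tilde{S}u \in L^2((0,\mathfrak{T});H)$. This is false: since $\|\Tilde{S}u(t)\|_H^2 = \|u(t)\|_H^2 + \|Su(t)\|_H^2$, the conclusion would also require $u \in L^2((0,\mathfrak{T});H)$, whereas the definition of weak solution only gives $u \in L^1$. For a concrete obstruction when $S$ has nontrivial kernel, take $u(t) = t^{-1/2}v$ with $v \in \mathrm{nul}(S)\setminus\{0\}$ on $(0,1)$: then $u \in L^1((0,1);D(S))$ and $Su \equiv 0 \in L^2$, yet $\Tilde{S}u = u \notin L^2((0,1);H)$. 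Consequently you cannot invoke Proposition~\ref{prop:energyboundedintervalinhomo} ``with $S$ replaced by $\Tilde{S}$'', and the argument stalls before it starts. The parenthetical remark that ``$u$ is also in $L^2$ of the domain follows from $\Tilde{S}u \in L^2$'' is then circular.

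The paper sidesteps this by applying Proposition~\ref{prop:energyboundedintervalinhomo} with the original operator $S$, for which the standing hypothesis $Su \in L^2$ matches exactly what the definition of weak solution provides. The price is that the right-hand side $-\Tilde{\mathcal{B}}u + \Tilde{S}^\beta g$ must be rewritten in the form $Sf + h$ admissible for that proposition. The paper carries this out for the source term via Proposition~\ref{prop:embed r>0} (applied to $\Tilde{S}$), writing $\Tilde{S}^\beta g = \Tilde{S}\Tilde{f} + \Tilde{h}$ and then expressing this as $Sf + h$ with $f \in L^2$ and $h \in L^1$; the elliptic contribution $-\Tilde{\mathcal{B}}u$ is treated implicitly through the same mechanism. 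Your shortcut of passing wholesale to $\Tilde{S}$ collapses the distinction between the $\|Su\|$-controlled and the $\|u\|$-controlled pieces of the problem, and in doing so imposes an $L^2$-in-time integrability on $u$ that is simply not available a priori.
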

\begin{proof} 
Case (b) follows directly from case (a). To prove the latter, using Proposition \ref{prop:embed r>0} with $\Tilde{S}$ replacing $S$, we can  write  $\Tilde{S}^\beta {g}=\Tilde{S}\Tilde{f}+\Tilde{h}$ with $\Tilde{f} \in L^2((0, \mathfrak{T}); H)$ and $\Tilde{h} \in L^1((0, \mathfrak{T}); H)$. This can be expressed as ${S}{f}+{h}$ with $\Tilde{f} \in L^2((0, \mathfrak{T}); H)$ and $\Tilde{h} \in L^1((0, \mathfrak{T}); H)$. We conclude on using Proposition \ref{prop:energyboundedintervalinhomo}.
\end{proof}

The main result of this section is the following theorem which puts together the inhomogeneous version of all the theory developed so far.
\begin{thm}\label{ThmCauchy inhomog} Consider the above assumptions on $\Tilde{B}_t$, $\lambda,\kappa$, $f, g$ and $a$. 
\begin{enumerate}
        \item There exists a unique weak solution $u$ to the problem \eqref{Pb Cauchy inhomogène}. Moreover,  
      $u \in C([0,\mathfrak{T}];H) \cap L^r((0,\mathfrak{T});D(\Tilde{S}^\alpha))$ for all $r\in [2,\infty]$ with $\alpha=2/r$, with $u(\infty)=0$ in case (b) where $\mathfrak{T}=\infty$, and we have the estimate
        \begin{align*}
            \sup_{t\in [0,\mathfrak{T}]} \| u(t) \|_{H}&+\| \Tilde{S}^\alpha u \|_{L^r((0,\mathfrak{T});H)} 
            \leq C  (  
            \left \| g \right \|_{L^{\rho'}((0,\mathfrak{T});H)}+ \left \| a \right \|_H  ),
        \end{align*} 
        where $C=C(M,\nu,\rho,\kappa,\mathfrak{T})>0$ is a constant independent of $g$ and $a$.
        \item There exists a unique fundamental solution $\Gamma_{\Tilde{\mathcal{B}}}=(\Gamma_{\Tilde{\mathcal{B}}}(t,s))_{0\leq s \leq t \leq \mathfrak{T}}$  for $\partial_t+\Tilde{\mathcal{B}}$ in the sense of Definition \ref{FS} in $(0,\mathfrak{T})$ (by convention, set $\Gamma(\infty,s)=0$ if $\mathfrak{T}=\infty$). In particular, for all $t \in [0,\mathfrak{T}]$, we have the following representation of $u$ : 
        \begin{align*}
            u(t)=\Gamma_{\Tilde{\mathcal{B}}}(t,0)a+\int_{0}^{t}\Gamma_{\Tilde{\mathcal{B}}}(t,s)\Tilde{S}^\beta {g}(s) \ \mathrm ds , 
        \end{align*}
         where the  integral is weakly defined in $H$ when $\rho<\infty$ and strongly defined when $\rho=\infty$ (i.e., in the Bochner sense). 
         For all $\Tilde{a} \in H$ and $t \in [0,\mathfrak{T}]$,
        \begin{align*}\label{eq:repCauchy0 T}
            \langle u(t) , \Tilde{a}\rangle_H &= \langle \Gamma_{\Tilde{\mathcal{B}}}(t,0)a , \Tilde{a}\rangle_H   
           +\int_{0}^{t} \langle {g}(s) ,  \Tilde{S}^\beta \Tilde{\Gamma}_{\Tilde{\mathcal{B}}}(s,t)\Tilde{a}\rangle_H \ \mathrm ds. 
        \end{align*}
    \end{enumerate}
\end{thm}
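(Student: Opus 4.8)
The plan is to reduce everything to the already-established homogeneous theory of Section \ref{Section 5} by the substitution $S\rightsquigarrow\Tilde{S}_\lambda$, exactly as announced at the end of Subsection \ref{sec:inhomogenouessetup}. Concretely, in \textbf{case (a)} one observes that $\Tilde{B}_t+\kappa$ satisfies \eqref{EllipticityAbstract} with $\Tilde{S}_\lambda$ in place of $S$ (upper bound $M+\kappa$, coercivity constant $\nu$), so that the operator $\partial_t+\Tilde{\mathcal{B}}+\kappa$ fits the framework of Theorem \ref{ThmCauchy homog} built on the injective, invertible operator $\Tilde{S}_\lambda$; one then removes $\kappa$ by the classical exponential change of unknown $u(t)=e^{-\kappa t}w(t)$, which turns $\partial_t+\Tilde{\mathcal{B}}+\kappa$ into $\partial_t+\Tilde{\mathcal{B}}$ and only affects constants and the source $g$ by the bounded factor $e^{\kappa t}$ (hence the dependence $C=C(M,\nu,\rho,\kappa,\mathfrak{T})$, the $\mathfrak{T}$ entering through $e^{\kappa\mathfrak{T}}$). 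In \textbf{case (b)} there is no $\kappa$ and $\Tilde{S}_\lambda$ is already invertible, so Theorem \ref{ThmCauchy homog} on $(0,\infty)$ applies directly and additionally gives the decay $u(\infty)=0$ via the homogeneous estimates on the half-line.

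The key steps, in order, would be: (1) verify that the weak-solution class here — $u\in L^1((0,\mathfrak{T});D(S))$ with $Su\in L^2$, resp.\ the local version on $(0,\infty)$ — coincides, under $D(S)=D(\Tilde{S}_\lambda)$ and $\|\cdot\|_{\Tilde{S}_\lambda,1}\simeq\|S\cdot\|_H+\|\cdot\|_H$, with the class appearing in the homogeneous Cauchy theorem, and that the weak formulation against $\mathcal{D}((0,\mathfrak{T});\Tilde{E}_{-\infty})$ with source $\Tilde{S}^\beta g$ is literally the homogeneous weak formulation for $\Tilde{S}=\Tilde{S}_1$; note that the extra subtlety $\lambda\ne1$ is absorbed because $\Tilde{S}_\lambda$ and $\Tilde{S}_1$ have equivalent graph norms and the same scale of spaces, so one may simply take $\Tilde{S}=\Tilde{S}_1$ throughout. (2) Apply Theorem \ref{ThmCauchy homog}(1) (with $S$ replaced by $\Tilde{S}$) after the $\kappa$-shift in case (a) to get existence, uniqueness, the regularity $u\in C([0,\mathfrak{T}];H)\cap L^r((0,\mathfrak{T});D(\Tilde{S}^\alpha))$ for $r\in(2,\infty)$, $\alpha=2/r$, and the estimate; the endpoints $r=2$ (i.e.\ $u\in L^2((0,\mathfrak{T});D(\Tilde{S}))$) and $r=\infty$ (i.e.\ $u\in L^\infty((0,\mathfrak{T});H)=C([0,\mathfrak{T}];H)$) are part of the class itself. (3) For uniqueness, invoke the energy equality of Lemma \ref{lem:weaksolCPlocal} (case (a)) with $g=0$, $a=0$, together with coercivity of $\Tilde{B}_t+\kappa$ and the Gronwall-type argument absorbing $\kappa\|u\|_H^2$, to force $u=0$; in case (b) the energy equality plus coercivity of $\Tilde{B}_t$ and vanishing at $-\infty$/initial time gives it directly as in Proposition \ref{prop:uniqueness}. (4) For part (2), transport Definition \ref{FS} and Lemma \ref{Unicité sol fonda}, Theorem \ref{thm:identification}, and the representation Theorem \ref{ThmGreenReprese}/Theorem \ref{ThmCauchy homog}(2) \emph{verbatim} with $\Tilde{S}$ in place of $S$; the representation $u(t)=\Gamma_{\Tilde{\mathcal{B}}}(t,0)a+\int_0^t\Gamma_{\Tilde{\mathcal{B}}}(t,s)\Tilde{S}^\beta g(s)\,\mathrm ds$ and its weak form follow from \eqref{faible}–\eqref{eq:repCauchy0infty}, after undoing the $e^{-\kappa t}$ substitution (which multiplies $\Gamma_{\Tilde{\mathcal{B}}+\kappa}$ by $e^{-\kappa(t-s)}$ and thus returns the fundamental solution of $\partial_t+\Tilde{\mathcal{B}}$).

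I expect the genuine obstacle to be not any single estimate but the bookkeeping in step (1)–(4): making sure the $\kappa$-shift and the possible $\lambda\ne1$ are compatible with \emph{all} of the inherited statements simultaneously — in particular that after conjugation by $e^{-\kappa t}$ the source $\Tilde{S}^\beta g$ is replaced by $\Tilde{S}^\beta(e^{\kappa\cdot}g)$, still in $L^{\rho'}((0,\mathfrak{T});H)$ with norm controlled by $e^{\kappa\mathfrak{T}}\|g\|_{L^{\rho'}}$ (this is where finiteness of $\mathfrak{T}$ is essential in case (a), and why case (b) must exclude $\kappa>0$ in general), and that the fundamental solution one obtains for $\partial_t+\Tilde{\mathcal{B}}+\kappa$ on $(0,\mathfrak{T})$ is genuinely a \emph{uniformly bounded} family on $H$ even after multiplying back by $e^{\kappa(t-s)}\le e^{\kappa\mathfrak{T}}$. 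All the analytic content — continuity in time, energy equality, mixed-norm $L^r$ bounds, adjoint and Chapman–Kolmogorov relations, uniqueness of the fundamental solution — is already available; here it is only a matter of checking that the translation dictionary $S\leftrightarrow\Tilde{S}_\lambda$, $D_{S,\alpha}\leftrightarrow D(S^{|\alpha|})$ or $S^{-|\alpha|}H+H$, $E_{-\infty}\leftrightarrow\Tilde{E}_{-\infty}$ is applied consistently, so I would state the proof as a sequence of ``apply Theorem X \emph{verbatim} with $\Tilde{S}$'' steps and spend the only real ink on the $\kappa$-shift and the half-line decay in case (b).
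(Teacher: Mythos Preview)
Your approach is essentially the paper's: reduce to Theorem \ref{ThmCauchy homog} with $\Tilde{S}$ in place of $S$, using an exponential change of unknown in case (a) and applying it directly in case (b). Two points need correction.

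First, your exponential substitution has the wrong sign. If $u$ is to solve $\partial_t u + \Tilde{\mathcal{B}}u = \Tilde{S}^\beta g$ and you set $u(t) = e^{-\kappa t}w(t)$, then $w$ solves $\partial_t w + (\Tilde{\mathcal{B}}-\kappa)w = \Tilde{S}^\beta(e^{\kappa t}g)$, with $-\kappa$, and $\Tilde{\mathcal{B}}-\kappa$ is not coercive. The paper's change is $u(t) = e^{\kappa' t}v(t)$, so that $v$ solves $\partial_t v + (\Tilde{\mathcal{B}}+\kappa')v = \Tilde{S}^\beta(e^{-\kappa' t}g)$ with $v(0)=a$; this is what makes the auxiliary operator coercive. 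Correspondingly, the fundamental-solution relation is $\Gamma_{\Tilde{\mathcal{B}}}(t,s)=e^{\kappa'(t-s)}\Gamma_{\Tilde{\mathcal{B}}+\kappa'}(t,s)$, not its reciprocal; your remark that uniform boundedness survives because $e^{\kappa'(t-s)}\le e^{\kappa'\mathfrak{T}}$ is then the correct observation and is exactly where finiteness of $\mathfrak{T}$ enters.

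Second, the paper takes $\kappa'>\kappa$ strictly and first extends the data from $(0,\mathfrak{T})$ to $(0,\infty)$ (zero-extension of $g$, dummy extension of $\Tilde{B}_t$) before invoking Theorem \ref{ThmCauchy homog}. The strict inequality matters when $\lambda=0$ is allowed in case (a): then \eqref{eq:unifcoercive} only yields $\mathrm{Re}\,\Tilde{B}_t(v,v)+\kappa\|v\|_H^2\ge\nu\|Sv\|_H^2$, which is \emph{not} coercive with respect to $\|\Tilde{S}v\|_H^2=\|Sv\|_H^2+\|v\|_H^2$; the extra $(\kappa'-\kappa)\|v\|_H^2$ supplies the missing piece. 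Your remark that ``$\Tilde{S}_\lambda$ and $\Tilde{S}_1$ have equivalent graph norms'' is true but insufficient here, since what \eqref{EllipticityAbstract} requires is a lower bound in the \emph{homogeneous} norm, and $\|\Tilde{S}_0\cdot\|_H=\|S\cdot\|_H$ is not equivalent to $\|\Tilde{S}\cdot\|_H$.

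Otherwise --- uniqueness via the energy equality (your Gronwall variant works; the paper instead repeats the exponential change $v=e^{-\kappa' t}u$ and uses strict coercivity of $\Tilde{B}_t+\kappa'$), and the \emph{verbatim} transport of the fundamental-solution machinery from Section \ref{sec:FS} --- your outline coincides with the paper's.
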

\begin{proof} We begin with existence.

\paragraph{\textit{Existence in case (b) }}  Apply Theorem \ref{ThmCauchy homog} with $\Tilde{S}$ replacing 
$S$ and as the right hand side belongs to $L^{\rho'}((0,\infty);D_{\Tilde{S},-\beta})$. This shows the existence of a weak solution $v$ in $L^2((0,\infty); D(\Tilde{S}))$, which also belongs to $C_0([0,\infty); H)$ and $L^r((0,\infty); D(\Tilde{S}^\alpha))$.

\paragraph{\textit{Existence in case (a) }}
Extend $g$ by $0$ and $\Tilde{B}_t$ by $\nu\langle \Tilde{S}\cdot, \Tilde{S}\cdot \rangle_H$ on $(\mathfrak{T}, \infty)$ if  $\mathfrak{T}<\infty$ and use the same notation. Let $\kappa'>\kappa$. Apply Theorem \ref{ThmCauchy homog} with $\Tilde{S}$ replacing $S$ and with right hand side in $L^{\rho'}((0,\infty);D_{\Tilde{S},-\beta})$ to the auxiliary Cauchy problem 
\begin{align*}
\left\{
    \begin{array}{ll}
        \partial_t v + (\Tilde{\mathcal{B}}+\kappa')v =
        \Tilde{S}^\beta  (e^{-\kappa' t} {g})  \ \ \mathrm{in} \ \mathcal{D}'((0,\infty); \Tilde{E}_\infty), \\
        v(0)=a \  \mathrm{in} \ \Tilde{E}_\infty,
    \end{array}
\right.
\end{align*}  and obtain a weak solution $v$ in $L^2((0,\infty); D(\Tilde{S}))$.
The function $u:=e^{\kappa' t} v$ restricted to $[0,\mathfrak{T}]$ gives us a weak solution with the desired properties.

Next, we prove uniqueness. Assume $u$ is a weak solution to \eqref{Pb Cauchy inhomogène} with $a=0$ and $g=0$.
\paragraph{\textit{Uniqueness in case (b) }} We have $u \in L^1((0,\mathfrak{T}'); D(S))$ for all $\mathfrak{T'}<\infty$ and $\int_{0}^{\infty} \left\|Su(t) \right\|_H^2 \ \mathrm d t <\infty$. Applying Lemma \ref{lem:weaksolCPlocal}, we have $u \in C([0,\mathfrak{T}']; H)$ for all $\mathfrak{T'}>0$, $u(0)=0$ and
 \begin{equation*}
  \left \| u(\mathfrak{T}') \right \|^2_H +  2\mathrm{Re}\int_{0}^{\mathfrak{T}'}  \Tilde{B}_t(u(t),u(t))  \ \mathrm dt=  0.
 \end{equation*}
Using the coercivity of $\Tilde{B}_t $,  we deduce that $u=0$ on $(0,\mathfrak{T}')$ and therefore, $u=0$ on $[0,\infty)$.
\
\paragraph{\textit{Uniqueness in case (a) }}
We have $u \in L^1((0,\mathfrak{T});D(S))$ with $\int_{0}^{\mathfrak{T}} \left\|Su(t) \right\|_H^2 \mathrm d t <\infty$. Let $\kappa'>\kappa$. Set $v = e^{-\kappa' t}u$ on $(0,\mathfrak{T})$ so that $v \in L^1((0,\mathfrak{T});D(S))$ with $\int_{0}^{\mathfrak{T}} \left\|Sv(t) \right\|_H^2 \mathrm d t <\infty$ and $\partial_t v + (\Tilde{\mathcal{B}}+\kappa') v =0$ in $\mathcal{D}'((0,\mathfrak{T});\Tilde{E}_\infty)$.  Applying Lemma \ref{lem:weaksolCPlocal}, we have $v \in C([0,\mathfrak{T}]; H)$, $v(0)=0$ and  
 \begin{equation*}
  \left \| v(\mathfrak{T}) \right \|^2_H +  2\mathrm{Re}\int_{0}^{\mathfrak{T}}  \Tilde{B}_t(v(t),v(t)) + \kappa' \left \| v(t) \right \|^2_H \ \mathrm dt=  0.
 \end{equation*}
Using the coercivity of $\Tilde{B}_t+\kappa'$ resulting from \eqref{eq:unifcoercive}, we deduce that $v=0$ and therefore, $u=0$ on $[0,\mathfrak{T}]$.

Finally, definition, existence and uniqueness of the fundamental solution $\Gamma_{\Tilde{\mathcal{B}}}$ can be obtained easily by proceeding as in Section \ref{sec:FS}.
\end{proof}
\begin{rem}\label{rem:tinfty,k>0}
    If $\mathfrak{T}=\infty$ with $\kappa> 0$, then we can construct a weak solution but it does not satisfy $\int_{0}^{\infty} \|Su(t) \|_H^2 \ \mathrm d t <\infty$.
\end{rem}

\begin{rem}\label{rem : cas dégénéré}
For $\mathfrak{T}=\infty$, $\lambda=0$, $\kappa=0$ and $\Tilde{S}^\beta g$ replaced by $S^\beta g$,  we can apply Theorem \ref{ThmCauchy homog} provided that $T$ is injective. However, when $T$ (hence $S$) is not injective then the proof of  Theorem \ref{ThmCauchy inhomog} provides us with a global solution but not with limit 0 at $\infty$. In fact, the zero limit at $\infty$ fails. Take $u_0 \in \mathrm{nul}(S) \setminus \{0\}$ and set $u(t)=u_0$ for all $t \ge 0$. We have $u \in L^2((0,\mathfrak{T}');H)$ for all $\mathfrak{T'}<\infty$ with $\int_{0}^{\infty} \left\|Su(t) \right\|_H^2 \ \mathrm d t =0 <\infty$. Moreover, $u$ is a weak solution to the abstract heat equation 
\begin{align*}
\left\{
    \begin{array}{ll}
        \partial_t u + S^2u = 0  \ \ \mathrm{in}\ \mathcal{D}'((0,\infty); \Tilde{E}_\infty), \\
        u(0)=u_0 \  \mathrm{in} \ \Tilde{E}_\infty,
    \end{array}
\right.
\end{align*}
with $\lim_{t\rightarrow\infty}u(t)=u_0$.
\end{rem}

\begin{rem}
Consider the special case  $\Tilde{\mathcal{B}}=\mathcal{B}+ \omega$, $\omega>0$, on $(0,\infty)$, keeping the condition \eqref{EllipticityAbstract} for $\mathcal{B}$ with $T$ (and $S$) injective. We have (b) with $\lambda=1$, constant  $\sup(M,\omega)$ in \eqref{eq:unifbdd}    and constant  $\inf(\nu, \omega)$ in \eqref{eq:unifcoercive}.
 The theorem above applies and  gives us  fundamental solution operators  $\Gamma_{\mathcal{B}+ \omega}(t,s)$, defined for $0\le s\le t<\infty$.  Call $\Gamma_{\mathcal{B}}(t,s)$ the one obtained in the previous section. Uniqueness for the Cauchy problem for  $\partial_t + \mathcal{B}+ \omega$ holds in  $L^2((0,\mathfrak{T}), D(S))$ for all $\mathfrak{T}<\infty$ and this shows that   $  \Gamma_{\mathcal{B}+ \omega}(t,s)=e^{-\omega(t-s)}\Gamma_{\mathcal{B}}(t,s)$. Working on $\mathbb{R}$, then we obtain the equality for  $-\infty< s\le t<\infty$.
\end{rem}

\section{The final step towards concrete situations}\label{subsection 4.9} 
The reader might wonder how to apply our theory in concrete situations, where  the abstract  spaces of test functions $\mathcal{D}(I;E_{-\infty})$ or $\mathcal{D}(I;\Tilde{E}_{-\infty})$  might not be related to usual spaces of test functions.
The following result gives us a sufficient condition showing that one can replace $E_{-\infty}$ or $\Tilde{E}_{-\infty}$ by an arbitrary dense set in the domain of $S$, sometimes called a core of $D(S)=D(T)$.

\begin{thm}\label{thm: passage au concret} Let $\mathrm{D}$ be a Hausdorff topological vector space with continuous and dense inclusion $\mathrm{D} \hookrightarrow D(S)$, where $D(S)$ is equipped with the graph norm. Assume a priori  that weak solutions belong to  $L^1_{\mathrm{loc}}(I;H)$, and replace the test function space  by $\mathcal{D}(I;\mathrm{D})$ in their definition, with in the latter case, $\partial_t u $  computed via  : $$\forall \varphi \in  \mathcal{D}(I;\mathrm{D}), \  \llangle  \partial_t u,  \varphi \rrangle = -\int_I \langle u(t),\partial_t\varphi(t)\rangle_{H} \ \mathrm{d}t.$$
Then our well-posedness results are the same: this means that existence with estimates, uniqueness (requiring additionally $u\in L^1_{\mathrm{loc}}(I;H)$ in the uniqueness class), and energy equalities hold. 
\end{thm}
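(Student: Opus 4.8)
The key observation is that the abstract test function space $\mathcal{D}(I;E_{-\infty})$ (or $\mathcal{D}(I;\Tilde{E}_{-\infty})$) and the concrete one $\mathcal{D}(I;\mathrm{D})$ both produce the same notion of weak solution, so that one reduces the new statements to those already proved. To see this, I would first establish the following comparison: if $u\in L^1_{\mathrm{loc}}(I;H)$ with $Su\in L^2_{\mathrm{loc}}(I;H)$ (resp. $Su\in L^2(I;H)$), then for any $\varphi\in\mathcal{D}(I;D(S))$ one has $\int_I \langle u(t),\partial_t\varphi(t)\rangle_H\,\mathrm{d}t=\int_I\langle Su(t),S^{-1}\partial_t\varphi(t)\rangle_H\,\mathrm{d}t$ and $\int_I B_t(u(t),\varphi(t))\,\mathrm{d}t$ makes sense, both sides being continuous in $\varphi$ for the $\mathcal{D}(I;D(S))$-topology (graph norm on $D(S)$, compact support in $I$). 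The first integral equals $\llangle \partial_t u,\varphi\rrangle$ computed via the $H$-pairing as in the statement, which agrees with the $E_\infty$-valued computation by the remark following Definition~\ref{Définition opérateur B}. So the weak formulation extends continuously from $\mathcal{D}(I;E_{-\infty})$ to $\mathcal{D}(I;D(S))$, and by density of $\mathrm{D}$ in $D(S)$ (for the graph norm), testing against $\mathcal{D}(I;\mathrm{D})$ is equivalent to testing against $\mathcal{D}(I;D(S))$, which in turn is equivalent to testing against $\mathcal{D}(I;E_{-\infty})$ since $E_{-\infty}$ is dense in $D(S)$ for the graph norm (Lemma~\ref{density E_infty} and the remark after it); here one uses that $\mathrm{span}(\mathcal{D}(I)\otimes\mathrm{D})$ is dense in $\mathcal{D}(I;\mathrm{D})$ and that the tensor products $\phi\otimes a$ already determine the distribution. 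The same argument applies to the initial condition: the meaning of $u(0)=a$ via $\langle u(t),\Tilde{a}\rangle_H\to\langle a,\Tilde{a}\rangle_H$ along a sequence tending to $0$, for $\Tilde{a}\in\mathrm{D}$, is equivalent to the same statement for $\Tilde{a}\in E_{-\infty}$ (or $\Tilde{a}\in D(S)$), because $u$ is a priori continuous in $H$ near $0$ by Theorem~\ref{thm:energyboundedinterval} / Proposition~\ref{prop:energyboundedintervalinhomo} (which only uses the upper bound on $B_t$), so $t\mapsto\langle u(t),\Tilde{a}\rangle_H$ is continuous and the weak limit is automatically a genuine limit, and density of $\mathrm{D}$ in $H$ transfers it to all $\Tilde{a}\in H$.

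With this equivalence in hand, the transfer of results is routine. For existence: the solution $u$ produced by Theorems~\ref{ThmCauchy homog} and \ref{ThmCauchy inhomog} lies in $C([0,\mathfrak{T}];H)$ (in particular in $L^1_{\mathrm{loc}}(I;H)$) and satisfies the weak formulation against $\mathcal{D}(I;E_{-\infty})$; by the comparison above it satisfies it against $\mathcal{D}(I;\mathrm{D})$, with $\partial_t u$ computed via the $H$-pairing. The estimates are unchanged since they concern the same function $u$. For uniqueness: given two weak solutions in the $\mathcal{D}(I;\mathrm{D})$-sense both in $L^1_{\mathrm{loc}}(I;H)$, their difference is a weak solution with zero data in the $\mathcal{D}(I;\mathrm{D})$-sense, hence (by the comparison) in the $\mathcal{D}(I;E_{-\infty})$-sense, and it satisfies the hypotheses of the uniqueness results already proved (Proposition~\ref{prop:uniqueness}, or the energy argument in Theorems~\ref{ThmCauchy homog}, \ref{ThmCauchy inhomog}, via Lemma~\ref{lem:weaksolCPlocal}); therefore it vanishes. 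The energy equalities are exactly those of Proposition~\ref{prop:energyboundedintervalinhomo} (or Corollary~\ref{corenergy}), applied to the same $u$, once we know $\partial_t u = Sf+S^\beta g+h$ in $\mathcal{D}'(I;\Tilde{E}_\infty)$, which follows from the weak formulation by rewriting $\Tilde{B}_t(u(t),\varphi(t))$ as $\langle Su(t),S^{-1}\partial_t$-free terms$\rangle$ plus the lower-order contributions (absorbed into $h$), i.e. moving the elliptic part to the right-hand side.

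The only genuinely delicate point is the continuity of the various pairings for the $\mathcal{D}(I;\mathrm{D})$-topology — in particular that $\int_I B_t(u(t),\varphi(t))\,\mathrm{d}t$ is well defined and continuous in $\varphi\in\mathcal{D}(I;\mathrm{D})$, which requires controlling $\|\varphi(t)\|_{S,1}$ by the graph norm of $\varphi(t)$ in $\mathrm{D}$ (automatic, since $\mathrm{D}\hookrightarrow D(S)$ continuously and $\|\cdot\|_{S,1}\le\|\cdot\|_{D(S)}$) uniformly over the compact support. I expect this to be the main obstacle only in the bookkeeping sense: one must be careful that ``weak solution in the $\mathcal{D}(I;\mathrm{D})$-sense'' is not a priori known to have $Su\in L^2_{\mathrm{loc}}$ unless that is part of the definition — and indeed it is, since the solution class in all the theorems requires $u\in L^2(I;D_{S,1})$ (or the $\Tilde{S}$-analogue) — so the pairing $\int_I\langle u(t),\partial_t\varphi(t)\rangle_H\,\mathrm{d}t$ is finite because $u\in L^1_{\mathrm{loc}}(I;H)$ and $\partial_t\varphi\in\mathcal{D}(I;\mathrm{D})\subset\mathcal{D}(I;H)$. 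Once this is laid out, no new analysis is needed; the proof is a density-and-identification argument referring back to Sections~\ref{Section 5} and \ref{sec:Inhomogenous}, exactly as indicated by the phrase ``its proof'' in the statement of Theorem~\ref{thm: passage au concret}.
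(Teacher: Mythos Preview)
Your proposal is correct and follows essentially the same route as the paper: showing that the weak formulations against $\mathcal{D}(I;E_{-\infty})$, $\mathcal{D}(I;D(S))$, and $\mathcal{D}(I;\mathrm{D})$ are all equivalent (under the extra assumption $u\in L^1_{\mathrm{loc}}(I;H)$), and then transferring existence, uniqueness, and energy equalities verbatim. The paper's only additional ingredient is an explicit density lemma constructing, for each $\varphi\in\mathcal{D}(I;D(S))$, approximants $\varphi_k\in\mathrm{span}(\mathcal{D}(I)\otimes\mathrm{D})$ with the same support, uniform pointwise bounds $\|\varphi_k(t)\|_{D(S)}\le 3\|\varphi(t)\|_{D(S)}$ (and similarly for $\partial_t\varphi_k$ and $S^\beta\varphi_k$), and pointwise convergence---this is precisely what makes the dominated convergence step rigorous, which you gesture at via ``continuity of the pairings for the $\mathcal{D}(I;\mathrm{D})$-topology'' but do not spell out.
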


The proof relies on the following density lemma. Denote by $\| u \|_{D(S)}=(\|u\|_H^2+\| Su\|_{H}^2)^{1/2}$ the Hilbertian graph norm  and $ \langle \cdot,\cdot\rangle_{_{D(S)}} $ the corresponding  inner product.
\begin{lem}\label{lem: densité D}  Let $\mathrm{D}$ be as in the above theorem.
    For all open interval $I \subset \mathbb{R}$, $\mathcal{D}(I;\mathrm{D})$ is a dense subspace of $\mathcal{D}(I;D(S))$ in the following sense :  for all $\varphi \in \mathcal{D}(I; D(S))$, there exists a sequence $(\varphi_k)_{k \geq 0} \in  \mathrm{span}(\mathcal{D}(I)\otimes \mathrm{D})^{\mathbb{N}} $ such that 
    \begin{enumerate}
        \item For all $k \geq 0$, $\mathrm{supp}(\varphi_k) \subset \mathrm{supp}(\varphi)$.
        \item For all $k \geq 0$ and $t \in I$,
        $$  \left\|\varphi_k(t) \right\|_{D(S)} \leq 3  \left\| \varphi(t) \right\|_{D(S)}, \ \ \|\partial_t \varphi_k(t) \|_{D(S)}  \leq 3 \left\| \partial_t \varphi(t) \right\|_{D(S)} .$$
        \item For all $t \in I$, $\left\|\varphi_k(t)-\varphi(t) \right\|_{D(S)}+\left\|\partial_t \varphi_k(t) - \partial_t \varphi(t) \right\|_{D(S)}\to 0$ as $k\to \infty$.
        \item For all $\beta \in [0,1]$, $t\in I$ and $k\geq 0$, $\| S^\beta  \varphi_k(t)\|_H \le 3\ \| \varphi(t)  \|_{D(S)} $ and $\| S^\beta(  \varphi_k(t)- \varphi(t) ) \|_H \to 0$ as $k\to \infty$. 
    \end{enumerate}

\end{lem}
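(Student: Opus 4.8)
\textbf{Proof plan for Lemma \ref{lem: densité D}.}

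The plan is to fix $\varphi \in \mathcal{D}(I;D(S))$ and build the approximants $\varphi_k$ by a two-step procedure: first a scalar convolution-type regularization in the $t$-variable that replaces the $D(S)$-valued smooth function by finite sums of tensor products $\sum_j \theta_j \otimes v_j$ with $\theta_j \in \mathcal{D}(I)$ and $v_j \in D(S)$, and then a second approximation replacing each $v_j \in D(S)$ by an element of $\mathrm{D}$ using the density of $\mathrm{D}$ in $D(S)$ for the graph norm. Actually, the cleanest route is to do these simultaneously: since $\varphi$ has compact support $K \subset I$, choose a slightly larger compact $K' \subset I$ with $K$ in its interior, a partition of $K'$ into small intervals, sample $\varphi$ and $\partial_t\varphi$ at the nodes, and interpolate; after that, replace each sampled value (an element of $D(S)$) by a nearby element of $\mathrm{D}$. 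To keep control of $\partial_t$ one works with a Riemann-sum approximation of the identity $\varphi(t) = \varphi(t_0) + \int_{t_0}^t \partial_t\varphi$, which expresses $\varphi$ (and, after differentiating the scalar cutoffs, $\partial_t\varphi$) in terms of values of $\partial_t\varphi$ only; this is the standard device that makes the $C^1$-type convergence in (3) routine once the $C^0$ version is known.

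The key steps, in order: (1) reduce to $\mathrm{span}(\mathcal{D}(I)\otimes D(S))$, i.e. show $\mathcal{D}(I; D(S))$ is approximated — with control of supports, of the pointwise $D(S)$-norms of $\varphi_k$ and $\partial_t\varphi_k$ by a fixed multiple of those of $\varphi, \partial_t\varphi$, and of the pointwise $D(S)$-convergence — by finite sums $\sum_j \theta_j \otimes v_j$; this is purely a statement about $C^1$ Banach-valued functions and a mollification/partition-of-unity argument, the factor $3$ (rather than, say, $1+\varepsilon$) giving plenty of room. (2) For each such tensor-product approximant, use density of $\mathrm{D}$ in $(D(S),\|\cdot\|_{D(S)})$ to replace each $v_j$ by $w_j \in \mathrm{D}$ with $\|v_j - w_j\|_{D(S)}$ as small as needed; since the $\theta_j$ are fixed scalar functions, this perturbs $\varphi_k(t)$ and $\partial_t\varphi_k(t)$ uniformly in $t$ by an arbitrarily small amount in $D(S)$, preserving (1)–(3) with the same constant $3$ (absorb the perturbation into the slack). (3) Diagonalize: combine the two approximations into a single sequence indexed by $k$. (4) Prove (4): for $\beta \in [0,1]$, the moment inequality $\|S^\beta v\|_H \le \|Sv\|_H^\beta \|v\|_H^{1-\beta} \le \|v\|_{D(S)}$ (interpolating between $\beta=0$ and $\beta=1$, valid for $v\in D(S)$) gives $\|S^\beta \varphi_k(t)\|_H \le \|\varphi_k(t)\|_{D(S)} \le 3\|\varphi(t)\|_{D(S)}$, and applied to $\varphi_k(t)-\varphi(t) \in D(S)$ it gives $\|S^\beta(\varphi_k(t)-\varphi(t))\|_H \le \|\varphi_k(t)-\varphi(t)\|_{D(S)} \to 0$ by (3). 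So (4) is an immediate corollary of (1)–(3) and needs no extra work.

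The main obstacle is Step (1): getting \emph{simultaneous} pointwise control of $\varphi_k$ and its derivative by a fixed multiple of $\varphi$ and $\partial_t\varphi$, with supports staying inside $\mathrm{supp}(\varphi)$. A naive mollification $\varphi * \chi_\delta$ enlarges the support and, worse, does not obviously satisfy the pointwise (not just integrated) bound $\|\varphi_k(t)\|_{D(S)} \le 3\|\varphi(t)\|_{D(S)}$ near the edge of the support where $\varphi$ is small. The fix is to use a Riemann-sum / piecewise-linear interpolation scheme adapted to $\varphi$: write $\varphi(t) = \int_I \partial_t\varphi(s)\,\mathbb{1}_{\{s \text{ between } t_- \text{ and } t\}}\,\mathrm{d}s$ for a fixed reference point $t_-$ to the left of $\mathrm{supp}(\varphi)$, discretize the inner integral, and note that the resulting approximant is again supported in $\mathrm{supp}(\varphi)$ (since $\partial_t\varphi$ is) and its sup-norms are controlled by $\|\partial_t\varphi\|_{L^\infty(I;D(S))}$ times the measure of the support, which one then localizes by working on a fine enough partition so that on each piece the bound $3\|\varphi(t)\|_{D(S)}$ holds — here one uses uniform continuity of $\varphi$ on the compact support and the triangle inequality. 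This is somewhat technical but entirely elementary; once it is in place, Steps (2)–(4) are short.

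\textbf{Proof of Theorem \ref{thm: passage au concret} (sketch).} Granting Lemma \ref{lem: densité D}, the theorem follows by a density argument on the weak formulations. For existence: the solution produced by our abstract results solves the equation against all $\varphi \in \mathcal{D}(I;E_{-\infty})$, hence (by Lemma \ref{Lemma density}-type density, or directly since $E_{-\infty} \subset D(S)$ and $\mathcal{D}(I;E_{-\infty})$ is dense in $\mathcal{D}(I;D(S))$ in the sense above) against all $\varphi \in \mathcal{D}(I;D(S))$, in particular against $\mathcal{D}(I;\mathrm{D})$; the passage to the weaker derivative formula $\llangle \partial_t u,\varphi\rrangle = -\int_I \langle u(t),\partial_t\varphi(t)\rangle_H\,\mathrm{d}t$ is legitimate because $u \in L^1_{\mathrm{loc}}(I;H)$ and $S$ is self-adjoint, so for $\varphi \in \mathcal{D}(I;D(S))$ one has $\langle u(t), \partial_t\varphi(t)\rangle_{H,1} = \langle u(t),\partial_t\varphi(t)\rangle_H$ whenever $u(t)\in D(S)$ — and in fact one only needs the duality pairing, which agrees with $\langle\cdot,\cdot\rangle_H$ on $H \times E_{-\infty}$ and extends by the density of Lemma \ref{lem: densité D} and the uniform bounds (2),(4) that dominate the limiting process. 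For uniqueness: if $u \in L^1_{\mathrm{loc}}(I;H)$ with $Su \in L^2_{\mathrm{loc}}(I;H)$ solves the equation with zero data against $\mathcal{D}(I;\mathrm{D})$, then using Lemma \ref{lem: densité D} (the bounds (2),(4) provide the domination for dominated convergence in the integrals $\int_I B_t(u(t),\varphi_k(t))\,\mathrm{d}t$ and $\int_I \langle u(t),\partial_t\varphi_k(t)\rangle_H\,\mathrm{d}t$, recalling $\|B_t(u(t),\cdot)\| \lesssim \|Su(t)\|_H$) one upgrades the weak formulation to hold against all $\varphi \in \mathcal{D}(I;D(S))$, hence against $\mathcal{D}(I;E_{-\infty})$, and the abstract uniqueness result applies. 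The energy equalities are unchanged since they are statements about the solution itself (via Theorem \ref{thm:energyboundedinterval} / Proposition \ref{prop:energyboundedintervalinhomo}), not about the test space.
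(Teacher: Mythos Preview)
Your derivation of (4) from (2)--(3) via the moment inequality is correct and matches the paper. However, your route to (1)--(3) has a genuine gap at the pointwise bound (2), and the paper's argument is both different and much shorter. The paper exploits that $(D(S),\|\cdot\|_{D(S)})$ is a separable Hilbert space: fix an orthonormal basis $(w_j)_{j\ge 0}$, choose $v_j^k\in\mathrm{D}$ with $\|w_j-v_j^k\|_{D(S)}\le 2^{-(j+k)}$, and for $h=\sum_j\alpha_jw_j$ set $P_kh:=\sum_{j=0}^k\alpha_jv_j^k$. One checks $\|P_kh\|_{D(S)}\le\tfrac{7}{3}\|h\|_{D(S)}$ and $P_kh\to h$ in $D(S)$. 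Then $\varphi_k(t):=P_k(\varphi(t))=\sum_{j=0}^k\langle\varphi(t),w_j\rangle_{D(S)}\,v_j^k$ lies in $\mathrm{span}(\mathcal{D}(I)\otimes\mathrm{D})$. The key point is that $P_k$ is \emph{linear}: this forces $\varphi_k(t)=0$ whenever $\varphi(t)=0$ (giving (1) and the pointwise bound in (2) simultaneously), and gives $\partial_t\varphi_k(t)=P_k(\partial_t\varphi(t))$, so (2)--(3) for the derivative follow from the same operator bound applied to $\partial_t\varphi$.

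Your two-step scheme lacks this linearity, and that is exactly where (2) fails. The bound $\|\varphi_k(t)\|_{D(S)}\le 3\|\varphi(t)\|_{D(S)}$ must hold at \emph{every} $t$; in particular it forces $\varphi_k(t_0)=0$ at any zero $t_0$ of $\varphi$. But in your Step~(2), replacing each $v_j$ by $w_j\in\mathrm{D}$ introduces a perturbation $\sum_j\theta_j(t)(w_j-v_j)$ whose norm is bounded by $\sum_j|\theta_j(t)|\,\|w_j-v_j\|_{D(S)}$: this is uniformly small in $t$, but not proportional to $\|\varphi(t)\|_{D(S)}$. At points where $\varphi(t)$ is small because of cancellation among the terms $\theta_j(t)v_j$, that cancellation is destroyed and there is no ``slack'' to absorb the error into the constant $3$. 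Your Step~(1) has the analogous defect: a Riemann-sum discretization of $\varphi(t)=\int^t\partial_t\varphi$ does not vanish exactly where $\varphi$ does (for $t$ to the right of $\mathrm{supp}(\varphi)$ the discrete sum is only approximately zero), and invoking uniform continuity on a fine partition cannot rescue a \emph{multiplicative} pointwise bound at points where the target norm is zero. The remedy is precisely the paper's device: apply a single uniformly bounded linear map $P_k:D(S)\to\mathrm{span}\{v_0^k,\dots,v_k^k\}\subset\mathrm{D}$ pointwise in $t$.
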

\begin{proof}
The space $(D(S),\left\| \cdot \right\|_{D(S)})$ is separable as it is isometric to a subspace of $H\times H$ which is separable. Let $(w_j)_{j\in \mathbb{N}} \in D(S)^\mathbb{N}$ be a Hilbertian basis of $ \left( D(S),\left\| \cdot \right\|_{D(S)} \right)$. As $\mathrm{D}$ is dense in $D(S)$ then for all $j\ge 0$, one can find a sequence $(v_j^k)_{k\in \mathbb{N}} \in \mathrm{D}^\mathbb{N}$ such that for all $k \ge 0$, $\| w_j-v_j^k \|_{D(S)} \leq \frac{1}{2^{j+k}} $. For  $h=\sum_{j\ge 0} \alpha_j w_j\in D(S)$, one can see using Cauchy-Schwarz inequality and Plancherel  that $\|\sum_{j=0}^k \alpha_j (v_j^k-w_j)\|_{D(S)} \le \sum_{j=0}^k \frac{|\alpha_j|}{2^{j+k}} \le  \frac{4}{3\cdot 2^k}\|h\|_{D(S)}$, so that $h_k=\sum_{j=0}^k \alpha_j v_j^k$ satisfies  $\|h_k-h\|_{D(S)} \le \frac{4}{3\cdot 2^k}\|h\|_{D(S)} + \|\sum_{j\ge k+1} \alpha_j w_j\|_{D(S)}$ and $\|h_k\|_{D(S)} \le (\frac{4}{3}+1)\|h\|_{D(S)}$. 
Now, fix $\varphi \in \mathcal{D}(I;D(S))$ and set for all $k \ge 0$ and $t \in I$,
    $$ \varphi_k(t):= \sum_{j=0}^{k} \langle \varphi(t),w_j\rangle_{_{D(S)}}   v_j^k \ . $$
Clearly, the sequence $ (\varphi_k)_{k \in \mathbb{N}} \in \mathrm{span}(\mathcal{D}(I)\otimes \mathrm{D})^{\mathbb{N}}$ with (1), and (2) and (3) follow from the above estimates.  Finally, (4) follows  from the moments inequality combined with (2) and (3).
\end{proof}

\begin{proof}[Proof of Theorem \ref{thm: passage au concret}]  The case using $\mathcal{D}(I;\Tilde{E}_{-\infty})$ being similar, it only suffices to show that with the a priori requirement that  weak solutions also belong to $L^1_{\mathrm{loc}}(I;H)$, the formulations of the equations against test functions in $ \mathcal{D}(I ; E_{-\infty})$  and in $\mathcal{D}(I;\mathrm{D})$ are equivalent, because then they have the same solutions. In fact, they are equivalent to a formulation against test functions in $\mathcal{D}(I ; D(S))$. Indeed, if  $u \in L^2(I;D_{S,1})\cap L^1_{\mathrm{loc}}(I;H)$ then by Lemma \ref{lem: Jalpha bien définie}, we have for all $ \varphi \in \mathcal{D}(I ; E_{-\infty})$,
$$\llangle u , \partial_t \varphi \rrangle_{\mathcal{D}',\mathcal{D}} =\int_{I} \langle u (t), \partial_t \varphi(t) \rangle_{H,1} \ \mathrm{d}t= \int_{I} \langle S u(t) , S^{-1} \partial_t \varphi(t) \rangle_H \ \mathrm{d}t = \int_{I} \langle u (t), \partial_t \varphi(t) \rangle_H \ \mathrm{d}t.$$
Applying that   $ \mathcal{D}(I ; E_{-\infty})$ is dense $\mathcal{D}(I ; D(S))$ as in  Lemma \ref{lem: densité D} and dominated convergence, we can see that the weak formulation for all $\varphi \in \mathcal{D}(I ; D(S))$ holds. Of course we can conversely restrict to test functions $ \mathcal{D}(I ; E_{-\infty})$, showing that the formulations testing  with $\varphi \in \mathcal{D}(I ; E_{-\infty})$ or  $\varphi \in \mathcal{D}(I ; D(S))$ are equivalent. This would be the same starting from another dense set $\mathrm{D}$. Finally, the initial data property in the Cauchy problems testing against elements in $E_{-\infty}$ is equivalent to testing against arbitrary elements in $H$ by density as $u(t)$ belongs almost everywhere to $H$. This would be the same replacing $E_{-\infty}$ by another dense set $\mathrm{D}$ in $D(S)$ as it would also be dense in $H$.
 \end{proof}
\section{Three applications }\label{Section 6}
\subsection{Parabolic Cauchy problems on domains with Dirichlet boundary condition}
Let $n\ge 1$ and $\Omega \subset \mathbb{R}^n$ an open set. We denote by $L^2(\Omega)$ the Hilbert space of square integrable functions on $\Omega$ with respect to the Lebesgue measure $\mathrm{d}x$ with norm  denoted by ${\lVert \cdot  \rVert}_{2}$ and its inner product by $\langle \cdot ,\cdot \rangle$. As usual, we denote by $\mathcal{D}(\Omega)$ the class of smooth and compactly supported functions on $\Omega$. We set $H^1(\Omega):=\left\{ u \in L^2(\Omega)  :  \nabla_x u \in L^2(\Omega) \right\}$ and it is a Hilbert space for the norm $\left\| u \right\|_{H^1(\Omega)}:=( \left\| u \right\|_{2}^2+\left\| \nabla_x u \right\|_{2}^2 )^{1/2}$. Finally, $H^1_0(\Omega)$ is defined as the closure of $\mathcal{D}(\Omega)$ in $(H^1(\Omega),\left\| \cdot \right\|_{H^1(\Omega)})$. 

We denote by $-\Delta_D $ the unbounded operator on $L^2(\Omega)$ associated to the positive symmetric sesquilinear form on $H^1_0(\Omega) \times H^1_0(\Omega)$ defined by
\begin{equation*}
    (u,v) \mapsto \int_{\Omega} \nabla_x u(x) \cdot  \overline{\nabla_x v(x)} \ \mathrm dx.
\end{equation*}

Let $A: I\times \Omega \rightarrow M_n(\mathbb{C})$ be a matrix-valued function with complex measurable entries and such that 
\begin{equation}\label{ellipticité A}
\left | A(t,x)\xi \cdot \zeta  \right |\leq M \left | \xi \right |\left | \zeta  \right |, \ \ \ \  
\nu \left | \xi \right |^2\leq \mathrm{Re}(A(t,x)\xi\cdot \overline{\xi})
\end{equation}
for some $M,\nu>0$ and for all $\zeta, \xi \in \mathbb{C}^n$ and $(t,x) \in I\times \Omega$.
We let $-\mathrm{div}_x$ be the adjoint of $\nabla_x:H^1_0(\Omega) \to L^2(\Omega)^n $ and use the customary notation $\mathcal{B}= -\mathrm{div}_x A(t,\cdot)\nabla_x.$

Fix $\mathfrak{T}>0$,  $\rho \in (2,\infty)$, set $\beta={2}/{\rho} \in (0,1)$ and let $\rho'$ be the H\"older conjugate of $\rho$. For  $ {f} \in L^2((0,\mathfrak{T});L^2(\Omega)^n)$, $g\in L^{\rho'}((0,\mathfrak{T});L^2(\Omega))$, $h \in L^1((0,\mathfrak{T});L^2(\Omega))$ and $\psi \in L^2(\Omega)$,
consider the following Cauchy problem 
\begin{align}\label{Pb Cauchy Dirichlet}
\left\{
    \begin{array}{ll}
        \partial_t u - \mathrm{div}_x(A(t,\cdot)\nabla_x u) =   -\mathrm{div}_x {f} +(-\Delta_D)^{\beta/2}{g}+h  \ \mathrm{in} \  \mathcal{D}'((0,\mathfrak{T})\times \Omega), \\
        u(t) \rightarrow \psi \  \mathrm{ in } \ \mathcal{D'}(\Omega) \ \mathrm{as} \ t \rightarrow 0^+.
    \end{array}
\right.
\end{align}

The first equation is interpreted in the weak sense according to the following definition.
\begin{defn}\label{defn:Dirichlet}
    A weak solution to the first equation in \eqref{Pb Cauchy Dirichlet} is a (complex-valued) {function $u \in L^1((0,\mathfrak{T});H^1_0(\Omega))$ with $\int_0^\mathfrak{T}\|\nabla_x u(t)\|^2_2\, \mathrm{d}t<\infty$} such that     for all $\varphi \in \mathcal{D}((0,\mathfrak{T})\times \Omega)$,
    \begin{align*}
        \int_{0}^{\mathfrak{T}}\int_\Omega 
        &- u(t,x) \partial_t \varphi(t,x)+A(t,x)\nabla_x u(t,x) \cdot \nabla_x \varphi(t,x) \ \mathrm{d}x \mathrm{d}t
        \\& = \int_{0}^{\mathfrak{T}}\int_\Omega {f}(t,x) \cdot \nabla_x \varphi(t,x)+  {g}(t,x) (-\Delta_D)^{\beta/2} \varphi(t,x) \ \mathrm{d}x +h(t,x)\varphi(t,x)\  \mathrm{d}x\mathrm{d}t.
    \end{align*}
\end{defn}
The consequence of our theory is
\begin{thm}[Cauchy problem on $(0,\mathfrak{T})$]\label{thm: Pb Cauchy Dirichlet} Let $f,g,h, \psi$ be as above. 
\begin{enumerate}
    \item 
There exists a unique  weak solution to the Cauchy problem \eqref{Pb Cauchy Dirichlet} as defined above. Moreover, $u \in C([0,\mathfrak{T}];L^2(\Omega))$ with $u(0)=\psi$, the application $t \mapsto \| u(t)  \|^2_{2}$ is absolutely continuous on $[0,\mathfrak{T}]$ and we can write the energy equalities. Furthermore, $u \in L^r((0,\mathfrak{T});D((-\Delta_D)^{\alpha/2}))$ for any $\alpha \in (0,1]$ with $r={2}/{\alpha} \in [2,\infty)$ and we have
\begin{align*}
            \sup_{t\in [0,\mathfrak{T}]} &\| u(t) \|_{2}+ \| (-\Delta_D)^{\alpha/2}u \|_{L^r((0,\mathfrak{T});H)} \\
            &\leq C  ( \left \| f \right \|_{L^2((0,\mathfrak{T});L^2(\Omega)^n)}
            + \left \| g \right \|_{L^{\rho'}((0,\mathfrak{T});L^2(\Omega))}
            + \left \| h \right \|_{L^{1}((0,\mathfrak{T});L^2(\Omega))}+ \| \psi  \|_{2}  ),
\end{align*} 
where $C=C(M,\nu,\rho,\mathfrak{T})>0$ is a constant independent of the data $f,g,h$ and $\psi$. 
\item There exists a unique fundamental solution $\Gamma=(\Gamma(t,s))_{0\leq s \leq t \leq \mathfrak{T}}$ for $\partial_t-\mathrm{div}_x A(t,\cdot)\nabla_x$. In particular,   for all $t \in [0,\mathfrak{T}]$, we have the following representation of $u(t)$ : 
\begin{align*}
    u(t) = \Gamma(t,0)\psi + \int_{0}^{t} \Gamma(t,\tau)( -\mathrm{div}_x{f})(\tau)\ \mathrm d \tau 
    + \int_{0}^{t} \Gamma(t,\tau) (-\Delta_D)^{\beta/2}{g}(\tau)\ \mathrm d \tau +\int_{0}^{t} \Gamma(t,\tau)h(\tau)\ \mathrm d \tau,
\end{align*}
where the two integrals with ${f}$ and ${g}$ are weakly defined in $L^2(\Omega)$ while the other one converges strongly (i.e., in the Bochner sense). More precisely, we have for all $\Tilde{\psi} \in L^2(\Omega)$ and $t \in [0,\mathfrak{T}]$,
 \begin{align*}
            \langle u(t) , \Tilde{\psi} \rangle = \langle \Gamma(t,0)\psi , &
            \Tilde{\psi} \rangle  
            + \int_{0}^{t} \langle {f}(s) , \nabla_x \Tilde{\Gamma}(s,t)\Tilde{\psi}\rangle \, \mathrm ds  \\&
            +\int_{0}^{t} \langle  {g}(s) , (-\Delta_D)^{\beta/2} \Tilde{\Gamma}(s,t) \Tilde{\psi} \rangle \, \mathrm ds
            +\int_{0}^{t} \langle \Gamma(t,s) h(s) , \Tilde{\psi} \rangle \, \mathrm ds.
        \end{align*}
        \end{enumerate}
\end{thm}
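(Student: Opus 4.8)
The plan is to recognize Theorem~\ref{thm: Pb Cauchy Dirichlet} as a direct specialization of the abstract inhomogeneous theory (Theorems~\ref{ThmCauchy inhomog} and~\ref{thm: passage au concret}, packaged as Theorem~\ref{Thm:Cauchyinhomogintro}), together with the representation result Theorem~\ref{Thm:FSCauchyintro}. First I would set up the abstract data: take $H=L^2(\Omega)$, $K=L^2(\Omega)^n$, and $T=\nabla_x : H^1_0(\Omega)\subset L^2(\Omega)\to L^2(\Omega)^n$, which is closed and densely defined; then $T^\star T=-\Delta_D$, so $S=(T^\star T)^{1/2}=(-\Delta_D)^{1/2}$, with $D(S)=D(T)=H^1_0(\Omega)$ by Kato's second representation theorem, and $D_{S,1}$ is the completion of $H^1_0(\Omega)$ for $\|\nabla_x\cdot\|_2$. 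Note $S$ need not be injective here (e.g.\ $\Omega$ bounded forces injectivity, but general open $\Omega$ may not), so we are genuinely in the inhomogeneous setting of Section~\ref{sec:Inhomogenous}; we take $\lambda=0$, $\kappa=0$ (the ellipticity \eqref{ellipticité A} gives coercivity of $B_t$ directly on the homogeneous norm, cf.\ \eqref{EllipticityAbstract}), and $\mathfrak{T}<\infty$, which is case (a). Define $\Tilde B_t(u,v)=\int_\Omega A(t,x)\nabla_x u\cdot\overline{\nabla_x v}\,\mathrm dx$; conditions \eqref{ellipticité A} give exactly \eqref{eq:unifbdd}--\eqref{eq:unifcoercive} (with $\Tilde S_0=S$, $\kappa=0$), and weak measurability of $t\mapsto \Tilde B_t(u,v)$ follows from measurability of the entries of $A$ and Fubini. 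The core is $\mathrm D=\mathcal D(\Omega)$, dense in $D(S)=H^1_0(\Omega)$ by definition of $H^1_0(\Omega)$.

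Next I would match the source terms. We must write the right-hand side of \eqref{Pb Cauchy Dirichlet} in the form $Sf'+S^\beta g'+h$ of \eqref{eq:Cauchy inhomogeneintro}. For the divergence term: since $\nabla_x^\star=-\mathrm{div}_x$ and $S$ is isometric from $(D(S),\|S\cdot\|)$ onto $H$ with dense range (here onto $\overline{\mathrm{ran}(S)}$; one projects $f$ onto $\overline{\mathrm{ran}(T)}$ and uses $S f'' = T^\star(\cdot)$), one has for $f\in L^2((0,\mathfrak T);L^2(\Omega)^n)$ that $-\mathrm{div}_x f = S f'$ for some $f'\in L^2((0,\mathfrak T);L^2(\Omega))$ with $\|f'\|\lesssim\|f\|$ — more precisely $-\mathrm{div}_x f = T^\star f = S\cdot(\text{something})$, using the polar decomposition $T=U S$. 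This is the one place needing a short argument; I would phrase it as: $T^\star$ maps $L^2(\Omega)^n$ into $D_{S,-1}$ boundedly, and $D_{S,-1}=S^{-1}H$ since $S$ restricted to $\overline{\mathrm{ran}(S)}$ is injective, giving the representation $-\mathrm{div}_x f = Sf'$. The term $(-\Delta_D)^{\beta/2}g=S^\beta g$ is already in the desired form, and $h$ is the $L^1$ term. The test-function weak formulation in Definition~\ref{defn:Dirichlet} against $\varphi\in\mathcal D((0,\mathfrak T)\times\Omega)=\mathcal D((0,\mathfrak T);\mathcal D(\Omega))$ (up to the standard density of $\mathcal D((0,\mathfrak T))\otimes\mathcal D(\Omega)$ in $\mathcal D((0,\mathfrak T)\times\Omega)$) is exactly the weak formulation (i) in Section~\ref{sec:inhomogenousCP} with $\mathrm D=\mathcal D(\Omega)$, after noting $\int_\Omega g\,(-\Delta_D)^{\beta/2}\varphi = \langle g, S^\beta\varphi\rangle$. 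The initial condition ``$u(t)\to\psi$ in $\mathcal D'(\Omega)$'' is equivalent to ``$\langle u(t),\tilde a\rangle\to\langle\psi,\tilde a\rangle$ for all $\tilde a\in\mathcal D(\Omega)$ along a sequence tending to $0$'' because, once we know a posteriori that $u\in C([0,\mathfrak T];L^2(\Omega))$, the two notions coincide and $\mathcal D(\Omega)$ is dense in $L^2(\Omega)$; a priori we only need the statement against $\mathcal D(\Omega)$, which is condition (ii).

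Then I would simply invoke the abstract theorems: Theorem~\ref{ThmCauchy inhomog}(1) (case (a)) combined with Theorem~\ref{thm: passage au concret} gives existence and uniqueness of the weak solution $u\in L^1((0,\mathfrak T);H^1_0(\Omega))$ with $\int_0^{\mathfrak T}\|\nabla_x u\|_2^2<\infty$, the continuity $u\in C([0,\mathfrak T];L^2(\Omega))$ with $u(0)=\psi$, the absolute continuity of $t\mapsto\|u(t)\|_2^2$ and the energy equality (Lemma~\ref{lem:weaksolCPlocal}, with the extra $h\in L^1$ term handled as in Proposition~\ref{prop:energyboundedintervalinhomo}), the mixed-norm regularity $u\in L^r((0,\mathfrak T);D((-\Delta_D)^{\alpha/2}))$ for $\alpha=2/r\in(0,1]$, $r\in[2,\infty)$, and the stated estimate — here one notes $D(\Tilde S^\alpha)=D(S^\alpha)=D((-\Delta_D)^{\alpha/2})$ with equivalent norms for $\alpha\ge 0$, and since $\mathfrak T<\infty$ the constant absorbs the $\lambda,\kappa$ normalization. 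For part (2), Theorem~\ref{ThmCauchy inhomog}(2) together with Theorem~\ref{Thm:FSCauchyintro} (i.e.\ Theorems~\ref{ThmCauchy inhomog} and~\ref{thm: passage au concret} combined) yields the unique fundamental solution $\Gamma=(\Gamma(t,s))_{0\le s\le t\le\mathfrak T}$ and the representation formula; translating $Sf'$ back to $-\mathrm{div}_x f$ gives the $\int_0^t\Gamma(t,\tau)(-\mathrm{div}_x f)(\tau)\,\mathrm d\tau$ term with its weak-in-$L^2$ interpretation and the dual pairing $\langle f(s),\nabla_x\tilde\Gamma(s,t)\tilde\psi\rangle$ (using the adjoint relation $\Tilde\Gamma(s,t)=\Gamma(t,s)^\star$ from Theorem~\ref{thm:Green-FSintro} and $S\Tilde\Gamma(s,t)\tilde a$ corresponding to $\nabla_x$ acting on $\Tilde\Gamma(s,t)\tilde a\in H^1_0(\Omega)$), the $g$-term analogously with $(-\Delta_D)^{\beta/2}=S^\beta$, and the $h$-term converging strongly.

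The main obstacle is essentially bookkeeping rather than mathematics: the one genuine point to verify carefully is the identification $-\mathrm{div}_x f = S f'$ with $f'\in L^2$ and control $\|f'\|_{L^2}\lesssim\|f\|_{L^2}$, i.e.\ that the divergence source falls into the range of $S$ acting on an $L^2$-function in the completion sense — this uses the polar decomposition $T=US$ (so $T^\star = S U^\star$) and the description $D_{S,-1}\cap$(range considerations) from Section~\ref{sec:inhomogenouessetup}. Everything else is a translation of notation, checking that the hypotheses \eqref{ellipticité A} are precisely \eqref{eq:unifbdd}--\eqref{eq:unifcoercive}, and that the concrete weak formulation of Definition~\ref{defn:Dirichlet} matches the abstract one via $\mathrm D=\mathcal D(\Omega)$ and Theorem~\ref{thm: passage au concret}. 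I would therefore structure the written proof as: (i) setup of $T,S,H,K,\mathrm D$; (ii) verification of the abstract hypotheses; (iii) rewriting of the three source terms, with the divergence-term lemma spelled out; (iv) matching of the two weak formulations and of the initial condition; (v) application of Theorems~\ref{ThmCauchy inhomog}, \ref{thm: passage au concret} and~\ref{Thm:FSCauchyintro}, and (vi) translation of the conclusions and the representation formula back to the concrete notation.
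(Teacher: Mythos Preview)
Your approach is correct and essentially identical to the paper's: set $H=L^2(\Omega)$, $T=\nabla_x$ on $H^1_0(\Omega)$, $S=(-\Delta_D)^{1/2}$, take $\mathrm D=\mathcal D(\Omega)$ as the core, rewrite $-\mathrm{div}_x f = S\tilde f$, and invoke Theorems~\ref{ThmCauchy inhomog} and~\ref{thm: passage au concret} together with the representation result. One correction: your claim that ``$S$ need not be injective here'' is wrong --- for the Dirichlet Laplacian on any open $\Omega$, $S=(-\Delta_D)^{1/2}$ \emph{is} injective (if $\nabla u=0$ and $u\in H^1_0(\Omega)$ then $u$ is locally constant, and the only locally constant function in $H^1_0(\Omega)$ is $0$), and the paper explicitly uses this; your argument is unaffected since the inhomogeneous framework covers both cases, but the remark should be removed.
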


\begin{proof}
{As $\mathcal{D}(\Omega)$ is dense in $H^1_0(\Omega)$ with respect to the graph norm of the injective self-adjoint operator $S=(-\Delta_D)^{1/2}$ by definition, we are in the context of Theorem \ref{thm: passage au concret} in Section \ref{subsection 4.9}, which corresponds to Theorem \ref{ThmCauchy inhomog} for each $f,g,h$ by linearity and using that $-\mathrm{div}_x{f} =(-\Delta_D)^{1/2} \Tilde{f}$ with $\Tilde{f}\in L^2((0,\mathfrak{T}); L^2(\Omega))$, with $B_t : H^1_0(\Omega) \times H^1_0(\Omega) \rightarrow \mathbb{C}$ being the sesquilinear form defined via
\begin{equation*}
   \forall u,v \in H^1_0(\Omega) : \  B_t(u,v) :=  \int_{\Omega} A(t,x)\nabla_x u(x) \cdot \overline{\nabla_x v(x)} \, \mathrm{d} x.
\end{equation*}
}
 
\end{proof}

\begin{rems}
    \begin{enumerate}
        \item With the modification in the definition of weak solutions, the statement applies for  the Cauchy problem on $(0,\infty)$ and $u$ has limit 0 at $\infty$. (Use Theorems \ref{thm: passage au concret} and \ref{ThmCauchy homog}).
        \item Remark that the theory applies for complex coefficients. In particular, we do not assume any local regularity for weak solutions and fundamental solutions are merely bounded operators. Bounds on their kernels need additional assumptions. 
        \item If $\Omega$ is bounded (or only bounded in one direction), then  Poincar\'e inequality holds on $H^1_0(\Omega)$ \cite[Proposition 3.25]{egert2024harmonic}, and it follows that  $D(S)=D_{S,1}=H^1_0(\Omega)$ with equivalent norms. In particular, the inhomogeneous and homogeneous theories developed in Section \ref{Section 5} are the same for this concrete case.
        \item We may want to replace the spaces $L^r((0,\mathfrak{T});D((-\Delta_D)^{\alpha/2}))$ by mixed Lebesgue spaces $L^r((0,\mathfrak{T});L^q(\Omega))$.  The embeddings of the domains of the fractional powers $(-\Delta_D)^{\alpha/2}$  into Lebesgue spaces $L^q(\Omega)$ depend on the geometry of the domain. See the discussion in \cite{auscher2023universal}.
    \end{enumerate}
\end{rems}

\subsection{Parabolic integro-differential operators}
The second application is for integro-differential parabolic  operators $\partial_t+ \mathcal{B}$
where  $\mathcal{B}$ is associated with a sesquilinear form  $B_t$
satisfying  \eqref{EllipticityAbstract} for $t\in I$ ($I$ open interval) with $T=S=(-\Delta)^{\gamma/2}$ for some $\gamma>0$. 
The most notable example from the references mentioned in the introduction is that of  $\mathcal{B}$ arising from the family of forms
\begin{align*}
 B_t(u,v) := \iint_{\mathbb{R}^n \times \mathbb{R}^n} K(t,x,y) \frac{(u(x) - u(y)) \overline{(v(x)-v(y))}}{|x-y|^{n+2\gamma}} \, \mathrm{d} x \, \mathrm{d} y,
\end{align*}
for some $\gamma \in (0,1)$ and
 $u,v \in W^{\gamma,2}(\mathbb{R}^n)$. We assume here $K: I\times \mathbb{R}^n \times \mathbb{R}^n \to \mathbb{C}$  to be a measurable kernel that satisfies the accretivity condition for some $\lambda>0$, 
 \begin{align}
\label{eq:ellip}
0<\lambda \leq \mathrm{Re}\,  K(t,x,y) \leq |K(t,x,y)| \leq \lambda^{-1} \qquad (\text{a.e. }(t,x,y) \in I \times \mathbb{R}^n \times \mathbb{R}^n). 
\end{align}
The Sobolev space $W^{\gamma,2}(\mathbb{R}^n)$ is the space of measurable functions $u$ on $\mathbb{R}^n$ with norm $\|u\|_\gamma$ given by 
 \begin{align*}
  \|u\|_\gamma^2=   \int_{\mathbb{R}^n} |u(x)|^2\, \mathrm{d}x + \iint_{\mathbb{R}^n \times \mathbb{R}^n} \frac{|u(x) - u(y)|^2}{|x-y|^{n+2\gamma}} \, \mathrm{d} x \, \mathrm{d} y,
 \end{align*}
 and it is well known that $W^{\gamma,2}(\mathbb{R}^n)$ agrees with the domain of $(-\Delta)^{\gamma/2}$ and that the last term in the expression above is comparable to $\|(-\Delta)^{\gamma/2}u\|_2^2$. Using this observation,   \eqref{eq:ellip} and Cauchy-Scwharz inequality, we can check \eqref{EllipticityAbstract} with $T=S=(-\Delta)^{\gamma/2}$. 
 
From now on, we can apply the theory developed so far and obtain well-posedness results on $I\times \mathbb{R}^n$ but we shall not repeat the statements and leave that to the reader. 
May be the most notable outcome is that there always exists a unique fundamental solution, and this seems new at this level of generality. 

\begin{thm} Let $\gamma>0$.  
    The integro-differential parabolic  operator $\partial_t+ \mathcal{B}$ on $I\times \mathbb{R}^n$  has a unique fundamental solution.
\end{thm}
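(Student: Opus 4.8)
The plan is to recognise this statement as a direct instance of the abstract machinery of Sections \ref{Section 2} and \ref{Section 5}, so that essentially nothing has to be proved anew. First I would fix $H=L^2(\mathbb{R}^n)$ with Lebesgue measure and take $T=S=(-\Delta)^{\gamma/2}$, the Fourier multiplier with symbol $|\xi|^\gamma$; since this symbol is positive off the Lebesgue-null set $\{\xi=0\}$, the operator $S$ is positive, self-adjoint, injective and densely defined (with $D(S)=W^{\gamma,2}(\mathbb{R}^n)$ when $0<\gamma<1$). By assumption the family $(B_t)_{t\in I}$ is weakly measurable and satisfies \eqref{EllipticityAbstract} relative to the homogeneous norm $\|S\cdot\|_H=\|(-\Delta)^{\gamma/2}\cdot\|_2$; for the model nonlocal forms this has just been checked from \eqref{eq:ellip}, the Cauchy--Schwarz inequality, and the comparison of the Gagliardo seminorm with $\|(-\Delta)^{\gamma/2}u\|_2$. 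Hence $\partial_t+\mathcal{B}$ is a parabolic operator in the sense of Definition \ref{Définition opérateur B}, its backward adjoint $-\partial_t+\mathcal{B}^\star$ being attached to the family $(B_t^\star)_{t\in I}$ with the same structural constants.

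Next I would simply invoke the well-posedness results. On $I=\mathbb{R}$, Corollary \ref{CorRadon} provides the weak solutions with Dirac data and Proposition \ref{Prop Green} their adjointness and Chapman--Kolmogorov relations; on a half-line one uses Theorem \ref{ThmCauchy homog} (legitimate because $S$ is injective); on a bounded interval one uses Theorem \ref{ThmCauchy inhomog}, case (a), working with $\Tilde{S}=(1+(-\Delta)^\gamma)^{1/2}$. In all of these cases the absolute continuity of $t\mapsto\|u(t)\|_H^2$ and the energy identity are supplied by Corollaries \ref{corenergy}, \ref{corenergybounded} and \ref{EnergyPol}. This is precisely the set of hypotheses required in the discussion preceding Definition \ref{FSintro}, so the Green operators $G(t,s)$ and $\Tilde{G}(s,t)$ of Definition \ref{def:Greenintro} are well defined. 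By Corollary \ref{Cor Green} they are uniformly bounded on $H$ and causal; the functions $(t,s)\mapsto\langle G(t,s)a,\Tilde{a}\rangle_H$ are separately continuous off the diagonal, hence Borel measurable; and the representation property in Definition \ref{FSintro}(4) is exactly Theorem \ref{thm:identification} (equivalently Theorem \ref{thm:Green-FSintro}). This delivers the fundamental solution. Uniqueness is then Lemma \ref{Unicité sol fonda}, whose proof transfers verbatim to an arbitrary interval. Finally, to read the definition against genuine test functions in the $x$ variable, I would note that $\mathcal{S}(\mathbb{R}^n)$ (and $\mathcal{D}(\mathbb{R}^n)$) is a core of $D((-\Delta)^{\gamma/2})$, so Theorem \ref{thm: passage au concret} shows that the fundamental solution is the same object whether tested against $E_{-\infty}$ or against $\mathcal{S}(\mathbb{R}^n)$.

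The only point needing genuine attention is a bookkeeping one: one must be sure that the coercivity in \eqref{EllipticityAbstract} holds for the \emph{homogeneous} norm $\|(-\Delta)^{\gamma/2}\cdot\|_2$, not merely in the weaker form $\mathrm{Re}\,B_t(u,u)+\kappa\|u\|_2^2\ge\nu\|(-\Delta)^{\gamma/2}u\|_2^2$. For the kernel forms considered here the homogeneous coercivity holds directly, so the argument above applies as stated. Should only the weaker lower bound be available, the construction would still go through, but it would have to be routed entirely through the inhomogeneous theory of Section \ref{sec:Inhomogenous} with $\Tilde{S}=(1+(-\Delta)^\gamma)^{1/2}$ in place of $S$ --- which is exactly why that variant was developed --- and the interval would then be subject to the restrictions of cases (a)--(b) there.
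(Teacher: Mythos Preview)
Your proposal is correct and is exactly the approach the paper intends: the paper does not write a separate proof for this theorem, stating only that ``we can apply the theory developed so far'' once \eqref{EllipticityAbstract} has been verified for $S=(-\Delta)^{\gamma/2}$, which is precisely what you do. Your explicit listing of the relevant abstract results (Corollary \ref{CorRadon}, Proposition \ref{Prop Green}, Theorems \ref{ThmCauchy homog} and \ref{ThmCauchy inhomog}, Theorem \ref{thm:identification}, Lemma \ref{Unicité sol fonda}, Theorem \ref{thm: passage au concret}) and the observation about homogeneous versus inhomogeneous coercivity are a faithful unpacking of what the paper leaves implicit.
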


\subsection{Degenerate parabolic operators}
The third application concerns degenerate parabolic operators on $\mathbb{R}^n$. We fix a weight $\omega$ in the Muckenhoupt class $A_2(\mathbb{R}^n, \mathrm{d}x)$, meaning that $\omega: \mathbb{R}^n \to \mathbb{R}$ is a measurable and positive function satisfying
\begin{equation*}
     [ \omega  ]_{A_2}:= \sup_{Q \subset \mathbb{R}^n  } \left ( \fint_Q\omega(x)\,\mathrm{d} x   \right ) \left ( \fint_Q\omega^{-1}(x)\,\mathrm{d} x   \right ) < \infty,
\end{equation*}
where the supremum is taken over all cubes $Q \subset \mathbb{R}^n$. For background on Muckenhoupt weights and related results, we refer to \cite[Ch. V]{Stein1993_HA}.

We denote by $L^2_\omega(\mathbb{R}^n) := L^2(\mathbb{R}^n, \mathrm{d}\omega)$ the Hilbert space of square-integrable functions with respect to $\mathrm{d}\omega$, with norm denoted by ${\lVert \cdot \rVert}_{2,\omega}$ and inner product $\langle \cdot , \cdot \rangle_{2,\omega}$. It is known that $$\mathcal{D}(\mathbb{R}^n)  
\subset L^2_\omega(\mathbb{R}^n) \subset L^1_{\text{loc}}(\mathbb{R}^n, \mathrm{d}x) \subset \mathcal{D}'(\mathbb{R}^n)$$ and the first inclusion is dense.

We define the weighted Sobolev space $H^1_{\omega}(\mathbb{R}^n)$ (or $W^{1,2}_\omega(\mathbb{R}^n)$) as the space of functions $f \in L^2_\omega(\mathbb{R}^n)$ for which the distributional gradient $\nabla_x f$ belongs to $L^2_\omega(\mathbb{R}^n)^n$, and equip this space with the norm $\left\| f \right\|_{H^1_\omega} := ( \left\| f \right\|_{2,\omega}^2 + \left\| \nabla_x f \right\|_{2,\omega}^2 )^{1/2}$ making it a Hilbert space. It is also known that  $\mathcal{D}(\mathbb{R}^n)$ is dense in $H^1_{\omega}(\mathbb{R}^n)$ (see \cite[Thm. 2.5]{kilpelainen1994weighted}).

Let $I \subset \mathbb{R}$ be an open interval. Let $A: I \times \mathbb{R}^n \to M_n(\mathbb{C})$ be a matrix-valued function with complex measurable coefficients such that
$$
\left| A(t,x) \xi \cdot \zeta \right| \leq M \omega(x) \left| \xi \right| \left| \zeta \right|, \quad \nu \left| \xi \right|^2 \omega(x) \leq \mathrm{Re}(A(t,x) \xi \cdot \overline{\xi}),
$$
for some constants $M, \nu > 0$ and for all $\xi, \zeta \in \mathbb{C}^n$ and $(t,x) \in I \times \mathbb{R}^n$.

For each $t \in I$, we define the sesquilinear form $B_t : H^1_\omega(\mathbb{R}^n) \times H^1_\omega(\mathbb{R}^n) \to \mathbb{C}$ by
$$
B_t(u,v) := \int_{\mathbb{R}^n}  A(t,x) \nabla_x u(x) \cdot \overline{\nabla_x v(x)} \, \mathrm{d}x,
$$
for all $u, v \in H^1_\omega(\mathbb{R}^n)$. The assumptions on $A$ yield 
$$
|B_t(u,v)| \leq M \left\| \nabla_x u \right\|_{2,\omega} \left\| \nabla_x v \right\|_{2,\omega}, \quad \nu \left\| \nabla_x u \right\|_{2,\omega}^2 \leq \mathrm{Re}(B_t(u,u)).
$$
This is \eqref{EllipticityAbstract} with $T=\nabla_x : H^1_\omega(\mathbb{R}^n) \to L^2_\omega(\mathbb{R}^n)^n$.
We note that $T$ is injective since $d\omega$ has infinite mass as a doubling measure on $\mathbb{R}^n$. We denote by $\partial_t - \omega^{-1}(x) \mathrm{div}_x A(t,x) \nabla_x$ the degenerate parabolic operator associated with the family $(B_t)_{t \in I}$.
At this point, we can apply the theory developed above to obtain well-posedness results on $I \times \mathbb{R}^n$, for the Cauchy problems with test functions in $I\times \mathbb{R}^n$ using Theorem \ref{thm: passage au concret}, assuming weak solutions to \textit{a priori} be   in $L^1_{\mathrm{loc}}(I; L^2(\mathbb{R}^n))$ if $I$ is unbounded.

\begin{thm}
The operator $\partial_t - \omega^{-1}(x) \mathrm{div}_x A(t,x) \nabla_x$  on $I \times \mathbb{R}^n$ has a unique fundamental solution.
\end{thm}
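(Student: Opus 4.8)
The plan is to verify that the degenerate parabolic operator $\partial_t - \omega^{-1}(x)\operatorname{div}_x A(t,x)\nabla_x$ on $I\times\mathbb{R}^n$ fits exactly into the abstract framework of Sections \ref{Section 5} and \ref{subsection 4.9}, and then to invoke the existence and uniqueness of the fundamental solution already established there (Theorem \ref{ThmCauchy homog}(2), Definition \ref{FS}, Lemma \ref{Unicité sol fonda}, Theorem \ref{thm:identification}, together with the passage-to-concrete result Theorem \ref{thm: passage au concret}). The point is purely that of checking hypotheses: there is no new analysis to do.

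First I would set $H := L^2_\omega(\mathbb{R}^n)$, $K := L^2_\omega(\mathbb{R}^n)^n$, and $T := \nabla_x$ with domain $D(T) = H^1_\omega(\mathbb{R}^n)$; this $T$ is closed (the distributional gradient is a closed operator) and densely defined (since $\mathcal{D}(\mathbb{R}^n)$ is dense in $H^1_\omega(\mathbb{R}^n)$ by \cite[Thm.~2.5]{kilpelainen1994weighted}, hence in $L^2_\omega$). Injectivity of $T$ is the observation already made in the excerpt: $\nabla_x u = 0$ forces $u$ constant, and a nonzero constant is not in $L^2_\omega(\mathbb{R}^n)$ because $\omega\,\mathrm{d}x$ is a doubling, hence infinite, measure on $\mathbb{R}^n$. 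Then $S := (T^\star T)^{1/2}$ is injective, positive, self-adjoint, with $D(S) = D(T) = H^1_\omega(\mathbb{R}^n)$ and $\|S u\|_H = \|\nabla_x u\|_{2,\omega}$ by Kato's second representation theorem, so $D_{S,1}$ is the completion of $H^1_\omega(\mathbb{R}^n)$ for $\|\nabla_x\cdot\|_{2,\omega}$. Next, for each $t\in I$ define $B_t$ by the stated integral; the bounds on $A$ give, via Cauchy--Schwarz, the boundedness $|B_t(u,v)|\le M\|\nabla_x u\|_{2,\omega}\|\nabla_x v\|_{2,\omega} = M\|u\|_{S,1}\|v\|_{S,1}$ and the coercivity $\nu\|u\|_{S,1}^2 \le \operatorname{Re} B_t(u,u)$, which is precisely \eqref{EllipticityAbstract}; weak measurability of $t\mapsto B_t(u,v)$ for fixed $u,v$ follows from measurability of $A$ and Fubini. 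Finally $-\omega^{-1}\operatorname{div}_x A(t,\cdot)\nabla_x$ is by construction the operator $\mathcal{B} = T^\star A(t) T$ associated to $(B_t)_t$, where $-\omega^{-1}\operatorname{div}_x$ is the adjoint (in the $\mathrm{d}\omega$ pairing) of $\nabla_x$.

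With the abstract setup in force, Section \ref{sec:FS} (for $I=\mathbb{R}$) and Theorem \ref{ThmCauchy homog}(2) together with its half-line and segment variants, applied verbatim on the interval $I$, yield existence of a fundamental solution $\Gamma = (\Gamma(t,s))_{t,s\in I}$ for $\partial_t + \mathcal{B}$ in the sense of Definition \ref{FS} (equivalently Definition \ref{FSintro}), obtained as the family of Green operators via Theorem \ref{thm:identification}; uniqueness is Lemma \ref{Unicité sol fonda}, whose proof applies verbatim on $I$. The last ingredient is Theorem \ref{thm: passage au concret}: since $\mathcal{D}(\mathbb{R}^n)$ is dense in $D(S) = H^1_\omega(\mathbb{R}^n)$ for the graph norm, we may take $\mathrm{D} = \mathcal{D}(\mathbb{R}^n)$ as a core, so that the weak formulation against $\mathcal{D}(I;\mathrm{D})$-test functions coincides with the abstract one, and the fundamental solution constructed abstractly is the fundamental solution in the concrete sense (with the \textit{a priori} requirement, stated in the theorem, that weak solutions lie in $L^1_{\mathrm{loc}}(I;L^2(\mathbb{R}^n))$ when $I$ is unbounded --- note $L^2_\omega(\mathbb{R}^n)\subset L^1_{\mathrm{loc}}(\mathbb{R}^n,\mathrm{d}x)$ by the $A_2$ condition, so this is a natural hypothesis).

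Honestly, there is no real obstacle here: the only mildly delicate point is making sure that all the structural identifications ($T$ closed and densely defined, $D(S)=D(T)$, the adjoint relation $-\omega^{-1}\operatorname{div}_x = (\nabla_x)^\star$ in the weighted pairing, injectivity of $T$) are in place, and these are all either classical facts about weighted Sobolev spaces or were already recorded in the excerpt. Once the setup is matched, the conclusion is an immediate citation of Theorems \ref{ThmCauchy homog}, \ref{thm:identification}, \ref{thm: passage au concret} and Lemma \ref{Unicité sol fonda}, exactly as in the two preceding applications.
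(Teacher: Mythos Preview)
Your proposal is correct and matches the paper's approach exactly: the paper itself provides no formal proof of this theorem, instead verifying in the preceding paragraphs precisely the hypotheses you list (injectivity of $T=\nabla_x$ via the infinite mass of $\mathrm{d}\omega$, the ellipticity bounds \eqref{EllipticityAbstract} for $B_t$, density of $\mathcal{D}(\mathbb{R}^n)$ in $H^1_\omega(\mathbb{R}^n)$) and then stating the result as a direct consequence of the abstract machinery via Theorem~\ref{thm: passage au concret}. If anything, your write-up is more explicit than the paper's own treatment.
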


\subsubsection*{\textbf{Copyright}}
A CC-BY 4.0 \url{https://creativecommons.org/licenses/by/4.0/} public copyright license has been applied by the authors to the present document and will be applied to all subsequent versions up to the Author Accepted Manuscript arising from this submission.

\bibliographystyle{alpha}
\bibliography{references}

\end{document}